%
%
%
%


\pdfoutput=1
\documentclass[10pt, a4paper]{amsart}
\usepackage{amssymb}
\usepackage[all]{xy}
\usepackage[T1]{fontenc}
\usepackage{mathpazo}
\usepackage{tgpagella} 
\usepackage{graphicx}
\date{22 March 2016}
\title[Commutative DG Rings]{Duality and Tilting for Commutative DG Rings}
\usepackage{hyperref}
\hypersetup{colorlinks=false}

\author{Amnon Yekutieli}
\address{Department of  Mathematics,
Ben Gurion University, Be'er Sheva 84105, Israel}
\email{amyekut@math.bgu.ac.il}

\thanks{{\em Mathematics Subject Classification} 2000.
Primary: 13D09; Secondary: 13D07, 18G10, 16E45}
%
\keywords{DG rings, DG modules, derived categories.}

\thanks{{\em Grants}: Supported by Israel Science Foundation grant no.\ 
253/13.}

\thanks{{\em Notice regarding publication}: The author has decided not to
submit the present version of the paper to a peer reviewed journal. This 
decision is based on frustrating experience with submissions of earlier 
versions.}

\newtheorem{thm}[equation]{Theorem}
\newtheorem{cor}[equation]{Corollary}
\newtheorem{prop}[equation]{Proposition}
\newtheorem{lem}[equation]{Lemma}
\theoremstyle{definition}
\newtheorem{dfn}[equation]{Definition}
\newtheorem{rem}[equation]{Remark}
\newtheorem{exa}[equation]{Example}

\newtheorem{conv}[equation]{Convention}
\newtheorem{conj}[equation]{Conjecture}

\numberwithin{equation}{section}

\newcommand{\iso}{\xrightarrow{\simeq}}

\newcommand{\xar}{\xrightarrow}
\newcommand{\opn}{\operatorname}
\newcommand{\cat}[1]{\operatorname{\mathsf{#1}}}

\newcommand{\cd}{\,{\cdot}\,}

\newcommand{\rmitem}[1]{\item[\text{\textup{(#1)}}]}
\newcommand{\mfrak}[1]{\mathfrak{#1}}
\newcommand{\mcal}[1]{\mathcal{#1}}

\newcommand{\mrm}[1]{\mathrm{#1}}

\newcommand{\OO}{\mcal{O}}
\newcommand{\MM}{\mcal{M}}

\newcommand{\RR}{\mcal{R}}

\newcommand{\Ga}{\Gamma}
\newcommand{\si}{\sigma}
\newcommand{\la}{\lambda}
\newcommand{\de}{\delta}

\newcommand{\al}{\alpha}
\newcommand{\be}{\beta}
\newcommand{\ga}{\gamma}

\newcommand{\Om}{\Omega}
\newcommand{\p}{\mfrak{p}}
\newcommand{\q}{\mfrak{q}}
\newcommand{\m}{\mfrak{m}}

\renewcommand{\a}{\mfrak{a}}

\newcommand{\K}{\mathbb{K}}
\newcommand{\R}{\mathbb{R}}
\newcommand{\Q}{\mathbb{Q}}
\newcommand{\Z}{\mathbb{Z}}
\newcommand{\N}{\mathbb{N}}

\newcommand{\Hom}{\mcal{H} \! om}

\newcommand{\tup}[1]{\textup{#1}}
\newcommand{\bsym}[1]{\boldsymbol{#1}}

\newcommand{\ot}{\otimes}

\newcommand{\til}[1]{\tilde{#1}}

\renewcommand{\d}{\mathrm{d}}
\newcommand{\pa}{\partial}

\newcommand{\smfrac}[2]{{\textstyle \frac{#1}{#2}}}

\newcommand{\lb}{\linebreak}



\begin{document}

\begin{abstract}
We consider commutative DG rings (better known as nonpositive \lb strongly 
commutative associative unital DG algebras). For such a DG ring $A$ we define 
the notions of {\em perfect, tilting,  dualizing, Cohen-Macaulay 
and rigid DG $A$-modules}.
{\em Geometrically perfect} DG modules are defined by a local condition on 
$\opn{Spec} \bar{A}$, where $\bar{A}$ is the commutative ring $\opn{H}^0(A)$.
{\em Algebraically perfect} DG modules are those that can be obtained from $A$ 
by finitely many shifts, direct summands and cones. 
Tilting DG modules are those that have inverses w.r.t.\ the derived tensor 
product; their isomorphism classes form the derived Picard group 
$\opn{DPic}(A)$. 
Dualizing DG modules are a generalization of Grothendieck's 
original definition (and here $A$ has to be {\em cohomologically 
pseudo-noetherian}). Cohen-Macaulay DG modules are the duals (w.r.t.\ a given 
dualizing DG module) of finite $\bar{A}$-modules. Rigid DG $A$-modules, 
relative to a commutative base ring $\K$, are defined using the {\em squaring 
operation}, and this is a generalization of Van den Bergh's original 
definition. 

The techniques we use are the standard ones of derived categories, with a few 
improvements. We introduce a new method for studying DG $A$-modules: \v{C}ech 
resolutions of DG $A$-modules corresponding to open coverings of
$\opn{Spec} \bar{A}$. 

Here are some of the new results obtained in this paper:
\begin{itemize}
\item A DG $A$-module is geometrically perfect iff it is algebraically perfect.

\item The canonical group homomorphism $\opn{DPic}(A) \to \opn{DPic}(\bar{A})$ 
is bijective. 

\item The group $\opn{DPic}(A)$ acts simply transitively on the set of 
isomorphism classes of dualizing DG $A$-modules.

\item Cohen-Macaulay DG modules are insensitive to cohomologically surjective 
DG ring homomorphisms. 

\item Rigid dualizing DG $A$-modules  are unique up to unique rigid 
isomorphisms. 
\end{itemize}

The functorial properties of Cohen-Macaulay DG modules that we establish 
here are needed for our work on rigid dualizing complexes over 
commutative rings, schemes and Deligne-Mumford stacks. 

We pose several conjectures regarding existence and uniqueness of rigid DG 
modules over commutative DG rings.  
\end{abstract}

\maketitle

\tableofcontents

\setcounter{section}{-1}
\section{Introduction}

In this paper we are interested in {\em commutative DG rings}. 
This is an abbreviation for strongly commutative nonpositive associative 
unital differential graded algebras over $\Z$. Thus a commutative DG ring is a 
graded ring $A = \bigoplus_{i \leq 0} A^i$, together with a 
differential $\d$ of degree $1$, that satisfies the graded Leibniz rule. 
The multiplication satisfies 
$b \cd a = (-1)^{i j} \cd a \cd b$
for all $a \in A^i$ and $b \in A^j$, and $a \cd a = 0$ if $i$ is odd.
By default, the DG rings in the Introduction are commutative. 
Rings are viewed as DG rings concentrated in degree $0$ (so they are 
commutative by default throughout the Introduction). 

Commutative DG rings come up in the foundations of {\em derived algebraic 
geometry}, as developed by To\"en-Vezzosi \cite{TV}.
Indeed, a {\em derived stack} is a stack of groupoids
on the site of commutative DG rings (with its \'etale topology). 
Some precursors of this point of view are the papers \cite{Hi2}, 
\cite{Ke2}, \cite{KoSo} and \cite{Be}. 
Another role of commutative DG rings is as {\em resolutions of commutative 
rings}. We shall say more on this role below, since it was the immediate 
motivation for writing the present paper. 

In our paper we study {\em perfect, tilting, dualizing, 
Cohen-Macaulay and rigid DG modules} over commutative DG 
rings. We give definitions that generalize the familiar definitions for rings. 
Many of the results that hold for rings, continue to hold in the much more 
complicated DG setting. Later in the Introduction we discuss the main 
definitions and results of the paper, and a couple of conjectures.  
  
But first let us explain the problem that motivated our work on commutative DG 
rings. {\em Rigid dualizing complexes} over noetherian
commutative rings are the foundation of a new approach to 
Grothendieck Duality on schemes and Deligne-Mumford stacks. 
See the papers \cite{YZ1}, \cite{YZ2}, \cite{Ye3}, \cite{Ye6}, \cite{Ye8} and 
\cite{Ye9}. Rigid dualizing complexes were introduced by Van den Bergh in the 
context of noncommutative rings over a base field $\K$; see \cite{VdB}.
But for the new approach to Grothendieck Duality in algebraic geometry, 
that should apply also to the arithmetic setup, we are interested in 
commutative rings over a base commutative ring $\K$ that is {\em not a field}. 

In this context, the definition of a rigid complex relies on the more primitive 
notion of the {\em square of a complex}. Given a commutative $\K$-ring $A$, 
we choose a  {\em K-flat commutative DG ring resolution} $\til{A} \to A$ 
relative $\K$. Such resolutions exist, because we work with strongly 
commutative DG rings (see Remark \ref{rem:580} for a discussion of this issue). 
For any complex of $A$-modules $M$, its square is the complex of $A$-modules
\begin{equation} \label{eqn:100}
\opn{Sq}_{A / \K}(M) := \opn{RHom}_{\til{A} \ot_{\K} \til{A}}
(A, M \ot_{\K}^{\mrm{L}} M) . 
\end{equation}
The question of independence of $\opn{Sq}_{A / \K}(M)$ of the 
choice of resolution $\til{A}$ is very subtle; and in fact there was a mistake 
in the original proof in \cite{YZ1} 
(which has since been corrected; see \cite{Ye5}). 
A rigid dualizing complex over $A$ relative to $\K$ is a pair 
$(R, \rho)$, consisting of a dualizing complex $R$, and an isomorphism 
$\rho : R \iso \opn{Sq}_{A / \K}(R)$ in the derived category $\cat{D}(A)$. 

In the course of writing the new paper \cite{Ye7} -- which corrects further 
mistakes in the earlier paper \cite{YZ1}, and extends it  -- we realized 
that we need Theorem \ref{thm:61}, that deals with Cohen-Macaulay DG modules.
This is explained in Remark \ref{rem:120}.
The parts of the present paper leading to Section \ref{sec:cm} set the stage 
for the definition of Cohen-Macaulay DG modules and the proof of Theorem 
\ref{thm:61}. 

From a wider perspective, we expect that the results in this paper shall find 
further applications in algebra and geometry. One such possible application 
would be the development of 
a theory of {\em rigid dualizing complexes in derived algebraic 
geometry} (of either flavor: Lurie's or To\"en's). Indeed, a commutative DG 
ring is an affine derived scheme; so what is needed is a way to sheafify our 
constructions. That should be facilitated by  the {\em \'etale descent} and 
{\em \'etale codescent} properties of {\em rigid residue complexes} (see 
\cite{Ye6}). 

Our work in the present paper should be easily accessible to anyone with a 
working knowledge of the derived category of modules over a ring (e.g.\ from 
the 
book \cite{We}). This is because the methods we use are basically the same; 
there are only slight modifications. The necessary tools to upgrade from 
rings to DG rings (such as K-injective resolutions in place of injective 
resolutions) are recalled in Section \ref{sec:resol} of our paper. We do not 
resort at all to the daunting technicalities of $\mrm{E}_{\infty}$ rings. 
Likewise, we do not touch simplicial methods or Quillen model structures. 

Let us now describe the work in this paper. 
Consider a commutative DG ring $A = \bigoplus_{i \leq 0} A^i$. Its cohomology 
$\opn{H}(A) = \bigoplus_{i \leq 0} \opn{H}^i(A)$ is a commutative 
graded ring. We use the notation $\bar{A} := \opn{H}^0(A)$. There is a 
canonical homomorphism of DG rings $A \to \bar{A}$. 
We say that $A$ is {\em cohomologically pseudo-noetherian} if $\bar{A}$ is a 
noetherian ring, and $\opn{H}^i(A)$ is a finite (i.e.\ finitely generated) 
$\bar{A}$-module for every $i$. (Note that this is weaker than the condition 
that the ring $\opn{H}(A)$ is noetherian.)

The category of DG $A$-modules is denoted by $\cat{C}(A)$. It is a DG 
category, and its derived category, gotten by inverting the 
quasi-isomorphisms, is denoted by $\cat{D}(A)$. There are full triangulated 
subcategories $\cat{D}^{+}(A)$, $\cat{D}^{-}(A)$ and $\cat{D}^{\mrm{b}}(A)$
of $\cat{D}(A)$, made up of the DG modules $M$ with bounded below, 
bounded above and bounded cohomologies, respectively. 
The full subcategory of $\cat{D}(A)$ on the DG modules $M$, whose cohomology 
modules $\opn{H}^i(M)$ are finite over $\bar{A}$, is denoted by
$\cat{D}_{\mrm{f}}(A)$. As usual, for any boundedness condition $\star$ we let 
$\cat{D}^{\star}_{\mrm{f}}(A) := \cat{D}_{\mrm{f}}(A) 
\cap \cat{D}^{\star}(A)$. If $A$ is cohomologically
pseudo-noetherian, then the categories $\cat{D}^{\star}_{\mrm{f}}(A)$ are 
triangulated, and $A \in \cat{D}^{-}_{\mrm{f}}(A)$.
In case $A$ is a ring, then 
$\cat{D}(A) = \cat{D}(\cat{Mod} A)$, the derived category of $A$-modules. 

In Section \ref{sec:resol} we recall some facts on DG modules. We mention 
several kinds of resolutions of DG modules, and special attention is paid to 
semi-free resolutions. 
Section \ref{sec:coh-dim} is about various notions of 
cohomological dimension for DG modules and derived functors.
For instance, in Definition \ref{dfn:11} we introduce the projective and  
injective dimensions of a DG $A$-module $M$ {\em relative to a subcategory}
$\cat{E} \subseteq \cat{D}(A)$. 
In Section \ref{sec:lifting} we study the reduction functor 
$\cat{D}(A) \to \cat{D}(\bar{A})$, 
$M \mapsto \bar{A} \ot^{\mrm{L}}_{A} M$. 
We show that projective $\bar{A}$-modules can be lifted to DG $A$-modules. 

In Section \ref{sec:localiz} we discuss localization of
a commutative DG ring $A$ on $\opn{Spec} \bar{A}$. We introduce the {\em 
\v{C}ech resolution} $\opn{C}(M; \bsym{a})$ of a DG $A$-module $M$,
associated to a {\em covering sequence} 
$\bsym{a} = (a_1, \ldots, a_n)$ of $\bar{A}$. 
In case there is a decomposition 
$\opn{Spec} \bar{A} = \coprod_{i = 1}^n \opn{Spec} \bar{A}_i$
into open-closed subsets, we show that there are canonically defined DG rings 
$A_1, \ldots, A_n$, and a DG ring 
quasi-iso\-morph\-ism $A \to \prod_{i = 1}^n A_i$, that in $\opn{H}^0$ recovers 
the decomposition
$\bar{A} \cong \prod_{i = 1}^n \bar{A}_i$.

The topic of Section \ref{sec:perfect} is {\em perfect} DG modules. 
A DG $A$-module $P$ is called {\em geometrically perfect} if locally on 
$\opn{Spec} \bar{A}$ it is isomorphic, in the derived category, to a finite 
semi-free DG module. See Definition \ref{lem:34} for the precise formulation. 
This definition appears to be completely new for DG rings.
When $A$ is a ring (so that $A = A^0 = \bar{A})$, this definition coincides 
with the one in \cite[Expos\'e I]{SGA-6}, since a finite semi-free DG module 
over a ring is just a bounded complex of finite free modules. 

A DG $A$-module $P$ is called  {\em algebraically perfect} if it can be 
finitely built from $A$ by shifts, direct summands and cones. In other words, 
if $P$ belongs to the epaisse subcategory of $\cat{D}(A)$ classically generated 
by $A$, in the sense of \cite{BV}. This definition (without the qualification 
``algebraically'') was already used in \cite{ABIM}. 

When $A$ is a ring, it is known that geometrically perfect complexes are the 
same as algebraically perfect complexes; see \cite[Expos\'e I]{SGA-6}. 
Therefore they are just called ``perfect complexes''.  
 
Here is the main result Section \ref{sec:perfect}. It is a combination of 
Theorem \ref{thm:50}, Theorem \ref{thm:74} and Corollary \ref{cor:550}.

\begin{thm} \label{thm:63}
Let $A$ be a commutative DG ring, and let $P$ be a DG $A$-module. 
The following four conditions are equivalent\tup{:}
\begin{enumerate}
\rmitem{i} The DG $A$-module $P$ is geometrically perfect. 

\rmitem{ii} The DG $A$-module $P$ is in $\cat{D}^{-}(A)$, and the DG 
$\bar{A}$-module $\bar{A} \ot^{\mrm{L}}_A P$ is geometrically perfect.

\rmitem{iii} For any $M, N \in \cat{D}(A)$, the canonical morphism 
\[ \opn{RHom}_A(P, M) \ot^{\mrm{L}}_A N \to 
\opn{RHom}_A(P, M \ot^{\mrm{L}}_A N) \]
in $\cat{D}(A)$ is an isomorphism.

\rmitem{iv} The DG $A$-module $P$ is algebraically perfect. 
\end{enumerate}
If $A$ is cohomologically pseudo-noetherian, then the four conditions above are 
equivalent to\tup{:}
\begin{enumerate}
\rmitem{v} The DG $A$-module $P$ is in $\cat{D}^{-}_{\mrm{f}}(A)$,
and it has finite projective dimension relative to $\cat{D}(A)$.
\end{enumerate}
\end{thm}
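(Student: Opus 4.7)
The plan is to prove the equivalence via the cycle $\text{(iv)} \Rightarrow \text{(iii)} \Rightarrow \text{(iv)}$ together with $\text{(iv)} \Rightarrow \text{(i)} \Rightarrow \text{(ii)} \Rightarrow \text{(iv)}$, and then to handle (v) separately under the pseudo-noetherian hypothesis. Four of the implications are essentially formal. For $\text{(iv)} \Rightarrow \text{(iii)}$, the class of $P$ for which the canonical morphism $\opn{RHom}_A(P, M) \ot^{\mrm{L}}_A N \to \opn{RHom}_A(P, M \ot^{\mrm{L}}_A N)$ is an isomorphism for all $M, N \in \cat{D}(A)$ is a thick triangulated subcategory of $\cat{D}(A)$ that contains $A$ (both sides identifying with $M \ot^{\mrm{L}}_A N$), so it contains every algebraically perfect DG module. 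For $\text{(iii)} \Rightarrow \text{(iv)}$, specializing $M = A$ yields that $\opn{RHom}_A(P,-)$ commutes with arbitrary coproducts; hence $P$ is a compact object of $\cat{D}(A)$, and the compact objects are precisely the thick closure of the generator $A$, i.e.\ the algebraically perfect DG modules. The implication $\text{(iv)} \Rightarrow \text{(i)}$ uses again a thick-subcategory argument: the class of geometrically perfect DG modules contains $A$ and is closed under shifts, cones and direct summands (each of these operations being stable under passing to principal opens of $\opn{Spec} \bar{A}$), hence it contains every algebraically perfect DG module. Finally, $\text{(i)} \Rightarrow \text{(ii)}$ is immediate, since a finite semi-free DG $A$-module is bounded above, and the reduction functor $\bar{A} \ot^{\mrm{L}}_A (-)$ sends finite semi-free DG $A$-modules to finite semi-free DG $\bar{A}$-modules.

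The heart of the argument, and where I expect the main obstacle, is $\text{(ii)} \Rightarrow \text{(iv)}$. The strategy is to cover $\opn{Spec} \bar{A}$ by finitely many principal opens $D(\bar{a}_1), \ldots, D(\bar{a}_n)$ on each of which $\bar{A} \ot^{\mrm{L}}_A P$ is quasi-isomorphic to a bounded complex of finite free modules, and then to use the lifting theorem of Section \ref{sec:lifting} (projective $\bar{A}$-modules lift to DG $A$-modules) to lift each local finite semi-free presentation, one step of the semi-free filtration at a time, from $\bar{A}[\bar{a}_i^{-1}]$ to $A[\bar{a}_i^{-1}]$. This yields algebraic perfectness of $P$ after localization at each $\bar{a}_i$. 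The delicate step is then to descend this local algebraic perfectness to $A$ itself: here one inducts on $n$, using the \v{C}ech resolution $\opn{C}(-; \bsym{a})$ of Section \ref{sec:localiz} and a Mayer-Vietoris triangle relating $P$, its localizations at the $\bar{a}_i$, and their intersections, and invokes closure of algebraic perfectness under cones and direct summands to glue.

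For the equivalence with (v) under the pseudo-noetherian hypothesis, the implication $\text{(iv)} \Rightarrow \text{(v)}$ is immediate: $A$ itself lies in $\cat{D}^{-}_{\mrm{f}}(A)$ and has (zero) projective dimension relative to $\cat{D}(A)$, and the class of DG modules with these two properties is a thick subcategory, hence contains every algebraically perfect one. Conversely, given $P \in \cat{D}^{-}_{\mrm{f}}(A)$ of finite projective dimension, I would build a finite semi-free resolution by descending induction on cohomology: at each stage, lift a finite generating set of the top cohomology of the running mapping cone (finite over $\bar{A}$ by pseudo-noetherianness) to cocycles in the DG module and attach a shifted finite free summand; the finiteness of the projective dimension forces the process to terminate after finitely many steps, producing a finite semi-free, hence algebraically perfect, representative of $P$.
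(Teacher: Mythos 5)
Your formal implications are fine, but the step you yourself identify as the heart, (ii) $\Rightarrow$ (iv), does not work as described. The localizations $A_{s_i} \ot_A P$ (and the \v{C}ech terms built from them), viewed as DG $A$-modules, are in general \emph{not} algebraically perfect over $A$: already $A_{s}$ itself usually lies outside the epaisse subcategory generated by $A$ (e.g.\ $\Z[1/2]$ is not a perfect complex of $\Z$-modules). So a Mayer--Vietoris triangle relating $P$ to its localizations, combined with ``closure of algebraic perfectness under cones and direct summands,'' proves nothing: two of the three vertices are not perfect over $A$. In fact no gluing is needed for (ii) $\Rightarrow$ (i), because geometric perfectness is local by definition: once you lift the local finite free complexes from $\bar{A}_{s_i}$ to $A_{s_i}$ (this is Proposition \ref{prop:305}, proved by induction on amplitude using conservativity of reduction), you are done with (i). The genuine globalization happens when passing from (i) to (iv), and the mechanism the paper uses is compactness, not perfectness of \v{C}ech pieces: compactness is a local property (Lemma \ref{lem:420}, proved via the finite filtration of $\opn{C}(-;\bsym{s})$, the fact that this functor commutes with coproducts, and adjunction along $A \to A_{s_i}$), geometrically perfect $\Leftrightarrow$ compact (Theorem \ref{thm:74}), and compact $\Leftrightarrow$ algebraically perfect by \cite{BV} (Corollary \ref{cor:550}).

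Two further steps are asserted too quickly. In (iv) $\Rightarrow$ (i) you claim the class of geometrically perfect DG modules is closed under direct summands as if this were formal; it is not --- this is exactly the classical sticking point (a summand of a finite free complex is only finite projective, hence free only after further localization), and over a DG ring the clean way to get it is again through the compactness characterization, which is manifestly summand-closed. And in (v) $\Rightarrow$ (iv) the assertion that finite projective dimension ``forces the process to terminate'' is unsupported at the DG level: stupid truncations of semi-free DG modules are not DG submodules over a nonpositive DG ring, and the ring-theoretic syzygy argument does not transfer directly. The paper's route is to reduce modulo $A \to \bar{A}$: adjunction shows $\bar{A} \ot^{\mrm{L}}_A P$ has finite projective dimension over the noetherian ring $\bar{A}$, the classical syzygy argument there yields a bounded complex of finite projectives (locally free), and then Theorem \ref{thm:50} (ii) $\Rightarrow$ (i), i.e.\ the lifting of Proposition \ref{prop:305}, brings the conclusion back to $A$. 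You should restructure your argument along these lines rather than trying to glue perfectness or terminate resolutions directly over $A$.
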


When $A$ is a commutative ring, conditions (i) and (ii) are essentially the 
same; and (as already mentioned above) the equivalence of conditions 
(i) and (iv) was proved in \cite{SGA-6}. 
But for DG rings this is a new result. In light of Theorem \ref{thm:63} we can 
unambiguously talk about ``perfect DG modules''.  

Section \ref{sec:tilting} is about {\em tilting} DG modules. 
A DG $A$-module $P$ is said to be tilting if there is some DG module
$Q$ such that $P \ot^{\mrm{L}}_A Q \cong A$ in $\cat{D}(A)$. The DG module
$Q$ is called a quasi-inverse of $P$. 
The next theorem is repeated as Theorem \ref{thm:76}.

\begin{thm} \label{thm:420}
Let $A$ be a commutative DG ring, and let $P$ be a DG $A$-module. The following 
four conditions are equivalent\tup{:}
\begin{enumerate}
\rmitem{i}  The DG $A$-module $P$ is tilting .

\rmitem{ii} The functor $P \ot^{\mrm{L}}_A -$ is an equivalence of 
$\cat{D}(A)$.

\rmitem{ii} The functor $\opn{RHom}_A(P, -)$ is an equivalence 
of $\cat{D}(A)$.

\rmitem{iv}  The DG $A$-module $P$ is perfect, and the adjunction 
morphism $A \to \opn{RHom}_A(P, P)$ in $\cat{D}(A)$ is an isomorphism.
\end{enumerate} 
\end{thm}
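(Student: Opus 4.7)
My plan is to exploit the Hom-tensor adjunction $P \ot^{\mrm{L}}_A - \dashv \opn{RHom}_A(P, -)$ between endofunctors of $\cat{D}(A)$, together with the characterization of perfect DG modules furnished by Theorem \ref{thm:63}. The commutativity of $A$ is convenient here: both functors land in $\cat{D}(A)$, and the derived tensor product is symmetric, so a one-sided inverse in the sense of (i) is automatically two-sided.

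First I would dispose of the easier equivalences. Since a functor between triangulated categories is an equivalence if and only if its right adjoint is (and they are then mutual quasi-inverses), (ii) $\Leftrightarrow$ (iii) is immediate from the adjunction. For (i) $\Rightarrow$ (ii): if $P \ot^{\mrm{L}}_A Q \cong A$, then by associativity and symmetry of $\ot^{\mrm{L}}_A$ the endofunctor $Q \ot^{\mrm{L}}_A -$ is quasi-inverse to $P \ot^{\mrm{L}}_A -$. Conversely, if $F := P \ot^{\mrm{L}}_A -$ is an equivalence with quasi-inverse $G$, then $Q := G(A)$ satisfies $P \ot^{\mrm{L}}_A Q = F(G(A)) \cong A$.

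Now I would deduce (iv) from (i)--(iii). The unit $A \to \opn{RHom}_A(P, P \ot^{\mrm{L}}_A A) \cong \opn{RHom}_A(P, P)$ of an adjoint equivalence is automatically an isomorphism, giving the second half of (iv). For perfectness, the key point is that the quasi-inverse $G = \opn{RHom}_A(P, -)$ of $F$ is also naturally isomorphic to $Q \ot^{\mrm{L}}_A -$ where $Q := \opn{RHom}_A(P, A) \cong G(A)$: both are quasi-inverses of $F$ (the latter by the argument of the previous paragraph), and such quasi-inverses are unique up to natural isomorphism. Consequently, for all $M, N \in \cat{D}(A)$,
\[
\opn{RHom}_A(P, M) \ot^{\mrm{L}}_A N \;\cong\; Q \ot^{\mrm{L}}_A M \ot^{\mrm{L}}_A N \;\cong\; \opn{RHom}_A(P, M \ot^{\mrm{L}}_A N),
\]
which is condition (iii) of Theorem \ref{thm:63}, so $P$ is perfect.

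Finally, (iv) $\Rightarrow$ (i) is a clean application of Theorem \ref{thm:63}: setting $Q := \opn{RHom}_A(P, A)$, condition (iii) of that theorem applied with $M = A$ and $N = P$ yields
\[
Q \ot^{\mrm{L}}_A P \;\iso\; \opn{RHom}_A(P, A \ot^{\mrm{L}}_A P) \;\cong\; \opn{RHom}_A(P, P) \;\cong\; A,
\]
where the last isomorphism is hypothesis (iv). The subtlest step is the identification $G \cong Q \ot^{\mrm{L}}_A -$ in the previous paragraph, but it reduces to the uniqueness of adjoints, so no essential obstacle arises.
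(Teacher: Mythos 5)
Your handling of (i) $\Leftrightarrow$ (ii) $\Leftrightarrow$ (iii), of the adjunction-morphism half of (iv), and of (iv) $\Rightarrow$ (i) is correct and essentially the same as the paper's proof of Theorem \ref{thm:76}. The gap is in the perfectness half of (ii) $\Rightarrow$ (iv). Condition (iii) of Theorem \ref{thm:63} (i.e.\ Theorem \ref{thm:74}(iii)) asserts that the \emph{canonical} morphism
\[ \psi_{P, M, N} : \opn{RHom}_A(P, M) \ot^{\mrm{L}}_A N \to
\opn{RHom}_A(P, M \ot^{\mrm{L}}_A N) \]
is an isomorphism. What you actually prove is only that its source and target are abstractly isomorphic: uniqueness of quasi-inverses gives you \emph{some} natural isomorphism $\th : Q \ot^{\mrm{L}}_A - \iso \opn{RHom}_A(P, -)$, and your chain of isomorphisms is assembled from $\th$, not from $\psi$. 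Nothing in the uniqueness-of-adjoints argument identifies that composite with $\psi_{P, M, N}$, and the existence of an isomorphism between two objects does not make a given canonical map between them invertible. So the citation of Theorem \ref{thm:63}(iii) at this point is not justified as written; note also that you cannot fall back on condition (ii) of Theorem \ref{thm:63} instead, since you have not shown $P \in \cat{D}^{-}(A)$ either.

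The step can be repaired, but it needs a genuine argument. The paper's route is compactness: $A$ is a compact object of $\cat{D}(A)$, and $F := P \ot^{\mrm{L}}_A -$ is an equivalence, hence preserves compactness, so $P = F(A)$ is compact; Theorem \ref{thm:74} then says compact equals perfect (and compactness also gives $P \in \cat{D}^-(A)$ via Lemma \ref{lem:73}). If you prefer to stay within your framework, you can apply the conservative functor $F$ to $\psi_{P, M, N}$ and check, at the level of a K-projective resolution of $P$ (or via the triangle identities), that $F(\psi_{P, M, N})$ is identified, through the associativity isomorphisms, with the comparison of $\ep_M \ot^{\mrm{L}}_A \opn{id}_N$ against $\ep_{M \ot^{\mrm{L}}_A N}$, where $\ep$ is the counit of $F \dashv \opn{RHom}_A(P, -)$; since the counit is invertible when $F$ is an equivalence, $F(\psi_{P, M, N})$, and hence $\psi_{P, M, N}$, is an isomorphism. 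Either way, an explicit compatibility check (or the compactness detour) is required; ``uniqueness of adjoints'' alone does not control the canonical morphism.
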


A combination of Theorems \ref{thm:420} and \ref{thm:63} implies that the 
DG $A$-module $Q := \lb \opn{RHom}_A(P, A)$
is a quasi-inverse of the tilting DG module $P$.

As in \cite{Ye2}, we define the {\em commutative derived Picard group} 
$\opn{DPic}(A)$ to be the group whose elements are the 
isomorphism classes of tilting DG $A$-modules, and the multiplication is 
induced by $- \ot^{\mrm{L}}_A -$.  

If $A \to B$ is a homomorphism of DG rings,  
then the operation $P \mapsto B \ot^{\mrm{L}}_A P$ induces a group homomorphism 
$\opn{DPic}(A) \to \opn{DPic}(B)$.
The next result is Theorem \ref{thm:40} in the body of the paper. 

\begin{thm} \label{thm:60}
Let $A$ be a commutative DG ring. The canonical group homomorphism 
\[ \opn{DPic}(A) \to \opn{DPic}(\bar{A}) \]
is bijective.
\end{thm}

In an earlier version of the paper we had a finiteness condition: we required 
$\opn{Spec} \bar{A}$ to have finitely many connected components. 
But, as noticed by Negron \cite{Ng}, this condition is superfluous.

It is known that the commutative derived Picard group of the ring $\bar{A}$ has 
this structure: 
\[ \opn{DPic}(\bar{A}) \cong 
\opn{Pic}(\bar{A}) \times \opn{F}_{\mrm{lc}}(\opn{Spec} \bar{A}, \Z)  \ . \]
Here $\opn{F}_{\mrm{lc}}(\opn{Spec} \bar{A}, \Z)$
is the group of locally constant functions $\opn{Spec} \bar{A} \to \Z$, and 
$ \opn{Pic}(\bar{A})$ is the usual (commutative) Picard group. See
Theorem \ref{thm:315}, due to Negron, that refines earlier results in 
\cite{Ye2}, \cite{RZ} and \cite {Ye4}.

In Section \ref{sec:dualizing} we talk about {\em dualizing} DG modules. 
Here $A$ is a cohomologically pseudo-noetherian commutative DG ring.
A DG $A$-module $R \in \cat{D}^{+}_{\mrm{f}}(A)$ 
is called dualizing if it has finite injective dimension relative to 
$\cat{D}(A)$, and the adjunction morphism 
$A \to \opn{RHom}_A(R, R)$ is an isomorphism. 
Note that when $A$ is a ring, this is precisely the original definition found 
in \cite{RD}; but for a DG ring there are several possible notions of 
injective dimension, and the correct one has to be used. See  
Definition \ref{dfn:11}(2) and Remark \ref{rem:50}.
Note also that $R$ need not have bounded cohomology -- see 
Corollary \ref{cor:380}  and Example \ref{exa:400}.
For comparisons to dualizing DG modules, as defined previously in \cite{Hi1}, 
\cite{FIJ} and \cite{Lu2}, see Example \ref{exa:113}, Proposition \ref{prop:80}
and Remark \ref{rem:316} respectively.

A DG ring $A$ is called  {\em tractable} if it is cohomologically 
pseudo-noetherian, and there is a homomorphism $\K \to A$ from a finite 
dimensional regular noetherian commutative ring $\K$, such that the induced 
homomorphism $\K \to \bar{A}$ is essentially finite type. Such a homomorphism
$\K \to A$ is called a {\em traction} for $A$. 

The next result is a combination of Theorem \ref{thm:51} and Corollary 
\ref{cor:50}. When $A$ is a ring, this was proved by Grothendieck 
\cite[Sections V.3 and V.10]{RD}.

\begin{thm} \label{thm:90}
Let $A$ be a tractable  commutative DG ring. Then\tup{:}
\begin{enumerate}
\item $A$ has a dualizing DG module. 
\item The operation $(P, R) \mapsto P \ot^{\mrm{L}}_A R$,
for a tilting DG module $P$ and a dualizing DG module $R$, induces a simply 
transitive action of the group $\opn{DPic}(A)$ on the set of isomorphism 
classes of dualizing DG $A$-modules. 
\end{enumerate}
\end{thm}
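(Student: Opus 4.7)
For part (1), I would adapt Grothendieck's construction from \cite{RD} to the DG setting via the traction $\K \to A$. Since $\K$ is regular noetherian of finite Krull dimension, it admits a dualizing complex $R_\K$ (one may take $R_\K := \K$). Set
\[ R := \opn{RHom}_\K(A, R_\K) \in \cat{D}(A), \]
with the $A$-action inherited from the first argument. Three conditions must be checked. First, $R \in \cat{D}^{+}_{\mrm{f}}(A)$: I would run the hyperext spectral sequence
\[ E_2^{p,q} = \opn{Ext}_\K^p\bigl(\opn{H}^{-q}(A), R_\K\bigr) \Rightarrow \opn{H}^{p+q}(R), \]
where $p$ lies in $[0,d]$ with $d$ the finite injective dimension of $R_\K$, $q \geq 0$ since $A$ is concentrated in nonpositive degrees, and each $E_2$-term is finite over $\bar{A}$ by classical Grothendieck duality for the essentially finite type $\K$-ring $\bar{A}$; only finitely many $(p,q)$ contribute to each total degree. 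Second, finite injective dimension of $R$ relative to $\cat{D}(A)$ is immediate from the Hom-tensor adjunction $\opn{RHom}_A(M, R) \cong \opn{RHom}_\K(M, R_\K)$ and the finite injective dimension of $R_\K$ over $\K$.

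The main obstacle is the biduality isomorphism $A \iso \opn{RHom}_A(R, R)$. Under the same adjunction this becomes the $\K$-biduality map $A \to \opn{RHom}_\K\bigl(\opn{RHom}_\K(A, R_\K), R_\K\bigr)$, which is classical only for $A \in \cat{D}^{\mrm{b}}_{\mrm{f}}(\K)$ --- a condition not generally satisfied here, since $A$ may have unbounded cohomology below and cohomology that is not finite over $\K$. My plan is to reduce modulo $\bar{A}$: the reduction $\bar{A} \ot^{\mrm{L}}_A R$ is canonically identified with $\opn{RHom}_\K(\bar{A}, R_\K)$, a dualizing complex $\bar{R}$ over $\bar{A}$, so the reduced morphism becomes the classical biduality $\bar{A} \iso \opn{RHom}_{\bar{A}}(\bar{R}, \bar{R})$, hence an isomorphism. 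To propagate back from $\bar{A}$ to $A$, I would use a DG Nakayama argument based on the finiteness and lifting material of Sections \ref{sec:coh-dim}--\ref{sec:lifting}: a morphism between DG modules in $\cat{D}^{-}_{\mrm{f}}(A)$ with appropriate dimensional control that becomes an isomorphism after $\bar{A} \ot^{\mrm{L}}_A -$ is itself an isomorphism. This lifting step is the technical heart of the argument.

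For part (2) I would proceed in three substeps. Well-definedness: for a tilting $P$ with quasi-inverse $Q$ and a dualizing $R$, the DG module $P \ot^{\mrm{L}}_A R$ inherits finite injective dimension via $\opn{RHom}_A(M, P \ot^{\mrm{L}}_A R) \cong \opn{RHom}_A(Q \ot^{\mrm{L}}_A M, R)$, and the biduality $\opn{RHom}_A(P \ot^{\mrm{L}}_A R, P \ot^{\mrm{L}}_A R) \cong \opn{RHom}_A(P, P) \cong A$ follows from two applications of Hom-tensor adjunction together with $\opn{RHom}_A(R, R) \cong A$. Freeness: if $P \ot^{\mrm{L}}_A R \cong R$ for a tilting $P$, then applying $\opn{RHom}_A(R, -)$ yields $P \cong \opn{RHom}_A(R, R) \cong A$. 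Transitivity: given two dualizing DG modules $R, R'$, set $P := \opn{RHom}_A(R, R')$ and verify Theorem \ref{thm:420}(iv) --- perfectness of $P$ and $A \iso \opn{RHom}_A(P, P)$ --- by reducing modulo $\bar{A}$ to the classical tiltingness of $\opn{RHom}_{\bar{A}}(\bar{R}, \bar{R}')$ \cite{RD}, then lifting back via Theorem \ref{thm:63}(ii) for perfectness and via the same DG Nakayama argument used in part (1) for the Hom-isomorphism. The identification $P \ot^{\mrm{L}}_A R \cong R'$ then follows from the evaluation map and the biduality property of $R$.
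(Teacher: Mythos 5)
Your part (1) breaks at the very first step. A traction $\K \to A$ is only \emph{cohomologically essentially finite type}: $\K \to \bar{A}$ is essentially finite type, not finite. The formula $R := \opn{RHom}_{\K}(A, R_{\K})$ is the correct twisted inverse image only for finite maps; for a non-finite traction it does not produce a dualizing DG module. Already for $\K$ a field and $A = \bar{A} = \K[t]$ one has $\opn{RHom}_{\K}(\K[t], \K) = \opn{Hom}_{\K}(\K[t], \K)$, which is not a finite $\K[t]$-module, so condition (i) of Definition \ref{dfn:253} fails (and biduality fails with it); the dualizing complex here is $\Omega^1_{\K[t]/\K}[1]$, not the plain $\K$-linear dual. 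Correspondingly, your claim that $E_2^{p,q} = \opn{Ext}^p_{\K}(\opn{H}^{-q}(A), R_{\K})$ is finite over $\bar{A}$ ``by classical Grothendieck duality for the essentially finite type $\K$-ring $\bar{A}$'' is false: finiteness of such Ext modules over $\bar{A}$ requires $\opn{H}^{-q}(A)$ to be finite over $\K$, i.e.\ a pseudo-finite traction. The paper avoids this by Lemma \ref{lem:380}: replace $A$, up to quasi-isomorphism, by $A_{\mrm{eft}}$ whose degree-zero part $A^0_{\mrm{eft}}$ is a noetherian ring essentially finite type over $\K$ and such that $A^0_{\mrm{eft}} \to A_{\mrm{eft}}$ is cohomologically pseudo-finite; then build a classical dualizing complex over the \emph{ring} $A^0_{\mrm{eft}}$ via the factorization through a polynomial ring with the $\Omega^n[n]$ twist (\cite{RD}), and transfer it by $\opn{RHom}_{A^0_{\mrm{eft}}}(A_{\mrm{eft}}, -)$ using Proposition \ref{prop:252}, finally restricting back along the quasi-isomorphisms.

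Your ``reduce mod $\bar{A}$ and lift by DG Nakayama'' mechanism is also not sound as stated, in either part. First, $\opn{RHom}_{\K}(\bar{A}, R_{\K})$ is canonically $\opn{RHom}_A(\bar{A}, R)$ (coinduction), not $\bar{A} \ot^{\mrm{L}}_A R$; the reduction functor $\bar{A} \ot^{\mrm{L}}_A -$ does not carry dualizing DG modules to dualizing complexes over $\bar{A}$, so the reduced biduality map is not the classical one. Second, in the transitivity step of part (2) you need an identification of $\bar{A} \ot^{\mrm{L}}_A \opn{RHom}_A(R, R')$ with $\opn{RHom}_{\bar{A}}(\bar{R}, \bar{R}')$; the relevant tensor-evaluation morphism (Lemma \ref{lem:400} / Lemma \ref{lem:45}) is only an isomorphism when the first Hom-argument is perfect or the tensor factor has the special local form of Lemma \ref{lem:45}, neither of which holds here ($R$ is not perfect, and $\bar{A}$ is not a localization of $A$). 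The paper's route sidesteps reduction entirely: since $\opn{RHom}_A(-, R)$, $\opn{RHom}_A(-, R')$ have finite cohomological dimension and $P := \opn{RHom}_A(R,R') \in \cat{D}^-_{\mrm{f}}(A)$, the way-out Theorem \ref{thm:10} shows the natural morphism $P \ot^{\mrm{L}}_A - \to \opn{RHom}_A(\opn{RHom}_A(-,R), R')$ is an isomorphism on $\cat{D}^-_{\mrm{f}}(A)$ because it is one on $A$; evaluating on $P' := \opn{RHom}_A(R', R)$ and on $R$ gives $P \ot^{\mrm{L}}_A P' \cong A$ and $P \ot^{\mrm{L}}_A R \cong R'$ (Theorem \ref{thm:43}(2)). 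The same way-out machinery (Theorems \ref{thm:10}, \ref{thm:30}) is what establishes biduality in Proposition \ref{prop:250} and hence in Proposition \ref{prop:252}, replacing your Nakayama step. Your well-definedness and freeness arguments in part (2) are essentially correct (the freeness step tacitly uses that tilting implies perfect, Theorem \ref{thm:76}, to invoke tensor-evaluation), but the two reduction-based steps above are genuine gaps.
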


In particular, if $\bar{A}$ is a local ring, then by Theorems \ref{thm:60} 
and \ref{thm:315} we have \lb $\opn{DPic}(A) \cong \Z$. Thus any two dualizing 
DG $A$-modules $R, R'$ satisfy $R' \cong R[m]$ for an integer $m$. 

A combination of Theorems \ref{thm:60} and \ref{thm:90} yields (see Corollary 
\ref{cor:80}):

\begin{cor}
If $A$ is a tractable commutative DG ring, then the operation 
$R \mapsto \lb \opn{RHom}_A(\bar{A}, R)$ induces a bijection 
\[ \frac{ \{  \tup{dualizing DG} \, A \tup{-modules} \} }
{ \tup{isomorphism} } \iso 
\frac{ \{ \tup{dualizing DG} \, \bar{A} \tup{-modules} \}  }
{ \tup{isomorphism} } . \]
\end{cor}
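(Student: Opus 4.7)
The plan is to recognize $R \mapsto \opn{RHom}_A(\bar{A}, R)$ as a $\Phi$-equivariant map between two nonempty torsors, where $\Phi : \opn{DPic}(A) \to \opn{DPic}(\bar{A})$ is the bijective group homomorphism of Theorem \ref{thm:60}, and thereby deduce bijectivity automatically.

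First, I would observe that $\bar{A}$ is itself tractable: if $\K \to A$ is a traction for $A$, then the composed homomorphism $\K \to \bar{A}$ is essentially finite type, and $\bar{A}$ is a noetherian ring, so it is tractable in its own right. Hence Theorem \ref{thm:90} applies to both $A$ and $\bar{A}$, and the sets $S_A$ and $S_{\bar{A}}$ of isomorphism classes of dualizing DG modules over $A$ and $\bar{A}$ are nonempty, with simply transitive actions of $\opn{DPic}(A)$ and $\opn{DPic}(\bar{A})$ respectively. Next I would verify that $F : R \mapsto \opn{RHom}_A(\bar{A}, R)$ is well-defined as a map $S_A \to S_{\bar{A}}$. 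This is a base-change statement along the cohomologically surjective DG ring homomorphism $A \to \bar{A}$, and it should be part of the infrastructure of Section \ref{sec:dualizing}: the hypotheses in the definition of ``dualizing'' (bounded below, finite cohomology over $\bar{A}$, finite injective dimension relative to $\cat{D}(\bar{A})$, and $\bar{A} \iso \opn{RHom}_{\bar{A}}(F(R), F(R))$) are checked via Hom-tensor adjunction $\opn{RHom}_{\bar{A}}(\bar{A}, -) \cong \opn{RHom}_A(\bar{A}, -)$ and standard spectral-sequence/dimension arguments.

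The core step is $\Phi$-equivariance of $F$: for $P \in \opn{DPic}(A)$ and $R \in S_A$,
\[
\opn{RHom}_A(\bar{A}, P \ot^{\mrm{L}}_A R) \cong (\bar{A} \ot^{\mrm{L}}_A P) \ot^{\mrm{L}}_{\bar{A}} \opn{RHom}_A(\bar{A}, R) .
\]
Since $(\bar{A} \ot^{\mrm{L}}_A P) \ot^{\mrm{L}}_{\bar{A}} N \cong P \ot^{\mrm{L}}_A N$ for any $N \in \cat{D}(\bar{A})$, this reduces to the projection formula $P \ot^{\mrm{L}}_A \opn{RHom}_A(\bar{A}, R) \iso \opn{RHom}_A(\bar{A}, P \ot^{\mrm{L}}_A R)$. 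Writing $P \cong \opn{RHom}_A(Q, A)$ with $Q$ the (also tilting, hence perfect) quasi-inverse given by the remark after Theorem \ref{thm:420}, two applications of Theorem \ref{thm:63}(iii) to $Q$, combined with the $\ot$-Hom adjunction, exhibit both sides canonically as $\opn{RHom}_A(Q \ot^{\mrm{L}}_A \bar{A}, R)$.

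With well-definedness and equivariance in hand, bijectivity is formal: fix $R_0 \in S_A$; the simply transitive actions yield bijections $\opn{DPic}(A) \xar{\sim} S_A$, $P \mapsto [P \ot^{\mrm{L}}_A R_0]$, and $\opn{DPic}(\bar{A}) \xar{\sim} S_{\bar{A}}$, $\bar{P} \mapsto [\bar{P} \ot^{\mrm{L}}_{\bar{A}} F(R_0)]$, and equivariance intertwines $F$ with $\Phi$, which is bijective by Theorem \ref{thm:60}. The main obstacle is the equivariance computation: although the chain of natural isomorphisms is short, care is needed to verify that the isomorphism produced is $\bar{A}$-linear (not merely $A$-linear), i.e.\ respects the $\bar{A}$-module structure coming from the first argument of $\opn{RHom}_A(\bar{A}, -)$ on both sides.
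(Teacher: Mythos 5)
Your argument is correct and is essentially the paper's own proof: both view the two sets as torsors under $\opn{DPic}(A)$ and $\opn{DPic}(\bar{A})$ via Corollary \ref{cor:50} (well-definedness of $R \mapsto \opn{RHom}_A(\bar{A},R)$ being Proposition \ref{prop:252}(1) applied to the cohomologically pseudo-finite homomorphism $A \to \bar{A}$), invoke the bijectivity of $\opn{DPic}(A) \to \opn{DPic}(\bar{A})$ from Theorem \ref{thm:40}, and reduce everything to the equivariance isomorphism $\opn{RHom}_A(\bar{A}, P \ot^{\mrm{L}}_A R) \cong (\bar{A} \ot^{\mrm{L}}_A P) \ot^{\mrm{L}}_{\bar{A}} \opn{RHom}_A(\bar{A}, R)$. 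The only (cosmetic) divergence is in justifying the projection-formula step $P \ot^{\mrm{L}}_A \opn{RHom}_A(\bar{A}, R) \iso \opn{RHom}_A(\bar{A}, P \ot^{\mrm{L}}_A R)$: the paper applies Lemma \ref{lem:45} directly, with $N = P$ satisfying condition ($*$) because $P$ is perfect, whereas you write $P \cong \opn{RHom}_A(Q, A)$ for the quasi-inverse $Q$ and use condition (iii) of Theorem \ref{thm:74} twice together with adjunction -- both routes rest on the perfectness of tilting DG modules, so the two proofs coincide in substance.
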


Here is a result that is quite surprising. It relies 
on a theorem of J{\o}rgensen \cite{Jo}, who proved it in the local case (i.e.\ 
when $\bar{A}$ is a local ring). 

\begin{thm} 
Let $A$ be a cohomologically bounded tractable commutative DG ring. 
If $\bar{A}$ is a perfect DG $A$-module, then the canonical homomorphism 
$A \to \bar{A}$ is a quasi-isomorphism. 
\end{thm}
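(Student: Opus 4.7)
The plan is to reduce to J{\o}rgensen's local theorem \cite{Jo} by localizing at every prime of $\bar{A}$. The map $A \to \bar{A}$ is a quasi-isomorphism iff $\opn{H}^i(A) = 0$ for all $i < 0$, and because tractability guarantees each $\opn{H}^i(A)$ is a finite $\bar{A}$-module over the noetherian ring $\bar{A}$, this vanishing may be verified prime-by-prime on $\opn{Spec} \bar{A}$.

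So fix $\bar{\p} \in \opn{Spec} \bar{A}$. Using the localization machinery developed in Section \ref{sec:localiz}, form the commutative DG ring $A_{\bar{\p}}$ obtained by inverting lifts of elements of $\bar{A} \smallsetminus \bar{\p}$. Since localization on cohomology is flat, $\opn{H}^0(A_{\bar{\p}}) \cong \bar{A}_{\bar{\p}}$ is a local noetherian ring, $\opn{H}^i(A_{\bar{\p}}) \cong \opn{H}^i(A)_{\bar{\p}}$ stays finite over $\bar{A}_{\bar{\p}}$, and the bound on the amplitude of $\opn{H}(A)$ transfers to $A_{\bar{\p}}$. Thus $A_{\bar{\p}}$ is a cohomologically bounded commutative DG ring whose $\opn{H}^0$ is local noetherian --- exactly the setting of J{\o}rgensen's theorem.

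I next verify that $\bar{A}_{\bar{\p}}$ is a perfect DG $A_{\bar{\p}}$-module. By Theorem \ref{thm:63}, the hypothesis that $\bar{A}$ is perfect over $A$ means $\bar{A}$ lies in the thick subcategory of $\cat{D}(A)$ classically generated by $A$. The base-change functor $A_{\bar{\p}} \ot^{\mrm{L}}_A (-) \colon \cat{D}(A) \to \cat{D}(A_{\bar{\p}})$ is triangulated and sends $A$ to $A_{\bar{\p}}$, hence preserves algebraic perfection; and by flatness of localization $A_{\bar{\p}} \ot^{\mrm{L}}_A \bar{A} \simeq \bar{A}_{\bar{\p}}$ in $\cat{D}(A_{\bar{\p}})$. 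Therefore $\bar{A}_{\bar{\p}}$ is perfect over $A_{\bar{\p}}$, again by Theorem \ref{thm:63}.

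Applying J{\o}rgensen's theorem to $A_{\bar{\p}}$ now gives that $A_{\bar{\p}} \to \bar{A}_{\bar{\p}}$ is a quasi-isomorphism, so $\opn{H}^i(A)_{\bar{\p}} = 0$ for every $i < 0$. Since this holds at every $\bar{\p}$, the local-global principle for finite modules over the noetherian ring $\bar{A}$ gives $\opn{H}^i(A) = 0$ for all $i < 0$, finishing the proof. The delicate point is ensuring that the DG ring localization constructed in Section \ref{sec:localiz} truly has all the expected properties (local $\opn{H}^0$, compatibility with derived tensor products on $\bar{A}$, preservation of cohomological boundedness and of perfection); but each of these has been set up in the earlier sections of the paper, so the reduction to the local case is genuinely formal once these tools are in hand.
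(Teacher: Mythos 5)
Your overall strategy is the same as the paper's: localize at each prime $\bar{\p} \in \opn{Spec} \bar{A}$, note that perfection is preserved by the base change $A \to A_{\bar{\p}}$, invoke J{\o}rgensen's amplitude inequality to get $\opn{H}^i(A_{\bar{\p}}) = 0$ for $i<0$, and glue. But there is a gap at the step where you declare that $A_{\bar{\p}}$ is ``exactly the setting of J{\o}rgensen's theorem''. The hypotheses you verify for $A_{\bar{\p}}$ are purely cohomological ($\opn{H}^0$ local noetherian, degreewise finite, bounded cohomology), whereas the application of \cite{Jo} in the paper requires control of the underlying DG ring itself: the proof first replaces $A$, up to quasi-isomorphism, by the DG ring $A_{\mrm{eft}}$ of Lemma \ref{lem:380}, whose degree-zero component is noetherian (essentially finite type over $\K$), precisely so that after localizing at $\til{S} = \pi^{-1}(\bar{A} \smallsetminus \bar{\p}) \cap A^0$ the ring $A^0_{\bar{\p}}$ is a \emph{noetherian local} ring surjecting onto $\bar{A}_{\bar{\p}}$. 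If you localize the original $A$ directly, $A^0_{\bar{\p}}$ need not be noetherian, and you have not checked that J{\o}rgensen's theorem applies.

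A tell-tale symptom is that your argument never uses the tractability (or cohomologically-essentially-finite-type) hypothesis beyond pseudo-noetherianity: as written, it would prove the statement for every cohomologically bounded, cohomologically pseudo-noetherian DG ring, which is stronger than what the theorem claims and stronger than what the reduction to \cite{Jo} is known to give. The missing ingredient is exactly the quasi-isomorphic replacement $A \rightsquigarrow A_{\mrm{eft}}$ (legitimate because the assertion, and perfection of $\bar{A}$, are invariant under DG ring quasi-isomorphisms), which is where the hypothesis enters. With that step inserted before your localization, the rest of your argument (base change of perfect modules, $\opn{amp} \opn{H}(A_{\bar{\p}}) \leq \opn{amp} \opn{H}(\bar{A}_{\bar{\p}}) = 0$ from \cite{Jo} since $\bar{A}_{\bar{\p}}$ is a nonzero compact object, and the prime-by-prime vanishing of $\opn{H}^i(A)$ for $i<0$) goes through as in the paper.
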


This is repeated (in slightly stronger form) as Theorem \ref{thm:73} in the 
body of the paper. See Remark \ref{rem:70} for an interpretation of this 
theorem.

Section \ref{sec:cm} of the paper is about {\em Cohen-Macaulay} DG modules.
The definition does not involve regular sequences of course; nor does it 
involve 
vanishing of local cohomologies as in \cite{RD} (even though it could probably 
be stated in this language). Instead we use a fact discovered in \cite{YZ3}: 
for 
a noetherian scheme $X$ with dualizing complex $\RR$, a complex 
$\MM \in \cat{D}^{\mrm{b}}_{\mrm{c}}(\cat{Mod} \OO_X)$ 
is CM (in the sense of \cite{RD}, for the dimension function determined by 
$\RR$) iff $\mrm{R} \Hom_{\OO_X}(\MM, \RR)$ 
is (isomorphic to) a coherent sheaf. 
In \cite{YZ3} the CM complexes inside 
$\cat{D}^{\mrm{b}}_{\mrm{c}}(\cat{Mod} \OO_X)$ 
were also called {\em perverse coherent sheaves}. 

With the explanation above, the next definition makes sense. Let $R$ be a 
dualizing DG $A$-module. A DG module $M \in \cat{D}^{\mrm{b}}_{\mrm{f}}(A)$
is called {\em CM with respect to $R$} if 
$\opn{RHom}_A(M, R) \in \cat{D}^{0}_{\mrm{f}}(A)$.
Here $\cat{D}^{0}_{\mrm{f}}(A)$ is the full subcategory of 
$\cat{D}(A)$ consisting of DG modules with finite cohomology concentrated in 
degree $0$; and we know that it is equivalent to the category 
$\cat{Mod}_{\mrm{f}} \bar{A}$ of finite $\bar{A}$-modules. 

The next theorem is repeated, in slightly greater generality, 
as Theorem \ref{thm:251}.

\begin{thm} \label{thm:61}
Let $f : A \to B$ be a homomorphism between tractable commutative DG rings,
such that $\opn{H}^0(f) : \bar{A} \to \bar{B}$
is surjective. Let $R_B$ be a dualizing DG $B$-module, and let  
$M, N \in \cat{D}^{\mrm{b}}_{\mrm{f}}(B)$. 
\begin{enumerate}
\item If $M$ is CM w.r.t.\ $R_B$, and there is an isomorphism
$\opn{rest}_f(M) \cong \opn{rest}_f(N)$ in $\cat{D}(A)$, then 
$N$ is also CM w.r.t.\ $R_B$. 

\item If $M$ and $N$ are both CM w.r.t.\ $R_B$, then the homomorphism 
\[ \opn{rest}_f : \opn{Hom}_{\cat{D}(B)}(M, N) \to 
\opn{Hom}_{\cat{D}(A)} \bigl( \opn{rest}_f(M), \opn{rest}_f(N) \bigr) \]
is bijective. 
\end{enumerate}
\end{thm}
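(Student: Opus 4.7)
My strategy uses two facts: the duality $\opn{RHom}_B(-, R_B)$ is a contravariant equivalence between CM DG $B$-modules w.r.t.\ $R_B$ and $\cat{Mod}_{\mrm{f}} \bar{B}$; and the surjectivity of $\bar{A} \to \bar{B}$ makes the restriction $\cat{Mod}_{\mrm{f}} \bar{B} \to \cat{Mod}_{\mrm{f}} \bar{A}$ fully faithful, since any $\bar{A}$-linear map between $\bar{B}$-modules is automatically $\bar{B}$-linear. Through duality, both parts of the theorem reduce to statements about this concrete full embedding of module categories (together with the standard full embedding $\cat{Mod}_{\mrm{f}} \bar{B} \hookrightarrow \cat{D}(B)$).

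\textbf{Setup and key compatibility.} Since $A$ is tractable, Theorem \ref{thm:90}(1) supplies a dualizing DG $A$-module $R_A$, and then $R_B^0 := \opn{RHom}_A(B, R_A)$ is dualizing over $B$. By Theorem \ref{thm:90}(2) applied to $B$, we can write $R_B \cong P \ot^{\mrm{L}}_B R_B^0$ for some tilting DG $B$-module $P$. The adjunction $\opn{rest}_f \dashv \opn{RHom}_A(B,-)$ yields a natural isomorphism
\[
\opn{rest}_f\bigl( \opn{RHom}_B(X, R_B^0) \bigr) \iso \opn{RHom}_A\bigl( \opn{rest}_f(X), R_A \bigr)
\]
for $X \in \cat{D}^{\mrm{b}}_{\mrm{f}}(B)$. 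If $M$ is CM w.r.t.\ $R_B$ with $K_M := \opn{RHom}_B(M, R_B) \in \cat{Mod}_{\mrm{f}} \bar{B}$, then, since $P^{-1}$ is perfect,
\[
\opn{RHom}_B(M, R_B^0) \cong P^{-1} \ot^{\mrm{L}}_B K_M \cong \bar{P}^{-1} \ot^{\mrm{L}}_{\bar{B}} K_M,
\]
where $\bar{P}^{-1} := \bar{B} \ot^{\mrm{L}}_B P^{-1}$ is a tilting $\bar{B}$-module. By Theorem \ref{thm:315}, $\bar{P}^{-1}$ is, locally on $\opn{Spec} \bar{B}$, a shifted line bundle, so $\opn{RHom}_B(M, R_B^0)$ is, up to a locally-constant shift, a single finite $\bar{B}$-module.

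\textbf{The two parts.} For (1), apply $\opn{RHom}_A(-, R_A)$ to the iso $\opn{rest}_f(M) \cong \opn{rest}_f(N)$ and use the compatibility to obtain $\opn{rest}_f(\opn{RHom}_B(M, R_B^0)) \cong \opn{rest}_f(\opn{RHom}_B(N, R_B^0))$. Since restriction preserves cohomology as graded abelian groups, $\opn{RHom}_B(N, R_B^0)$ is concentrated (locally on $\opn{Spec} \bar{B}$) in the same single degree as $\opn{RHom}_B(M, R_B^0)$, with cohomology finite over $\bar{A}$ and hence over $\bar{B}$ by surjectivity of $\bar{A} \to \bar{B}$. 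Twisting back by $\bar{P}$ at the $\bar{B}$-level gives $\opn{RHom}_B(N, R_B) \in \cat{Mod}_{\mrm{f}} \bar{B}$, i.e., $N$ is CM. For (2), set $K_N := \opn{RHom}_B(N, R_B)$; duality on $B$ combined with the full embedding $\cat{Mod}_{\mrm{f}} \bar{B} \hookrightarrow \cat{D}(B)$ gives $\opn{Hom}_{\cat{D}(B)}(M, N) \cong \opn{Hom}_{\bar{B}}(K_N, K_M)$. On the $A$ side, duality via $R_A$ on $\cat{D}^{\mrm{b}}_{\mrm{f}}(A)$ together with the compatibility converts $\opn{Hom}_{\cat{D}(A)}(\opn{rest}_f(M), \opn{rest}_f(N))$ into a Hom between $\opn{rest}_f(\bar{P}^{-1} \ot^{\mrm{L}}_{\bar{B}} K_{N})$ and $\opn{rest}_f(\bar{P}^{-1} \ot^{\mrm{L}}_{\bar{B}} K_{M})$ inside $\cat{D}(A)$; the locally-constant shifts cancel, the invertible twist by $\bar{P}^{-1}$ is undone, and the result is $\opn{Hom}_{\bar{A}}(K_N, K_M) = \opn{Hom}_{\bar{B}}(K_N, K_M)$, the last equality by surjectivity. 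Naturality of each step confirms that the bijection is the one induced by $\opn{rest}_f$.

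\textbf{Main obstacle.} The central subtlety is that the given $R_B$ need not be of the form $\opn{RHom}_A(B, R_A)$ for any dualizing $R_A$, because $\opn{DPic}(A) \to \opn{DPic}(B)$ is typically not surjective (line bundles on $\opn{Spec} \bar{B}$ may fail to lift to $\opn{Spec} \bar{A}$). The delicate step is to cleanly track the twist by the tilting module $P$; the enabling fact is that the tilting $\bar{B}$-module $\bar{P}^{-1}$ is locally a shifted line bundle, so tensoring with it preserves the class of single finite $\bar{B}$-modules up to a locally-constant shift, allowing the argument to proceed as in the special case $R_B = R_B^0$.
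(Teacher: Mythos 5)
Your overall route is the same as the paper's: normalize the dualizing module to $R''_B=\opn{RHom}_A(B,R_A)$, control the discrepancy $R_B\cong P\ot^{\mrm{L}}_B R''_B$ through the structure of $\opn{DPic}(\bar{B})\cong\opn{Pic}(\bar{B})\times\Z^n$, dualize down to finite $\bar{B}$-modules, and use surjectivity of $\bar{f}$ to pass between $\opn{Hom}_{\bar{B}}$ and $\opn{Hom}_{\bar{A}}$ (this is exactly the content of Theorem \ref{thm:250} and the chain of isomorphisms in the paper's proof of Theorem \ref{thm:251}). However, there is a genuine gap at the point where $\opn{Spec}\bar{B}$ has several connected components with different shifts $k_i$, and it occurs in both parts. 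In part (1) you write that ``since restriction preserves cohomology as graded abelian groups,'' $\opn{RHom}_B(N,R''_B)$ is concentrated, locally on $\opn{Spec}\bar{B}$, in the same single degree as $\opn{RHom}_B(M,R''_B)$. An isomorphism of graded abelian groups (or even of graded $\bar{A}$-modules, degree by degree) does not by itself match the cohomology to the correct components: a priori $\opn{H}^j$ of $N$'s dual could live on a component where the shift is $k_{i'}\neq -j$, with the two sides merely isomorphic as groups. What rules this out is the $\bar{A}$-module structure together with surjectivity of $\bar{f}$: the components of $\opn{Spec}\bar{B}$ have disjoint closed images in $\opn{Spec}\bar{A}$ (equivalently, the idempotents $e_i\in\bar{B}$ act through elements $a_i\in\bar{A}$), so supports over $\bar{B}$ are determined by the $\bar{A}$-structure. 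This is precisely the paper's Proposition \ref{prop:21}, and your argument needs it but never invokes it; surjectivity is only used for the (automatic) finiteness statement.

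The same omission reappears in part (2), hidden in the phrase ``the locally-constant shifts cancel.'' After dualizing, both sides are restrictions of direct sums $\bigoplus_i (\text{module on component } i)[k_i]$ with possibly different $k_i$, and $\opn{Hom}_{\cat{D}(A)}$ between two such sums contains cross-component terms of the form $\opn{Hom}_{\cat{D}(A)}\bigl(F(X_i)[k_i],F(Y_j)[k_j]\bigr)$ with $i\neq j$. These do not vanish for degree reasons (positive Ext groups over $A$ are generally nonzero), so the shifts cannot simply be cancelled; one needs the vanishing $\opn{Hom}_{\cat{D}(A)}(F(X_i),F(Y_j))=0$ for $i\neq j$, which again is Proposition \ref{prop:21} and again uses the surjectivity of $\bar{f}$ in an essential way. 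When $\opn{Spec}\bar{B}$ is connected your argument is complete; in the general case you must add this component-separation lemma, after which both parts go through essentially as in the paper (the paper organizes the same idea by replacing $M,N$ with the componentwise shifts $M'=\bigoplus_i E_i(M)[k_i]$, $N'=\bigoplus_i E_i(N)[k_i]$ and invoking Proposition \ref{prop:21} twice).
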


In the theorem, $\opn{rest}_f : \cat{D}(B) \to \cat{D}(A)$ is the restriction 
functor. As already mentioned, Theorem \ref{thm:61} is needed in \cite{Ye7}.

In our paper \cite{Ye5} we introduce the {\em squaring operation} for 
commutative DG rings. The construction goes like this. The input is a 
homomorphism $A \to B$ of commutative DG rings. To this datum we associate a 
functor $\opn{Sq}_{B / A}$ from $\cat{D}(B)$ to itself. The formula is a bit 
more general than (\ref{eqn:100}): we choose any K-flat DG ring resolution 
$\til{A} \to \til{B}$ of $A \to B$ (see Section \ref{sec:rigid}), and we define 
\begin{equation} \label{eqn:505}
\opn{Sq}_{B / A}(M) 
:= \opn{RHom}_{\til{B} \ot_{\til{A}} \til{B}}(B, M \ot_{\til{A}}^{\mrm{L}} M)
\in \cat{D}(B) . 
\end{equation}
The proof that this definition does not depend on the resolution 
$\til{A} \to \til{B}$ is quite difficult. 

A {\em rigid DG module} over $B$ relative to $A$ is a pair $(M, \rho)$, 
consisting of a DG $B$-module $M$, and a {\em rigidifying isomorphism}
$\rho : M \iso \opn{Sq}_{B / A}(M)$
in $\cat{D}(B)$. There is a notion of rigid morphism between rigid DG modules
(Definition \ref{dfn:501}).  
Here are our results on rigid DG modules. The first is a DG version of
\cite[Theorem 0.2]{YZ1}, and it is repeated as Theorem \ref{thm:501}.

\begin{thm} \label{thm:505}
Let $A \to B$ be a homomorphism of commutative DG rings, and let 
$(M, \rho)$ be a rigid DG module over $B$ relative to $A$. Assume that the 
adjunction morphism $B \to \opn{RHom}_{B}(M, M)$ in $\cat{D}(B)$
is an isomorphism. Then the only rigid automorphism of $(M, \rho)$ is the 
identity. 
\end{thm}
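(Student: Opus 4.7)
My plan is to reduce the uniqueness of rigid automorphisms to the algebraic identity $b^2 = b$ in $\bar{B} = \opn{H}^0(B)$, which under the unit hypothesis forces $b = 1$. This is the same strategy as in the original proof of \cite[Theorem 0.2]{YZ1}. The first ingredient is that, by applying $\opn{H}^0$ to the assumed isomorphism $B \iso \opn{RHom}_B(M, M)$ in $\cat{D}(B)$, one obtains a ring isomorphism $\bar{B} \iso \opn{End}_{\cat{D}(B)}(M)$. Consequently, every endomorphism of $M$ in $\cat{D}(B)$ has the form $b \cdot \opn{id}_M$ for a unique $b \in \bar{B}$, and is invertible precisely when $b \in \bar{B}^{\times}$.

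The second ingredient is the quadratic behavior of the squaring operation on scalar endomorphisms:
\[
\opn{Sq}_{B/A}(b \cdot \opn{id}_M) \;=\; b^2 \cdot \opn{id}_{\opn{Sq}_{B/A}(M)}
\]
for every $b \in \bar{B}$. I would derive this directly from formula (\ref{eqn:505}): the functor $\opn{Sq}_{B/A}$ sends a morphism $\phi : M \to M$ to $\opn{RHom}_{\til{B} \ot_{\til{A}} \til{B}}(B,\, \phi \ot^{\mrm{L}}_{\til{A}} \phi)$, and scaling each of the two tensor factors by $b$ contributes a factor of $b$, for a total factor of $b^2$ after the ($\bar{B}$-linear) $\opn{RHom}$. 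This is the only step that really uses the construction of $\opn{Sq}_{B/A}$ from \cite{Ye5}.

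With these two facts in place the argument is brief. Let $\phi$ be a rigid automorphism of $(M, \rho)$; by the first step, $\phi = b \cdot \opn{id}_M$ for some unit $b \in \bar{B}^{\times}$. Rigidity (Definition \ref{dfn:501}) reads $\opn{Sq}_{B/A}(\phi) \circ \rho = \rho \circ \phi$ in $\cat{D}(B)$, i.e.\
\[
\bigl( b^2 \cdot \opn{id}_{\opn{Sq}_{B/A}(M)} \bigr) \circ \rho \;=\; \rho \circ \bigl( b \cdot \opn{id}_M \bigr) .
\]
Because $\rho$ is a $\bar{B}$-linear isomorphism, this collapses to $b^2 = b$ in $\bar{B}$, and since $b$ is a unit we conclude $b = 1$, so $\phi = \opn{id}_M$. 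The main obstacle, as I see it, is making the quadratic identity of the second step fully rigorous: one must lift the cohomology class $b \in \bar{B}$ to a compatible scalar action on a K-flat resolution of $M$ and trace it through (\ref{eqn:505}) to verify that multiplication by $b$ on each tensor factor genuinely descends, after applying $\opn{RHom}$, to multiplication by $b^2$ on $\opn{Sq}_{B/A}(M)$ in $\cat{D}(B)$. Conceptually this is routine bifunctoriality bookkeeping, but it leans on the detailed setup of the squaring operation.
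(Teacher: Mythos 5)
Your proposal is correct and follows essentially the same route as the paper's proof: identify $\opn{End}_{\cat{D}(B)}(M)\cong\bar{B}$ via the adjunction hypothesis, write $\phi=b\cd\opn{id}_M$, and play the rigidity square against the quadratic behavior of $\opn{Sq}_{B/A}$ to get $b^2=b$, hence $b=1$. The step you flag as the main obstacle is not one you need to prove: the identity $\opn{Sq}_{B/A}(b\cd\phi)=b^2\cd\opn{Sq}_{B/A}(\phi)$ is exactly Theorem \ref{thm:500}(2) (quoted from \cite{Ye5}), so you may simply cite it instead of tracing $b$ through a K-flat resolution.
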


Next is a DG version of the uniqueness in
\cite[Theorem 1.1]{YZ2}(1). It is repeated as Theorem \ref{thm:502}.

\begin{thm} \label{thm:506}
Let $A$ be a tractable  commutative DG ring, with traction $\K \to A$.  
Suppose  $(R, \rho)$ and $(R', \rho')$ are rigid dualizing DG modules over $A$ 
relative to $\K$. Then there is a unique rigid isomorphism 
$(R, \rho) \cong (R', \rho')$.
\end{thm}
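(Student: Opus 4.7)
The plan is to first show that any two rigid dualizing DG modules are isomorphic in $\cat{D}(A)$ after forgetting the rigidifying data, then to modify an arbitrary such isomorphism so that it becomes rigid, and finally to deduce uniqueness from Theorem \ref{thm:505}. For the existence of an underlying isomorphism, Theorem \ref{thm:90}(2) supplies a tilting DG $A$-module $P$, unique up to isomorphism in $\cat{D}(A)$, with $R' \cong P \ot^{\mrm{L}}_A R$. The external input I rely on is the tensor-compatibility of the squaring operation under a tilting twist, namely a canonical isomorphism
\[ \opn{Sq}_{A / \K}(P \ot^{\mrm{L}}_A R) \cong (P \ot^{\mrm{L}}_A P) \ot^{\mrm{L}}_A \opn{Sq}_{A / \K}(R) \]
for tilting $P$, reflecting the ``quadratic in its argument'' nature of $\opn{Sq}_{A / \K}$ and extracted from the formalism of \cite{Ye5}. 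Substituting $\rho$ and $\rho'$ gives $P \ot^{\mrm{L}}_A R \cong (P \ot^{\mrm{L}}_A P) \ot^{\mrm{L}}_A R$. Since $R$ is dualizing, $\opn{RHom}_A(-, R)$ is an anti-equivalence on the relevant subcategory, so tensoring with $R$ is invertible up to duality; this forces $P \cong P \ot^{\mrm{L}}_A P$ in $\opn{DPic}(A)$, and multiplying by the quasi-inverse of $P$ supplied by Theorems \ref{thm:420} and \ref{thm:63} yields $P \cong A$, hence $R' \cong R$ in $\cat{D}(A)$.

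For rigidification, pick any isomorphism $\al : R \iso R'$ and transport $\rho'$ to obtain a second rigidifying isomorphism $\rho'' := \opn{Sq}_{A / \K}(\al)^{-1} \circ \rho' \circ \al$ on $R$. Its discrepancy with $\rho$ is an automorphism of $R$; because $R$ is dualizing, the adjunction $A \iso \opn{RHom}_A(R, R)$ gives $\opn{End}_{\cat{D}(A)}(R) = \bar{A}$, so $\rho^{-1} \circ \rho'' = u \cdot \opn{id}_R$ for a unique $u \in \bar{A}^{\times}$. The squaring operation is quadratic on scalar endomorphisms, i.e.\ $\opn{Sq}_{A / \K}(v \cdot \opn{id}_R) = v^2 \cdot \opn{id}_{\opn{Sq}_{A / \K}(R)}$ for $v \in \bar{A}$, and a short bookkeeping then shows that replacing $\al$ by $u \cdot \al$ multiplies $\rho''$ by $u^{-2} \cdot u = u^{-1}$, turning it into $\rho$; hence $u \cdot \al$ is rigid. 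For uniqueness, given two rigid isomorphisms $\al_1, \al_2 : (R, \rho) \iso (R', \rho')$, the composite $\al_2^{-1} \circ \al_1$ is a rigid automorphism of $(R, \rho)$, and Theorem \ref{thm:505}, whose hypothesis $A \iso \opn{RHom}_A(R, R)$ holds because $R$ is dualizing, forces this composite to be $\opn{id}_R$.

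The main obstacle is establishing the tensor-compatibility formula for $\opn{Sq}_{A / \K}$ under a tilting twist used in the existence step. This is exactly the kind of delicate interaction between nested derived Hom and derived tensor product that underlies the well-definedness of $\opn{Sq}_{A / \K}$ itself (and was historically the source of the error in \cite{YZ1}); concretely one would choose a K-flat DG ring resolution $\til{A} \to A$ adapted to both $P$ and $R$, represent $P$ by a suitable semi-free tilting DG module, and manipulate (\ref{eqn:505}) using the K-flat and K-injective machinery recalled in Section \ref{sec:resol}. Once that compatibility is in hand, the rest of the argument is conceptually transparent: it trades on the contrast between the linear action of $\opn{DPic}(A)$ on dualizing modules by tensor and the quadratic action of $\opn{Sq}_{A / \K}$ on rigidifying data.
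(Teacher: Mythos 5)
Your overall strategy coincides with the paper's: produce a tilting $P$ with $R' \cong P \ot^{\mrm{L}}_A R$, use the two rigidifying isomorphisms together with the quadratic behaviour of $\opn{Sq}_{A/\K}$ to force $P \ot^{\mrm{L}}_A P \cong P$ and hence $P \cong A$ (Van den Bergh's argument), then adjust an arbitrary isomorphism $R \iso R'$ by a unit of $\bar{A}$, and deduce uniqueness from Theorem \ref{thm:505}. The scalar bookkeeping in your rigidification step and the uniqueness argument are correct and agree with the paper's proof.

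The gap is the ``external input'' you assume, namely the isomorphism $\opn{Sq}_{A/\K}(P \ot^{\mrm{L}}_A R) \cong (P \ot^{\mrm{L}}_A P) \ot^{\mrm{L}}_A \opn{Sq}_{A/\K}(R)$ for tilting $P$. This is not something extracted from the formalism of \cite{Ye5}: that paper gives the well-definedness of the squaring operation, not its behaviour under a tilting twist, and this compatibility is precisely the crux of the present proof rather than a routine resolution manipulation. Writing $\til{A}^{\mrm{en}} := \til{A} \ot_{\K} \til{A}$ for a K-flat resolution $\K \to \til{A}$ of $\K \to A$, the paper proves it in three moves: a tensor identity rewriting $(P \ot^{\mrm{L}}_A R) \ot^{\mrm{L}}_{\K} (P \ot^{\mrm{L}}_A R)$ as $(R \ot^{\mrm{L}}_{\K} R) \ot^{\mrm{L}}_{\til{A}^{\mrm{en}}} (P \ot^{\mrm{L}}_{\K} P)$; the fact (Lemma \ref{lem:526}) that $\til{A}^{\mrm{en}}$ is cohomologically pseudo-noetherian and that $P \ot^{\mrm{L}}_{\K} P$ is a tilting, hence perfect, DG $\til{A}^{\mrm{en}}$-module; and the tensor-evaluation isomorphism of Lemma \ref{lem:45}, which pulls $P \ot^{\mrm{L}}_{\K} P$ out of $\opn{RHom}_{\til{A}^{\mrm{en}}}(A, -)$ using $A \in \cat{D}^{-}_{\mrm{f}}(\til{A}^{\mrm{en}})$ and $R \ot^{\mrm{L}}_{\K} R \in \cat{D}^{+}(\til{A}^{\mrm{en}})$. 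It is exactly here, through the pseudo-noetherian property of $\til{A}^{\mrm{en}}$, that the cohomologically essentially finite type hypothesis in the traction is actually used; your sketch (``choose an adapted K-flat resolution and manipulate'') does not identify these ingredients, and without them the substitution of $\rho$ and $\rho'$ does not yield the displayed isomorphism. So this step must be proved, not assumed, for your argument to be complete; once it is supplied, the rest of your proposal is the paper's proof.
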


Thus we can call $(R, \rho)$ {\em the} rigid dualizing DG module of $A$ 
relative to $\K$. 

\begin{conj} \label{conj:505}
In the situation of Theorem \tup{\ref{thm:506}}, the rigid dualizing 
DG module over $A$ relative to $\K$ exists.
\end{conj}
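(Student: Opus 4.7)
The plan is to reduce the existence question to the classical ring case and then lift. By the hypothesis of Theorem \ref{thm:506}, the traction $\K \to \bar A$ is essentially finite type with $\K$ regular noetherian of finite Krull dimension, so a rigid dualizing complex $(\bar R, \bar \rho)$ of $\bar A$ relative to $\K$ exists by the classical construction of Yekutieli--Zhang. By the Corollary following Theorem \ref{thm:90}, there is a dualizing DG $A$-module $R$, unique up to isomorphism in $\cat D(A)$, whose reduction $\opn{RHom}_A(\bar A, R)$ is isomorphic to $\bar R$. The remaining task is then to produce a rigidifying isomorphism $\rho : R \iso \opn{Sq}_{A/\K}(R)$.

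The key technical input I would establish is a \emph{reduction compatibility} for the squaring operation, asserting a natural isomorphism
\[ \opn{RHom}_A \bigl( \bar A,\, \opn{Sq}_{A/\K}(R) \bigr) \cong \opn{Sq}_{\bar A/\K} \bigl( \opn{RHom}_A(\bar A, R) \bigr) \]
in $\cat D(\bar A)$. Granting this, together with the separate claim that $\opn{Sq}_{A/\K}(R)$ is again a dualizing DG $A$-module, the right-hand side is $\opn{Sq}_{\bar A/\K}(\bar R)$, which is isomorphic to $\bar R$ via $\bar \rho$. Hence $R$ and $\opn{Sq}_{A/\K}(R)$ have isomorphic reductions, so by the bijection in the Corollary after Theorem \ref{thm:90} they are isomorphic in $\cat D(A)$; any such isomorphism gives a valid $\rho$, and uniqueness of the rigid structure is then the content of Theorem \ref{thm:506}.

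The main obstacle is the reduction compatibility itself. Unpacking the definition (\ref{eqn:505}), one has to compare $\opn{RHom}_{\til A \ot_\K \til A}(A, R \ot^{\mrm L}_\K R)$, for a K-flat DG ring resolution $\til A \to A$ over $\K$, with its analogue for $\bar A$. The natural route is to lift $A \to \bar A$ to a K-flat DG ring map $\til A \to \til{\bar A}$ over $\K$, rewrite the squaring formula along this map using the commutative square of DG rings $\til A \ot_\K \til A \to \til{\bar A} \ot_\K \til{\bar A}$, and apply tensor-hom adjunction together with a base-change isomorphism for $\opn{RHom}_A(\bar A, -)$. The manipulations are of exactly the type developed for the squaring operation in \cite{Ye5}, so the tools are in place; but organising them in a resolution-independent way, and verifying the compatibility on the level of the rigidifying isomorphism $\bar \rho$ rather than merely up to unspecified isomorphism, is the delicate point. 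Showing that $\opn{Sq}_{A/\K}(R)$ is dualizing is a related but separate issue, best handled by direct verification of the three defining properties (membership in $\cat D^{+}_{\mrm f}(A)$, finite injective dimension relative to $\cat D(A)$, and the homothety $A \to \opn{RHom}_A(\opn{Sq}_{A/\K}(R), \opn{Sq}_{A/\K}(R))$ being an isomorphism), each of which should descend from the corresponding property of $\bar R$ via the reduction compatibility.
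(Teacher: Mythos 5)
You should first note that the statement you are proving is Conjecture \ref{conj:505}: the paper offers no proof of it, only the remark that Conjecture \ref{conj:501} would imply it, so your argument has to stand entirely on its own — and it does not, because its two key inputs are precisely the open content. The ``reduction compatibility'' $\opn{RHom}_A \bigl( \bar{A}, \opn{Sq}_{A/\K}(R) \bigr) \cong \opn{Sq}_{\bar{A}/\K} \bigl( \opn{RHom}_A(\bar{A}, R) \bigr)$ is not a formal consequence of adjunction and base change: $\bar{A}$ is not K-flat over $A$, the squaring operation is only a quadratic (non-triangulated) functor by Theorem \ref{thm:500}(2), so the reduction techniques of Sections \ref{sec:coh-dim}--\ref{sec:lifting} do not apply to it, and what you need is exactly the trace compatibility of the squaring operation along a cohomologically pseudo-finite homomorphism, i.e.\ an instance of Conjecture \ref{conj:501}(1). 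Even in the ring case this was a substantial theorem of \cite{YZ1} (resting on H\"om-evaluation and K\"unneth isomorphisms that use the regularity of $\K$ and finiteness of $R$), and its DG version is left open in the paper; asserting that ``the tools are in place'' in \cite{Ye5} does not close this.

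The second and more serious gap is your use of Corollary \ref{cor:80}. That corollary is a bijection between isomorphism classes of \emph{dualizing} DG modules over $A$ and over $\bar{A}$; to conclude $R \cong \opn{Sq}_{A/\K}(R)$ from an isomorphism of the reductions you must already know that $\opn{Sq}_{A/\K}(R)$ is a dualizing DG $A$-module. Your plan to ``descend'' this from $\bar{R}$ cannot work: conditions (ii) and (iii) of Definition \ref{dfn:253} (finite injective dimension relative to $\cat{D}(A)$, and the homothety morphism being an isomorphism) are properties over $A$ that are not detected by applying $\opn{RHom}_A(\bar{A},-)$. Establishing them for $\opn{Sq}_{A/\K}(R) = \opn{RHom}_{\til{A} \ot_{\K} \til{A}}(A, R \ot^{\mrm{L}}_{\K} R)$ amounts to showing that $R \ot^{\mrm{L}}_{\K} R$ is dualizing over $\til{A} \ot_{\K} \til{A}$ (this is where regularity of $\K$ must enter) and then invoking Proposition \ref{prop:252}(1) along the cohomologically pseudo-finite map $\til{A} \ot_{\K} \til{A} \to A$ — and that is the heart of the existence problem, nowhere supplied in your argument. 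Note also a directional issue: even granting the trace compatibility, it transports rigid structures from $A$ down to $\bar{A}$, not upward; the route the paper actually envisions is via Lemma \ref{lem:380}, the known ring case for $A^0_{\mrm{eft}}$, and Conjecture \ref{conj:501}(1) applied to $A^0_{\mrm{eft}} \to A_{\mrm{eft}}$, which is exactly the step that remains unproved.
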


Indeed, we give more detailed conjectures in Section \ref{sec:rigid}, that 
would imply Conjecture \ref{conj:505}. When $A$ is a ring, these assertions 
were already proved in \cite{YZ1} (with some corrections in \cite{Ye7}).
In case $A$ has bounded cohomology, Conjecture \ref{conj:505} was very recently 
proved by Shaul \cite{Sh2}. 

We end the Introduction with another conjecture on rigid DG modules. 

\begin{conj} \label{conj:506}
In the situation of Theorem \tup{\ref{thm:506}}, let $(M, \rho)$
be a rigid DG module over $A$ relative to $\K$. Assume that 
$M \in \cat{D}^+_{\mrm{f}}(A)$, and that $M$ is nonzero on each connected 
component of $\opn{Spec} \bar{A}$. Then $M$ is a dualizing DG $A$-module. 
\end{conj}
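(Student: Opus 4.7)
Since Conjecture~\ref{conj:505} is expected to produce a rigid dualizing DG module $(R, \rho^R)$ over $A$ relative to $\K$, I will take such an $(R, \rho^R)$ as given and compare $(M, \rho)$ to it. The overall strategy is to construct a tilting DG module $T \in \cat{D}(A)$ together with an isomorphism $M \cong T \ot^{\mrm{L}}_A R$; once this is achieved, Theorem~\ref{thm:90}(2) forces $M$ to be dualizing, since the $\opn{DPic}(A)$-action sends dualizing DG modules to dualizing DG modules.

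First I would set
\[ T := \opn{RHom}_A(R, M) \in \cat{D}(A), \]
with evaluation morphism $\mrm{ev} \colon T \ot^{\mrm{L}}_A R \to M$. Using that $R$ has finite injective dimension relative to $\cat{D}(A)$ and that $M \in \cat{D}^+_{\mrm{f}}(A)$, one argues that $T \in \cat{D}^{\mrm{b}}_{\mrm{f}}(A)$ (first showing $M \in \cat{D}^{\mrm{b}}_{\mrm{f}}(A)$, which should follow from the rigidity isomorphism $\rho$ together with the finiteness built into $\opn{Sq}_{A/\K}$), and that $\mrm{ev}$ is an isomorphism by the double-duality for the dualizing DG module $R$. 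The problem then reduces to proving that $T$ is tilting.

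To do this I would transport the rigidities. The squaring operation $\opn{Sq}_{A/\K}$ of \cite{Ye5} is expected to enjoy a K\"unneth-type multiplicativity: a canonical morphism
\[ \opn{Sq}_{A/\K}(N_1) \ot^{\mrm{L}}_A \opn{Sq}_{A/\K}(N_2) \to \opn{Sq}_{A/\K}(N_1 \ot^{\mrm{L}}_A N_2) \]
which, under appropriate finiteness, is an isomorphism. Combined with $\rho$ and $\rho^R$, and with the isomorphism $M \cong T \ot^{\mrm{L}}_A R$, this furnishes a rigidifying isomorphism $\rho^T \colon T \iso \opn{Sq}_{A/\K}(T)$. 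The hypothesis that $M$ is nonzero on each connected component of $\Sp \bar A$ transfers to $T$. Localizing on $\Sp \bar A$ --- where, by Theorems~\ref{thm:60} and~\ref{thm:315}, the group $\opn{DPic}(A)$ becomes $\Z$ on each component --- and invoking the uniqueness in Theorem~\ref{thm:505} (applied to the tilting quasi-inverses of $T$), I would conclude that $T$ is, locally on $\Sp \bar A$, a shift of $A$, hence tilting.

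The hard part will be twofold: (i) establishing the K\"unneth compatibility of $\opn{Sq}_{A/\K}$, which in the DG context requires delicate manipulation of K-flat resolutions $\tilde A \to A$ over $\K$ and of the bimodule $\tilde A \ot_{\K} \tilde A$ appearing in (\ref{eqn:505}); and (ii) the passage from ``$T$ is rigid, the adjunction $A \to \opn{RHom}_A(T, T)$ is an isomorphism, and $T$ is nowhere zero'' to ``$T$ is tilting''. The non-vanishing hypothesis in the conjecture is precisely what rules out $T$ being supported on a proper union of connected components, and is therefore essential to upgrade perfectness to tiltingness via Theorem~\ref{thm:420}.
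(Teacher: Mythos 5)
This statement is Conjecture \ref{conj:506}; the paper does not prove it, and the closest it comes is Remark \ref{rem:520}, which only sketches possible routes. So your proposal cannot be measured against a proof in the paper; judged on its own terms, it has several genuine gaps. First, you begin by taking a rigid dualizing DG module $(R, \rho^R)$ ``as given'' via Conjecture \ref{conj:505}; that existence statement is itself open in the paper (it is known only when $A$ is a ring), and the hypotheses of Conjecture \ref{conj:506} do not grant it, so at best you would prove a conditional statement. Second, your claim that $M \in \cat{D}^{\mrm{b}}_{\mrm{f}}(A)$ ``should follow from the rigidity isomorphism'' is unsubstantiated, and in fact cannot be right in general: by Corollary \ref{cor:380}, when $A$ is cohomologically unbounded every dualizing DG module is cohomologically unbounded, so if the conjecture is true then $M$ is typically \emph{not} in $\cat{D}^{\mrm{b}}_{\mrm{f}}(A)$. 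Third, and most seriously, the assertion that the evaluation morphism $\opn{RHom}_A(R, M) \ot^{\mrm{L}}_A R \to M$ is an isomorphism ``by double-duality'' is precisely the crux of the problem, not a formality. The biduality of Proposition \ref{prop:250}(2) gives $M \iso \opn{RHom}_A(\opn{RHom}_A(M,R),R)$, which is a different statement; the isomorphism you need is proved in Theorem \ref{thm:43}(2) only when $M$ is \emph{already known to be dualizing} (finite injective dimension enters through Theorem \ref{thm:10}), and for a general $M \in \cat{D}^{+}_{\mrm{f}}(A)$ it can fail even over a ring (e.g.\ $M$ outside the relevant Auslander/Bass class over a non-Gorenstein Cohen--Macaulay local ring). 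Fourth, the K\"unneth-type morphism $\opn{Sq}_{A/\K}(N_1) \ot^{\mrm{L}}_A \opn{Sq}_{A/\K}(N_2) \to \opn{Sq}_{A/\K}(N_1 \ot^{\mrm{L}}_A N_2)$ is nowhere established; the cup-product discussion in Remark \ref{rem:120} shows how delicate exactly this kind of compatibility is, and there is no reason it is an isomorphism in the generality you invoke it.

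For comparison, the route the paper itself suggests (Remark \ref{rem:520}) is structurally different: it sets $L := \opn{RHom}_A(M, R_A)$ and uses Shaul's rectangle-operation formula to convert rigidity of $M$ into the idempotency $L \ot^{\mrm{L}}_A L \cong L$, after which the nonvanishing hypothesis on each connected component forces $L \cong A$ and hence $M \cong R_A$. That argument avoids both your evaluation-map step and your K\"unneth map for $\opn{Sq}$, but it too is conditional: it requires extending Shaul's theorem from tractable rings to tractable DG rings, and even then is expected to cover only the cohomologically bounded case. If you want to pursue your line, the honest formulation is a conditional result assuming Conjecture \ref{conj:505}, and the real work would be items (ii)--(iii) above, for which you currently have no argument.
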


When $A$ is a ring and 
$M \in \cat{D}^{\mrm{b}}_{\mrm{f}}(A)$, this was proved in \cite{YZ2} 
and \cite{AIL}. More on this conjecture in Remark \ref{rem:520}.

\medskip \noindent
{\bf Acknowledgments}. 
I wish to thank Peter J{\o}rgensen, Bernhard Keller, Vladimir Hinich,
Liran Shaul, James Zhang, Amnon Neeman, Dennis Gaitsgory, 
Rishi Vyas, Matan Prasma, Jacob Lurie, Benjamin Antieau, John Palmieri, 
Pieter Belmans, \lb Michel Vaqui\'e, Joseph Lipman, Srikanth Iyengar and Cris 
Negron for helpful discussions.

\section{DG Modules and their Resolutions} \label{sec:resol}
 
A {\em DG ring} (usually called an associative unital DG algebra over $\Z$) is 
a graded ring $A = \bigoplus_{i \in \Z} A^i$, with differential $\d$ of degree 
$1$, satisfying the graded Leibniz rule 
\[ \d(a \cd b) = \d(a) \cd b + (-1)^i \cd a \cd \d(b) \]
for $a \in A^i$ and $b \in A^j$. 
A homomorphism of DG rings is a degree $0$ ring homomorphism 
that commutes with the differentials. 
In this way we get a category, denoted by $\cat{DGR}$. 
Rings are viewed as DG rings concentrated in degree $0$. 
For a DG ring $A$, the cohomology 
$\opn{H}(A) = \bigoplus_{i \in \Z} \opn{H}^i(A)$ is a graded ring. 

The {\em opposite} of the DG ring $A$ is the DG ring $A^{\mrm{op}}$,
which is the same graded abelian group as $A$, with the same differential, but 
the multiplication $\cd^{\mrm{op}}$ is reversed in the graded sense. Namely, 
for 
elements $a \in A^i$ and $b \in A^j$, their product in $A^{\mrm{op}}$ is 
\[ a \, \cd^{\mrm{op}} \, b := (-1)^{i j} \cd b \cd a . \]

A left DG $A$-module is a graded left $A$-module $M = \bigoplus_{i \in \Z} 
M^i$, with differential $\d$ satisfying the graded Leibniz rule. 
If $M$ is a left DG $A$-module then 
$\opn{H}(M) = \bigoplus_{i \in \Z} \opn{H}^i(M)$
is a graded left $\opn{H}(A)$-module.
Note that a right DG $A$-module is the same as a left DG $A^{\mrm{op}}$-module.

\begin{conv} \label{conv:585}
By default, in this paper all DG modules are left DG modules.
\end{conv} 

From Section \ref{sec:localiz} onwards our DG rings will be commutative, 
and for them the distinction between left and right DG modules becomes 
negligible. 

\begin{dfn} \label{dfn:365}
Let $A$ be a DG ring. 
The category of (left) DG $A$-modules, with $A$-linear homomorphisms of degree 
$0$ that commute with differentials, is denoted by $\cat{DGMod} A$,
or by its abbreviation $\cat{C}(A)$. 
The derived category, gotten from $\cat{DGMod} A$ by inverting 
quasi-isomorphisms, is denoted by 
$\cat{D}(\cat{DGMod} A)$, or by the abbreviation $\cat{D}(A)$.
\end{dfn}

For information on $\cat{D}(A)$ see \cite[Section 10]{BL}, 
\cite[Section 2]{Ke1} or 
\cite[\href{http://stacks.math.columbia.edu/tag/09KV}{Section 09KV}]{SP}.
If $A$ is a ring, then 
$\cat{C}(A) = \cat{C}(\cat{Mod} A)$,
the category of complexes of $A$-modules; and 
$\cat{D}(A) = \cat{D}(\cat{Mod} A)$,
the usual derived category of the abelian category $\cat{Mod} A$. 

\begin{dfn} \label{dfn:105}
Let $A = \bigoplus_{i \in \Z} A^i$ be a DG ring.
\begin{enumerate}
\item We say that $A$ is  {\em a nonpositive DG ring} if 
$A^i = 0$ for all $i > 0$.

\item If $A$ is nonpositive DG ring, then we write 
$\bar{A} := \opn{H}^0(A)$, which is a ring. There is a canonical DG ring 
homomorphism $A \to \bar{A}$.
\end{enumerate}
\end{dfn}

The full subcategory of $\cat{DGR}$ on the nonpositive DG rings is 
denoted by $\cat{DGR}^{\leq 0}$.

\begin{conv} \label{conv:305}
By default, in this paper all DG rings are nonpositive; i.e.\ we work 
inside $\cat{DGR}^{\leq 0}$. 
\end{conv}

One of the important advantages of nonpositive DG rings
is that the differential $\d$ of any DG $A$-module $M$ is $A^0$-linear. This 
implies that the two smart truncations
\begin{equation} \label{eqn:425}
\opn{smt}^{\geq i}(M) := \bigl( \cdots \to 0 \to \opn{Coker}(\d|_{M^{i - 1}})
\to M^{i + 1} \to M^{i + 2} \to  \cdots \bigr)
\end{equation}
and
\begin{equation} \label{eqn:426}
\opn{smt}^{\leq i}(M) := \bigl( \cdots \to  M^{i - 2} \to  M^{i - 1} \to
\opn{Ker}(\d|_{M^{i}}) \to 0 \to  \cdots \bigr)
\end{equation}
remain within $\cat{C}(A)$; and there are functorial homomorphisms 
$M \to \opn{smt}^{\geq i}(M)$ and 
$\opn{smt}^{\leq i}(M) \to M$ in $\cat{C}(A)$, inducing isomorphisms in 
$\opn{H}^{\geq i}$ and $\opn{H}^{\leq i}$ respectively.
Note that these are the truncations $\tau_{\leq n}$ and $\tau_{\geq n}$ from 
\cite[\href{http://stacks.math.columbia.edu/tag/0118}{Section 0118}]{SP},
that are variants of the truncations $\si_{> n}$ and $\si_{\leq n}$ from 
\cite[Section I.7, page 69]{RD}.
Warning: the two stupid truncations might fail to work in this context -- these 
truncated complexes of abelian groups might not be DG $A$-modules. 
 
Recall that for a subset $S \subseteq \Z$, its infimum is
$\opn{inf}(S) \in \Z \cup \{ \pm \infty \}$,
where $\opn{inf}(S) = + \infty$ iff $S = \varnothing$. 
Likewise the supremum is $\opn{sup}(S) \in \Z \cup \{ \pm \infty \}$,
where $\opn{sup}(S) = - \infty$ iff $S = \varnothing$.
For $i, j \in \Z \cup \{ \infty \}$, the expressions 
$i + j$ and $-i - j$ have obvious values in 
$\Z \cup \{ \pm \infty \}$. And for 
$i, j \in \Z \cup \{ \pm \infty \}$, 
the expression $i \leq j$ has an obvious meaning.

Let $M = \bigoplus_{i \in \Z} M^i$ be a graded abelian group. We write  
\begin{equation} \label{eqn:371}
\inf (M) := \opn{inf}\, \{ i \mid M^i \neq 0 \} \quad \tup{and} \quad 
\sup (M) := \opn{sup}\, \{ i \mid  M^i \neq 0 \} .
\end{equation}
The amplitude of $M$ is 
\begin{equation} \label{eqn:372}
\opn{amp} (M) := \sup (M) - \inf (M) \in \N \cup \{ \pm \infty\} .
\end{equation}
(For $M = 0$ this reads $\inf (M) = \infty$, $\sup (M) = -\infty$ 
and $\opn{amp} (M) = -\infty$.)
Thus $M$ is bounded (resp.\ bounded above, resp.\ bounded below) iff 
$\opn{amp} (M) < \infty$ (resp.\ $\opn{sup} (M) < \infty$, resp.\
$\opn{inf} (M) > -\infty$).

Given $i_0 \leq i_1$ in $\Z \cup \{ \pm \infty \}$,
the {\em integer interval} with these endpoints is the set of integers
\begin{equation} \label{eqn:580}
[i_0, i_1] := \{ i \in \Z  \mid i_0 \leq i \leq i_1 \}  .
\end{equation}
The integer interval $[i_0, i_1]$ is said to be bounded (resp.\ bounded above, 
resp.\ \lb bounded below) if $i_0, i_1 \in \Z$ (resp.\  $i_1 \in \Z$, resp.\ 
$i_0 \in \Z$). 
The {\em length} of this interval is $i_1 - i_0 \in \N \cup \{ \infty \}$. 
Of course the interval has finite length iff it is bounded. 
We write $-[i_0, i_1] := [-i_1, -i_0]$.
Given a second integer interval $[j_0, j_1]$, we let 
\[ [i_0, i_1] + [j_0, j_1] := [i_0 + j_0, i_1 + j_1] . \]
For the empty interval $\varnothing$, the sum is 
$[i_0, i_1] + \varnothing := \varnothing$.

\begin{dfn}  \label{dfn:550}
Let $M = \bigoplus_{i \in \Z} M^i$ be a graded abelian group.
\begin{enumerate}
\item We say that $M$ is concentrated in an integer interval $[i_0, i_1]$
if 
\[  \{ i \in \Z \mid M^i \neq 0 \} \subseteq [i_0, i_1] . \]

\item The {\em concentration} of $M$ is the smallest integer interval 
$\opn{con}(M)$ in which $M$ is concentrated.
\end{enumerate}
\end{dfn}

In other words, if 
$i_0 = \opn{inf} (M) \leq i_1 = \opn{sup} (M)$, 
then the concentration of $M$ is the interval
$\opn{con}(M) = [i_0, i_1]$,
and the amplitude $\opn{amp}(M)$ is the length of $\opn{con}(M)$.
Furthermore, $\opn{con}(M) = \varnothing$ iff $M = 0$. 

Given an integer interval $[i_0, i_1]$, we 
denote by $\cat{D}^{[i_0, i_1]}(A)$ the full subcategory of $\cat{D}(A)$ 
consisting of the DG modules $M$ whose cohomologies $\opn{H}(M)$ are 
concentrated in this interval; namely 
$\opn{con}(\opn{H}(M)) \subseteq [i_0, i_1]$.
For $i \in \Z$ we write 
$\cat{D}^{i}(A) := \cat{D}^{[i, i]}(A)$.
The subcategory $\cat{D}^{[i_0, i_1]}(A)$ is additive, but not triangulated. 
Similarly we have the subcategory 
$\cat{C}^{[i_0, i_1]}(A) \subseteq \cat{C}(A)$,
consisting of the DG modules $M$ that are concentrated in the integer interval
$[i_0, i_1]$; namely $\opn{con}(M) \subseteq [i_0, i_1]$.

\begin{dfn} \label{dfn:370}
Let $A$ be a DG ring. 
\begin{enumerate}
\item A DG $A$-module $M$ is said to be {\em cohomologically bounded} 
(resp.\  {\em cohomologically bounded above}, 
resp.\  {\em cohomologically bounded below})
if the  graded module $\opn{H}(M)$ is bounded (resp.\  bounded above, 
resp.\ bounded below).
We denote by $\cat{D}^{\mrm{b}}(A)$, $\cat{D}^{-}(A)$ and $\cat{D}^{+}(A)$
the corresponding full subcategories of $\cat{D}(A)$.

\item The full subcategory of $\cat{D}(A)$ consisting of the DG modules $M$,
whose cohomology modules $\opn{H}^i(M)$ are finite over $\bar{A}$,  is denoted 
by $\cat{D}_{\mrm{f}}(A)$.

\item  For any boundedness condition $\star$ we write 
$\cat{D}^{\star}_{\mrm{f}}(A) := \cat{D}_{\mrm{f}}(A) \cap \cat{D}^{\star}(A)$.
\end{enumerate}
\end{dfn}

Thus we have 
\[ \cat{D}^{\mrm{b}}(A) = \bigcup_{-\infty < i_0 \leq i_1 < \infty }
\cat{D}^{[i_0, i_1]}(A) \ , \]
etc. The categories $\cat{D}^{\mrm{b}}(A)$, $\cat{D}^{-}(A)$ and 
$\cat{D}^{+}(A)$ are triangulated. If $\bar{A}$ is left 
noetherian, then the categories 
$\cat{D}^{\mrm{b}}_{\mrm{f}}(A)$, $\cat{D}^{-}_{\mrm{f}}(A)$ and 
$\cat{D}^{+}_{\mrm{f}}(A)$ are also triangulated.

Given a DG $A$-module $M$, its shift by an integer $i$ is the DG module 
$M[i]$, whose \lb $j$-th graded component is $M[i]^j := M^{i+j}$.
Elements of $M[i]^j$ are denoted by $m[i]$, with $m \in M^{i+j}$.
The differential of $M[i]$ is 
$\d_{M[i]}(m[i]) := (-1)^i \cd \d_M(m)[i]$.
The left action of $A$ on $M[i]$ is also twisted by $\pm 1$, as follows:
$a \cd m[i] := (-1)^{k i} \cd (a \cd m)[i]$ 
for $a \in A^k$. The right action remains untwisted:
$m[i] \cd a := (m \cd a)[i]$.
See \cite[Section 1]{Ye5} for a detailed study of the shift operation, 
including an explanation of the sign that appears in the left action. 

We now recall some resolutions of DG $A$-modules. 
A DG module $N$ is called acyclic if $\opn{H}(N) = 0$. 
A DG $A$-module $M$ is called {\em K-projective} (resp.\ {\em 
K-injective}), if for any acyclic DG $A$-module $N$, the DG 
$\Z$-module $\opn{Hom}_A(M, N)$ (resp.\ $\opn{Hom}_A(N, M)$) is also 
acyclic. The DG $A$-module $M$ is called {\em K-flat} if for any acyclic DG 
$A^{\mrm{op}}$-module $N$, the DG $\Z$-module $N \ot_A M$ is acyclic. 
It is easy to see that K-projective implies K-flat. 
More information about the operations 
$\opn{Hom}_A(-, -)$ and $- \ot_A -$, 
and the various resolutions, see \cite[Section 1]{Ye5}.

For a cardinal number $r$ (possibly infinite) we denote by $M^{\oplus r}$ the 
direct sum of $r$ copies of $M$.
Recall that a DG $A$-module $P$ is a {\em free DG module} if 
$P \cong \bigoplus_{i \in \Z} A[-i]^{\oplus r_i}$, where 
$r_i$ are cardinal numbers.
We say that $P$ is a {\em finite free DG module} if $\sum_i r_i < \infty$
(for some such isomorphism).  
 
\begin{dfn} \label{dfn:310}
Let $P$ be a DG $A$-module. 
A {\em semi-free filtration} of $P$ is an ascending filtration 
$\{ \nu_j(P) \}_{j \in \Z}$ by DG $A$-submodules
$\nu_j(P) \subseteq P$, such that 
$\nu_{-1}(P) = 0$, $P = \bigcup_j \nu_j(P)$, and each 
$\opn{gr}^\nu_j(P) :=  \nu_j(P) / \nu_{j-1}(P)$ is a free DG $A$-module.
\end{dfn}

\begin{dfn} \label{dfn:100}
Let $P$ be a DG $A$-module, with semi-free filtration 
$\{ \nu_j(P) \}_{j \in \Z}$.
\begin{enumerate}
\item The filtration $\{ \nu_j(P) \}_{j \in \Z}$ is said to have length $l$
if 
\[ l = \opn{inf}\, \{ j \in \N \mid  \nu_{j}(P) = P  \}
\in \N \cup \{ \infty \}  . \]

\item The filtration $\{ \nu_j(P) \}_{j \in \Z}$ is called {\em pseudo-finite} 
if each free DG $A$-module $\opn{gr}^\nu_j(P)$ is finite, and 
$\lim_{j \to  \infty} \sup (\opn{gr}^\nu_j(P)) = -\infty$.

\item The filtration $\{ \nu_j(P) \}_{j \in \Z}$ on $P$ is called 
{\em finite} if it is pseudo-finite and has finite length. 
\end{enumerate}
\end{dfn}

\begin{dfn} \label{dfn:311}
A DG $A$-module $P$ is called a {\em semi-free} (resp.\ {\em pseudo-finite 
semi-free}, resp.\ {\em finite semi-free}) {\em DG module} if it admits a 
semi-free (resp.\ pseudo-finite semi-free, resp.\  finite semi-free)  
filtration. The {\em semi-free length} of $P$ is defined to be the minimum of 
the lengths of its semi-free filtrations.  
\end{dfn}

Note that a semi-free DG module need not be bounded above.
If $P$ is a semi-free DG module then it is K-projective; see
\cite[Section 3.1]{Ke1}.

Recall that our DG rings are always nonpositive. 
The next proposition gives another characterization of pseudo-finite semi-free
DG modules. The graded ring gotten from $A$ by forgetting the differential is 
denoted by $A^{\natural}$. Likewise for DG modules.

\begin{prop} \label{prop:100}
Let $P$ be a DG $A$-module. 
\begin{enumerate}
\item $P$ is pseudo-finite semi-free iff there are $i_1 \in \Z$
and $r_i \in \N$, such that 
$P^{\natural} \cong \lb \bigoplus_{i \leq i_1} A^{\natural}[-i]^{\oplus r_i}$
as graded $A^{\natural}$-modules. 

\item $P$ is finite semi-free iff there is an isomorphism of 
graded $A^{\natural}$-modules as in item \tup{(1)} above, and $i_0 \in \Z$,
such that $r_i = 0$ for all $i < i_0$.
\end{enumerate}
\end{prop}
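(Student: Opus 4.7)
The plan is to prove the two iff statements in parallel, since (2) merely adds a lower-degree boundedness constraint to (1). Each direction translates between a semi-free filtration and a graded-module decomposition, using in a crucial way that $A$ is nonpositive; the main delicacy will be in the forward direction, where one must combine pseudo-finiteness with nonpositivity to conclude that each $r_i$ is finite.

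For the forward direction, suppose $\{\nu_j(P)\}_{j \in \N}$ is a pseudo-finite semi-free filtration. Since each $\opn{gr}^\nu_j(P)$ is a free DG module, the short exact sequences
\[ 0 \to \nu_{j-1}(P)^{\natural} \to \nu_j(P)^{\natural} \to \opn{gr}^\nu_j(P)^{\natural} \to 0 \]
split as sequences of graded $A^{\natural}$-modules; splicing these splittings yields
\[ P^{\natural} \cong \boplus_j \opn{gr}^\nu_j(P)^{\natural} \cong \boplus_{j, k} A^{\natural}[-k]^{\oplus s_{j,k}} , \]
with $\sum_k s_{j,k} < \infty$ for each fixed $j$. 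Set $r_i := \sum_j s_{j,i}$. Nonpositivity of $A$ forces $A^{\natural}[-k]$ to be supported in degrees $\leq k$, so $s_{j,i} > 0$ requires $i \leq \sup \opn{gr}^\nu_j(P)$; combined with $\lim_{j \to \infty} \sup \opn{gr}^\nu_j(P) = -\infty$, this means only finitely many $j$'s contribute to each fixed $i$, whence $r_i < \infty$. Taking $i_1 := \max_j \sup \opn{gr}^\nu_j(P)$, which is finite because the sequence of sups diverges to $-\infty$, gives the isomorphism required in (1). For (2), finite length of the filtration means only finitely many $\opn{gr}^\nu_j(P)$'s are nonzero, each involving only finitely many $k$'s, so the set $\{ i : r_i \neq 0 \}$ is finite, providing the lower bound $i_0$.

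For the backward direction, pick a homogeneous $A^{\natural}$-basis $\{ e_{\alpha, i} \}$ of $P^{\natural}$ indexed so that $\deg(e_{\alpha, i}) = i$ and $\alpha$ ranges over an index set of cardinality $r_i$, and define
\[ \nu_j(P) := \sum_{i \geq i_1 - j} \sum_{\alpha} A \cd e_{\alpha, i} , \]
the $A$-submodule spanned by basis elements of degree $\geq i_1 - j$. The key observation is that nonpositivity of $A$ automatically closes $\nu_j(P)$ under $\d$: writing $\d(e_{\alpha, i}) = \sum_{\beta, k} a_{\beta, k} \cd e_{\beta, k}$ with $a_{\beta, k} \in A^{i + 1 - k}$, nonpositivity forces $i + 1 - k \leq 0$, whence $k \geq i + 1 > i \geq i_1 - j$. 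The quotient $\opn{gr}^\nu_j(P)$ therefore has vanishing induced differential on its generators, and is isomorphic to the finite free DG module $A[-(i_1 - j)]^{\oplus r_{i_1 - j}}$, whose supremum is $i_1 - j \to -\infty$; hence the filtration is pseudo-finite semi-free, proving (1). Under the additional hypothesis of (2) that $r_i = 0$ for $i < i_0$, we have $\opn{gr}^\nu_j(P) = 0$ for $j > i_1 - i_0$, so $\nu_{i_1 - i_0}(P) = P$ and the filtration has finite length.
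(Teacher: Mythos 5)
Your proposal is correct and follows essentially the same route as the paper: the direction from the graded decomposition to a filtration is exactly the paper's degree filtration $\nu_j(P) = \bigoplus_{i_1 - j \leq i \leq i_1} A[-i]^{\oplus r_i}$, with nonpositivity of $A$ guaranteeing closure under $\d$, and your splitting-and-splicing argument for the other direction just fills in what the paper dismisses as ``clear''. The finiteness bookkeeping for each $r_i$ via $\lim_{j\to\infty}\sup(\opn{gr}^\nu_j(P)) = -\infty$ is handled correctly.
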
 

\begin{proof}
Given an isomorphism 
$P^{\natural} \cong \bigoplus_{i \leq i_1} A^{\natural}[-i]^{\oplus r_i}$,
define  
\[ \nu_j(P) := \bigoplus_{i_1 - j \leq i \leq i_1} A[-i]^{\oplus r_i} 
\subseteq P . \]
This is a pseudo-finite semi-free filtration, of length $\leq i_1 - i_0$
in the finite case. The converse is clear, and so is item (2). 
\end{proof}

\begin{dfn} \label{dfn:390} 
Let $[i_0, i_1]$ be an integer interval (possibly unbounded). 
\begin{enumerate}
\item Let $P$ be a free graded $A^{\natural}$-module. We say that $P$ has a
{\em basis concentrated in $[i_0, i_1]$}  
if there is an isomorphism of graded 
$A^{\natural}$-modules
\[ P \cong \bigoplus_{i \in [i_0, i_1]} A^{\natural}[-i]^{\oplus r_i} \]
for some cardinal numbers $r_i$.

\item A DG $A$-module $M$ is said to be {\em generated} in the integer interval
$[i_0, i_1]$ if there is a surjection of graded $A^{\natural}$-modules
$P \to M^{\natural}$, where $P$ is a free graded $A^{\natural}$-module
with a basis concentrated in $[i_0, i_1]$.
\end{enumerate}
\end{dfn}

\begin{prop} \label{prop:390} 
Let $M$ be a DG $A$-module generated in the integer interval $[i_0, i_1]$.
\begin{enumerate}
\item If $N \in \cat{C}^{[j_0, j_1]}(A^{\mrm{op}})$,
then 
\[ N \ot_A M \in \cat{C}^{[j_0, j_1] + [i_0, i_1]}(\Z)
= \cat{C}^{[j_0 + i_0, j_1 + i_1]}(\Z). \]

\item If $N \in \cat{C}^{[j_0, j_1]}(A)$,
then 
\[ \opn{Hom}_A(M, N) \in \cat{C}^{[j_0, j_1] - [i_0, i_1]}(\Z)
= \cat{C}^{[j_0 - i_1, j_1 - i_0]}(\Z). \]
\end{enumerate}
\end{prop}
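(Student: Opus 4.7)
The plan is to reduce everything to the free model and then compute directly on shifts of $A$. By Definition \ref{dfn:390}(2), there is a surjection $\pi : P \to M^{\natural}$ of graded $A^{\natural}$-modules with $P \cong \bigoplus_{i \in [i_0, i_1]} A^{\natural}[-i]^{\oplus r_i}$. Since $N \ot_A -$ is right exact and $\opn{Hom}_A(-, N)$ is left exact (in their respective variances), $\pi$ induces a surjection $N \ot_A P \twoheadrightarrow N \ot_A M$ for part (1), and an injection $\opn{Hom}_A(M, N) \hookrightarrow \opn{Hom}_A(P, N)$ for part (2). So it suffices to verify each concentration claim after replacing $M$ by $P$.

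For part (1), I would use the canonical identification $N \ot_A A[-i] \cong N[-i]$ of graded $\Z$-modules (sending $n \ot 1[-i]$ to $n[-i]$, which shifts every degree up by $i$). Taking direct sums,
\[ N \ot_A P \cong \boplus_{i \in [i_0, i_1]} N[-i]^{\oplus r_i} . \]
Each summand $N[-i]$ is concentrated in $[j_0 + i, j_1 + i]$, so the direct sum is concentrated in $\bigcup_{i \in [i_0, i_1]} [j_0 + i, j_1 + i] = [j_0 + i_0, j_1 + i_1]$, which is exactly $[j_0, j_1] + [i_0, i_1]$.

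For part (2), I would invoke the analogous identification $\opn{Hom}_A(A[-i], N) \cong N[i]$: a homomorphism of degree $d$ is determined by where it sends the generator $1[-i] \in A[-i]^{i}$, which must land in $N^{i + d} = N[i]^d$. Since $\opn{Hom}_A$ converts direct sums in the first argument into direct products,
\[ \opn{Hom}_A(P, N) \cong \prod_{i \in [i_0, i_1]} N[i]^{r_i} . \]
Each factor $N[i]$ is concentrated in $[j_0 - i, j_1 - i]$, so the degree $k$ component of the product is zero unless $k \in [j_0 - i, j_1 - i]$ for some $i \in [i_0, i_1]$, equivalently $k \in [j_0 - i_1, j_1 - i_0] = [j_0, j_1] - [i_0, i_1]$.

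The only mild subtlety is that $[i_0, i_1]$ and the cardinals $r_i$ may be infinite, so in part (2) the expression is a genuine infinite product; however, at each fixed degree $k$ only the (finite) set of indices $i \in [i_0, i_1] \cap [j_0 - k, j_1 - k]$ contribute a nonzero factor, so there is no issue with the grading being well-defined. I do not expect any real obstacle; the proof is essentially bookkeeping with the shift conventions recalled just before Definition \ref{dfn:310}.
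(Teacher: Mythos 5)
Your proof is correct; the paper simply leaves this verification to the reader, and your argument (reduce to the free graded model $P$ via the defining surjection, then compute $N \ot_A A[-i] \cong N[-i]$ and $\opn{Hom}_A(A[-i], N) \cong N[i]$ and take the union of the resulting intervals) is exactly the intended bookkeeping. The only notational point worth flagging is that $\pi : P \to M^{\natural}$ is a homomorphism of graded $A^{\natural}$-modules rather than of DG modules, so the induced surjection and injection live in the graded category; but since concentration depends only on the underlying graded objects, your reduction via right exactness of $- \ot -$ and left exactness of contravariant $\opn{Hom}$ goes through verbatim.
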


The easy proof is left to the reader. 

\begin{dfn} \label{dfn:391}
Let $A$ be a DG ring.
\begin{enumerate}
\item $A$ is called {\em cohomologically left noetherian} if the graded ring 
$\opn{H}(A)$ is left noetherian.

\item $A$ is called {\em cohomologically left pseudo-noetherian} if it 
satisfies these two conditions:
\begin{enumerate}
\rmitem{i} The ring  $\bar{A} = \opn{H}^0(A)$ is left noetherian.

\rmitem{ii} For every $i$ the left $\bar{A}$-module $\opn{H}^i(A)$ is finite 
(i.e.\ finitely generated). 
\end{enumerate}
\end{enumerate} 
\end{dfn}

We shall be mostly interested in  cohomologically left pseudo-noetherian DG 
rings. Of course if $A$ is cohomologically left noetherian, then it is 
cohomologically left pseudo-noetherian. These conditions are equivalent when 
$A$ is cohomologically bounded.

As usual, by a semi-free resolution of a DG module $M$ we mean a 
quasi-iso\-mor\-phism $P \to M$ in $\cat{C}(A)$, where $P$ is semi-free. 
Likewise we talk about K-injective resolutions $M \to I$.

\begin{prop} \label{prop:101}
Let $M$ be a DG $A$-module. 
\begin{enumerate}
\item There is a semi-free resolution $P \to M$ such that 
$\opn{sup}(P) = \opn{sup}(\opn{H}(M))$.

\item If the DG ring $A$ is cohomologically left pseudo-noetherian, and if
$M \in \cat{D}^{-}_{\mrm{f}}(A)$, then there is a pseudo-finite semi-free 
resolution $P \to M$ such that $\opn{sup}(P) = \opn{sup}(\opn{H}(M))$.

\item There is a K-injective resolution $M \to I$ such that 
$\opn{inf}(I) = \opn{inf}(\opn{H}(M))$.
\end{enumerate}
\end{prop}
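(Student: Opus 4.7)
The plan is to construct all three resolutions by the standard inductive procedure, using smart truncation \eqref{eqn:425}--\eqref{eqn:426} to enforce the sup/inf bound from the outset.

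For part (1), set $s := \sup (\opn{H}(M))$ and first replace $M$ by the smart truncation $M' := \opn{smt}^{\leq s}(M)$, which lies in $\cat{M}^{(-\infty,s]}(A)$ and is quasi-isomorphic to $M$. Now build a semi-free filtration $\{\nu_j(P)\}$ by the usual iterative trick: starting from $\nu_{-1}(P) = 0$, at stage $j$ adjoin a free DG module $F_j = \bigoplus_\alpha A[-d_{j,\alpha}]$ whose generators are sent to cocycles of $M'$ that either lift generators of missing classes in $\opn{H}(M')/\opn{im}(\opn{H}(\phi_j))$ or kill superfluous classes in $\ker(\opn{H}(\phi_j) \to \opn{H}(M'))$. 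Because $M'$ is concentrated in degrees $\leq s$ and $A$ is nonpositive, we may always choose $d_{j,\alpha} \leq s$, so Proposition \ref{prop:100} gives $\sup(P) \leq s$; since $P \to M$ is a quasi-isomorphism this forces $\sup(P) = s$.

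For part (2), run the same construction, but carry out the induction degree by degree from the top. The pseudo-noetherian hypothesis ensures $\opn{H}^i(M')$ is a finite $\bar{A}$-module for each $i$, and an easy descending induction shows that the cohomology of the cone of each partial resolution $\nu_j(P) \to M'$ remains finite over $\bar{A}$ in every degree. Thus at each stage we can choose $F_j$ finite free, concentrated in a single degree $s - j$. This produces a pseudo-finite semi-free filtration with $\sup(\opn{gr}^\nu_j(P)) = s - j \to -\infty$, while keeping $\sup(P) = s$.

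For part (3), let $i := \inf(\opn{H}(M))$ and replace $M$ by $M'' := \opn{smt}^{\geq i}(M)$, which is concentrated in degrees $\geq i$. Then build a K-injective resolution $M'' \to I$ as the totalization of a Cartan--Eilenberg injective resolution: each column is an injective resolution of $M''^n$ in $\cat{M}(\bar{A})$ (or more precisely, an injective DG $A$-module produced by $\opn{Hom}_{\Z}(A, J)$ for $J$ an injective $\Z$-module cogenerator, arranged column-wise), and the product totalization yields a K-injective DG $A$-module. Since $M''$ vanishes in degrees $< i$, the resolution may be taken with $I^n = 0$ for $n < i$, so $\inf(I) = i$.

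The main obstacle is part (3): one must verify that the totalization construction really yields a K-injective DG $A$-module and not just a complex that is degreewise injective over $A^0$. The standard fix is either to invoke the general existence theorem for K-injective resolutions in $\cat{DGMod}\, A$ (e.g.\ \cite[Section 3.1]{Ke1}) applied to $M''$ and then note that, because $M''$ has zero components below degree $i$, the construction can be arranged to preserve this bound via a filtration argument. The ``sup = sup H'' equality in (1)--(2) and ``inf = inf H'' equality in (3) then follow automatically, since a quasi-isomorphism forces $\sup(P) \geq \sup(\opn{H}(M))$ and $\inf(I) \leq \inf(\opn{H}(M))$.
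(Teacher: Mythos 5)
Your proposal is correct and is in substance the same as the paper's proof: the paper simply cites the standard constructions (Keller's Theorems 3.1 and 3.2, the Avramov--Foxby--Halperin notes, or the Stacks Project), observing that $\opn{sup}(A) = 0$ forces the bound in (1)--(2) and that $\opn{inf}\bigl(\opn{Hom}_{\Z}(A, \Q/\Z)\bigr) = 0$ forces the bound in (3), and you have essentially written out those constructions. Parts (1) and (2) are fine as written: truncating by $\opn{smt}^{\leq s}$ first and then adjoining generators only in degrees $\leq s$ works because $A$ is nonpositive, and in (2) the noetherian hypothesis on $\bar{A}$ together with the finiteness of each $\opn{H}^i(A)$ and $\opn{H}^i(M)$ over $\bar{A}$ guarantees that each finite stage $\nu_j(P)$ and the cone of $\nu_j(P) \to M'$ have degreewise finite cohomology, so finitely many generators in the single degree $s-j$ suffice at each step. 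In (3) the Cartan--Eilenberg phrasing is not quite right as stated -- the components $(M'')^n$ are $A^0$-modules, not $\bar{A}$-modules, and a degreewise injective totalization is not automatically K-injective over a DG ring -- but your fallback is exactly the paper's argument: run the general tower construction of K-injective resolutions with building blocks that are products of shifts $\opn{Hom}_{\Z}(A, \Q/\Z)[-d]$ with $d \geq i$ (such shifts suffice because $\opn{H}(M'')$ is concentrated in degrees $\geq i$); since this co-induced module is concentrated in degrees $\geq 0$, every stage of the tower, hence its limit $I$, is concentrated in degrees $\geq i$, and the quasi-isomorphism gives the reverse inequality $\opn{inf}(I) \leq \opn{inf}(\opn{H}(M))$.
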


\begin{proof}
(1) See \cite[Theorem 3.1]{Ke1}, \cite{AFH}, or 
\cite[\href{http://stacks.math.columbia.edu/tag/09KK}{Section 09KK}]{SP},
noting that \lb $\opn{sup}(A) = 0$ (if $A$ is nonzero). 

\medskip \noindent
(2) In this case the construction in item (1) can be made 
with finitely many basis elements in each degree. 
See \cite[Theorem 3.1]{Ke1} or \cite{AFH}.

\medskip \noindent
(3) See \cite[Theorem 3.2]{Ke1}, \cite{AFH}, or 
\cite[\href{http://stacks.math.columbia.edu/tag/09KQ}{Section 09KQ}]{SP}.
The point is that any DG $A$-module can be embedded in a product of shifts of 
the DG $A$-module \lb $\opn{Hom}_{\Z}(A, \Q / \Z)$. 
\end{proof}

\begin{rem} \label{rem:90}
Suppose $A$ is a ring. A DG $A$-module $P$ is pseudo-finite semi-free iff it is 
a bounded above complex of finite free $A$-modules. Now according to 
\cite{SGA-6} or 
\cite[\href{http://stacks.math.columbia.edu/tag/064Q}{Definition 064Q}]{SP},
a DG $A$-module $M$ is called pseudo-coherent if it is quasi-isomorphic to
some pseudo-finite semi-free DG module $P$. This explains the choice of the 
name ``pseudo-finite semi-free DG module''.

The name ``pseudo-noetherian DG ring'' was chosen due to the close 
relation to pseudo-finite semi-free resolutions; see Proposition 
\ref{prop:101}(2).
\end{rem}

\section{Cohomological Dimension} \label{sec:coh-dim}

We continue with the conventions of Section \ref{sec:resol}, namely our DG 
rings are nonpositive, and the DG modules are acted upon from the left. 
The concentration $\opn{con}(M)$ of a graded module $M$ was 
introduced in Definition \ref{dfn:550}.

\begin{dfn} \label{dfn:10}
Let $A$ and $B$ be DG rings, and let
$\cat{E} \subseteq \cat{D}(A)$ be a full subcategory.
\begin{enumerate}
\item Let  $F : \cat{E} \to \cat{D}(B)$ be an additive functor,
and let $[d_0, d_1]$ be an integer interval. We say that {\em $F$ has 
cohomological displacement at most $[d_0, d_1]$} if 
\[ \opn{con} \bigl( \opn{H}(F(M)) \bigr) \subseteq
\opn{con} \bigl( \opn{H}(M) \bigr) + [ d_0, d_1] \]
for every $M \in \cat{E}$.

\item Let $F : \cat{E}^{\mrm{op}} \to \cat{D}(B)$ be an additive functor,
and let $[d_0, d_1]$ be an integer interval. We say that {\em $F$ has 
cohomological displacement at most $[d_0, d_1]$} if 
\[ \opn{con} \bigl( \opn{H}(F(M)) \bigr) \subseteq
- \opn{con} \bigl( \opn{H}(M) \bigr) + [ d_0, d_1] \]
for every $M \in \cat{E}$.

\item Let $F$ be as in item (1) or (2).
The {\em cohomological displacement of $F$} is the smallest integer interval 
$[d_0, d_1]$ for which $F$ has cohomological displacement at most $[d_0, d_1]$.
If $d_0 \in \Z$ (resp.\ $d_1 \in \Z$, resp.\ $d_0, d_1 \in \Z$) then $F$ is 
said to have {\em bounded below} (resp.\ {\em bounded above}, 
resp.\ {\em bounded}) {\em cohomological displacement}.

\item Let $[d_0, d_1]$ be the cohomological displacement of $F$. 
The {\em cohomological dimension} of $F$ is 
$d := d_1 - d_0 \in \N \cup \{ \infty \}$. If $d \in \N$, then $F$ is said 
to have {\em finite cohomological dimension}.
\end{enumerate}
\end{dfn}

Note that if $\cat{E}' \subseteq \cat{E}$, and $F$ has 
cohomological displacement at most $[d_0, d_1]$, then $F|_{\cat{E}'}$ also 
has cohomological displacement at most $[d_0, d_1]$.

\begin{exa} \label{exa:391}
Consider a commutative ring $A = B$, and let $\cat{E} := \cat{D}^{}(A)$.
For the covariant case (item (1) in Definition \ref{dfn:10}) take a nonzero 
projective module $P$, and 
let $F := \opn{Hom}_A(P \oplus P[1], -)$. 
Then $F$ has cohomological 
displacement $[0, 1]$. 
For the contravariant case (item (2)) take a nonzero 
injective module $I$, and let 
$F := \lb \opn{Hom}_A(-, I \oplus I[1])$. 
Then $F$ has cohomological displacement $[-1, 0]$. 
In both cases the cohomological dimension of $F$ is $1$. 
\end{exa}

\begin{rem}
Suppose $A$ and $B$ are rings. If $\cat{E} = \cat{D}^+(A)$ and 
$F = \mrm{R} F_0$ (or $\cat{E} = \cat{D}^-(A)$ and $F = \mrm{L} F_0$) for some 
additive functor $F_0 : \cat{Mod} A \to \cat{Mod} B$, then the cohomological 
dimension of $F$ is the usual cohomological dimension of $F_0$. 

Assume that $\cat{E} = \cat{D}(A)$ and $F$ is a triangulated 
functor. The functor $F$ has bounded below (resp.\ above) cohomological 
displacement iff it is way-out right (resp.\ left), in the sense of 
\cite[Section I.7]{RD}.
\end{rem}

\begin{dfn} \label{dfn:11}
Let $A$ be a DG ring, let $M \in \cat{D}(A)$, and let $[d_0, d_1]$ be an integer
interval of length $d := d_1 - d_0$.
\begin{enumerate}
\item Given a full subcategory $\cat{E} \subseteq \cat{D}(A)$, 
we say that $M$ has {\em projective concentration $[d_0, d_1]$
and projective dimension $d$ relative to $\cat{E}$} if the functor
\[ \opn{RHom}_A(M, -) : \cat{E} \to \cat{D}(\Z) \]
has cohomological displacement $-[d_0, d_1]$ relative to $\cat{E}$.

\item Given a full subcategory $\cat{E} \subseteq \cat{D}(A)$, 
we say that $M$ has {\em injective concentration $[d_0, d_1]$ and 
injective dimension $d$ relative to $\cat{E}$} if the functor
\[ \opn{RHom}_A(-, M) : \cat{E}^{\mrm{op}} \to \cat{D}(\Z) \]
has  cohomological displacement $[d_0, d_1]$ relative to $\cat{E}$.

\item Given a full subcategory $\cat{E} \subseteq \cat{D}(A^{\mrm{op}})$, 
we say that $M$ has {\em flat concentration $[d_0, d_1]$  
and flat dimension $d$ relative to $\cat{E}$} if the functor
\[ - \ot^{\mrm{L}}_A M : \cat{E} \to \cat{D}(\Z) \]
has cohomological displacement $[d_0, d_1]$ relative to $\cat{E}$.
\end{enumerate}
\end{dfn}

\begin{exa} \label{exa:390}
Continuing with the setup of Example \ref{exa:391}, the DG module
$P \oplus P[1]$ (resp.\ $I \oplus I[1]$) has projective (resp.\ injective)
concentration $[-1, 0]$ relative to $\cat{D}(A)$. 
\end{exa}

\begin{exa} \label{exa:520}
Let $A$ be a DG ring, and consider the free DG module 
$P := A \in \cat{D}(A)$.
The functor 
\[ F := \opn{RHom}_{A}(P, -) : \cat{D}(A) \to \cat{D}(\Z) \]
is isomorphic to the forgetful functor 
$\opn{rest}_{A / \Z}$, so it has cohomological displacement 
$[0, 0]$ and cohomological dimension $0$ relative to $\cat{D}(A)$. Thus the DG 
module $P$ has projective concentration $[0, 0]$ and projective dimension $0$
relative to $\cat{D}(A)$.
Note however that the cohomology $\opn{H}(P)$ could be unbounded below. 
\end{exa}

The interval of generation of a DG module was introduced in Definition 
\ref{dfn:390}. 

\begin{prop} \label{prop:393}
Let $M \in \cat{D}(A)$. Assume there is an isomorphism 
$P \cong M$ in $\cat{D}(A)$, where $P$ is a K-flat \tup{(}resp.\ 
K-projective\tup{)} DG $A$-module generated in the integer interval 
$[d_0, d_1]$. Then  $M$ has flat \tup{(}resp.\ projective\tup{)} concentration 
at most $[d_0, d_1]$ relative to  $\cat{D}(A^{\mrm{op}})$
\tup{(}resp.\ $\cat{D}(A)$\tup{)}. 
\end{prop}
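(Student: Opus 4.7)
The plan is to combine the two ingredients we already have: the K-flatness (or K-projectivity) of $P$, which allows the derived functor to be computed by the naive underived one, and Proposition \ref{prop:390}, which gives the desired concentration bound for the underived functor provided the input DG module is genuinely concentrated in its cohomological interval. Smart truncations (equations (\ref{eqn:425}) and (\ref{eqn:426})) provide the bridge.

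First I would treat the flat case. Let $N \in \cat{D}(A^{\mrm{op}})$, and write $\opn{con}(\opn{H}(N)) = [j_0, j_1]$, allowing $j_0 = -\infty$ or $j_1 = \infty$. Form the smart truncation
\[ N' := \opn{smt}^{\leq j_1}\bigl( \opn{smt}^{\geq j_0}(N) \bigr), \]
interpreting $\opn{smt}^{\leq \infty}$ and $\opn{smt}^{\geq -\infty}$ as the identity. The functorial maps in (\ref{eqn:425}) and (\ref{eqn:426}) provide a zigzag of quasi-isomorphisms between $N$ and $N'$, and by construction $N'$ lies in $\cat{M}^{[j_0, j_1]}(A^{\mrm{op}})$. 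Since $P$ is K-flat, tensoring with $P$ preserves quasi-isomorphisms, so
\[ N \ot_A^{\mrm{L}} M \, \cong \, N' \ot_A P \quad \text{in } \cat{D}(\Z). \]
Proposition \ref{prop:390}(1), applied to $N'$ and $P$ (generated in $[d_0, d_1]$), yields that $N' \ot_A P$ is concentrated, as a DG $\Z$-module, in $[j_0 + d_0, j_1 + d_1]$. A fortiori its cohomology is concentrated there, giving the required flat concentration bound.

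The projective case proceeds by the same template, replacing $- \ot_A -$ with $\opn{Hom}_A(-, -)$ and using K-projectivity of $P$ instead of K-flatness. Concretely, for $N \in \cat{D}(A)$ with $\opn{con}(\opn{H}(N)) = [j_0, j_1]$ one builds the same quasi-iso truncation $N'$, obtains $\opn{RHom}_A(M, N) \cong \opn{Hom}_A(P, N')$, and invokes Proposition \ref{prop:390}(2) to conclude that this lies in $\cat{M}^{[j_0, j_1] - [d_0, d_1]}(\Z) = \cat{M}^{[j_0 - d_1, j_1 - d_0]}(\Z)$. Unwinding Definitions \ref{dfn:10} and \ref{dfn:11}, this is exactly the statement that $M$ has projective concentration at most $[d_0, d_1]$ relative to $\cat{D}(A)$.

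I do not anticipate a serious obstacle: the only subtlety is making sure the truncation construction still makes sense, and the conclusion is still vacuous in the correct direction, when $j_0 = -\infty$ or $j_1 = \infty$, which is immediate from the conventions on $\pm \infty$ recorded before (\ref{eqn:371}) and (\ref{eqn:580}). Everything else is a direct consequence of applying the two propositions that were set up precisely for this purpose.
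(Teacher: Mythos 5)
Your proposal is correct and follows essentially the same route as the paper's proof: replace $N$ by a smart truncation so that its concentration equals that of its cohomology, use K-flatness (resp.\ K-projectivity) of $P$ to compute the derived functor by the underived one, and then apply Proposition \ref{prop:390}. The only difference is that you spell out the truncation zigzag and the $\pm\infty$ conventions explicitly, which the paper leaves implicit.
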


\begin{proof}
First consider the K-flat case. Take any $N \in \cat{D}(A^{\mrm{op}})$. 
After applying smart truncation, we can assume that 
$\opn{con}(N) = \opn{con}(\opn{H}(N))$.
Now  $N \ot_A^{\mrm{L}} M \cong N \ot_A P$, 
and by Proposition \ref{prop:390} the bounds on 
$N \ot_A P$ are as claimed. 

Next consider the K-projective case. Take any $N \in \cat{D}(A)$. As above, 
we can assume that 
$\opn{con}(N) = \opn{con}(\opn{H}(N))$.
We know that 
$\opn{RHom}_A(M, N) \cong \opn{Hom}_A(P, N)$.
The bounds on the DG module 
$\opn{Hom}_A(P, N)$ are as claimed, by Proposition \ref{prop:390}. 
\end{proof}

\begin{prop} \label{prop:35}
If $M \in \cat{D}^{[d_0, d_1]}(A)$, then $M$ has projective concentration at 
most  \lb $[-\infty, d_1]$ relative to $\cat{D}(A)$, 
injective concentration at most $[d_0, \infty]$  relative to $\cat{D}(A)$, 
and flat concentration at most  $[-\infty, d_1]$ relative to 
$\cat{D}(A^{\mrm{op}})$.
\end{prop}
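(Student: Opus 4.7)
The plan is to reduce each of the three claims to bounds that have already been established: for the projective and flat parts, I will invoke Proposition \ref{prop:393} on a well-chosen semi-free resolution of $M$; for the injective part, I will use Proposition \ref{prop:390}(2) on a K-injective resolution of $M$, combined with a semi-free resolution of the test object. The uniform input is the resolution-with-optimal-bounds statement, Proposition \ref{prop:101}.

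For the projective and flat cases, first I would apply Proposition \ref{prop:101}(1) to choose a semi-free resolution $P \to M$ with $\sup(P) = \sup(\opn{H}(M)) \leq d_1$. Since our DG rings are nonpositive, any generator of $P$ in degree $k$ contributes a nonzero element to $P^k$, so the entire semi-free basis of $P^\natural$ must lie in degrees $\leq \sup(P) \leq d_1$; equivalently, $P$ is generated (in the sense of Definition \ref{dfn:390}) in the integer interval $[-\infty, d_1]$. Because semi-free DG modules are K-projective, and hence K-flat, Proposition \ref{prop:393} applied to $P \cong M$ immediately delivers projective concentration at most $[-\infty, d_1]$ relative to $\cat{D}(A)$ and flat concentration at most $[-\infty, d_1]$ relative to $\cat{D}(A^{\mrm{op}})$.

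For the injective case, the plan is to compute $\opn{RHom}_A(-, M)$ through resolutions on both sides. Proposition \ref{prop:101}(3) produces a K-injective resolution $M \to I$ with $\inf(I) = \inf(\opn{H}(M)) \geq d_0$, so $I \in \cat{M}^{[d_0, \infty]}(A)$. For an arbitrary test object $N \in \cat{D}(A)$, pick a semi-free resolution $P \to N$ with $\sup(P) = \sup(\opn{H}(N))$, so that $P$ is generated in $[-\infty, \sup(\opn{H}(N))]$. Then
\[ \opn{RHom}_A(N, M) \;\cong\; \opn{Hom}_A(P, I), \]
and Proposition \ref{prop:390}(2) places the right-hand side in
\[ \cat{M}^{[d_0, \infty] \, - \, [-\infty,\, \sup(\opn{H}(N))]}(\Z) \;=\; \cat{M}^{[d_0 - \sup(\opn{H}(N)),\, \infty]}(\Z). \]
Passing to cohomology and unwinding the sign convention for contravariant functors in Definition \ref{dfn:10}(2), this is exactly the bound $-\opn{con}(\opn{H}(N)) + [d_0, \infty]$ required to conclude injective concentration at most $[d_0, \infty]$ relative to $\cat{D}(A)$.

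I do not expect any genuine obstacle here; the substance of the argument is entirely contained in Propositions \ref{prop:101}, \ref{prop:390} and \ref{prop:393}. The only minor care needed is bookkeeping with the infinite endpoints in the interval arithmetic of (\ref{eqn:580}) and the sign flip $-[d_0, d_1]$ built into the projective-displacement and contravariant-displacement conventions, so that e.g.\ the bound $[-\infty, d_1]$ for the concentration of $M$ corresponds to the displacement $[-d_1, \infty]$ of $\opn{RHom}_A(M, -)$.
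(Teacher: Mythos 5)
Your proposal is correct and follows essentially the same route as the paper: a semi-free resolution $P \to M$ with $\opn{sup}(P) = \opn{sup}(\opn{H}(M))$ fed into Proposition \ref{prop:393} for the projective and flat bounds, and a K-injective resolution $M \to I$ with $\opn{inf}(I) = \opn{inf}(\opn{H}(M))$ for the injective bound, all resting on Proposition \ref{prop:101}. The only difference is cosmetic: you make explicit (via a semi-free resolution of the test object $N$ and Proposition \ref{prop:390}(2)) the bookkeeping that the paper leaves implicit when it writes $\opn{RHom}_A(N, M) \cong \opn{Hom}_A(N, I)$.
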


\begin{proof}
First let's assume that  $M \neq 0$ and $d_1 < \infty$.
We know that $M$ admits a semi-free resolution $P \to M$ with 
$\opn{sup}(P) = \opn{sup}(\opn{H}(M)) \leq d_1$. 
Now we can use Proposition \ref{prop:393} for the flat and projective 
concentrations.

Now let's assume that $M \neq 0$ and $d_0 > -\infty$.
By Proposition \ref{prop:101} there is a K-injective 
resolution $M \to I$ with 
$\opn{inf}(I) = \opn{inf}(\opn{H}(M)) \geq d_0$. For any 
$N \in \cat{D}(A)$ we have 
$\opn{RHom}_A(N, M) \cong \opn{Hom}_A(N, I)$, and hence the bound on the 
injective concentration. 
\end{proof}

\begin{rem} \label{rem:50}
Let us write <att> for either of the attributes projective, 
injective or flat.
When $A$ is a ring,
$\cat{E} = \cat{D}^{0}(A) \approx \cat{Mod} A$
and $M \in \cat{Mod} A$,
we recover the usual definition of <att> dimension of modules in ring theory.
Furthermore, in the ring case, a DG $A$-module $M$ has <att> concentration in a 
bounded integer interval $[d_0, d_1]$ relative to $\cat{D}^{0}(A)$, iff $M$
isomorphic in $\cat{D}(A)$ to a complex $P$ of <att> $A$-modules with 
$\opn{con}(P) \subseteq [d_0, d_1]$. 
This implies that $M$ has finite <att> dimension relative to $\cat{D}(A)$. 
We do not know if this -- namely the <att> dimension of $M$ relative to 
$\cat{D}(A)$ is the same as that relative $\cat{D}^{0}(A)$ -- is true when $A$ 
is a genuine DG ring. 
\end{rem}

The next two theorems are variations of the opposite (in the categorical sense) 
of \cite[Proposition I.7.1]{RD}, the ``Lemma on Way-Out Functors''.
The canonical homomorphism $A \to \bar{A}$ lets us 
view any $\bar{A}$-module as a DG $A$-module.

\begin{thm} \label{thm:70}
Let $A$ and $B$ be DG rings, let 
$F, G : \cat{D}(A) \to \cat{D}(B)$ 
be triangulated functors, and let $\eta : F \to G$ be a morphism of 
triangulated functors. Assume that $\eta_M : F(M) \to G(M)$ is an 
isomorphism for every $M \in \cat{Mod} \bar{A}$. 
\begin{enumerate}
\item The morphism $\eta_M$ is an isomorphism for every 
$M \in \cat{D}^{\mrm{b}}(A)$.

\item If $F$ and $G$ have bounded above cohomological displacements, then 
$\eta_M$ is an isomorphism for every $M \in \cat{D}^{-}(A)$.

\item If $F$ and $G$ have finite cohomological dimensions, then 
$\eta_M$ is an isomorphism for every $M \in \cat{D}(A)$.
\end{enumerate}
\end{thm}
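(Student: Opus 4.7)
The plan is a standard way-out argument, adapted to the DG setting via the smart truncations $\opn{smt}^{\leq i}$ and $\opn{smt}^{\geq i}$ of (\ref{eqn:425})--(\ref{eqn:426}), which remain in $\cat{M}(A)$ precisely because $A$ is nonpositive, together with their distinguished triangle $\opn{smt}^{\leq i}(M) \to M \to \opn{smt}^{\geq i+1}(M) \to$ in $\cat{D}(A)$. Two facts will be used repeatedly: $F$, $G$ and $\eta$ all commute with shifts; and any $M \in \cat{D}^{0}(A)$ is canonically isomorphic in $\cat{D}(A)$ to $\opn{H}^{0}(M)$ regarded as an $\bar{A}$-module, obtained by applying the two smart truncations successively. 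Combined with the hypothesis this forces $\eta_M$ to already be an isomorphism whenever $M \in \cat{D}^{n}(A)$ for some $n \in \Z$.

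For (1), I induct on $\opn{amp}(\opn{H}(M))$ for $M \in \cat{D}^{\mrm{b}}(A)$. The base case (amplitude $\leq 0$) is the observation above. For the inductive step, set $i := \inf(\opn{H}(M))$; the truncation triangle then has first term in $\cat{D}^{i}(A)$ (amplitude $0$) and third term in $\cat{D}^{\mrm{b}}(A)$ with amplitude $\opn{amp}(\opn{H}(M)) - 1$, so $\eta$ is an isomorphism on both by the base case and the inductive hypothesis. Apply $F$, $G$ and $\eta$ to the triangle and invoke the five lemma on the two resulting long exact cohomology sequences.

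For (2), fix a common cohomological-displacement bound $[d_0, d_1]$ for $F$ and $G$ with $d_1 \in \Z$, granted by hypothesis. Let $M \in \cat{D}^{-}(A)$; it suffices to prove that $\opn{H}^n(\eta_M)$ is an isomorphism for every $n \in \Z$. Pick any $i \leq n - d_1$. Then $\opn{smt}^{\leq i-1}(M)$ has cohomology in $(-\infty, i-1]$, so both $F(\opn{smt}^{\leq i-1}(M))$ and $G(\opn{smt}^{\leq i-1}(M))$ have cohomology in $(-\infty, i-1+d_1] \subseteq (-\infty, n-1]$. The long exact sequences obtained from the truncation triangle $\opn{smt}^{\leq i-1}(M) \to M \to \opn{smt}^{\geq i}(M) \to$ therefore yield $\opn{H}^n(F(M)) \iso \opn{H}^n(F(\opn{smt}^{\geq i}(M)))$ and the analogous statement for $G$. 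Since $\opn{smt}^{\geq i}(M) \in \cat{D}^{\mrm{b}}(A)$, part (1) supplies the required isomorphism at degree $n$.

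Part (3) is the symmetric truncation. Finite cohomological dimension forces $d_0, d_1 \in \Z$. Given $M \in \cat{D}(A)$ and $n \in \Z$, pick $j \geq n - d_0$; then $\opn{smt}^{\geq j+1}(M)$ has cohomology in $[j+1, \infty)$, and hence $F$ and $G$ of it have cohomology in $[j+1+d_0, \infty) \subseteq [n+1, \infty)$. The long exact sequences of $F$ and $G$ on $\opn{smt}^{\leq j}(M) \to M \to \opn{smt}^{\geq j+1}(M) \to$ identify $\opn{H}^n(\eta_M)$ with $\opn{H}^n(\eta_{\opn{smt}^{\leq j}(M)})$, which is an isomorphism by (2) since $\opn{smt}^{\leq j}(M) \in \cat{D}^{-}(A)$. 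No step of the argument is genuinely difficult; the only real care required is matching the truncation indices against the displacement bounds, which is a mechanical check once the smart truncations from Section \ref{sec:resol} are in place.
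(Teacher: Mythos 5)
Your proposal is correct and follows essentially the same route as the paper: part (1) by induction on $\opn{amp}(\opn{H}(M))$ using smart truncation, part (2) by truncating below so the bounded-above displacement kills the relevant degrees and reduces to (1), and part (3) by truncating above and reducing to (2). The only differences are cosmetic index bookkeeping and your choice to split off the lowest cohomological degree in the induction, which changes nothing of substance.
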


\begin{proof}
(1) The proof is by induction on $j := \opn{amp}(\opn{H}(M))$. 
If $j = 0$ then $M$ is isomorphic to a shift an object of $\bar{A}$, so 
$\eta_M$ is an isomorphism. If $j > 0$, then using smart truncation 
we obtain a distinguished triangle
$M' \to M \to M'' \xar{\vartriangle}$ in $\cat{D}(A)$ such that  
$\opn{amp}(\opn{H}(M'')) < j$ and $\opn{amp}(\opn{H}(M')) < j$. 
Since $\eta_{M''}$ and $\eta_{M'}$ are isomorphisms, so is 
$\eta_{M}$.

\medskip \noindent
(2) Here we assume that $F$ and $G$ have cohomological displacements at 
most \lb $[-\infty, d_1]$ for some integer $d_1$. 
Take any $M \in \cat{D}^{-}(A)$. 
In order to prove that $\eta_M$ is an isomorphism it suffices to show that 
\[ \opn{H}^i(\eta_M) : \opn{H}^i(F(M)) \to \opn{H}^i(G(M)) \]
is bijective for every $i \in \Z$. 

Fix an integer $i$. Let $M' \to M \to M'' \xar{\vartriangle}$ be a 
distinguished triangle such that 
$\opn{sup}(\opn{H}(M')) \leq i - d_1 - 2$
and 
$\opn{inf}(\opn{H}(M'')) \geq i - d_1 - 1$. This can be obtained using smart 
truncation.

The  cohomologies of $F(M')$ and $G(M')$ are concentrated in the degree range 
$\leq i - 2$. The distinguished triangle induces a commutative 
diagram of $\bar{A}$-modules with exact rows:
\[ \UseTips \xymatrix @C=5ex @R=5ex {
\opn{H}^{i}(F(M'))
\ar[r]
\ar[d]_{ \opn{H}^{i}(\eta_{M'}) }
&
\opn{H}^{i}(F(M))
\ar[r]
\ar[d]_{ \opn{H}^{i}(\eta_{M}) }
&
\opn{H}^{i}(F(M''))
\ar[r]
\ar[d]_{ \opn{H}^{i}(\eta_{M''}) }
&
\opn{H}^{i+1}(F(M'))
\ar[d]_{ \opn{H}^{i+1}(\eta_{M'}) }
\\
\opn{H}^{i}(G(M'))
\ar[r]
&
\opn{H}^{i}(G(M))
\ar[r]
&
\opn{H}^{i}(G(M''))
\ar[r]
&
\opn{H}^{i+1}(G(M')) \ .
} \]
The four terms involving $M'$ are zero. Since $M''$ has bounded cohomology, we 
know by part (1) that $\opn{H}^{i}(\eta_{M''})$ is an isomorphism. Therefore 
$\opn{H}^{i}(\eta_{M})$ is an isomorphism.

\medskip \noindent
(3) Here we assume that $F$ and $G$ have finite cohomological dimensions. 
So $F$ and $G$ have cohomological displacements at most $[d_0, d_1]$ for some 
$d_0 \leq d_1$ in $\Z$.
Take any $M \in \cat{D}(A)$, and fix $i \in \Z$. We want to show that 
$\opn{H}^i(\eta_M)$ is an isomorphism. 
Using smart truncations of $M$ we obtain a distinguished triangle
$M' \to M \to M'' \xar{\vartriangle}$ in $\cat{D}(A)$, such that
$\opn{sup}(\opn{H}(M')) \leq i - d_0 + 1$
and 
$\opn{inf}(\opn{H}(M'')) \geq i - d_0 + 2$.
The cohomologies of $F(M'')$ and $G(M'')$ are concentrated in degrees 
$\geq i + 2$. 
We have a commutative diagram of $\bar{A}$-modules with exact rows:
\[ \UseTips \xymatrix @C=5ex @R=5ex {
\opn{H}^{i-1}(F(M''))
\ar[r]
\ar[d]_{ \opn{H}^{i-1}(\eta_{M''}) }
&
\opn{H}^{i}(F(M'))
\ar[r]
\ar[d]_{ \opn{H}^{i}(\eta_{M'}) }
&
\opn{H}^{i}(F(M))
\ar[r]
\ar[d]_{ \opn{H}^{i}(\eta_{M}) }
&
\opn{H}^{i}(F(M''))
\ar[d]_{ \opn{H}^{i}(\eta_{M''}) }
\\
\opn{H}^{i-1}(G(M''))
\ar[r]
&
\opn{H}^{i}(G(M'))
\ar[r]
&
\opn{H}^{i}(G(M))
\ar[r]
&
\opn{H}^{i}(G(M''))
} \]
The four terms involving $M''$ are zero here. Since $M'$ has bounded above
cohomology, we know by part (2) that $\opn{H}^{i}(\eta_{M'})$ is an 
isomorphism. Therefore 
$\opn{H}^{i}(\eta_{M})$ is an isomorphism.
\end{proof}

\begin{thm} \label{thm:10}
Let $A$ and $B$ be DG rings, let 
$F, G : \cat{D}(A) \to \cat{D}(B)$ 
be triangulated functors, and let $\eta : F \to G$ be a morphism of 
triangulated functors. 
Assume that $A$ is cohomologically left pseudo-noetherian,
and $\eta_A : F(A) \to G(A)$ is an isomorphism.
\begin{enumerate}
\item If $F$ and $G$ have bounded above cohomological displacements, then 
$\eta_M : F(M) \to G(M)$ is an isomorphism for every 
$M \in \cat{D}^{-}_{\mrm{f}}(A)$.

\item If $F$ and $G$ have finite cohomological dimensions, then 
$\eta_M : F(M) \to G(M)$ is an isomorphism for every 
$M \in \cat{D}_{\mrm{f}}(A)$.
\end{enumerate}
\end{thm}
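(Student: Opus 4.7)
The plan is to mimic the structure of Theorem \ref{thm:70}, but replace the reduction to arbitrary finite $\bar{A}$-modules by an appeal to pseudo-finite semi-free resolutions. Such resolutions exist by Proposition \ref{prop:101}(2), which is exactly where the cohomologically left pseudo-noetherian hypothesis on $A$ will be consumed. The preparatory step is to bootstrap the hypothesis $\eta_A$ iso to \emph{all finite semi-free DG $A$-modules} $P$: since $\eta$ is a morphism of triangulated functors, $\eta_{A[n]}$ is iso for every $n \in \Z$; since triangulated functors preserve finite direct sums, $\eta_Q$ is iso for every finite free DG module $Q$; and an induction on the semi-free length, using the distinguished triangles $\nu_{j-1}(P) \to \nu_j(P) \to \opn{gr}^\nu_j(P) \xar{\vartriangle}$ and the triangulated five lemma, handles an arbitrary finite semi-free $P$.

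For part (1), let $d_1 \in \Z$ bound the cohomological displacements of $F$ and $G$ above, fix $M \in \cat{D}^{-}_{\mrm{f}}(A)$ and $i \in \Z$, and choose a pseudo-finite semi-free resolution $P \to M$. The condition $\lim_{j \to \infty} \opn{sup}(\opn{gr}^\nu_j(P)) = -\infty$ lets me pick $j_0$ with $\opn{sup}(\opn{gr}^\nu_k(P)) \leq i - d_1 - 2$ for every $k > j_0$, so that $P / \nu_{j_0}(P)$ is concentrated as a graded $A^{\natural}$-module in degrees $\leq i - d_1 - 2$. Consequently $\opn{H}(F(P / \nu_{j_0}(P)))$ and $\opn{H}(G(P / \nu_{j_0}(P)))$ vanish in degrees $\geq i - 1$. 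Applying $F$ and $G$ to the distinguished triangle $\nu_{j_0}(P) \to P \to P / \nu_{j_0}(P) \xar{\vartriangle}$ and reading off the long exact sequences around degree $i$, these vanishings force $\opn{H}^i(F(\nu_{j_0}(P))) \iso \opn{H}^i(F(P))$ and similarly for $G$; by the preparatory step $\opn{H}^i(\eta_{\nu_{j_0}(P)})$ is iso, so $\opn{H}^i(\eta_M) = \opn{H}^i(\eta_P)$ is iso as well. Since $i$ is arbitrary, $\eta_M$ is iso.

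For part (2), suppose the cohomological displacements of $F$ and $G$ are at most $[d_0, d_1]$ for integers $d_0 \leq d_1$. Given $M \in \cat{D}_{\mrm{f}}(A)$ and $i \in \Z$, set $n := i - d_0 + 2$ and form the smart-truncation triangle $M' \to M \to M'' \xar{\vartriangle}$ with $M' := \opn{smt}^{\leq n - 1}(M)$ and $M'' := \opn{smt}^{\geq n}(M)$; these smart truncations live in $\cat{M}(A)$ because $A$ is nonpositive. Then $M' \in \cat{D}^{-}_{\mrm{f}}(A)$, since its cohomology is bounded above and levelwise finite over $\bar{A}$, so $\eta_{M'}$ is iso by part (1). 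The displacement bound forces $\opn{H}^k(F(M'')) = \opn{H}^k(G(M'')) = 0$ for $k \leq i$, and the analogous long exact sequence argument in degree $i$ promotes $\opn{H}^i(\eta_{M'})$ to $\opn{H}^i(\eta_M)$.

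The main obstacle is part (1). In Theorem \ref{thm:70} one could descend from an arbitrary $M$ to bounded-cohomology pieces and eventually to individual $\bar{A}$-modules, where the hypothesis directly applied. Here the hypothesis only asserts $\eta_A$ iso, and cohomology modules over $\bar{A}$ are not among the inputs, so one must pass through the pseudo-finite semi-free resolution $P \to M$ and exploit the combinatorial control that its filtration gives on the graded support of the ``tail'' $P / \nu_{j_0}(P)$. Making this rigorous is the point at which the cohomologically left pseudo-noetherian hypothesis on $A$ enters essentially.
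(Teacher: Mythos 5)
Your proposal is correct and follows essentially the same route as the paper's proof: bootstrap $\eta$ to finite free and then finite semi-free DG modules by induction on the filtration, handle $\cat{D}^{-}_{\mrm{f}}(A)$ via a pseudo-finite semi-free resolution whose tail $P/\nu_{j_0}(P)$ is concentrated in low degrees (the paper fixes $j$ and lets $i$ range instead of fixing $i$ and choosing $j_0$, an immaterial difference), and reduce the unbounded case to part (1) by smart truncation exactly as in the paper's final step.
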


\begin{proof}
Step 1. Consider a finite free DG $A$-module $P$, i.e.\ 
$P \cong \bigoplus_{k = 1}^r A[-i_k]$ in $\cat{C}(A)$ for some 
$i_1, \ldots, i_r \in \Z$. 
Because the functors $F, G$ are triangulated, and $\eta_A$ is an isomorphism, 
it follows that $\eta_P$ is an isomorphism.

\medskip \noindent
Step 2. Now let $P$ be a finite semi-free DG $A$-module,
with finite semi-free filtration $\{ \nu_j(P) \}$ of length $j_1$
(see Definition \ref{dfn:100}). 
We prove that $\eta_{P}$ is an isomorphism by induction on $j_1$.
For $j_1 = 0$ this is step 1. Now assume $j_1 \geq 1$. 
Write $P' := \nu_{j_1 - 1}(P)$ and $P'' := \opn{gr}^\nu_{j_1}(P)$, so there is 
a distinguished triangle
$P' \to P \to P'' \xar{\vartriangle}$ in $\cat{D}(A)$. 
According to step 1 and the induction hypothesis, the morphisms 
$\eta_{P''}$ and $\eta_{P'}$ are isomorphisms. 
Hence $\eta_P$ is an isomorphism.

\medskip \noindent
Step 3. Here we assume that $F$ and $G$ have cohomological displacements at 
most $[-\infty, d_1]$ for some integer $d_1$. 
Take any $M \in \cat{D}^{-}_{\mrm{f}}(A)$. 
In order to prove that $\eta_M$ is an isomorphism it suffices to show that 
\[ \opn{H}^i(\eta_M) : \opn{H}^i(F(M)) \to \opn{H}^i(G(M)) \]
is bijective for every $i \in \Z$.  

We may assume that $M$ is nonzero. 
Let $i_1 := \opn{sup}(\opn{H}(M))$, which is an integer. 
There exists a pseudo-finite semi-free resolution $P \to M$ 
such that $\opn{sup}(P) = i_1$; see Proposition \ref{prop:101}.
We will prove that $\opn{H}^i(\eta_P)$ is an isomorphism for every $i$. 
Fix a pseudo-finite semi-free filtration $\{ \nu_j(P) \}$ of $P$. 

Take an integer $j$, and define $P' := \nu_{j}(P)$ and 
$P'' := P / \nu_{j}(P)$. So there is a  distinguished triangle
$P' \to P \to P'' \xar{\vartriangle}$
in $\cat{D}(A)$. The DG module $P''$ is concentrated in the degree range 
$\leq i_1 - j - 1$, and hence so is its cohomology.
Thus the  cohomologies of $F(P'')$ and $G(P'')$ are concentrated in the 
degree range $\leq i_1 - j - 1 + d_1$, or in other words 
$\opn{H}^i(F(P'')) = \opn{H}^i(G(P'')) = 0$ for all 
$i > i_1 - j - 1 + d_1$. 
On the other hand the DG module $P'$ is finite semi-free. 
The distinguished triangle above induces a commutative 
diagram of $\bar{A}$-modules with exact rows:
\begin{equation} \label{eqn:37}
\UseTips \xymatrix @C=5ex @R=5ex {
\opn{H}^{i-1}(F(P''))
\ar[r]
\ar[d]_{ \opn{H}^{i-1}(\eta_{P''}) }
&
\opn{H}^{i}(F(P'))
\ar[r]
\ar[d]_{ \opn{H}^{i}(\eta_{P'}) }
&
\opn{H}^{i}(F(P))
\ar[r]
\ar[d]_{ \opn{H}^{i}(\eta_{P}) }
&
\opn{H}^{i}(F(P''))
\ar[d]_{ \opn{H}^{i}(\eta_{P''}) }
\\
\opn{H}^{i-1}(G(P''))
\ar[r]
&
\opn{H}^{i}(G(P'))
\ar[r]
&
\opn{H}^{i}(G(P))
\ar[r]
&
\opn{H}^{i}(G(P''))
}
\end{equation}
For any $i > i_1 + d_1 - j$ the modules in this diagram involving $P''$ 
are zero. By step 2 we know that $\opn{H}^{i}(\eta_{P'})$ is an 
isomorphism for every $i$. Therefore 
$\opn{H}^{i}(\eta_{P})$ is an isomorphism for every 
$i > i_1 + d_1 - j$. Since $j$ can be made arbitrarily large, we conclude that 
$\opn{H}^{i}(\eta_{P})$ is an isomorphism for every $i$.

\medskip \noindent
Step 4. Here we assume that $F$ and $G$ have finite cohomological dimensions. 
So $F$ and $G$ have cohomological displacements at most $[d_0, d_1]$ for some 
$d_0 \leq d_1$ in $\Z$. 
This step is very similar to the proof of  Theorem \ref{thm:70}(3).

Take any $M \in \cat{D}^{}_{\mrm{f}}(A)$. Fix $i \in \Z$. We want to show that 
$\opn{H}^i(\eta_M)$ is an isomorphism. 
Using smart truncations of $M$, we obtain a distinguished triangle
$M' \to M \to M'' \xar{\vartriangle}$ in $\cat{D}_{\mrm{f}}(A)$, such that
$M' \in \cat{D}^{[-\infty, i - d_0 + 1]}_{\mrm{f}}(A)$ and
$M'' \in \cat{D}^{[i - d_0 + 1, \infty]}_{\mrm{f}}(A)$.
We get a commutative diagram like (\ref{eqn:37}).
The modules $\opn{H}^{i-1}(F(M''))$, $\opn{H}^{i}(F(M''))$, 
$\opn{H}^{i-1}(G(M''))$ and $\opn{H}^{i}(G(M''))$ are zero. 
Since $M' \in \cat{D}^{-}_{\mrm{f}}(A)$, step 3 says that \lb 
$\opn{H}^{i}(\eta_{M'})$ is an isomorphism. Therefore 
$\opn{H}^{i}(\eta_{M})$ is an isomorphism.
\end{proof}

The next theorem is a variant of the opposite of \cite[Proposition I.7.3]{RD}.
For a boundedness condition $\star$, we denote by $- \star$ the opposite 
boundedness condition. Thus if $\cat{D}^{\star}$ denotes either 
$\cat{D}^{}$, $\cat{D}^{+}$, $\cat{D}^{-}$ or $\cat{D}^{\mrm{b}}$, 
then $\cat{D}^{- \star}$ denotes either
$\cat{D}^{}$, $\cat{D}^{-}$, $\cat{D}^{+}$ or $\cat{D}^{\mrm{b}}$ 
respectively.

\begin{thm} \label{thm:30}
Let $A$ and $B$ be DG rings, and let 
$F : \cat{D}(A)^{\mrm{op}} \to \cat{D}(B)$ 
be a triangulated functor.
Assume that $A$ and $B$ are cohomologically left pseudo-noetherian, 
and $F(A) \in \cat{D}^+_{\mrm{f}}(B)$.
\begin{enumerate}
\item If $F$ has bounded below cohomological displacement, then 
$F(M) \in \cat{D}^+_{\mrm{f}}(B)$  for every $M \in \cat{D}^{-}_{\mrm{f}}(A)$.

\item If $F$ has finite cohomological dimension, then  
$F(M) \in \cat{D}^{- \star}_{\mrm{f}}(B)$  for every 
$M \in \cat{D}^{\star}_{\mrm{f}}(A)$.
\end{enumerate}
\end{thm}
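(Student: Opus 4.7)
The plan is to mirror the strategy of Theorem \ref{thm:10}, but using the contravariance of $F$ carefully, and to reduce part (2) to part (1) by smart truncations.

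For part (1), assume $F$ has cohomological displacement at most $[d_0, \infty]$ for some $d_0 \in \Z$. Take $M \in \cat{D}^{-}_{\mrm{f}}(A)$, and invoke Proposition \ref{prop:101}(2) to replace $M$ by a pseudo-finite semi-free resolution $P$ with $\opn{sup}(P) = \opn{sup}(\opn{H}(M)) =: i_1$. The global boundedness below of $F(P)$ is immediate from the displacement bound: $\opn{con}(\opn{H}(F(P))) \subseteq -[-\infty,i_1] + [d_0,\infty] = [-i_1 + d_0, \infty]$. So the real work is showing each $\opn{H}^i(F(P))$ is finite over $\bar B$. First I would record that if $P$ is finite free, then $F(P)$ is a finite direct sum of shifts of $F(A)$, hence in $\cat{D}^{+}_{\mrm{f}}(B)$ by hypothesis; then, using the distinguished triangle $\nu_{j-1}(P) \to \nu_j(P) \to \opn{gr}^\nu_j(P) \xar{\vartriangle}$ and induction on filtration length, the same holds for every finite semi-free DG module.

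Now fix $i \in \Z$ and a pseudo-finite semi-free filtration $\{\nu_j(P)\}$ of $P$. Set $P':=\nu_j(P)$ and $P'':=P/\nu_j(P)$, so we obtain a distinguished triangle $P' \to P \to P'' \xar{\vartriangle}$. Because the filtration is pseudo-finite, $\opn{sup}(P'') \to -\infty$ as $j \to \infty$, so for $j$ sufficiently large $\opn{sup}(\opn{H}(P'')) \leq -i + d_0 - 2$, and the displacement bound forces $\opn{H}^i(F(P'')) = \opn{H}^{i+1}(F(P'')) = 0$. The long exact cohomology sequence obtained from applying the contravariant functor $F$ then gives $\opn{H}^i(F(P)) \cong \opn{H}^i(F(P'))$, and $P'$ is finite semi-free, so the latter is finite over $\bar B$. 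This completes part (1).

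For part (2), fix a cohomological displacement $[d_0,d_1]$ of $F$ with $d_0,d_1 \in \Z$. The case $\star = -$ is exactly part (1). For $\star = \mrm{b}$, apply part (1) to get $F(M) \in \cat{D}^{+}_{\mrm{f}}(B)$, and use the displacement bound on the bounded $M$ to also get bounded above, hence $F(M) \in \cat{D}^{\mrm{b}}_{\mrm{f}}(B)$. For $\star = +$ (and similarly for $\star$ blank), fix $i$ and apply the smart truncations \eqref{eqn:425}, \eqref{eqn:426} to get a distinguished triangle $M' \to M \to M'' \xar{\vartriangle}$ in $\cat{D}_{\mrm{f}}(A)$ with $M'' \in \cat{D}^{[N+1,\infty]}_{\mrm{f}}(A)$ (for $N$ chosen large in terms of $i$ and $d_1$) and $M' \in \cat{D}^{-}_{\mrm{f}}(A)$ (even bounded, if $\star = +$). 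The displacement bound gives $\opn{H}^{i}(F(M'')) = \opn{H}^{i+1}(F(M'')) = 0$, so the long exact sequence yields $\opn{H}^i(F(M)) \cong \opn{H}^i(F(M'))$, which is finite over $\bar B$ by part (1). Boundedness above of $F(M)$ (when $M$ is bounded below) is a direct consequence of the displacement bound.

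The main obstacle is purely bookkeeping: keeping the signs and direction of the contravariant cohomological displacement straight when passing between the filtration quotients, and ensuring that the growth of $\opn{sup}(P/\nu_j(P)) \to -\infty$ is enough to make the truncation tails of $F$ vanish in any prescribed degree. Once that sign convention is pinned down, the inductive step on finite semi-free filtrations and the reduction via smart truncations are routine.
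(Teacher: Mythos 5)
Your proposal is correct and follows essentially the same route as the paper's own proof, which adapts the argument of Theorem \ref{thm:10}: induction over finite semi-free filtrations to handle finite semi-free DG modules, pseudo-finite semi-free resolutions (Proposition \ref{prop:101}(2)) with the quotient triangles $\nu_j(P) \to P \to P/\nu_j(P)$ for part (1), and smart truncation plus part (1) for part (2), with the boundedness statements read off from the cohomological displacement. Your handling of the contravariant long exact sequences and of the several boundedness cases in part (2) is a correct spelling-out of what the paper leaves implicit.
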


\begin{proof}
The proof is very similar to that of Theorem \ref{thm:10}. We just outline the 
necessary changes. 

\medskip \noindent
Steps 1-2. $P$ is a finite semi-free DG $A$-module. By induction on the 
length $j_1$ of a semi-free filtration, we prove that 
$F(P) \in \cat{D}^+_{\mrm{f}}(B)$.
 
\medskip \noindent
Step 3. Here $F$ has cohomological displacement $[d_0, \infty]$ for some 
$d_0 \in \Z$, and $M \in \cat{D}^{-}_{\mrm{f}}(A)$.
Let $P \to M$ be a pseudo-finite semi-free resolution, with 
$\opn{sup}(P) = i_1$.
Take any $j \in \Z$, and consider the distinguished triangle 
$P' \to P \to P'' \xar{\vartriangle}$ where $P' := \nu_{j}(P)$. 
There is an exact sequence of $\bar{B}$-modules 
\begin{equation} \label{eqn:38}
\UseTips \xymatrix @C=5ex @R=5ex {
\opn{H}^{i-1}(F(P''))
\ar[r]
&
\opn{H}^{i}(F(P'))
\ar[r]
&
\opn{H}^{i}(F(P))
\ar[r]
&
\opn{H}^{i}(F(P'')) .
}
\end{equation}
By step 2 we know that 
$\opn{H}^{i}(F(P')) \in \cat{Mod}_{\mrm{f}} \bar{B}$ for every $i$.
If $i \leq d_0 - i_1 + j$ then 
$\opn{H}^{i-1}(F(P'')) = \opn{H}^{i}(F(P'')) = 0$. 
Therefore $\opn{H}^{i}(F(P)) \in \cat{Mod}_{\mrm{f}} \bar{B}$.
But $j$ can be made arbitrarily large. 
This proves that 
$F(M) \in \cat{D}^{}_{\mrm{f}}(B)$.
But on the other hand we know that 
$\opn{H}^{i}(F(M)) = 0$ for all $i < -i_1 + d_0$. 
We conclude that 
$F(M) \in \cat{D}^+_{\mrm{f}}(B)$.

\medskip \noindent
Step 4. Here $F$ has cohomological displacement at 
most $[d_0, d_1]$ for some $d_0 \leq d_1$ in $\Z$,
$M \in \cat{D}^{}_{\mrm{f}}(A)$, and $i \in \Z$. We truncate $M$ to obtain 
a distinguished triangle
$M' \to M \to M'' \xar{\vartriangle}$ in $\cat{D}_{\mrm{f}}(A)$, such that
$M' \in \cat{D}^{[-\infty, j + 1]}_{\mrm{f}}(A)$ and
$M'' \in \cat{D}^{[j + 1, \infty]}_{\mrm{f}}(A)$
for $j := i - d_1 - 3$.
We get an exact sequence like (\ref{eqn:38}).
The modules $\opn{H}^{i - 1}(F(M''))$ and $\opn{H}^{i}(F(M''))$ are zero, and 
$\opn{H}^{i}(F(M')) \in \cat{Mod}_{\mrm{f}} \bar{B}$ by step 3. 
Therefore $\opn{H}^{i}(F(M)) \in \cat{Mod}_{\mrm{f}} \bar{B}$. 
The condition on the boundedness of $\opn{H}(F(M))$ is established like in step 
3.
\end{proof}

\section{Reduction and Lifting} \label{sec:lifting}

Recall that by default all DG modules are left DG modules, and all DG rings are 
nonpositive (Convention \ref{conv:305}).   
In this section we study the canonical DG ring homomorphism 
$A \to \bar{A}$, and the corresponding reduction functor 
$\cat{D}(A) \to \cat{D}(\bar{A})$, 
$M \mapsto \bar{A} \ot^{\mrm{L}}_{A} M$.
We do not make any finiteness assumptions on the cohomology modules 
$\opn{H}^i(M)$.

A triangulated functor $F$ is called {\em conservative} if for any object 
$M$, $F(M) = 0$ implies $M = 0$; or equivalently, if for any morphism 
$\phi$, $F(\phi)$ is an isomorphism implies $\phi$ is an isomorphism.
Cf.\ \cite[Section 1.4]{KaSc}. The following result is analogous to the 
Nakayama Lemma (cf.\ Remark \ref{rem:317}).

\begin{prop} \label{prop:92}
Let $A$ be a DG ring. The reduction functor 
\[ \bar{A} \ot^{\mrm{L}}_{A} - : \cat{D}^{-}(A) \to \cat{D}^-(\bar{A}) \]
is conservative. 
\end{prop}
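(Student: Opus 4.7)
My plan is to prove the equivalent formulation: if $M \in \cat{D}^-(A)$ satisfies $\bar{A} \ot^{\mrm{L}}_A M \cong 0$, then $M \cong 0$. I will argue the contrapositive by showing that the top cohomology of $M$ is preserved under reduction. Setting $i_1 := \opn{sup}(\opn{H}(M))$, if $M$ is nonzero then $i_1 \in \Z$ and $\opn{H}^{i_1}(M) \neq 0$. The key identification I aim to establish is
\[ \opn{H}^{i_1}(\bar{A} \ot^{\mrm{L}}_A M) \cong \opn{H}^{i_1}(M) \]
as $\bar{A}$-modules; this supplies a nonzero cohomology module of the reduction and completes the proof.

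To compute the left-hand side, I invoke Proposition \ref{prop:101}(1) to choose a semi-free resolution $P \to M$ with the sharp bound $\opn{sup}(P) = i_1$; then, since $P$ is K-flat over $A$, the reduction is represented by the honest tensor product $\bar{A} \ot_A P$, and it suffices to inspect what happens in top degree. The combination of the nonpositivity of $A$ with the vanishing $P^{> i_1} = 0$ forces the only relations contributing in total degree $i_1$ to come from $A^0$, so $(\bar{A} \ot_A P)^{i_1} = \bar{A} \ot_{A^0} P^{i_1}$. The outgoing differential from this degree vanishes for the same reason, while the incoming one is induced by $1 \ot \d_P$, with image $\bar{A} \ot_{A^0} \d_P(P^{i_1 - 1})$. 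Taking the cokernel gives $\bar{A} \ot_{A^0} \opn{H}^{i_1}(P)$, and because the $A^0$-action on the top cohomology $\opn{H}^{i_1}(P) = \opn{H}^{i_1}(M)$ factors through $\bar{A}$ (by the Leibniz rule, elements of $\d(A^{-1})$ act trivially on cohomology), this simplifies to $\opn{H}^{i_1}(M)$, which is nonzero.

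I do not anticipate any serious obstacle; the core of the proof is the elementary top-degree calculation above, which is forced by the nonpositivity of $A$ together with the sharp amplitude bound provided by Proposition \ref{prop:101}(1). One mild point to verify is that $1 \ot \d_P$ descends to the extra relations (coming from $A^{-1}$) appearing in $(\bar{A} \ot_A P)^{i_1 - 1}$; this is immediate from the Leibniz rule and the fact that $\d(A^{-1})$ maps to zero in $\bar{A}$. In spirit this is a DG analogue of the Nakayama Lemma, consistent with the remark preceding the statement.
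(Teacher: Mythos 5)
Your proposal is correct and follows essentially the same route as the paper: choose a semi-free (hence K-flat) resolution $P \to M$ with $\opn{sup}(P) = \opn{sup}(\opn{H}(M)) = i_1$ via Proposition \ref{prop:101}(1), and identify $\opn{H}^{i_1}(\bar{A} \ot_A P) \cong \bar{A} \ot_A \opn{H}^{i_1}(P) \cong \opn{H}^{i_1}(M) \neq 0$. The paper simply cites this identification as the ``K\"unneth trick,'' whereas you spell out the same top-degree computation explicitly (including the correct observation that $\d(A^{-1})$ acts trivially, so the $\bar{A}$-action is well defined); there is no essential difference.
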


\begin{proof}
Take $M \in \cat{D}^{-}(A)$ not isomorphic to $0$, and let 
$i_1 := \opn{sup}(\opn{H}(M))$.
We can find a K-flat resolution (e.g.\ a semi-free resolution) $P \to M$ over 
$A$ such that $\opn{sup}(P) = i_1$.
Then $\bar{A} \ot^{\mrm{L}}_A M \cong \bar{A} \ot^{}_A P$, and  
(by the ``K\"unneth trick'')
\[ \opn{H}^{i_1}(\bar{A} \ot^{\mrm{L}}_A M) \cong 
\opn{H}^{i_1}(\bar{A} \ot^{}_A P) \cong 
\bar{A} \ot^{}_A \opn{H}^{i_1}(P) \cong \opn{H}^{i_1}(M) \]
is nonzero. Hence $\bar{A} \ot^{\mrm{L}}_A M$ is nonzero. 
\end{proof}

Given a homomorphism $\phi : P \to Q$ in $\cat{C}(A)$, we denote by 
$\opn{cone}(\phi)$ the corresponding cone, which is also an object of 
$\cat{C}(A)$. 

\begin{lem} \label{lem:33}
Suppose $\phi : P \to Q$ is a homomorphism in $\cat{C}(A)$,
where $P$ and $Q$ are pseudo-finite \tup{(}resp.\ finite\tup{)} semi-free DG 
modules. Then $\opn{cone}(\phi)$ is a pseudo-finite \tup{(}resp.\ finite\tup{)} 
semi-free DG module. 
\end{lem}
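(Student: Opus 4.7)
The plan is to reduce the statement to the graded module characterization given in Proposition \ref{prop:100}, thereby avoiding the need to construct a semi-free filtration of the cone by hand.

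First, I would recall that as a graded $A^{\natural}$-module, $\opn{cone}(\phi)^{\natural} \cong P[1]^{\natural} \oplus Q^{\natural}$, with a twisted differential built from $\d_P$, $\d_Q$ and $\phi$. Since Proposition \ref{prop:100} characterizes pseudo-finite semi-free DG modules purely by the shape of their underlying graded module, it suffices to check this shape for $\opn{cone}(\phi)$.

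Next, assume $P$ and $Q$ are pseudo-finite semi-free. By Proposition \ref{prop:100}(1), there exist integers $i_1, i_1'$ and natural numbers $r_i, s_i$ (for $i \leq i_1$ and $i \leq i_1'$ respectively) such that
\[ P^{\natural} \cong \bigoplus_{i \leq i_1} A^{\natural}[-i]^{\oplus r_i} \quad \text{and} \quad Q^{\natural} \cong \bigoplus_{i \leq i_1'} A^{\natural}[-i]^{\oplus s_i} \]
as graded $A^{\natural}$-modules. Reindexing the shift for $P[1]^{\natural}$, I obtain
\[ \opn{cone}(\phi)^{\natural} \cong \bigoplus_{i \leq \max(i_1 - 1,\, i_1')} A^{\natural}[-i]^{\oplus t_i} \]
where $t_i := r_{i+1} + s_i$ (with the convention $r_i = 0$ for $i > i_1$ and $s_i = 0$ for $i > i_1'$). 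Each $t_i$ is a finite natural number, so invoking Proposition \ref{prop:100}(1) in the other direction shows $\opn{cone}(\phi)$ is pseudo-finite semi-free.

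For the finite case, assume additionally that there are $i_0, i_0' \in \Z$ with $r_i = 0$ for $i < i_0$ and $s_i = 0$ for $i < i_0'$. Then $t_i = 0$ for $i < \min(i_0 - 1, i_0')$, so by Proposition \ref{prop:100}(2) the cone $\opn{cone}(\phi)$ is finite semi-free. There is no real obstacle here, since the content of the argument is entirely captured by Proposition \ref{prop:100}; the only care required is bookkeeping the shift in the index induced by $P \mapsto P[1]$ and confirming that the direct sum of two graded modules of the prescribed form is again of that form.
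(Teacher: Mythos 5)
Your proof is correct and follows exactly the route the paper takes: the paper's own proof of this lemma is simply ``Clear from Proposition \ref{prop:100}'', and your argument is a careful write-up of that observation, using $\opn{cone}(\phi)^{\natural} \cong P[1]^{\natural} \oplus Q^{\natural}$ and the graded-module characterization. The shift bookkeeping ($t_i = r_{i+1} + s_i$) is handled correctly in both the pseudo-finite and finite cases.
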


\begin{proof}
Clear from Proposition \ref{prop:100}.
\end{proof}

\begin{prop} \label{prop:305}  
Let $M \in \cat{D}^-(A)$. We write
$\bar{M} := \bar{A} \ot^{\mrm{L}}_A M \in \cat{D}^-(\bar{A})$. 
\begin{enumerate}
\item If $\bar{M}$ is isomorphic in $\cat{D}(\bar{A})$ to 
$\bar{A}^{\oplus r}$ for some cardinal number $r$, then 
$M$ is isomorphic in $\cat{D}(A)$ to $A^{\oplus r}$.

\item If $\bar{M}$ is isomorphic in $\cat{D}(\bar{A})$ to a semi-free DG 
$\bar{A}$-module $\bar{P}$ of semi-free length $d$, then $M$ is 
isomorphic in $\cat{D}(A)$ to a semi-free DG $A$-module $P$ of semi-free 
length $d$.
\end{enumerate}
\end{prop}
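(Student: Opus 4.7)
The strategy is to construct an explicit morphism $\phi$ in $\cat{D}(A)$ lifting the given isomorphism after reduction, and then appeal to the conservativity of the reduction functor $\bar{A} \otimes_A^{\mrm{L}} -$ proved in Proposition~\ref{prop:92} to conclude that $\phi$ is itself an isomorphism in $\cat{D}(A)$.

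For part (1), the hypothesis forces $\opn{sup}(\opn{H}(\bar{M})) = 0$; combining this with the top-degree K\"unneth identity used in the proof of Proposition~\ref{prop:92}, I obtain $\opn{sup}(\opn{H}(M)) = 0$ and $\opn{H}^0(M) \cong \opn{H}^0(\bar{M}) \cong \bar{A}^{\oplus r}$. I would choose a semi-free resolution $P' \to M$ with $\opn{sup}(P') = 0$ (Proposition~\ref{prop:101}(1)), lift an $\bar{A}$-basis of $\opn{H}^0(M)$ to cocycles $z_1, \ldots, z_r \in (P')^0$, and define a DG $A$-linear map $\phi \colon A^{\oplus r} \to P'$ sending the $k$-th generator to $z_k$. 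The reduction $\bar{\phi} \colon \bar{A}^{\oplus r} \to \bar{P}'$ is a quasi-isomorphism, since both sides have cohomology concentrated in degree $0$ and the induced map on $\opn{H}^0$ is the chosen basis identification; Proposition~\ref{prop:92} then yields $M \cong A^{\oplus r}$ in $\cat{D}(A)$.

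For part (2), I would proceed by induction on the semi-free length $d$. In the base case $d = 0$ the DG $\bar{A}$-module $\bar{P}$ is free, so $\bar{P} \cong \bigoplus_i \bar{A}[-i]^{\oplus r_i}$ with zero differential, and the candidate lift is the free DG $A$-module $P := \bigoplus_i A[-i]^{\oplus r_i}$; a morphism $\phi \colon P \to P'$ into a semi-free resolution $P' \to M$ with $\opn{sup}(P') = \opn{sup}(\opn{H}(\bar{M}))$ is built generator by generator, processing degrees from the top downward so that lifts of cocycle representatives in $\bar{P}'$ can be corrected during the descent. For the inductive step $d \geq 1$, apply the distinguished triangle $\nu_{d-1}(\bar{P}) \to \bar{P} \to \opn{gr}^\nu_d(\bar{P}) \xar{\vartriangle}$ in $\cat{D}(\bar{A})$: by the inductive hypothesis the first term lifts to a semi-free DG $A$-module $Q$ of length at most $d-1$ equipped with a morphism $Q \to M$ in $\cat{D}(A)$ whose reduction matches the given one, and by the base case the cofiber of $Q \to M$ is isomorphic to a free DG $A$-module. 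Assembling these via the extension structure, with Lemma~\ref{lem:33} confirming that the result is semi-free of length $d$, produces the required $P$.

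The main obstacle throughout is the cocycle-lifting step: while any element of $(\bar{P}')^i$ lifts to $(P')^i$, a lift of a cocycle is not automatically a cocycle, so the construction must proceed degree by degree and correct successive lifts using the semi-free structure of $P'$ and of $\bar{P}$. Once a morphism $\phi \colon P \to M$ with $\bar{\phi}$ a quasi-isomorphism has been produced, Proposition~\ref{prop:92} closes the argument.
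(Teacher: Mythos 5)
Part (1) of your proposal is correct and is essentially the paper's own argument: the top-degree K\"unneth identity together with conservativity of $\bar{A} \ot^{\mrm{L}}_A -$ (Proposition \ref{prop:92}), with the comparison morphism produced by lifting generators into the top degree of a resolution, where every element is automatically a cocycle.

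Part (2) has a genuine gap, in two places. First, your base case ($\bar{P}$ free, i.e.\ $\bar{P} \cong \bigoplus_i \bar{A}[-i]^{\oplus r_i}$ with generators possibly in many degrees) already carries the entire difficulty of the proposition, and the mechanism you offer for it --- lifting cocycle representatives degree by degree from the top and ``correcting during the descent'' --- is exactly the step that does not work as stated. A DG homomorphism $\bigoplus_i A[-i]^{\oplus r_i} \to P'$ amounts to choosing cocycles of $P'$ in each degree $i$, and the class such a cocycle induces in $\opn{H}^i(\bar{A} \ot_A P') \cong \opn{H}^i(\bar{M})$ always lies in the image of the natural map $\opn{H}^i(M) \to \opn{H}^i(\bar{M})$. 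Only in the top degree $i_1 = \opn{sup}(\opn{H}(M))$ does the K\"unneth argument guarantee that this map is surjective (indeed bijective); in lower degrees there is no a priori reason that a basis of $\opn{H}^i(\bar{M})$ is reachable, and establishing that it is, is essentially equivalent to the proposition itself --- so your sketch is circular at its core. The paper's proof is built around precisely this point: it maps only the top-degree free summand $\bar{P}' = \bar{A}[-i_2]^{\oplus r_2}$ of $\bar{P}$ into $\bar{M}$, lifts that morphism to $\psi : A[-i_2]^{\oplus r_2} \to M$ (unobstructed, since every element of $M^{i_2}$ is a cocycle, or $M^{i_2} = 0$), passes to $\opn{cone}(\psi)$, identifies its reduction with $\opn{Coker}(\bar{\al})$ by extending a commutative square to an isomorphism of distinguished triangles, and inducts on $\opn{amp}(\bar{P})$ --- after each cone there is a new top degree where lifting is again unobstructed --- finally assembling $P$ by turning the triangle and invoking Lemma \ref{lem:33}. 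Second, your inductive step on the semi-free length misapplies the inductive hypothesis: the proposition takes as input a DG $A$-module $M$ whose reduction has a special form; it does not assert that a given DG $\bar{A}$-module such as $\nu_{d-1}(\bar{P})$ lifts to a DG $A$-module $Q$, let alone that it lifts together with a morphism $Q \to M$ in $\cat{D}(A)$ reducing to the composite $\nu_{d-1}(\bar{P}) \to \bar{P} \cong \bar{M}$. Without such a lift-with-morphism, and the compatible identification of the reduction of its cofiber with $\opn{gr}^{\nu}_d(\bar{P})$, the triangle you want to assemble in $\cat{D}(A)$ is simply not available. Replacing the filtration-peeling induction by the paper's top-degree cone-peeling induction on the amplitude of $\bar{P}$ repairs both issues.
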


Observe that the DG $\bar{A}$-module $\bar{P}$ in (2) above is nothing but a
bounded complex of free $\bar{A}$-modules; cf.\ Proposition \ref{prop:100}.
The semi-free length was introduced in Definition \ref{dfn:311}.

\begin{proof}
Step 1. 
In view of Proposition \ref{prop:92} we can assume that $\bar{A}$ and $M$ are 
nonzero. Define $i_1 := \opn{sup}(\opn{H}(M))$. 
By replacing $M$ with a suitable resolution of it, we can assume that $M$ is a 
K-flat DG $A$-module satisfying $\opn{sup}(M) =  i_1$.
After that we can also assume that $\bar{M} = \bar{A} \ot_A M$. 
The K\"unneth formula says that
\[ \opn{H}^{i_1}(\bar{M}) \cong 
\opn{H}^{i_1}(\bar{A} \ot_{A} M) \cong \bar{A} \ot_{A} \opn{H}^{i_1}(M)
\cong \opn{H}^{i_1}(M) \]
as $\bar{A}$-modules. Therefore  
$\opn{sup}(\opn{H}(\bar{M})) = \opn{sup}(\bar{M}) = i_1$.

We are given an isomorphism $\bar{\phi} : \bar{P} \to \bar{M}$ in 
$\cat{D}(\bar{A}^0)$, where $\bar{P}$ is a bounded complex of free 
$\bar{A}$-modules. 
Since $\bar{P}$ is K-projective, we can assume that the isomorphism 
$\bar{\phi} : \bar{P} \to \bar{M}$ in $\cat{D}(\bar{A})$
is in fact a quasi-isomorphism in $\cat{C}(\bar{A})$.
The proof continues by induction on $j := \opn{amp}(\bar{P}) \in \N$.
Note that in item (1) we have $i_1 = 0$, $\bar{P} = \bar{A}^{\oplus r}$ and 
$j = 0$. 

\medskip \noindent
Step 2. In this step we assume that $j = 0$. This means that the 
only nonzero term of $\bar{P}$ is
in degree $i_1$, and it is 
the free module $\bar{P}^{i_1} \cong \bar{A}^{\oplus r_1}$
for some cardinal number $r_1$. In other words, 
$\bar{P} \cong \bar{A}[-i_1]^{\oplus r_1}$ in $\cat{C}(\bar{A})$.
So $\opn{H}^{i_1}(\bar{M}) \cong \bar{P}^{i_1}$, and 
$\opn{H}^{i}(\bar{M}) = 0$ for all $i \neq i_1$.
Recall the isomorphism of $\bar{A}$-modules
$\opn{H}^{i_1}(\bar{M}) \cong \opn{H}^{i_1}(M)$ from Step 1.  
There are canonical surjections 
$M^{i_1} \to \opn{H}^{i_1}(M)$ and
$\bar{M}^{i_1} \to \opn{H}^{i_1}(\bar{M})$.
We can write the quasi-isomorphism $\bar{\phi}$ as 
$\bar{\phi} : \bar{A}[-i_1]^{\oplus r_1} \to \bar{M}$.
 
Consider the nonderived reduction functor
$F : \cat{C}(A) \to \cat{C}(\bar{A})$, $F(-) := \bar{A} \ot_{A} -$.
Let $P := A[-i_1]^{\oplus r_1}$, a free DG $A$-module
satisfying $F(P) \cong \bar{P}$.  
There exists a homomorphism 
$\phi : P \to M$ in $\cat{C}(A)$ that lifts 
the quasi-isomorphism
$\bar{\phi} : \bar{P} \to \bar{M}$,
namely $\bar{\phi} = F(\phi)$. 
Now the DG modules $P$ and $M$ are K-flat, so 
$\bar{\phi} = \mrm{L} F(\phi)$. 
Since $\bar{\phi}$ is an isomorphism, and since the functor 
$\mrm{L} F$ is conservative, we conclude that $\phi$ is an isomorphism.
This proves item (1).

\medskip \noindent
Step 3. Here we suppose that $j \geq 1$.
Let $i_2 := \opn{sup} (\bar{P})$, which is of course $\geq i_1$. 
Say $\bar{P}^{i_2} \cong \bar{A}^{\oplus r_2}$ for some natural number $r_2$.
Define DG modules $\bar{P}' := \bar{A}[-i_2]^{\oplus r_2}$ and 
$P' := A[-i_2]^{\oplus r_2}$; these satisfy
$\bar{P}' \cong \bar{A} \ot_A P'$. 
The inclusion $\bar{P}^{i_2} \subseteq \bar{P}$ is viewed as a DG module 
homomorphism $\bar{\al} : \bar{P}' \to \bar{P}$.
We also have a quasi-isomorphism 
$\bar{\phi} : \bar{P} \to \bar{M}$ and an equality 
$\bar{M} = \bar{A} \ot_A M$ in $\cat{C}(\bar{A})$.
In this way we obtain a homomorphism 
$\bar{\psi} :  \bar{P}' \to \bar{M}$,
$\bar{\psi} := \bar{\phi} \circ \bar{\al}$. 
Because $P'$ is a free DG $A$-module, there is a homomorphism 
$\psi :  P' \to M$ in $\cat{C}(A)$ lifting $\bar{\psi}$, namely 
$\bar{\psi} = F(\psi)$, where $F$ is the functor $\bar{A} \ot_A -$. 

Let $M'' \in \cat{C}(A)$ be the cone of $\psi$, so there is a distinguished 
triangle 
\begin{equation} \label{eqn:30}
P' \xar{\psi} M \xar{\chi} M'' \xar{} P'[1] 
\end{equation}
in $\cat{D}(A)$. Define $\bar{M}'' := F(M'')$ and $\bar{\chi} := F(\chi)$,  
which are an object and a morphism in $\cat{C}(\bar{A})$, respectively.  
Since all three DG modules in this triangle are K-flat, it follows that 
\[ \bar{P}' \xar{\bar{\psi}} \bar{M} \xar{\bar{\chi}} \bar{M}'' 
\xar{} \bar{P}'[1] \]
is a distinguished triangle in $\cat{D}(\bar{A})$.
On the other hand, let $\bar{P}''$ be the cokernel of the inclusion 
$\bar{\al} : \bar{P}' \to \bar{P}$. So there is a distinguished triangle
\[ \bar{P}' \xar{\bar{\al}} \bar{P} \xar{\bar{\be}} \bar{P}'' 
\xar{} \bar{P}'[1] \]
in $\cat{D}(\bar{A})$. 
Consider the diagram of solid arrows in $\cat{D}(\bar{A})$~: 
\[  \UseTips \xymatrix @C=5ex @R=5ex {
\bar{P}'
\ar[r]^{\bar{\al}} 
\ar[d]^{=}
&
\bar{P} 
\ar[r]^{\bar{\be}} 
\ar[d]^{\bar{\phi}} 
&
\bar{P}'' 
\ar[r]^{}
\ar@{-->}[d]^{\bar{\phi}''} 
&
\bar{P}'[1]
\ar[d]^{=}
\\
\bar{P}'
\ar[r]^{\bar{\psi}} 
&
\bar{M} 
\ar[r]^{\bar{\chi}} 
&
\bar{M}'' 
\ar[r]^{}
&
\bar{P}'[1]
} \]
The square on the left is commutative, and therefore it extends to an 
isomorphism of distinguished triangles. So there is an isomorphism
$\bar{\phi}'' : \bar{P}'' \to \bar{M}''$
in $\cat{D}(\bar{A})$.

Finally, the complex $\bar{P}''$ is a bounded complex of finite free 
$\bar{A}$-modules, of amplitude $j-1 \geq 0$. According to the induction 
hypothesis (step 2 for $j > 1$, and step 1 for $j = 1$) there is an isomorphism 
$\phi'' : P'' \iso M''$ in $\cat{D}(A)$ for some finite semi-free DG 
$A$-module $P''$. From (\ref{eqn:30}) we obtain a distinguished triangle 
\[ P' \xar{\psi} M \xar{\psi} P'' \xar{\ga} P'[1] \]
in $\cat{D}(A)$. We can assume that $\ga$ is a homomorphism in 
$\cat{C}(A)$. Turning this triangle we get a distinguished triangle 
\[ P''[-1] \xar{- \ga[-1]} P' \xar{\psi} M \xar{\vartriangle} . \]
Define $P$ to be the cone on the homomorphism 
$- \ga[-1] : P''[-1] \to P'$. Then $P \cong M$ in $\cat{D}(A)$,
and by Lemma \ref{lem:33}, $P$ is a finite semi-free DG $A$-module. 
\end{proof}

We learned the next proposition from B. Antieau and J. Lurie. Cf.\ 
\cite[Corollary 7.2.2.19]{Lu1} for a more general statement. 

\begin{prop} \label{prop:310}
Let $\bar{P}$ be a projective $\bar{A}$-module. Then there exists a DG 
$A$-module $P$ with these properties:
\begin{enumerate}
\rmitem{i} $P$ is a direct summand, in $\cat{D}(A)$, of a direct sum of copies 
of $A$.
\rmitem{ii} 
$\bar{A} \ot^{\mrm{L}}_A P \cong \bar{P}$
in $\cat{D}(\bar{A})$.
\end{enumerate} 
\end{prop}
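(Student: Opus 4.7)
The plan is to realise $P$ as the splitting of a lifted idempotent. Since $\bar{P}$ is projective over $\bar{A}$, it is a direct summand of a free module; thus there is some cardinal $r$ and an idempotent $\bar{e} \in \opn{End}_{\bar{A}}(\bar{A}^{\oplus r})$ whose image is isomorphic to $\bar{P}$. I will lift $\bar{e}$ to an idempotent endomorphism $e$ of $A^{\oplus r}$ in $\cat{D}(A)$, then split $e$ inside $\cat{D}(A)$ to produce $P$ as a direct summand of $A^{\oplus r}$.

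For the lifting step, I will show that the reduction functor induces a ring bijection
\[ \opn{End}_{\cat{D}(A)}(A^{\oplus r}) \xrightarrow{\simeq} \opn{End}_{\cat{D}(\bar{A})}(\bar{A}^{\oplus r}) . \]
Indeed, $A^{\oplus r}$ and $\bar{A}^{\oplus r}$ are K-projective as direct sums of free DG modules, so the respective endomorphism rings are computed as $\opn{H}^0$ of the Hom-complexes. Using the adjunction $\opn{Hom}_A(A^{\oplus r}, N) \cong \prod_{i \in I} N$ with $|I|=r$, together with the facts that $\bar{A} \ot_A A^{\oplus r} = \bar{A}^{\oplus r}$ and that products of modules are exact (so $\opn{H}^0$ commutes with the product), both sides identify canonically with $\prod_{i \in I} \bar{A}^{\oplus r}$, and the reduction map is the identity under these identifications. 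In particular $\bar{e}$ lifts to a unique idempotent $e \in \opn{End}_{\cat{D}(A)}(A^{\oplus r})$.

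Next, because $\cat{D}(A)$ is a triangulated category admitting arbitrary (in particular countable) coproducts, idempotents split, by the theorem of B\"okstedt--Neeman. Applied to $e$, this yields a decomposition $A^{\oplus r} \cong P \oplus Q$ in $\cat{D}(A)$, where $P$ is the image of $e$ and $Q$ the image of $1 - e$. By construction $P$ satisfies condition~(i). Applying the triangulated reduction functor $\bar{A} \ot^{\mrm{L}}_A -$ to this decomposition gives
\[ \bar{A}^{\oplus r} \cong (\bar{A} \ot^{\mrm{L}}_A P) \oplus (\bar{A} \ot^{\mrm{L}}_A Q) , \]
where the projection onto the first summand corresponds to the idempotent $\bar{e}$. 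Hence $\bar{A} \ot^{\mrm{L}}_A P \cong \opn{image}(\bar{e}) \cong \bar{P}$ in $\cat{D}(\bar{A})$, giving condition~(ii).

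The main obstacle is the splitting step: we must invoke a genuinely infinitary fact (B\"okstedt--Neeman-style splitting of idempotents in a triangulated category with countable coproducts), since for infinite $r$ we cannot realise $P$ as a bounded cone over $A^{\oplus r}$. If one prefers to avoid citing this theorem, one can instead construct $P$ explicitly as the mapping telescope $\opn{hocolim}(A^{\oplus r} \xar{e} A^{\oplus r} \xar{e} \cdots)$, which is a valid object of $\cat{D}(A)$ and a direct summand of $A^{\oplus r} \oplus A^{\oplus r} \oplus \cdots$, still satisfying condition~(i).
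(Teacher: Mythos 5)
Your argument is correct, but it takes a genuinely different route from the paper's. You compute
$\opn{End}_{\cat{D}(A)}(A^{\oplus r}) \cong \prod_{i \in I} \opn{H}^0(A^{\oplus r}) \cong \opn{End}_{\bar{A}}(\bar{A}^{\oplus r})$,
lift $\bar{e}$ to an honest idempotent $e$ in $\cat{D}(A)$, and then invoke the B\"okstedt--Neeman theorem (idempotents split in a triangulated category with countable coproducts) to obtain $P$ as a direct summand of $A^{\oplus r}$; applying the additive functor $\bar{A} \ot^{\mrm{L}}_A -$ then identifies $\bar{A} \ot^{\mrm{L}}_A P$ with a splitting of $\bar{e}$, hence with $\bar{P}$. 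All of these steps check out. The paper, by contrast, avoids both of your key ingredients: it lifts $\bar{e}$ only to a chain-level endomorphism $\phi$ of the free DG module $F$, with no claim that $\phi$ is idempotent, defines $P := \opn{cone}(\Phi)$ as an explicit homotopy-colimit (telescope) of $\phi$, and uses K-flatness to see that $\bar{A} \ot^{\mrm{L}}_A P \cong \opn{cone}(\bar{\Phi}) \cong \bar{P}$; condition (i) is then obtained not by splitting an idempotent but by running the same construction for the complementary summand $\bar{Q}$ and applying Proposition \ref{prop:305}(1) to conclude $P \oplus Q \cong F$ in $\cat{D}(A)$. Your route buys a shorter, more conceptual proof at the price of citing idempotent completeness of $\cat{D}(A)$; the paper's route is more hands-on but entirely self-contained, relying only on its own telescope construction and Proposition \ref{prop:305}(1). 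One caution about your closing remark: the assertion that the telescope of $e$ is a direct summand of $A^{\oplus r} \oplus A^{\oplus r} \oplus \cdots$ is not free of charge --- proving that the telescope of an idempotent is a summand of anything is essentially the content of the B\"okstedt--Neeman argument (or, in the paper's version, of the appeal to Proposition \ref{prop:305}(1)), so that parenthetical alternative does not actually avoid the theorem you were trying to sidestep.
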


\begin{proof}
Say $\bar{P}$ is a direct summand, in $\cat{Mod} \bar{A}$, of a free 
$\bar{A}$-module $\bar{F}$. So $\bar{P}$ is the image of an idempotent 
endomorphism $\bar{\phi} : \bar{F} \to \bar{F}$. Let $F$ be the direct sum 
in $\cat{C}(A)$ of copies of $A$, as many as there are copies of $\bar{A}$ in 
$\bar{F}$. There is a canonical surjection $F \to \bar{F}$. Choose any 
homomorphism $\phi : F \to F$ in $\cat{C}(A)$ lifting $\bar{\phi}$.
Note that many such $\phi$ exist; and they aren't necessarily idempotents. Let 
$P$ be the homotopy colimit construction on $\phi$. Namely we let
\[ \Phi : \bigoplus\nolimits_{i \in \N} \, F \ \to \ 
\bigoplus\nolimits_{i \in \N} \, F \]
be the homomorphism
\[  \Phi(a_0, a_1, a_2, ...) := (a_0, a_1 - \phi(a_0), a_2 - \phi(a_1), ...) \]
in $\cat{C}(A)$, and then we define 
$P := \opn{cone}(\Phi) \in \cat{C}(A)$. 
Because $P$ is K-flat over $A$, we see that  
$\bar{A} \ot^{\mrm{L}}_A P \cong \bar{A} \ot_A P \cong 
\opn{cone}(\bar{\Phi})$,
where 
\[ \bar{\Phi} : \bigoplus\nolimits_{i \in \N} \, \bar{F} \ \to \ 
\bigoplus\nolimits_{i \in \N} \, \bar{F} \]
is the homotopy colimit construction on $\bar{\phi}$.
An easy calculation, using the fact that $\bar{\phi}$ is an idempotent with 
image $\bar{P}$,  shows that 
$\opn{cone}(\bar{\Phi}) \cong \bar{P}$ in $\cat{D}(\bar{A})$. 
This proves (ii).

As for (i): say $\bar{Q}$ is the other direct summand of $\bar{F}$
in $\cat{Mod} \bar{A}$.
We can lift it to a DG $A$-module $Q$ as above. Now
$\bar{P} \oplus \bar{Q} \cong \bar{F}$
in $\cat{D}(\bar{A})$. By Proposition \ref{prop:305}(1) we deduce that
$P \oplus Q \cong F$ in $\cat{D}(A)$.
\end{proof}

\section{Localization of Commutative DG Rings} \label{sec:localiz}

In this section we specialize to DG rings satisfying a commutativity 
condition. Such a DG ring $A$ can be localized on $\opn{Spec} \bar{A}$. 

\begin{dfn} \label{dfn:305}
Let $A = \bigoplus_{i \in \Z} A^i$ be a DG ring. 
\begin{enumerate}
\item $A$ is called {\em weakly commutative} if 
$b \cd a =  (-1)^{i j} \cd a \cd b$ 
for all $a \in A^{i}$ and $b \in A^{j}$.

\item $A$ is called {\em strongly commutative} if it is weakly commutative, and 
if $a \cd a = 0$ for all $a \in A^{i}$ with $i$ odd.

\item $A$ is called a {\em commutative DG ring} if it is  strongly commutative 
and nonpositive. 
\end{enumerate}
\end{dfn}

If $A$ is weakly commutative, then any left DG 
$A$-module $M$ can be viewed as a
right DG $A$-module. The formula for the right action is this: 
$m \cd a := (-1)^{i j} \cd a \cd m$ 
for $a \in A^i$ and $m \in M^j$.

The full subcategory of $\cat{DGR}$ on the commutative DG rings 
is denoted by $\cat{DGR}^{\leq 0}_{\mrm{sc}}$.

\begin{conv} \label{conv:100}
By default, all DG rings from here on in the paper are 
commutative (Definition \ref{dfn:305}(3)), unless explicitly stated otherwise; 
i.e.\ we work inside $\cat{DGR}^{\leq 0}_{\mrm{sc}}$.
In particular all rings are commutative by default. 
\end{conv}

Since our DG rings are now commutative, we can talk about cohomologically 
pseudo-noetherian DG rings, instead of cohomologically left pseudo-noetherian 
ones (Definition \ref{dfn:391}).

\begin{rem} \label{rem:580}
In \cite{YZ1}, and in earlier versions of this paper, we used the name 
``super-commutative'' for what we now call ``strongly commutative''. The book 
\cite{ML} uses the term ``strictly commutative''. 

Of course when $2$ is invertible in the ring $A^0$ (e.g.\ in characteristic 
$0$), there is no difference between weakly commutative and strongly 
commutative DG rings.

The reader may wonder why we chose to work in the category 
$\cat{DGR}^{\leq 0}_{\mrm{sc}}$ of strongly commutative nonpositive DG rings, 
and not in the category $\cat{DGR}^{\leq 0}_{\mrm{wc}}$ of
weakly commutative ones. Here is the reason. 

Consider any nontrivial commutative 
base ring $\K$ (e.g.\ $\K = \Z$), and the corresponding slice category 
$\cat{DGR}^{\leq 0}_{\mrm{sc}} / \K$.
Let $X$ be a set of nonpositive graded variables. 
The strongly commutative polynomial ring 
in $X$ (see \cite[Definition 3.10]{Ye5}) is denoted by $\K_{\mrm{sc}}[X]$. 
It is easy to see that $\K_{\mrm{sc}}[X]$ is a flat $\K$-module. 
A DG ring $A \in \cat{DGR}^{\leq 0}_{\mrm{sc}} / \K$
is called semi-free if $A^{\natural} \cong \K_{\mrm{sc}}[X]$ for some $X$.
Such a semi-free DG ring $A$ is a bounded above complex of flat $\K$-modules, 
and thus it is a K-flat DG $\K$-module. We had shown in \cite{YZ1} that there 
are enough semi-free resolutions in $\cat{DGR}^{\leq 0}_{\mrm{sc}} / \K$.
Therefore there are enough K-flat resolutions in 
$\cat{DGR}^{\leq 0}_{\mrm{sc}} / \K$.

On the other hand, let us examine the category 
$\cat{DGR}^{\leq 0}_{\mrm{wc}} / \Z$.
The weakly commutative polynomial ring $\Z_{\mrm{wc}}[x]$ over $\Z$, 
in a single variable $x$ of degree $-1$, has this structure as a graded 
$\Z$-module:
\[ \Z_{\mrm{wc}}[x] = \Z \oplus (\Z \cd x) \oplus 
\bigl( \smfrac{\Z}{(2)} \cd x^2 \bigr) \oplus 
\bigl( \smfrac{\Z}{(2)} \cd x^3 \bigr)  \oplus \cdots . \] 
We see that it is not flat over $\Z$. Moreover, it is not hard to see that if
$A$ is any object in $\cat{DGR}^{\leq 0}_{\mrm{wc}} / \Z$
such that $A^{\natural} \cong \Z_{\mrm{wc}}[x]$,
then $A$ is not a K-flat DG $\Z$-module. 
Similarly, it seems that any semi-free object 
$A \in \cat{DGR}^{\leq 0}_{\mrm{wc}}$ that has a nontrivial degree $-1$ 
component, also fails to be a K-flat DG 
$\Z$-module. The upshot is that, most likely, {\em there are not enough 
K-flat resolutions in $\cat{DGR}^{\leq 0}_{\mrm{wc}} / \Z$}.
\end{rem}

\begin{prop} \label{prop:311}
Let $A$ be a DG ring. The action of $A^0$ on $\cat{C}(A)$ 
makes $\cat{D}(A)$ into an $\bar{A}$-linear category.
\end{prop}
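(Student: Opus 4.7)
The plan is to construct a ring homomorphism from $\bar{A}$ into the center of $\cat{D}(A)$ (i.e.\ into $\opn{End}(\mathrm{Id}_{\cat{D}(A)})$) by factoring the obvious action of $A^0$ through the homotopy category and then through $\d$-coboundaries.

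First, for each $a \in A^0$ and each $M \in \cat{M}(A)$, consider left multiplication $\lambda^M_a : M \to M$, $m \mapsto a \cd m$. Since $A$ is nonpositive we have $A^1 = 0$, so $\d(a) = 0$; the graded Leibniz rule then gives $\d(a \cd m) = a \cd \d(m)$, so $\lambda^M_a$ commutes with the differential. Commutativity of $A$ (Convention \ref{conv:100}) ensures $\lambda^M_a$ is $A$-linear in the graded sense: for $a' \in A^j$ and $m \in M$, $a \cd (a' \cd m) = (a \cd a') \cd m = (a' \cd a) \cd m = a' \cd (a \cd m)$ (no sign appears since $a$ has degree $0$). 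Hence $\lambda^M_a \in \opn{End}_{\cat{M}(A)}(M)$, and $a \mapsto \lambda^M_a$ is a ring homomorphism $A^0 \to \opn{End}_{\cat{M}(A)}(M)$ that is natural in $M$: any homomorphism $\phi : M \to N$ in $\cat{M}(A)$ satisfies $\phi \circ \lambda^M_a = \lambda^N_a \circ \phi$. Thus we obtain a ring homomorphism from $A^0$ to the center of the DG category $\cat{M}(A)$.

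Next I pass to the homotopy category $\cat{K}(A)$ and show the induced central action of $A^0$ factors through $\bar{A} = A^0 / \d(A^{-1})$. For $b \in A^{-1}$ set $a := \d(b) \in A^0$; we claim the morphism $\lambda^M_a$ is null-homotopic. Indeed, since $b$ has degree $-1$, the Leibniz rule gives $\d(b \cd m) = \d(b) \cd m - b \cd \d(m) = a \cd m - b \cd \d(m)$, so $\lambda^M_a = \d_M \circ \lambda^M_b + \lambda^M_b \circ \d_M$, exhibiting $\lambda^M_b$ (a degree $-1$ map) as a homotopy from $\lambda^M_a$ to $0$. So in $\cat{K}(A)$, and a fortiori in $\cat{D}(A)$, the element $a$ acts as zero. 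This gives a well-defined central ring homomorphism $\bar{A} \to \opn{End}(\mathrm{Id}_{\cat{D}(A)})$.

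Finally this $\bar{A}$-action is compatible with composition in $\cat{D}(A)$ by centrality, and it is additive, so $\opn{Hom}_{\cat{D}(A)}(M, N)$ becomes a $\bar{A}$-module for which composition is $\bar{A}$-bilinear. No single step is hard; the only point that requires a moment's care is the sign check in the null-homotopy formula (the $-1$ coming from $|b| = -1$ in the Leibniz rule), and the verification that the $\bar{A}$-action on $\opn{Hom}_{\cat{D}(A)}(M, N)$ computed via $M$ and via $N$ agree, which is immediate from the naturality established in the first step.
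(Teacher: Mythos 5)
Your proof is correct and follows essentially the same route as the paper: the key point in both is that for $b \in A^{-1}$ the element $\d(b) \in A^0$ acts null-homotopically (your homotopy $\lambda^M_b$ is exactly the coboundary $\d(b \cd \phi)$ in the paper's Hom-complex formulation), so the $A^0$-action on $\cat{K}(A)$ factors through $\bar{A} = A^0/\d(A^{-1})$ and then descends to the localization $\cat{D}(A)$. The extra verifications you include (chain-map property, naturality, sign check) are fine and just make explicit what the paper leaves implicit.
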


\begin{proof}
Let $\phi : M \to N$ be a morphism in  $\cat{C}(A)$; namely 
$\phi$ is a degree $0$ cocycle in the DG abelian group 
$\opn{Hom}_A(M, N)$. For any $a \in A^{-1}$, 
the homomorphism $\d(a) \cd \phi = \d(a \cd \phi)$ 
is a degree $0$ coboundary in $\opn{Hom}_A(M, N)$, and so it
vanishes in the homotopy category $\cat{K}(A)$. Therefore $\cat{K}(A)$ is 
an $\bar{A}$-linear category, and hence so is its localization 
$\cat{D}(A)$.
\end{proof}

\begin{dfn} \label{dfn:40}
Let $A$ be a DG ring, with canonical homomorphism
$\pi : A \to \bar{A} = \opn{H}^0(A)$. Given a multiplicatively closed 
subset $S$ of $\bar{A}$, the set
$\til{S} := \pi^{-1}(S) \cap A^0$ is a 
multiplicatively closed subset of $A^0$. Define the ring 
$A^0_S :=  \til{S}^{-1} \cd A^0$ 
(this is the usual localization), and the DG ring 
$A_S := A^0_S \ot_{A^0} A$.
There is a canonical DG ring homomorphism 
$\la_S : A \to A_S$.

If $S = \{ s^i \}_{i \in \N}$ for some element $s \in \bar{A}$, 
then we also use the notation $A_s := A_S$.
\end{dfn}

There is the usual localization $\bar{A}_S = S^{-1} \cd \bar{A}$ of the 
ring $\bar{A}$ w.r.t.\ $S$. We get a graded ring 
$\opn{H}(A)_S := \bar{A}_S \ot_{\bar{A}} \opn{H}(A)$. 
There are graded ring homomorphisms 
$\opn{H}(A) \to \opn{H}(A)_S$ and 
$\opn{H}(\la_S) :\opn{H}(A) \to \opn{H}(A_S)$.

\begin{prop} \label{prop:300}
Let $A$ be a DG ring and $S \subseteq \bar{A}$ a multiplicatively closed 
subset. 
\begin{enumerate}
\item There is a unique isomorphism of graded $\opn{H}(A)$-rings 
$\opn{H}(A_S) \cong \opn{H}(A)_S$.

\item For any DG $A$-module $M$ there is a unique isomorphism of graded 
$\opn{H}(A_S)$-modules 
\[ \opn{H}(A_S \ot_A M) \cong \opn{H}(A)_S \ot_{\opn{H}(A)} \opn{H}(M)  \]
that is compatible with the homomorphisms from $\opn{H}(M)$.

\item If $A$ is cohomologically pseudo-noetherian, then so is $A_S$.
\end{enumerate}
\end{prop}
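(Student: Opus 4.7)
The plan is to reduce every assertion to the classical fact that the localization $A^0 \to A^0_S = \til{S}^{-1} \cd A^0$ is a flat homomorphism of commutative rings.

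First I would observe that flatness of $A^0_S$ over $A^0$ makes the functor $A^0_S \ot_{A^0} -$ exact on $A^0$-modules, and hence commute with cohomology: for any DG $A^0$-module $N$, applying $A^0_S \ot_{A^0} -$ to the short exact sequences involving cocycles, coboundaries and $N^i$ produces a natural isomorphism
\[ A^0_S \ot_{A^0} \opn{H}(N) \iso \opn{H}(A^0_S \ot_{A^0} N) \]
of graded $A^0_S$-modules. Applied to $N := A$ and to $N := M$ (restricted from $A$ to $A^0$), and using the identifications $A^0_S \ot_{A^0} A = A_S$ and $A^0_S \ot_{A^0} M = A_S \ot_A M$, this already yields
\[ \opn{H}(A_S) \cong A^0_S \ot_{A^0} \opn{H}(A) \quad \tup{and} \quad \opn{H}(A_S \ot_A M) \cong A^0_S \ot_{A^0} \opn{H}(M) . \]

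Next I would identify $A^0_S \ot_{A^0} -$ with $\bar{A}_S \ot_{\bar{A}} -$ on $\bar{A}$-modules. Since the $A^0$-action on $\opn{H}(A)$ and $\opn{H}(M)$ factors through $\pi : A^0 \surj \bar{A}$, this reduces to the identification $A^0_S \ot_{A^0} \bar{A} \cong \bar{A}_S$, which follows from $\til{S}^{-1} (A^0 / \opn{Ker}(\pi)) = S^{-1} \bar{A}$, because the elements of $\til{S}$ map to $S$ in $\bar{A}$ by definition. Composing this with the isomorphisms from the previous paragraph produces the asserted isomorphisms in items (1) and (2), now as graded $\opn{H}(A)_S$-modules (and, in (1), as graded $\opn{H}(A)$-rings). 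Uniqueness is automatic, since each isomorphism is determined by the requirement that it commute with the canonical maps from $\opn{H}(A)$ (resp.\ $\opn{H}(M)$) into both sides; indeed, each side is generated over $\opn{H}(A)_S$ by the image of that canonical map.

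For item (3), note that $\opn{H}^0(A_S) = \bar{A}_S$ is noetherian, being a localization of the noetherian ring $\bar{A}$; and for each $i$ the $\bar{A}_S$-module $\opn{H}^i(A_S) \cong \bar{A}_S \ot_{\bar{A}} \opn{H}^i(A)$ is finite, as the localization of the finite $\bar{A}$-module $\opn{H}^i(A)$. The whole argument is essentially mechanical once flatness of the ordinary localization is invoked, so I do not anticipate any real obstacle; the one point that needs a second glance is the compatibility (naturality) of the isomorphism in (2) with the maps from $\opn{H}(M)$, but this is a direct consequence of the functoriality of the flat base-change isomorphism.
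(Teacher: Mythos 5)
Your proof is correct and follows essentially the same route as the paper, whose entire argument for items (1)--(2) is that $\la_S : A^0 \to A^0_S$ is flat, with (3) an immediate consequence; you have simply spelled out the flat base-change and $\bar{A}_S$-identification details that the paper leaves implicit.
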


\begin{proof}
Items (1-2) are true because the ring homomorphism $\la_S : A^0 \to A^0_S$ is 
flat. Item (3) is an immediate consequence of (1-2). 
\end{proof}

\begin{dfn}  \label{dfn:70}
Let $A$ be a ring, and let $\bsym{a} = (a_1, \ldots, a_n)$ be a 
sequence of elements of $A$. 
We call $\bsym{a}$ a {\em covering sequence} of $A$ if
$\sum_{i = 1}^n A \cd a_i = A$. 
\end{dfn}
 
Consider the spectrum $\opn{Spec} A$ of the ring $A$. For an element $a \in A$ 
we identify the principal open set \
$\{ \p \in \opn{Spec} A \mid a \notin \p \}$
with the scheme $\opn{Spec} A_a$, where $A_a$ is the localization of $A$ 
w.r.t.\ $a$. Clearly a sequence 
$\bsym{a} = (a_1, \ldots, a_n)$ in $A$ is a covering sequence iff
$\opn{Spec} A = \bigcup_{i = 1}^n \, \opn{Spec} A_{a_i}$.

Let $A$ be a DG ring, and let $\bsym{a} = (a_1, \ldots, a_n)$ be a 
covering sequence of $\bar{A}$.
For any  strictly increasing sequence
$\bsym{i} = (i_0, \ldots, i_p)$ of length $p$ in the integer interval $[0, n]$, 
i.e.\  $1 \leq i_0 < \cdots < i_p \leq n$, we define the ring 
\begin{equation} \label{eqn:341}
\opn{C}(A^0; \bsym{a})(\bsym{i}) := 
A^0_{a_{i_0}} \ot_{A^0} \cdots \ot_{A^0} A^0_{a_{i_p}} , 
\end{equation}
where $A^0_{a_{0}}, \ldots, A^0_{a_{p}}$ are the localizations from Definition 
\ref{dfn:40}. Next, for any $p \in [0, n - 1]$ we let 
\begin{equation} \label{eqn:342}
\opn{C}^p(A^0; \bsym{a}) := \bigoplus_{\bsym{i}} \, 
\opn{C}(A^0; \bsym{a})(\bsym{i}) \ , 
\end{equation}
where the sum is on all strictly increasing sequences $\bsym{i}$ of length $p$.
Finally we define the DG $A^0$-module 
\begin{equation} \label{eqn:340}
\opn{C}(A^0; \bsym{a}) := 
\bigoplus_{p = 0}^{n - 1} \, \opn{C}^p(A^0; \bsym{a}) .
\end{equation}
The differential 
$\opn{C}^p(A^0; \bsym{a}) \to \opn{C}^{p+1}(A^0; \bsym{a})$
is $\sum_{\bsym{i}, k} (-1)^k \cd \la_{\bsym{i}, k}$,
where $\bsym{i}$ runs over the strictly increasing sequences of length $p + 1$,
$k \in [0, p + 1]$, 
$\pa_k(\bsym{i})$ is the sequence obtained from $\bsym{i}$ by omitting 
$i_k$, and 
$\la_{\bsym{i}, k} : \opn{C}(A^0; \bsym{a})(\pa_k(\bsym{i})) \to 
\opn{C}(A^0; \bsym{a})(\bsym{i})$
is the canonical ring homomorphism.

\begin{dfn}  \label{dfn:72}
Let $A$ be a DG ring, and let  $\bsym{a} = (a_1, \ldots, a_n)$ be a 
covering sequence of $\bar{A}$.
For a DG $A$-module $M$, the {\em \v{C}ech DG module} of $M$ is the DG 
$A$-module
\[ \opn{C}(M; \bsym{a}) := 
\opn{C}(A^0; \bsym{a}) \ot_{A^0} M . \]
\end{dfn}

There is a canonical DG module homomorphism 
$\opn{c}_M : M \to \opn{C}(M; \bsym{a})$, 
sending $m \in M$ to $\sum_i 1_i \ot m \in \opn{C}(M; \bsym{a})$,
where $1_i$ is the element 
$1 \in A^0_{a_{i}} \subseteq \opn{C}^0(A^0; \bsym{a})$.

\begin{prop} \label{prop:70}
Let $\bsym{a} = (a_1, \ldots, a_n)$ be a covering sequence of $\bar{A}$, and 
let $M$ be a DG $A$-module. Then the homomorphism 
$\opn{c}_M : M \to \opn{C}(M; \bsym{a})$ is a quasi-isomorphism. 
\end{prop}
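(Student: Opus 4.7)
My plan is to show that the cone of $\opn{c}_M$ is acyclic via a standard spectral-sequence reduction to classical Čech exactness over the ordinary ring $\bar{A}$. First I organize the data as a double complex $D^{p, q}$, with $D^{-1, q} := M^q$ and $D^{p, q} := \opn{C}^p(A^0; \bsym{a}) \otimes_{A^0} M^q$ for $p \in [0, n-1]$; the vertical differential is induced by $\d_M$, the horizontal differential is the Čech differential from (\ref{eqn:340}), and $\opn{c}_M$ supplies the horizontal map from column $-1$ to column $0$. Writing $T := \opn{Tot}(D)$, the claim becomes equivalent to the acyclicity of $T$.

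Next I run the spectral sequence obtained by first taking vertical cohomology. Because each $\opn{C}^p(A^0; \bsym{a})$ is a direct sum of localizations of $A^0$, it is flat over $A^0$, so tensoring with it commutes with taking cohomology. Thus
\[ E_1^{p, q} \cong \opn{C}^p(A^0; \bsym{a}) \otimes_{A^0} \opn{H}^q(M) \]
for $p \geq 0$, while $E_1^{-1, q} = \opn{H}^q(M)$. By Proposition \ref{prop:311} the $A^0$-action on $\opn{H}^q(M)$ factors through $\pi : A^0 \to \bar{A}$; combined with the identification $A^0_{a_i} \otimes_{A^0} \bar{A} \cong \bar{A}_{a_i}$ from Proposition \ref{prop:300}, each row $E_1^{\bullet, q}$ is identified with the standard augmented Čech complex of the $\bar{A}$-module $\opn{H}^q(M)$ with respect to $\bsym{a}$.

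The third step invokes the classical fact: for any commutative ring $R$ and sequence $\bsym{f} = (f_1, \ldots, f_n)$ with $(f_1, \ldots, f_n) = R$, the augmented Čech complex of every $R$-module is exact. (The standard proof localizes at each prime $\q$ of $R$, picks $f_i \notin \q$, and uses the contracting homotopy based on the $i$-th coordinate.) Applied with $R = \bar{A}$ and $N = \opn{H}^q(M)$, which is legal since $\bsym{a}$ is a covering sequence of $\bar{A}$, this forces $E_2^{p, q} = 0$ for all $p, q$. The Čech filtration has finite length ($-1 \leq p \leq n-1$), so the spectral sequence converges to $\opn{H}^*(T)$, which therefore vanishes.

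The main technical point is the $E_1$ identification: verifying that flatness of $\opn{C}^p(A^0; \bsym{a})$ over $A^0$ suffices to commute $\opn{H}^q$ with $\opn{C}^p(A^0; \bsym{a}) \otimes_{A^0} -$ when $M^{\bullet}$ may be unbounded in both directions, and translating from $A^0$-level localizations to the $\bar{A}$-level Čech complex via the $\bar{A}$-linear structure. Once this is set up, convergence is automatic from the boundedness of the Čech direction, and the geometric content is packaged entirely into the classical Čech exactness for modules over $\bar{A}$ — which explains why the $A^0$-level localizations behave well despite $\bsym{a}$ not necessarily lifting to a covering of $\opn{Spec} A^0$.
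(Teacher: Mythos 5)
Your argument is correct, but it takes a genuinely different route from the paper's. The paper observes that $\opn{C}(-; \bsym{a}) \cong \opn{C}(A; \bsym{a}) \ot_A -$ is a triangulated functor of finite cohomological dimension (at most $n-1$), and then invokes Theorem \ref{thm:70}(3) to reduce the assertion to the case $M \in \cat{Mod} \bar{A}$, where it is exactly the classical \v{C}ech lemma for the covering of $\opn{Spec} \bar{A}$ determined by $\bsym{a}$. You instead bypass Theorem \ref{thm:70} entirely: you filter the augmented \v{C}ech double complex by the \v{C}ech degree, use flatness of each $\opn{C}^p(A^0; \bsym{a})$ over $A^0$ to compute $E_1$ as $\opn{C}^p(A^0; \bsym{a}) \ot_{A^0} \opn{H}^q(M)$, use the fact that the $A^0$-action on cohomology factors through $\bar{A}$ (the computation behind Proposition \ref{prop:311}) together with $A^0_{a_i} \ot_{A^0} \bar{A} \cong \bar{A}_{a_i}$ to identify each row with the augmented \v{C}ech complex of the $\bar{A}$-module $\opn{H}^q(M)$, and then kill $E_2$ by the same classical exactness; convergence is legitimate because the column filtration has length $n+1$, independently of any boundedness of $M$. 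So both proofs funnel into the identical classical input over $\bar{A}$; the difference is the reduction mechanism. The paper's version is shorter because Theorem \ref{thm:70} is already available and gets reused elsewhere, and it reduces to a single $\bar{A}$-module; yours is self-contained, reduces instead to the cohomology modules $\opn{H}^q(M)$, and correctly isolates the two points that make the unbounded case harmless (flat base change commuting with cohomology, and finiteness of the \v{C}ech filtration). Either is a complete proof.
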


\begin{proof}
Since $\opn{C}(A; \bsym{a})$ is a K-flat DG $A$-module, and 
$\opn{C}(M; \bsym{a}) \cong \opn{C}(A; \bsym{a}) \ot_A M$, we see that 
$\opn{C}(-; \bsym{a})$ is a triangulated functor from $\cat{D}(A)$ to 
itself. 
The homomorphism $\opn{c}_M$ is a quasi-isomorphism iff it is an isomorphism 
in $\cat{D}(A)$.

The cohomological dimension of the functor $\opn{C}(-; \bsym{a})$ is finite: 
it is at most $n-1$. 
According to Theorem \ref{thm:70}(3) it suffices to check that $\opn{c}_M$
is a quasi-isomorphism for 
$M \in \cat{Mod} \bar{A}$. But in this case 
$\opn{C}(M; \bsym{a}) \cong \opn{C}(\bar{A}; \bsym{a}) \ot_{\bar{A}} M$,
so $\opn{C}(M; \bsym{a})$ is the usual \v{C}ech complex for the covering of
$\opn{Spec} \bar{A}$ determined by the sequence $\bsym{a}$. 
In geometric language (cf.\ \cite[Section III.4]{Ha}), writing 
$X := \opn{Spec} \bar{A}$ and $U_i := \opn{Spec} \bar{A}_{a_i}$,
and letting $\mcal{M}$ denote the quasi-coherent $\OO_X$-module corresponding 
to $M$, we have $M \cong \Ga(X, \MM)$ and
$\opn{C}(M; \bsym{a}) \cong \opn{C}(\{ U_i \}, \MM)$. 
By 
\cite[\href{http://stacks.math.columbia.edu/tag/01X9}{Lemma 01X9}]{SP},
the homomorphism 
$\opn{c}_M : M \to \opn{C}(M; \bsym{a})$ 
is a quasi-isomorphism. 
\end{proof}

\begin{rem}
Actually the DG $A$-module $\opn{C}(A; \bsym{a})$
has more structure. There is a cosimplicial commutative ring 
$\opn{C}_{\mrm{cos}}(A^0; \bsym{a})$, 
whose degree $p$ piece is 
\[ \opn{C}_{\mrm{cos}}^p(A^0; \bsym{a}) := 
\prod_{\bsym{i}} \ \opn{C}(A^0; \bsym{a})(\bsym{i}) , \]
where $\bsym{i} = (i_0, \ldots, i_p)$ are weakly increasing
sequences in $[1, n]$. 
The \v{C}ech DG module $\opn{C}(A^0; \bsym{a})$ is the standard 
normalization 
of $\opn{C}_{\mrm{cos}}(A^0; \bsym{s})$, and as such it has a structure 
of noncommutative central DG $A^0$-ring (which is concentrated in non-negative 
degrees). Hence $\opn{C}(A; \bsym{a})$ is a noncommutative DG ring,
and $\opn{c}_A : A \to \opn{C}(A; \bsym{a})$ 
is a DG ring quasi-isomorphism. 
See \cite[Section 8]{PSY}.
\end{rem}

\begin{dfn}  \label{dfn:90}
Let $A$ be a ring, and let $\bsym{e} = (e_1, \ldots, e_n)$ be a 
sequence of elements of $A$. We call $\bsym{e}$ an {\em idempotent covering 
sequence} if each $e_i$ is an idempotent element of $A$, $e_i \cd e_j = 0$ 
for $i \neq j$, and $1 = \sum_{i = 1}^n e_i$. 
\end{dfn}

Suppose $\bsym{e} = (e_1, \ldots, e_n)$ is an idempotent covering sequence of 
the ring $A$. Of course $\bsym{e}$ is a covering sequence in the sense of 
Definition \ref{dfn:70}. 
For any $i$ there is a unique $A$-ring isomorphism
$A_{e_i} \cong A / (1 - e_i) \cd A$; 
namely the localization of $A$ with respect to the element $e_i$ is also 
the quotient ring modulo the ideal generated by the complementary idempotent 
$1 - e_i$. There is a ring isomorphism 
\begin{equation} \label{eqn:585}
A \iso \prod_{i = 1}^n \ A_{e_i} \,  .
\end{equation}
The scheme $\opn{Spec} A_{e_i}$ is an open-closed subscheme of $\opn{Spec} A$, 
and 
\begin{equation} \label{eqn:551}
\opn{Spec} A = \coprod_{i = 1}^n \ \opn{Spec} A_{e_i} \, . 
\end{equation}

\begin{dfn} \label{dfn:585}
Let $\bsym{e} = (e_1, \ldots, e_n)$ be an idempotent covering sequence of 
the ring $A$. The {\em $\bsym{e}$-induced decomposition} of $A$ is 
the ring isomorphism (\ref{eqn:585}). 
\end{dfn}

\begin{dfn} \label{dfn:560}
Let $A$ be a ring. The set of locally constant functions 
$f : \opn{Spec} A \to \Z$ (for the Zariski topology on $\opn{Spec} A$) shall be 
denoted by $\opn{F}_{\mrm{lc}}(\opn{Spec} A, \Z)$. 
It is an abelian group by pointwise addition.  
\end{dfn}

\begin{prop} \label{prop:550}
Let $A$ be a ring. There is a bijection between the set 
$\opn{F}_{\mrm{lc}}(\opn{Spec} A, \Z)$,
and the set of pairs $(\bsym{e}, \bsym{k})$, consisting of 
an idempotent covering sequence $\bsym{e} = (e_1, \ldots, e_n)$
of $A$, and a nondecreasing sequence of integers 
$\bsym{k} = (k_1, \ldots, k_n)$. 
This bijection sends a function $f \in \opn{F}_{\mrm{lc}}(\opn{Spec} A, \Z)$
to the pair $(\bsym{e}, \bsym{k})$ that satisfies
$f^{-1}(k_i)  = \opn{Spec} A_{e_i}$.
\end{prop}

\begin{proof}
Consider a locally constant function $f : \opn{Spec} A \to \Z$.
Then $f$ is continuous for the discrete topology on $\Z$. 
Since the topological space $\opn{Spec} A$ is quasi-compact, the image of $f$ 
must be finite. This gives rise to a finite decomposition of 
$\opn{Spec} A$ into open-closed subsets, which must be of the form 
(\ref{eqn:551}) for some idempotent covering sequence $\bsym{e}$. 
After suitable renumbering we get the pair  $(\bsym{e}, \bsym{k})$.

The converse is clear. 
\end{proof}

\begin{exa} \label{exa:550}
Often $\opn{Spec} A$ of the ring $A$ has finitely many connected 
components; say $n$ of them. Prototypes are: 
\begin{enumerate}
\item $A$ is a noetherian ring.
\item $A$ is a semilocal ring. 
\item $A$ is the ring of continuous (resp.\ differentiable) 
functions $X \to \R$, where $X$ is a connected topological space (resp.\ a 
connected differentiable manifold). Here $n = 1$. 
\end{enumerate}

An ordering of the connected components of $\opn{Spec} A$
gives rise to an idempotent covering sequence $\bsym{e} = (e_1, \ldots, e_n)$.
The sequence of delta functions $(\de_1, \ldots, \de_n)$
is a basis of the group $\opn{F}_{\mrm{lc}}(\opn{Spec} A, \Z)$, 
which is thus isomorphic to $\Z^n$.
\end{exa}

\begin{dfn}  \label{dfn:551}
Let $A$ be a ring, and assume that $\opn{Spec} A$ has finitely many 
connected components. A {\em connected component idempotent covering sequence}
of $A$ is an idempotent covering sequence
$\bsym{e} = (e_1, \ldots, e_n)$, such that each $\opn{Spec} A_{e_i}$
is nonempty and connected. 
\end{dfn}

Clearly a connected component idempotent covering sequence of $A$ is unique up 
to permutation. 

\begin{prop} \label{prop:20}
Let $A$ be a DG ring, and let 
$\bsym{e} = (e_1, \ldots, e_n)$ be an idempotent covering sequence of 
$\bar{A} = \opn{H}^0(A)$. For any $i$ we have the localized DG ring 
$A_i := A_{e_i}$ from Definition \tup{\ref{dfn:40}}, and the DG ring 
homomorphism $\la_i : A \to A_i$. Then the DG ring homomorphism
\[ (\la_1,  \ldots , \la_n) : A \to A_1 \times \cdots \times A_n \]
is a quasi-isomorphism. 
\end{prop}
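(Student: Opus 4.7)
The plan is to verify that $\la$ is a quasi-isomorphism by computing $\opn{H}(\la)$ directly and showing it is bijective in every degree. This is more transparent than invoking the general \v{C}ech resolution of Proposition \ref{prop:70}, although that would also work.

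First I would observe that finite products of DG rings commute with cohomology, so
\[ \opn{H}(A_1 \times \cdots \times A_n) \cong \prod_{i=1}^n \opn{H}(A_i), \]
and by Proposition \ref{prop:300}(1) applied to the multiplicative set $S_i = \{e_i^k \mid k \in \N\} \subseteq \bar{A}$, there are canonical isomorphisms $\opn{H}(A_i) \cong \opn{H}(A)_{e_i} = \bar{A}_{e_i} \ot_{\bar{A}} \opn{H}(A)$ of graded $\opn{H}(A)$-rings.

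The key step is the identification $M_{e_i} \cong M e_i$ for an $\bar{A}$-module $M$, exploiting that $e_i$ is idempotent. Indeed, from $e_i^2 = e_i$ one sees that the surjection $\bar{A} \surj \bar{A} e_i$, $a \mapsto a e_i$, has kernel $\bar{A}(1 - e_i)$, and becomes an isomorphism after inverting $e_i$; thus there is a canonical ring isomorphism $\bar{A}_{e_i} \iso \bar{A} e_i$. Tensoring with $M$ yields $M_{e_i} \iso M e_i$, under which the localization map $M \to M_{e_i}$ becomes $m \mapsto m e_i$.

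Applying this to $M = \opn{H}(A)$ and combining with the previous paragraph, the map $\opn{H}(\la) : \opn{H}(A) \to \prod_i \opn{H}(A) e_i$ becomes $m \mapsto (m e_1, \ldots, m e_n)$. Since $1 = \sum_i e_i$ in $\bar{A}$ with orthogonal idempotents $e_i$, the $\bar{A}$-module $\opn{H}(A)$ decomposes as $\opn{H}(A) = \bigoplus_{i=1}^n \opn{H}(A) e_i$, and this map is precisely the canonical isomorphism from the finite direct sum to the finite direct product. Hence $\opn{H}(\la)$ is an isomorphism, proving that $\la$ is a quasi-isomorphism. No substantial obstacle arises; the only bookkeeping point is to keep distinct the idempotent $e_i \in \bar{A}$ from its (generally non-idempotent) lifts $\tilde{e}_i \in A^0$ which are actually inverted in forming $A_i$.
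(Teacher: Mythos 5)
Your proposal is correct and follows essentially the same route as the paper: the paper's proof also computes $\opn{H}(\la)$ directly, using Proposition \ref{prop:300}(1) to identify $\opn{H}(A_i) \cong \bar{A}_{e_i} \ot_{\bar{A}} \opn{H}(A)$ and the decomposition of $\bar{A}$ coming from the orthogonal idempotents. Your extra step identifying $M_{e_i}$ with $M e_i$ just makes explicit what the paper leaves implicit in the chain of isomorphisms.
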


\begin{proof}
Let's write $\bar{A}_i := \bar{A}_{e_i} = \opn{H}^0(A)_{e_i}$, so 
$\bar{A} = \prod_{i = 1}^n \, \bar{A}_i$. 
Using Proposition \ref{prop:300}(1) we obtain canonical graded ring isomorphisms
\[ \opn{H}(A) \cong 
\prod\nolimits_{i = 1}^n \, \bigl( \bar{A}_i \ot_{\bar{A}} \opn{H}(A) \bigr) 
\cong \prod\nolimits_{i = 1}^n \, \opn{H}(A_i) \cong 
\opn{H}\bigl( \prod\nolimits_{i = 1}^n \, A_i \bigr) . \] 
The composition of these isomorphisms is exactly 
$\opn{H}(\la_1,  \ldots , \la_n)$. 
\end{proof}

\begin{cor} \label{cor:22}
With $A_1, \ldots, A_n$ as in Proposition \tup{\ref{prop:20}}, the 
restriction functors 
$\opn{rest}_{\la_i} : \cat{D}(A_{i}) \to \cat{D}(A)$
induce an equivalence of triangulated categories
$\bigoplus_{i = 1}^n \,  \cat{D}(A_{i}) \to \cat{D}(A)$.
\end{cor}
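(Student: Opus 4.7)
The plan is to factor the claimed equivalence through the intermediate derived category $\cat{D}(B)$, where $B := A_1 \times \cdots \times A_n$. Proposition \ref{prop:20} gives a DG ring quasi-isomorphism $\la \colon A \to B$, so the standard fact that a quasi-isomorphism of DG rings induces an equivalence of derived categories (see \cite[Section 10]{BL} or \cite[Section 2]{Ke1}) yields an equivalence of triangulated categories $\opn{rest}_{\la} \colon \cat{D}(B) \iso \cat{D}(A)$. Moreover, if $\pi_i \colon B \to A_i$ denotes the $i$-th projection, then $\pi_i \circ \la = \la_i$, so the composition $\opn{rest}_\la \circ \opn{rest}_{\pi_i}$ agrees with $\opn{rest}_{\la_i}$. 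Hence it suffices to establish the equivalence
\[ \Phi \colon \bigoplus\nolimits_{i = 1}^n \, \cat{D}(A_{i}) \iso \cat{D}(B), \qquad
\Phi(M_1, \ldots, M_n) := \opn{rest}_{\pi_1}(M_1) \oplus \cdots \oplus \opn{rest}_{\pi_n}(M_n). \]

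For this I would use the orthogonal idempotents $\varepsilon_i := (0, \ldots, 0, 1, 0, \ldots, 0) \in B^0$ (with the $1$ in position $i$). These satisfy $\varepsilon_i \cd \varepsilon_j = 0$ for $i \neq j$, $\varepsilon_i^2 = \varepsilon_i$, $\sum_i \varepsilon_i = 1$, and $\d(\varepsilon_i) = 0$. Consequently, for any DG $B$-module $N$, the submodules $\varepsilon_i \cd N$ are honest DG $B$-submodules and there is a canonical decomposition $N = \bigoplus_{i} \varepsilon_i \cd N$ in $\cat{M}(B)$. The action of $B$ on $\varepsilon_i \cd N$ factors through the projection $\pi_i \colon B \to A_i$, so $\varepsilon_i \cd N = \opn{rest}_{\pi_i}(N_i)$ for a canonically defined DG $A_i$-module $N_i$. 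This gives a quasi-inverse $\Psi \colon \cat{D}(B) \to \bigoplus_i \cat{D}(A_i)$, $\Psi(N) := (N_1, \ldots, N_n)$.

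It remains to verify that $\Phi$ and $\Psi$ are mutually quasi-inverse triangulated functors. The composition $\Psi \circ \Phi$ is the identity on the nose, because $\varepsilon_i \cd \opn{rest}_{\pi_j}(M_j) = 0$ for $i \neq j$ and $\varepsilon_i \cd \opn{rest}_{\pi_i}(M_i) = \opn{rest}_{\pi_i}(M_i)$. The other composition $\Phi \circ \Psi$ is also the identity on the nose, by the decomposition $N = \bigoplus_i \varepsilon_i \cd N$ above. Both functors are clearly additive and commute with shifts; that they preserve distinguished triangles follows because $\Phi$ is defined as a restriction on each summand, and $\Psi$ is computed by applying the exact functors $\varepsilon_i \cd (-)$.

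The only point requiring slight care is that all of this already takes place at the level of DG modules (not merely after passing to derived categories), because $\d(\varepsilon_i) = 0$ and the $\varepsilon_i$ lie in degree $0$; hence $\Phi$ and $\Psi$ descend to the derived categories without any further resolution step. Composing with the equivalence $\opn{rest}_\la \colon \cat{D}(B) \iso \cat{D}(A)$ of Proposition \ref{prop:20} and using $\opn{rest}_\la \circ \opn{rest}_{\pi_i} = \opn{rest}_{\la_i}$ yields the desired equivalence $\bigoplus_i \cat{D}(A_i) \iso \cat{D}(A)$.
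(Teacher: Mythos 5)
Your proof is correct and follows essentially the same route as the paper, which simply declares the result standard and cites the equivalence of derived categories induced by a DG ring quasi-isomorphism (\cite[Proposition 1.4]{YZ1}), i.e.\ the factorization through $\cat{D}(A_1 \times \cdots \times A_n)$ that you spell out. Your explicit treatment of the degree-$0$ closed idempotents $\varepsilon_i$ just fills in the ``standard'' decomposition step that the paper leaves implicit.
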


\begin{proof}
This is standard. Cf.\ \cite[Proposition 1.4]{YZ1}. 
\end{proof}

\begin{dfn} \label{dfn:102}
Let $A$ be a DG ring, and let $e \in \bar{A} = \opn{H}^0(A)$
be an idempotent element. Consider the localized DG ring 
$A_{e}$ corresponding to $e$, as in Definition \ref{dfn:40}. 
The triangulated functor
\[ E : \cat{D}(A) \to \cat{D}(A) \ , \  
E(M) := A_{e} \ot_A M, \]
is called the {\em idempotent functor} corresponding to $e$.
\end{dfn}

\begin{dfn}  \label{dfn:343}
Let $A$ be a DG ring, and let 
$\bsym{e} = (e_1, \ldots, e_n)$
be an idempotent covering sequence of $\opn{Spec} \bar{A}$. 
\begin{enumerate}
\item The corresponding DG ring quasi-isomorphism 
\[ (\la_1,  \ldots , \la_n) : A \to A_1 \times \cdots \times A_n \, , \]
from Proposition \ref{prop:20} 
is called the {\em $\bsym{e}$-induced decomposition} of $A$. 

\item The corresponding functors  $E_1, \ldots, E_n$ from Definition 
\ref{dfn:102} are called the 
{\em $\bsym{e}$-induced idempotent functors} of $A$. 

\item In case $\opn{Spec} \bar{A}$ has finitely many connected components, and 
$\bsym{e}$ is one of its connected component idempotent covering sequence, then 
 the expression ``$\bsym{e}$-induced'' above is sometimes replaced by 
``connected component''. 
\end{enumerate}
\end{dfn}

\begin{prop} \label{prop:90}
Suppose $\bsym{e} = (e_1, \ldots, e_n)$ is an idempotent covering sequence of 
$\bar{A}$. Let \lb $E_1, \ldots, E_n$ be the $\bsym{e}$-induced
idempotent functors. 
\begin{enumerate}
\item We have $E_i \circ E_i \cong E_i$, $E_i \circ E_j = 0$ for $i \neq j$, 
and $\sum_{i = 1}^n E_i \cong \opn{id}_{\cat{D}(A)}$.

\item Under the equivalence of categories in Corollary \tup{\ref{cor:22}},
$\cat{D}(A_{i})$ is  the essential image of $E_i$.
\end{enumerate}
\end{prop}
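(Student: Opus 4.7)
The plan is to reduce both parts to cohomology computations, exploiting the fact that the localization maps are K-flat. Since each $A^0_{e_i}$ is a flat $A^0$-module, the DG module $A_{e_i} = A^0_{e_i} \ot_{A^0} A$ is K-flat over $A$; hence $E_i$ requires no derived tensor product, and compositions $E_i \circ E_j$ are computed by the plain tensor product $A_{e_i} \ot_A A_{e_j} \ot_A -$.

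For part (1), the identities reduce to statements about $A_{e_i} \ot_A A_{e_j}$. When $i = j$, the standard fact that iterating a localization does nothing, $A^0_{e_i} \ot_{A^0} A^0_{e_i} \cong A^0_{e_i}$, tensors up to $A_{e_i} \ot_A A_{e_i} \cong A_{e_i}$, giving $E_i \circ E_i \cong E_i$. When $i \neq j$, Proposition \ref{prop:300} gives
\[
\opn{H}(A_{e_i} \ot_A A_{e_j}) \cong \opn{H}(A_{e_j})_{e_i} \cong \opn{H}(A)_{e_i \cd e_j} = 0,
\]
since $e_i \cd e_j = 0$ in $\bar{A}$; hence $E_i \circ E_j \cong 0$. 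For the third identity, the quasi-isomorphism $\la : A \to \prod_{i = 1}^n A_i = \bigoplus_{i = 1}^n A_{e_i}$ of Proposition \ref{prop:20} is a quasi-isomorphism between K-flat DG $A$-modules, so tensoring with any $M \in \cat{M}(A)$ yields a quasi-isomorphism $M \iso \bigoplus_i E_i(M)$, which is precisely $\opn{id}_{\cat{D}(A)} \iso \sum_i E_i$.

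For part (2), each $E_i(M) = A_i \ot_A M$ carries a canonical DG $A_i$-module structure whose restriction along $\la_i$ recovers the original object, so the essential image of $E_i$ is contained in that of $\opn{rest}_{\la_i}$. Conversely, if $N' \in \cat{D}(A_i)$, then $N'$ is already an $A^0_{e_i}$-module, whence
\[
E_i(\opn{rest}_{\la_i}(N')) = A^0_{e_i} \ot_{A^0} N' \cong N',
\]
and $\opn{rest}_{\la_i}(N')$ lies in the essential image of $E_i$. Under the equivalence $\bigoplus_i \cat{D}(A_i) \iso \cat{D}(A)$ of Corollary \ref{cor:22}, this identifies the $i$-th summand with the essential image of $E_i$. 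The only mildly delicate point is keeping track of the K-flatness that makes every tensor product undrived, but this is automatic from flatness of localization at the level of $A^0$.
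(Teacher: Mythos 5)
Your proof is correct and follows essentially the route the paper intends: the paper leaves this proposition to the reader as ``similar to the proof of Proposition \ref{prop:20}'', i.e.\ a cohomology computation via Proposition \ref{prop:300} combined with the K-flatness of the localizations $A_{e_i}$, which is exactly what you carry out (including the observation that $\bar{A}_{e_i e_j} = 0$ for $i \neq j$ and that tensoring the quasi-isomorphism $\la$ of Proposition \ref{prop:20} with $M$ stays a quasi-isomorphism). Your treatment of part (2), identifying the essential image of $E_i$ with that of $\opn{rest}_{\la_i}$ via the idempotence of localization, is likewise the intended argument.
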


\begin{proof}
This is similar to the proof of Proposition \ref{prop:20}, and is left to the 
reader.
\end{proof}

\begin{prop} \label{prop:21}
Let $f : A \to B$ be a DG ring homomorphism, and let 
$M, N \in \cat{D}(B)$.
We write $\bar{f} := \opn{H}^0(f)$ and 
$F := \opn{rest}_f : \cat{D}(B) \to \cat{D}(A)$. 
Assume that the ring homomorphism 
$\bar{f} : \bar{A} \to \bar{B}$ is surjective.
Let $\bsym{e} = (e_1, \ldots, e_n)$ be an idempotent covering sequence of 
$\bar{B}$,  let $E_1, \ldots, E_n$ be the $\bsym{e}$-induced idempotent
functors of $B$, and write $M_i := E_i(M)$ and $N_i := E_i(N)$. 
Then for any  $i \neq j$ we have
\[ \opn{Hom}_{\cat{D}(A)} \bigl( F(M_i), F(N_j) \bigl) = 0 . \]
Therefore we get a canonical isomorphism of $\bar{A}$-modules 
\[ \opn{Hom}_{\cat{D}(A)} \bigl( F(M), F(N) \bigl) \cong 
\bigoplus_{i = 1}^n \,
\opn{Hom}_{\cat{D}(A)} \bigl( F(M_i), F(N_i) \bigl) . \]
\end{prop}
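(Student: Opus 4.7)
The plan is to exploit the $\bar{A}$-linearity of $\cat{D}(A)$ (Proposition \ref{prop:311}) together with the surjectivity of $\bar{f}$, which lets us lift each idempotent $e_i \in \bar{B}$ to some element $\til{e}_i \in \bar{A}$. The vanishing will follow from the contradiction between pre-composition (where $\til{e}_i$ acts as the identity on $F(M_i)$) and post-composition (where $\til{e}_i$ acts as zero on $F(N_j)$).

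More precisely, I would proceed as follows. First, recall that in $\bar{B}_{e_k} = \opn{H}^0(B_{e_k})$ one has $e_k = 1$, since $e_k$ becomes a unit after localization and $e_k(e_k - 1) = 0$. Consequently, for $k \neq j$, the relation $e_k \cd e_j = 0$ in $\bar{B}$ forces $e_k = 0$ in $\bar{B}_{e_j}$. Now the $\bar{B}$-action on $\cat{D}(B_{e_j})$ factors through $\bar{B}_{e_j}$ via the localization homomorphism, so the endomorphism of $E_j(N) \in \cat{D}(B_{e_j})$ given by multiplication by $e_k$ is the zero morphism; similarly multiplication by $e_i$ on $E_i(M)$ is the identity.

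Next, pick any lift $\til{e}_i \in \bar{A}$ of $e_i \in \bar{B}$ under the surjection $\bar{f}$. The $\bar{A}$-action on $F(M_i), F(N_j) \in \cat{D}(A)$ factors through $\bar{f}$, so multiplication by $\til{e}_i$ on $F(M_i)$ agrees with multiplication by $e_i$ on $M_i$ (which is the identity by the previous paragraph), while multiplication by $\til{e}_i$ on $F(N_j)$ agrees with multiplication by $e_i$ on $N_j$, which is zero since $i \neq j$. For any morphism $\phi : F(M_i) \to F(N_j)$ in $\cat{D}(A)$, the $\bar{A}$-linearity furnished by Proposition \ref{prop:311} gives
\[
\phi = \phi \circ \bigl( \til{e}_i \cd \opn{id}_{F(M_i)} \bigr)
= \bigl( \til{e}_i \cd \opn{id}_{F(N_j)} \bigr) \circ \phi = 0,
\]
which establishes the first assertion.

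For the second assertion, Proposition \ref{prop:90}(1) gives isomorphisms $M \cong \bigoplus_{i=1}^n M_i$ and $N \cong \bigoplus_{i=1}^n N_i$ in $\cat{D}(B)$; since $F$ is additive and triangulated, these carry over to $F(M) \cong \bigoplus_i F(M_i)$ and $F(N) \cong \bigoplus_i F(N_i)$ in $\cat{D}(A)$. Decomposing Hom into its matrix of blocks and applying the vanishing just proved for the off-diagonal entries yields the desired direct sum decomposition. I do not anticipate serious obstacles here; the only subtle point is verifying that multiplication by $e_i$ on $E_j(M)$ is really zero as a morphism in $\cat{D}(B)$ (not merely on cohomology), which is handled by the localization argument above combined with Proposition \ref{prop:311} applied to the DG ring $B_{e_j}$.
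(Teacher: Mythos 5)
Your argument is correct and follows essentially the same route as the paper's proof: lift the idempotent $e_i$ along the surjection $\bar{f}$, use the $\bar{A}$-linearity of $\cat{D}(A)$ from Proposition \ref{prop:311} (together with the fact that $e_i$ acts as the identity on $M_i$ and as zero on $N_j$, which you justify via the localization $\bar{B}_{e_j}$, exactly the point the paper states more tersely) to move the scalar across $\phi$ and conclude $\phi = 0$, then decompose $\opn{Hom}$ using $\sum_i E_i \cong \opn{id}$ and additivity of $\opn{rest}_f$. No gaps.
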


\begin{proof}
For any $i$ choose some element $a_i \in \bar{A}$ such that 
$\bar{f}(a_i) = e_i$.
Consider the noncommutative rings $\opn{End}_{\cat{D}(B)}(M_i)$ and 
$\opn{End}_{\cat{D}(A)}(F(M_i))$. 
There is a commutative diagram (of noncommutative rings) 
\[  \UseTips \xymatrix @C=5ex @R=5ex {
\bar{A}
\ar[r]
\ar[d]_{ \bar{f} }
&
\opn{End}_{\cat{D}(A)}  ( F(M_i) )
\\
\bar{B}
\ar[r]
&
\opn{End}_{\cat{D}(B)} ( M_i )
\ar[u]_{F} \ . 
} \]
Cf.\ Proposition \ref{prop:311}.

Take two distinct indices $i, j$.
We know that $e_i \cd \opn{id}_{M_i} = \opn{id}_{M_i}$
in $\opn{End}_{\cat{D}(B)}(M_i)$,
and $e_i \cd \opn{id}_{M_j} = 0$ in $\opn{End}_{\cat{D}(B)}(M_j)$.
Therefore
$a_i \cd \opn{id}_{F(M_i)} = \opn{id}_{F(M_i)}$
in $\opn{End}_{\cat{D}(A)}(F(M_i))$,
and $a_i \cd \opn{id}_{F(M_j)} = 0$ in $\opn{End}_{\cat{D}(A)}(F(M_j))$.

Consider any morphism $\phi : F(M_i) \to F(N_j)$ in $\cat{D}(A)$.
Then 
\[ \begin{aligned}
& \phi =  \opn{id}_{F(M_j)} \circ \, \phi \circ \opn{id}_{F(M_i)}  = 
\opn{id}_{F(M_j)} \circ \, \phi \circ (a_i \cd \opn{id}_{F(M_i)}) \\
& \qquad = (a_i \cd \opn{id}_{F(M_j)}) \circ \phi \circ \opn{id}_{F(M_i)}  = 0 .
\end{aligned} \]
\end{proof}

\section{Perfect DG Modules} \label{sec:perfect}

Recall that all DG rings are now commutative by default (Convention 
\ref{conv:100}). In particular all rings are commutative. For a DG ring $A$, 
its reduction is $\bar{A} = \opn{H}^0(A)$. 

Let $A$ be a  DG ring and $s \in \bar{A}$ an element. The 
localization $A_s$ was defined in Definition \ref{dfn:40}. The notion of
covering sequence of $\bar{A}$ was introduced in Definition \ref{dfn:70},
and finite semi-free DG modules were introduced in Definition \ref{dfn:311}.

If $A$ is a ring, there are two definitions in the literature (see 
\cite[Expos\'e I]{SGA-6}) of a {\em perfect complex of $A$-modules}. Let us 
recall them; but in order to make a 
distinction, we shall add attributes to the name ``perfect''. The first 
definition is this: a complex $M \in \cat{D}(A)$ is {\em geometrically perfect}
if there is a covering sequence  
$(s_1, \ldots, s_n)$ of $A$, and for every $i$ there 
is an isomorphism $A_{s_i} \ot_A M \cong P_i$ in  $\cat{D}(A_{s_i})$,
 where $P_i$ is a bounded complex of finite free $A_{s_i}$-modules.  
The second definition is: a complex $M \in \cat{D}(A)$ is {\em algebraically 
perfect} if there is an isomorphism $M \cong P$ in $\cat{D}(A)$, where $P$ is a 
bounded complex of finite projective $A$-modules. 
It is known that the two definitions are equivalent - see 
\cite[Expos\'e I]{SGA-6} or 
\cite[\href{http://stacks.math.columbia.edu/tag/08CL}{[Section 08E4}]{SP}.
Therefore it is safe to use the name ``perfect complex'' without further 
qualification. 

Here are our generalizations to DG rings. 

\begin{dfn} \label{dfn:25}
Let $A$ be a DG ring, and let $M$ be a DG $A$-module. 
We say that $M$ is {\em geometrically perfect} if there is a covering sequence  
$(s_1, \ldots, s_n)$ of $\bar{A}$, and for every $i$ there 
is an isomorphism $A_{s_i} \ot_A M \cong P_i$
in $\cat{D}(A_{s_i})$, for some finite semi-free DG $A_{s_i}$-module $P_i$.
\end{dfn} 

Clearly when $A$ is a ring, we recover the classical definition; but
for DG rings this is a new definition. 

Before stating the second definition, we need to recall some terminology on 
triangulated categories. Let $\cat{D}$ be a triangulated category. 
A full subcategory $\cat{E} \subseteq \cat{D}$ is called {\em 
epaisse} if it is closed under shifts, cones and direct summands (so $\cat{E}$ 
itself is triangulated). We say that an object $P \in \cat{E}$ is a {\em 
classical generator} of  $\cat{E}$ if this is the smallest epaisse subcategory 
of $\cat{D}$ that contains $P$. In this case, any object of $\cat{E}$ can be 
obtained from $P$ by finitely many shifts, direct summands and cones. See 
\cite{Ri} and \cite{BV}. 

\begin{dfn} \label{dfn:545}
Let $A$ be a DG ring, and let $M$ be a DG $A$-module. 
We say that $M$ is {\em algebraically perfect} if $M$ belongs to the epaisse 
subcategory of $\cat{D}(A)$ classically generated by $A$. 
\end{dfn} 

It is not hard to see that when $A$ is a ring, this 
definition coincides with the definition of algebraically perfect complexes 
given before. Definition \ref{dfn:545} is not new -- it already appeared in 
\cite{ABIM} (without the attribute ``algebraically''). 

In our paper we are interested in geometrically perfect DG modules. 
However, as we shall prove in Corollary \ref{cor:550}, these turn out to be 
the same as algebraically perfect DG modules.

\begin{prop} \label{prop:81}
Let $A \to B$ be a homomorphism of DG rings, and let $M$ be a geometrically 
perfect DG $A$-module. Then $B \ot^{\mrm{L}}_A M$ is a geometrically perfect DG 
$B$-module. 
\end{prop}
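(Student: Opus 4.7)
The plan is to base-change the covering sequence of $\bar{A}$ given for $M$ along the induced map $\bar{f} : \bar{A} \to \bar{B}$, and then show that the local finite semi-free resolutions of $M$ base-change to finite semi-free DG modules over the localized $B$. Write $f : A \to B$ and $\bar{f} = \opn{H}^0(f)$, and suppose $\bsym{s} = (s_1, \ldots, s_n)$ is a covering sequence of $\bar{A}$ for which $A_{s_i} \ot_A M \cong P_i$ in $\cat{D}(A_{s_i})$, with each $P_i$ finite semi-free over $A_{s_i}$. Set $t_i := \bar{f}(s_i) \in \bar{B}$. Since $\bar{f}$ sends $\sum_i \bar{A} \cd s_i = \bar{A}$ to a sum equal to $\bar{B}$, the tuple $\bsym{t} = (t_1, \ldots, t_n)$ is a covering sequence of $\bar{B}$.

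First I would build a compatible DG ring homomorphism $A_{s_i} \to B_{t_i}$. By Definition~\ref{dfn:40}, $A_{s_i} = A^0_{\til{S}_i} \ot_{A^0} A$ where $\til{S}_i = \pi^{-1}(\{s_i^k\}) \cap A^0$, and any $a \in \til{S}_i$ is mapped by $f^0 : A^0 \to B^0$ into the analogous multiplicative set for $B$ (since $\bar{f}(\bar{a}) = t_i^k$). Hence the universal property yields a DG ring map $A_{s_i} \to B_{t_i}$ fitting into a commutative square with $f$. Moreover $B_{t_i} \cong B_{t_i} \ot_{B^0} B \cong B_{t_i} \ot_{A^0} A$, and the localization $A^0 \to A^0_{s_i}$ is flat, so the underived tensor $B_{t_i} \ot_A -$ agrees with $B_{t_i} \ot_A^{\mrm{L}} -$ when applied to a DG module that is already $A_{s_i}$-local.

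Next, using that $B_{t_i} \ot_B^{\mrm{L}} (B \ot_A^{\mrm{L}} M) \cong B_{t_i} \ot_A^{\mrm{L}} M$, and factoring through $A_{s_i}$, I would compute
\[
B_{t_i} \ot_B^{\mrm{L}} (B \ot_A^{\mrm{L}} M) \cong B_{t_i} \ot_{A_{s_i}}^{\mrm{L}} (A_{s_i} \ot_A^{\mrm{L}} M) \cong B_{t_i} \ot_{A_{s_i}}^{\mrm{L}} P_i .
\]
The first isomorphism uses flatness of $A^0 \to A^0_{s_i}$ to identify the underived and derived base changes from $A$ to $A_{s_i}$, and the second uses the given isomorphism $A_{s_i} \ot_A M \cong P_i$ in $\cat{D}(A_{s_i})$. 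Since $P_i$ is semi-free over $A_{s_i}$, it is K-flat, so $B_{t_i} \ot_{A_{s_i}}^{\mrm{L}} P_i \cong B_{t_i} \ot_{A_{s_i}} P_i$.

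The remaining step, which is the real content, is to show that $Q_i := B_{t_i} \ot_{A_{s_i}} P_i$ is a finite semi-free DG $B_{t_i}$-module. Fix a finite semi-free filtration $\{\nu_j(P_i)\}$ with graded pieces $\opn{gr}_j^{\nu}(P_i)$ finite free DG $A_{s_i}$-modules. Each short exact sequence $0 \to \nu_{j-1}(P_i) \to \nu_j(P_i) \to \opn{gr}_j^{\nu}(P_i) \to 0$ is split as a sequence of graded $A_{s_i}^{\natural}$-modules, because $\opn{gr}_j^{\nu}(P_i)^{\natural}$ is a free graded $A_{s_i}^{\natural}$-module; hence the filtration $\{B_{t_i} \ot_{A_{s_i}} \nu_j(P_i)\}$ is a filtration of $Q_i$ with graded pieces $B_{t_i} \ot_{A_{s_i}} \opn{gr}_j^{\nu}(P_i)$, and each of these is a finite free DG $B_{t_i}$-module of the same rank and degrees as $\opn{gr}_j^{\nu}(P_i)$. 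This presents $Q_i$ as a finite semi-free DG $B_{t_i}$-module, completing the proof. The main technical care is in this last verification that the semi-free filtration base-changes correctly; this is essentially the graded splitting argument already implicit in Proposition~\ref{prop:100}.
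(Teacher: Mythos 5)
Your proof is correct and follows the same route as the paper: push the covering sequence $(s_1,\ldots,s_n)$ of $\bar{A}$ forward to $(t_1,\ldots,t_n)$ in $\bar{B}$ and check that $Q_i := B_{t_i} \ot_{A_{s_i}} P_i$ is a finite semi-free DG $B_{t_i}$-module with $B_{t_i} \ot_B (B \ot^{\mrm{L}}_A M) \cong Q_i$. The paper states these facts without elaboration; your write-up simply supplies the details (the map $A_{s_i} \to B_{t_i}$, flat base change, and the base-changed semi-free filtration), all of which are sound.
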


\begin{proof}
Let $\bar{f} : \bar{A} \to \bar{B}$ denote the induced ring homomorphism, and 
let $\bsym{s} = \lb (s_1, \ldots, s_n)$ and $P_i$ be as in the definition. 
Define 
$t_i := \bar{f}(s_i)$ and $N := B \ot^{\mrm{L}}_A M$. Then 
$(t_1, \ldots, t_n)$ is a covering sequence of $\bar{B}$,
$Q_i := B_{t_i} \ot_{A_{s_i}} P_i$ is a finite semi-free DG $B_{t_i}$-module, 
and $B_{t_i} \ot_B N \cong Q_i$ in 
$\cat{D}(B_{t_i})$.
\end{proof}

\begin{lem}  \label{lem:34}
Let $M$ be a geometrically perfect DG $A$-module. 
\begin{enumerate}
\item $M$ belongs to $\cat{D}^{-}(A)$. 
\item If $A$ is cohomologically pseudo-noetherian, then $M$ 
belongs to $\cat{D}^{-}_{\mrm{f}}(A)$. 
\end{enumerate}
\end{lem}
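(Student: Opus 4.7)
The plan is to transfer the bounds and finite-generation properties of the local pieces $P_i$ to the cohomology of $M$ itself. The first step is to note that the DG ring $A_{s_i} = A^0_{s_i} \ot_{A^0} A$ is K-flat over $A$, since the ring localization $A^0_{s_i}$ is a flat $A^0$-module; consequently $A_{s_i} \ot_A M$ already computes $A_{s_i} \ot^{\mrm{L}}_A M$. Combined with Proposition \ref{prop:300}(2), together with the observation that $s_i \in \bar{A}$ lives in degree $0$ so localization preserves the grading, this produces canonical isomorphisms
\[ \opn{H}^j(P_i) \,\cong\, \opn{H}^j(A_{s_i} \ot_A M) \,\cong\, \bar{A}_{s_i} \ot_{\bar{A}} \opn{H}^j(M) \,=\, \bigl( \opn{H}^j(M) \bigr)_{s_i} \]
for every $j$ and every $i$.

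For part (1), each finite semi-free DG module $P_i$ is bounded (Proposition \ref{prop:100}(2)), so there is some $j_1 \in \Z$ with $\opn{H}^j(P_i) = 0$ for all $j > j_1$ and all $i$. The displayed isomorphism then forces $\bigl( \opn{H}^j(M) \bigr)_{s_i} = 0$ for all such $i, j$. Since $(s_1, \ldots, s_n)$ is a covering sequence of $\bar{A}$, every element of $\opn{H}^j(M)$ is killed by a power $s_i^{k_i}$ of each $s_i$; those powers still generate the unit ideal of $\bar{A}$, so the element is zero. Hence $\opn{H}^j(M) = 0$ for $j > j_1$, placing $M$ in $\cat{D}^{-}(A)$.

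For part (2), the cohomologically pseudo-noetherian assumption on $A$ transfers to each $A_{s_i}$ via Proposition \ref{prop:300}(3). A short induction on the length of a finite semi-free filtration of $P_i$, using long exact sequences in cohomology and the finiteness of each $\opn{H}^k(A_{s_i})$ over the noetherian ring $\bar{A}_{s_i}$, shows that every $\opn{H}^j(P_i)$ is finite over $\bar{A}_{s_i}$. Consequently each $\bigl( \opn{H}^j(M) \bigr)_{s_i}$ is finite over $\bar{A}_{s_i}$. The proof concludes with the standard local-to-global principle for finite generation: lift finite generating sets of the various $(\opn{H}^j(M))_{s_i}$ to a finite subset $T \subseteq \opn{H}^j(M)$, and observe that the submodule $\bar{A} \cdot T$ has the full localization $(\opn{H}^j(M))_{s_i}$ at each $s_i$; the quotient $\opn{H}^j(M) / (\bar{A} \cdot T)$ therefore vanishes by the same covering-sequence argument as in part (1), giving finite generation of $\opn{H}^j(M)$ over $\bar{A}$.

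I expect no substantial obstacle here; the only point that warrants care is the compatibility of the localization at $s_i$ with the graded structure on $\opn{H}(A)$, which is routine because $s_i$ sits in degree zero and all tensor products in sight involve flat modules, allowing the derived and underived versions to be identified.
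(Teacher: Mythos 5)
Your proof is correct and takes essentially the same route as the paper: both rest on the isomorphism $\opn{H}^j(P_i) \cong \bar{A}_{s_i} \ot_{\bar{A}} \opn{H}^j(M)$ coming from flatness of $A^0 \to A^0_{s_i}$, plus boundedness/finiteness of the cohomology of the finite semi-free pieces, the only difference being that the paper invokes faithfully flat descent along $\bar{A} \to \prod_i \bar{A}_{s_i}$ where you carry out the covering-sequence (partition-of-unity) argument by hand. One small wording correction: $P_i$ need not be bounded as a complex when $A$ has unbounded cohomology — Proposition \ref{prop:100}(2) only gives concentration in degrees $\leq i_1$, i.e.\ boundedness above — but that is all your argument actually uses.
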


\begin{proof}
Step 1. Assume $M \cong P$, where $P$ is a finite semi-free DG $A$-module. 
We know that $A \in \cat{D}^{-}(A)$, and that $A \in \cat{D}^{-}_{\mrm{f}}(A)$
in the  cohomologically pseudo-noetherian case. Now $P$ is obtained from $A$ 
by finitely many shifts and cones, and hence $P$ also belongs to 
$\cat{D}^{-}(A)$, and to $\cat{D}^{-}_{\mrm{f}}(A)$ in the cohomologically 
pseudo-noetherian case.

\medskip \noindent
Step 2. Let $s_1, \ldots, s_n \in \bar{A}$ and
$P_i \in \cat{D}(A_{s_i})$ be as in Definition \ref{dfn:25}. 
We know that for each $i$, 
$\bar{A}_{s_i} \ot_{\bar{A}} \opn{H}(M) \cong \opn{H}(P_i)$
as graded modules over $\bar{A}_{s_i}$.
Step 1 tells us that $P_i \in \cat{D}^{-}(A_{s_i})$, and that 
$P_i \in \cat{D}^{-}_{\mrm{f}}(A_{s_i})$
in the  cohomologically pseudo-noetherian case.
From the  faithfully flat ring homomorphism 
$\bar{A} \to  \prod_i \bar{A}_{s_i}$
we deduce that $\opn{H}(M)$ is bounded above (cf.\ Proposition \ref{prop:300}). 
In the cohomologically pseudo-noetherian case, descent implies that each 
$\opn{H}^j(M)$ is finite over $\bar{A}$. Cf.\ 
\cite[\href{http://stacks.math.columbia.edu/tag/066D}{Lemma 066D}]{SP},
noting that an $\bar{A}$ module is finite iff it is $0$-pseudo-coherent. 
\end{proof}

Let $L, M, N \in \cat{D}(A)$. There is a canonical morphism
\begin{equation} \label{eqn:400}
\psi_{L, M, N} : \opn{RHom}_A(L, M) \ot^{\mrm{L}}_A N \to 
\opn{RHom}_A(L, M \ot^{\mrm{L}}_A N) 
\end{equation}
in $\cat{D}(A)$, which is functorial in the three arguments. 
If we choose a K-projective resolution 
$\til{L} \to L$, and a K-flat resolution $\til{N} \to N$, 
then the morphism $\psi_{L, M, N}$ is represented by the homomorphism 
\begin{equation} \label{eqn:401}
\til{\psi}_{\til{L}, M, \til{N}} : \opn{Hom}_A(\til{L}, M) \ot_A \til{N} 
\to 
\opn{Hom}_A(\til{L}, M \ot_A \til{N})
\end{equation}
in $\cat{C}(A)$, where 
\[ \til{\psi}_{\til{L}, M, \til{N}}(\al \ot n)(l) :=
(-1)^{j k} \cd \al(l) \ot n \]
for $\al \in \opn{Hom}_A(\til{L}, M)^i$,
$n \in \til{N}^j$ and $l \in \til{L}^k$. 

\begin{lem} \label{lem:400}
$L, M, N \in \cat{D}(A)$, and assume $L$ is geometrically perfect. Then the 
morphism $\psi_{L, M, N}$ in formula \tup{(\ref{eqn:400})} is an isomorphism.
\end{lem}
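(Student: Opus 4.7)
The plan is to combine an epaisse-subcategory argument---which disposes of the case when $L$ is finite semi-free over some DG ring---with a \v{C}ech reduction that localizes the problem to the covering witnessing geometric perfectness of $L$. For fixed $M, N$, the assignment $L \mapsto \psi_{L, M, N}$ is a morphism between two triangulated functors on $\cat{D}(A)^{\mrm{op}}$, so the class $\mcal{E}$ of $L \in \cat{D}(A)$ for which it is an isomorphism forms an epaisse subcategory. Since $\psi_{A, M, N}$ is tautologically an isomorphism (both sides reduce canonically to $M \ot_A^{\mrm{L}} N$), $\mcal{E}$ contains every finite semi-free DG $A$-module. The identical argument over an arbitrary DG ring $B$ gives: $\psi^B_{P, M', N'}$ is an isomorphism whenever $P$ is finite semi-free over $B$.

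Next, let $\bsym{s} = (s_1, \ldots, s_n)$ be a covering sequence of $\bar A$ as in Definition \ref{dfn:25}. For a strictly increasing multi-index $\bsym{i} = (i_0, \ldots, i_p)$ in $[1,n]$ set $B_{\bsym{i}} := A_{s_{i_0}} \ot_A \cdots \ot_A A_{s_{i_p}}$; this is a single-element localization of $A$, hence K-flat over $A$, and $B_{\bsym{i}} \ot_A^{\mrm{L}} L$ is finite semi-free over $B_{\bsym{i}}$ as a further localization of $P_{i_0}$. For any $M', N' \in \cat{D}(A)$, the restriction-extension adjunction
\[
\opn{RHom}_{B_{\bsym{i}}}\bigl(B_{\bsym{i}} \ot_A^{\mrm{L}} L,\ Y\bigr) \cong \opn{RHom}_A(L, Y)
\]
for $Y \in \cat{D}(B_{\bsym{i}})$, combined with the tautology $X \ot_{B_{\bsym{i}}}^{\mrm{L}} (B_{\bsym{i}} \ot_A^{\mrm{L}} N') = X \ot_A^{\mrm{L}} N'$, identifies $\psi^{B_{\bsym{i}}}_{B_{\bsym{i}} \ot_A^{\mrm{L}} L,\, B_{\bsym{i}} \ot_A^{\mrm{L}} M',\, B_{\bsym{i}} \ot_A^{\mrm{L}} N'}$ with $\psi^A_{L,\, B_{\bsym{i}} \ot_A^{\mrm{L}} M',\, N'}$ in $\cat{D}(B_{\bsym{i}})$. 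The former is an isomorphism by the first paragraph, hence so is the latter in $\cat{D}(B_{\bsym{i}})$, and therefore also in $\cat{D}(A)$.

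Returning to the given $L, N$ from the lemma, the class $\mcal{F}$ of $M \in \cat{D}(A)$ for which $\psi^A_{L, M, N}$ is an isomorphism is again an epaisse subcategory of $\cat{D}(A)$, and by the previous step it contains $B_{\bsym{i}} \ot_A^{\mrm{L}} M'$ for every $M' \in \cat{D}(A)$ and every $\bsym{i}$. Proposition \ref{prop:70} yields a quasi-iso $M \to \opn{C}(M; \bsym{s})$; since the \v{C}ech complex has finite length, it is obtained from its direct summands $B_{\bsym{i}} \ot_A M$ by finitely many extensions. Hence $\opn{C}(M; \bsym{s})$, and therefore $M$, lies in $\mcal{F}$---completing the proof.

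The main obstacle is the identification of $\psi^{B_{\bsym{i}}}$ with $\psi^A_{L, B_{\bsym{i}} \ot_A M', N'}$ in the second paragraph. The subtlety is that we never exhibit a finite semi-free resolution of $L$ over $A$, since such a resolution would amount to algebraic perfectness---a property not yet available at this point of the paper. The saving grace is that finite semi-freeness is used only over $B_{\bsym{i}}$ (where it is given by hypothesis), while the Hom-over-$B_{\bsym{i}}$ side of $\psi^{B_{\bsym{i}}}$ converts back to Hom-over-$A$ via the adjunction above for free.
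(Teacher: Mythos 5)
Your proposal is correct and follows essentially the same route as the paper: reduce the general $M$ via the \v{C}ech resolution $\opn{C}(M;\bsym{s})$ and its finite filtration to modules extended from the localizations, then use adjunction along $A \to A_{\bsym{s}}$ to transfer $\psi$ to the localized DG ring, where $L$ becomes finite semi-free and the statement is immediate. The only (harmless) deviations are that you dispose of the finite semi-free case by the epaisse-subcategory d\'evissage from $L = A$ rather than the paper's explicit check on the level of Hom complexes, and that the localized pieces are graded subquotients of the filtration (built in by finitely many extensions), not direct summands of $\opn{C}(M;\bsym{s})$ itself.
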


\begin{proof}
Step 1. Assume $L \cong \til{L}$ in $\cat{D}(A)$, where $\til{L}$ is a finite 
semi-free DG $A$-module. Choose a  K-flat resolution $\til{N} \cong N$.
Then the homomorphism $\til{\psi}_{\til{L}, M, \til{N}}$ in (\ref{eqn:401}) is 
in fact bijective. 

\medskip \noindent
Step 2. Let $\bsym{s} = (s_1, \ldots, s_n)$ be a covering sequence of 
$\bar{A}$, and for every $i$ let us write  $A_i := A_{s_i}$. 
We assume that there are isomorphisms
$A_{i} \ot_A L \cong \til{L}_i$ in $\cat{D}(A_{i})$, such that $\til{L}_i$ is 
a finite semi-free DG $A_{i}$-module; cf.\ Definition \ref{dfn:25}. 
In this step we assume that 
$M \cong A_{i} \ot_A M$ in $\cat{D}(A)$ for some index $i$. 
Let's write $L_i := A_{i} \ot_A  L$,
$M_i := A_{i} \ot_A  M$  and $N_i := A_{i} \ot_A  N$. 
Then, and using adjunction with respect to the 
homomorphism $A \to  A_{i}$, we get isomorphisms 
\[ \begin{aligned}
& \opn{RHom}_A(L, M) \ot^{\mrm{L}}_A N \cong 
\opn{RHom}_A(L, M_i) \ot^{\mrm{L}}_A N 
\\
& \quad \cong \opn{RHom}_{A_{i}}(L_i, M_i) \ot^{\mrm{L}}_A N \cong 
\opn{RHom}_{A_{i}}(L_i, M_i) \ot^{\mrm{L}}_{A_{i}} N_i 
\end{aligned} \]
and 
\[ \opn{RHom}_A(L, M \ot^{\mrm{L}}_A N) \cong 
\opn{RHom}_A(L, M_i \ot^{\mrm{L}}_A N)
\cong \opn{RHom}_{A_{i}}(L_i, M_i \ot^{\mrm{L}}_A N_i) \]
in $\cat{D}(A)$. By step 1, the morphism 
\[ \psi_{L_i, M_i, N_i} : 
\opn{RHom}_{A_{i}}(L_i, M_i) \ot^{\mrm{L}}_{A_{i}} N_i
\to \opn{RHom}_{A_{i}}(L_i, M_i \ot^{\mrm{L}}_{A_{i}} N_i)  \]
is an isomorphism.

\medskip \noindent
Step 3. We keep the covering sequence $\bsym{s} = (s_1, \ldots, s_n)$ from step 
2. Since the \v{C}ech resolution 
$\opn{c}_N : M \to \opn{C}(M; \bsym{s})$ is a quasi-isomorphism
(Proposition \ref{prop:70}), it suffices to prove that 
$\psi_{L, M', N}$ is an isomorphism, where 
$M' :=  \opn{C}(M; \bsym{s})$. 

The DG $A^0$-module $\opn{C}(A^0; \bsym{s})$ is filtered by degree: 
\[ \mu^k(\opn{C}(A^0; \bsym{s})) := 
\bigoplus_{j \geq k}  \, \opn{C}^{j}(A^0; \bsym{s}) . \]
This is a decreasing filtration of finite length, because 
$\mu^0(\opn{C}(A^0; \bsym{s})) = \opn{C}(A^0; \bsym{s})$ and 
$\mu^{n}(\opn{C}(A^0; \bsym{s})) = 0$.
Now by definition 
$\opn{C}(M; \bsym{s}) = \opn{C}(A^0; \bsym{s}) \ot_{A^0} M$,
so we get an induced filtration of finite length
$\bigl\{ \mu^k(\opn{C}(M; \bsym{s})) \bigr\}_{k \in \Z}$ 
on the DG module $\opn{C}(M; \bsym{s})$, with 
\begin{equation} \label{eqn:410}
\mu^k \bigl( \opn{C}(M; \bsym{s}) \bigr) := 
\mu^k \bigl( \opn{C}(A^0; \bsym{s}) \bigr) \ot_{A^0} M .  
\end{equation}
For every $k$ the filtration gives rise to an 
exact sequence of DG $A$-modules, that becomes a distinguished triangle
\begin{equation} \label{eqn:411}
\mu^{k+1} \bigl( \opn{C}(M; \bsym{s}) \bigr) \to 
\mu^k \bigl( \opn{C}(M; \bsym{s}) \bigr) \to 
\opn{gr}_{\mu}^k \bigl( \opn{C}(M; \bsym{s}) \bigr) \xar{\vartriangle}
\end{equation}
in $\cat{D}(A)$. Thus to prove that $\psi_{L, M', N}$ is an isomorphism, 
it suffices to prove that 
$\psi_{L, M'_k, N}$ is an isomorphism, where 
\begin{equation} \label{eqn:412}
M'_k := \opn{gr}_{\mu}^k \bigl( \opn{C}(M; \bsym{s}) \bigr)
\cong \opn{C}^k(A^0; \bsym{s})[-k] \ot_{A^0} M . 
\end{equation}

But $M'_k$ is a finite direct sum of shifts of the DG modules 
\[ M''_{\bsym{i}} := \opn{C}(A^0; \bsym{s})(\bsym{i}) \ot_{A^0} M ; \]
see formula (\ref{eqn:342}).
Thus we reduce the problem to proving that 
$\psi_{L, M''_{\bsym{i}}, N}$ is an isomorphism. Because  
$M''_{\bsym{i}}$ satisfies the assumption in step 2, we are done. 
\end{proof}

\begin{thm} \label{thm:50}
Let $A$ be a DG ring, and let $M$ be a DG $A$-module. 
The following two conditions are equivalent\tup{:}
\begin{enumerate}
\rmitem{i} $M$ is geometrically perfect. 
\rmitem{ii} $M$ belongs to $\cat{D}^{-}(A)$, and the DG $\bar{A}$-module 
$\bar{A} \ot^{\mrm{L}}_A M$ is geometrically perfect.
\end{enumerate}
If $A$ is cohomologically pseudo-noetherian, then these two conditions 
are equivalent to\tup{:}
\begin{enumerate}
\rmitem{iii} $M$ is in $\cat{D}^{-}_{\mrm{f}}(A)$, and it has finite projective 
dimension relative to $\cat{D}(A)$.
\end{enumerate}
\end{thm}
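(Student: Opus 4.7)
The plan is to establish (i) $\Leftrightarrow$ (ii) first (without the noetherian hypothesis), and then, under the pseudo-noetherian assumption, to prove (i) $\Leftrightarrow$ (iii). The direction (i) $\Rightarrow$ (ii) is immediate: Lemma \ref{lem:34}(1) gives $M \in \cat{D}^{-}(A)$, and Proposition \ref{prop:81} applied to $A \to \bar{A}$ yields that $\bar{A} \ot^{\mrm{L}}_A M$ is geometrically perfect over $\bar{A}$.

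For (ii) $\Rightarrow$ (i), choose a covering sequence $\bsym{s} = (s_1, \ldots, s_n)$ of $\bar{A}$ that witnesses geometric perfectness of $\bar{M} := \bar{A} \ot^{\mrm{L}}_A M$ over the ring $\bar{A}$. Set $M_i := A_{s_i} \ot_A M \in \cat{D}^{-}(A_{s_i})$. Since $\overline{A_{s_i}} = \bar{A}_{s_i}$ by Proposition \ref{prop:300}, the reduction of $M_i$ over $\overline{A_{s_i}}$ is isomorphic to a bounded complex of finite free $\bar{A}_{s_i}$-modules. Proposition \ref{prop:305}(2) then lifts this to an isomorphism $P_i \cong M_i$ in $\cat{D}(A_{s_i})$ for a finite semi-free DG $A_{s_i}$-module $P_i$, giving (i).

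For (i) $\Rightarrow$ (iii), Lemma \ref{lem:34}(2) provides $M \in \cat{D}^{-}_{\mrm{f}}(A)$. For the finite projective dimension, note that each local witness $P_i$, being finite semi-free, is generated in a bounded integer interval, so by Proposition \ref{prop:393} it has bounded projective concentration relative to $\cat{D}(A_{s_i})$. Given any $N \in \cat{D}(A)$, replace $N$ by its \v{C}ech resolution $\opn{c}_N : N \iso \opn{C}(N; \bsym{s})$ (Proposition \ref{prop:70}), a finite-length complex assembled from the localizations $A_{s_i} \ot_A N$. Derived adjunction along $A \to A_{s_i}$ yields $\opn{RHom}_A(M, A_{s_i} \ot_A N) \cong \opn{RHom}_{A_{s_i}}(P_i, A_{s_i} \ot_A N)$, which has bounded cohomological displacement by the local bounds. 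A d\'evissage along the finite filtration $\{\mu^k\}$ of $\opn{C}(N; \bsym{s})$, in the style of step 3 of Lemma \ref{lem:400}, assembles these into a bounded projective displacement for $M$ relative to $\cat{D}(A)$.

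For (iii) $\Rightarrow$ (i), reduce to the ring case. A pseudo-finite semi-free resolution $P \to M$ (Proposition \ref{prop:101}(2)) shows $\bar{M} := \bar{A} \ot^{\mrm{L}}_A M \in \cat{D}^{-}_{\mrm{f}}(\bar{A})$. Derived adjunction $\opn{RHom}_{\bar{A}}(\bar{M}, \bar{N}) \cong \opn{RHom}_A(M, \bar{N})$ for $\bar{N} \in \cat{D}(\bar{A}) \subseteq \cat{D}(A)$ transfers the finite projective dimension from $M$ over $A$ to $\bar{M}$ over the noetherian ring $\bar{A}$. By the classical result of \cite{SGA-6} for noetherian rings, $\bar{M}$ is a perfect complex, hence geometrically perfect over $\bar{A}$. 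Now (ii) $\Rightarrow$ (i) closes the circle. The hardest step is (i) $\Rightarrow$ (iii): promoting local projective dimension bounds on the cover of $\opn{Spec} \bar{A}$ to a global bound over the DG ring $A$; the \v{C}ech resolution combined with derived adjunction is the decisive tool.
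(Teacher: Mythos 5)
Your proposal is correct and follows essentially the same route as the paper: (i) $\Leftrightarrow$ (ii) by localizing and lifting with Proposition \ref{prop:305}, (i) $\Rightarrow$ (iii) via the local finite semi-free witnesses, adjunction and Proposition \ref{prop:390}, and (iii) $\Rightarrow$ (ii) by passing to $\bar{A}$ and using the classical ring case. The only divergence is in (i) $\Rightarrow$ (iii): the paper invokes Lemma \ref{lem:400} to get $A_{s_i} \ot_A \opn{RHom}_A(M,N) \cong \opn{Hom}_{A_{s_i}}(P_i, A_{s_i} \ot_A N)$ and concludes over the cover (obtaining projective dimension $\leq d_1 - d_0$), whereas you \v{C}ech-resolve $N$ and d\'evissage directly -- which is exactly the mechanism inside the proof of Lemma \ref{lem:400}, and yields a slightly larger but still finite bound, which is all the theorem requires.
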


When $A$ is a ring, i.e.\ $A = \bar{A}$, the equivalence (i) $\Leftrightarrow$ 
(ii) is almost a tautology, and the equivalence (i) $\Leftrightarrow$ (iii)
was already proved in \cite[Expos\'e I]{SGA-6}. But for a genuine DG ring this 
is a new result. 

\begin{proof}
(i) $\Leftrightarrow$ (ii): 
According to Lemma \ref{lem:34} we have $M \in \cat{D}^{-}(A)$.
Write $\bar{M} := \bar{A} \ot^{\mrm{L}}_A M$. 
Consider a covering sequence $(s_1, \ldots, s_n)$ of $\bar{A}$.
Let us write $A_i := A_{s_i}$, $M_i := A_i \ot_{A} M$ and 
$\bar{A}_i := \opn{H}^0(A_{s_i})$. 
For every $i$ there is an isomorphism 
$\bar{A}_{i} \ot_{\bar{A}} \bar{M} \cong 
\bar{A}_{i} \ot^{\mrm{L}}_{A_i} M_i$
in $\cat{D}(A_{s_i})$.
Using Proposition \ref{prop:305} we see that $M_i$ is isomorphic in 
$\cat{D}(A_i)$ to a finite semi-free DG $A_i$-module iff 
$\bar{A}_{i} \ot^{\mrm{L}}_{A_i} M_i$ is isomorphic in 
$\cat{D}(\bar{A}_{i})$ to a finite semi-free DG $\bar{A}_{i}$-module.

\medskip \noindent 
(i) $\Rightarrow$ (iii): 
Here  $A$ is cohomologically pseudo-noetherian. 
Lemma \ref{lem:34} says that $M \in \cat{D}^{-}_{\mrm{f}}(A)$.
To prove that $M$ has finite projective dimension relative to 
$\cat{D}(A)$, we have to bound 
$\opn{H} \bigl( \opn{RHom}_{A}( M , N) \bigr)$ 
in terms of $\opn{H}(N)$ for any $N \in \cat{D}(A)$.
Choose a covering sequence $(s_1, \ldots, s_n)$ of $\bar{A}$ and finite 
semi-free DG $A_i$-modules $P_i$ as in Definition \ref{dfn:25}, where 
$A_i := A_{s_i}$. 
Let $d_0 \leq d_1$ be integers such that every $P_i$ is generated in the 
integer interval $[d_0, d_1]$ (see Definition \ref{dfn:390}). 
Using Lemma \ref{lem:400} for the isomorphism $\cong^{\dag}$,
and adjunction, we obtain isomorphisms 
\[ \begin{aligned}
& 
A_i \ot_A \opn{RHom}_{A}( M , N) \cong^{\dag}
\opn{RHom}_{A}( M , A_i \ot_A N) 
\\
& \quad 
\cong \opn{RHom}_{A_i}( A_i \ot_A M , A_i \ot_A N) \cong 
\opn{Hom}_{A_i}( P_i, A_i \ot_A N) 
\end{aligned} \]
in $\cat{D}(A)$.
Using Proposition \ref{prop:390}, this proves that 
\[ \opn{con} \bigl( \opn{H} (\opn{RHom}_{A}( M , N)) \bigr) \subseteq
\opn{con} ( \opn{H}(N) ) - [d_0, d_1] . \]
We conclude that the projective dimension of $M$ relative to 
$\cat{D}(A)$ is $\leq d_1 - d_0$.

\medskip \noindent 
(iii) $\Rightarrow$ (ii): 
Here again $A$ is cohomologically pseudo-noetherian. 
Let's write $\bar{M} := \bar{A} \ot^{\mrm{L}}_A M$.
Because $M \in \cat{D}^{-}_{\mrm{f}}(A)$, we 
can find a pseudo-finite semi-free resolution $P \to M$ in 
$\cat{C}(A)$. 
Thus $\bar{M} \cong \bar{A} \ot_A P$ belongs to 
$\cat{D}^{-}_{\mrm{f}}(\bar{A})$.

For every $\bar{N} \in \cat{D}(\bar{A})$ we have, by adjunction, 
$\opn{RHom}_{\bar{A}}( \bar{M}, \bar{N}) \cong 
\opn{RHom}_{A}(M, \bar{N})$.
This shows that the projective dimension of $\bar{M}$ relative to 
$\cat{D}(\bar{A})$ is finite. But this just means that the complex 
$\bar{M}$ has finite projective dimension over the ring $\bar{A}$. 
In particular $\bar{M}$ belongs to $\cat{D}^{-}_{\mrm{f}}(\bar{A})$.
The usual syzygy argument shows that there is a quasi-isomorphism 
$\bar{P} \to \bar{M}$ in 
$\cat{C}(\bar{A})$, for some bounded complex of finite projective 
$\bar{A}$-modules $\bar{P}$. But locally on $\opn{Spec} \bar{A}$ each 
$\bar{P}^i$ is a free $\bar{A}$-module; and hence $\bar{M}$ is geometrically 
perfect. 
\end{proof}

\begin{rem} \label{rem:80}
Possibly one could remove the pseudo-noetherian hypothesis on $A$ in condition 
(iii) of  Theorem \ref{thm:50}. The new condition on $M$ would most likely be 
this:
\begin{enumerate}
\item[(iii')] The DG $A$-module $M$ is pseudo-coherent, and it has finite flat 
dimension relative to $\cat{D}^{\mrm{b}}(A)$.
\end{enumerate}
This would require a detailed study of pseudo-coherent DG $A$-modules. 
Cf.\ \lb \cite[Expos\'e I]{SGA-6}, 
\cite[\href{http://stacks.math.columbia.edu/tag/0657}{Definition 0657}]{SP}
and
\cite[\href{http://stacks.math.columbia.edu/tag/0658}{Lemma 0658}]{SP}.
\end{rem}

Recall that a DG $A$-module $M$ is called a {\em compact object of 
$\cat{D}(A)$} if for any collection $\{ N_z \}_{z \in Z}$ of DG $A$-modules, 
the canonical homomorphism 
\begin{equation} \label{eqn:402}
\bigoplus_{z \in Z} \, \opn{Hom}_{\cat{D}(A)}(M, N_z) \to 
\opn{Hom}_{\cat{D}(A)} \Bigl( M, \bigoplus\nolimits_{z \in Z} \, N_z \Bigr) 
\end{equation}
is bijective. (In general this is only injective.)
It is known that for a ring $A$, compact and perfect are the same (see 
\cite[Section 6]{Ri}, \cite[Example 1.13]{Ne}, or 
\cite[\href{http://stacks.math.columbia.edu/tag/07LT}{Proposition 07LT}]{SP}).
It turns out that this is also true for a DG ring. 

First we need to know that being compact is a local property on 
$\opn{Spec} \bar{A}$. This is very similar to arguments found in \cite{Ne}.

\begin{lem} \label{lem:420}
Let $A$ be a DG ring, let $M$ be a DG $A$-module, and let 
$(s_1, \ldots, s_n)$ be a covering sequence of $\bar{A}$.
The following conditions are equivalent.
\begin{enumerate}
\rmitem{i} $M$ is a compact object of $\cat{D}(A)$. 

\rmitem{ii} For every $i$ the DG $A_{s_i}$-module $A_{s_i} \ot_A M$
 is a compact object of $\cat{D}(A_{s_i})$. 
\end{enumerate}
\end{lem}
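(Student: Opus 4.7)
The plan is to prove the two implications separately, with the restriction-extension adjunction for $A \to A_{s_i}$ as the main technical tool in both directions, and the \v{C}ech resolution of Definition~\ref{dfn:72} doing the work in the harder direction.

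For (i) $\Rightarrow$ (ii), the idea is that compactness is stable under extension of scalars. Given a family $\{ N_z \}_{z \in Z}$ in $\cat{D}(A_{s_i})$, adjunction yields
\[ \opn{Hom}_{\cat{D}(A_{s_i})}(A_{s_i} \ot_A M, N_z) \cong \opn{Hom}_{\cat{D}(A)}(M, \opn{rest}(N_z)) , \]
and the restriction functor $\opn{rest} : \cat{D}(A_{s_i}) \to \cat{D}(A)$ commutes with direct sums (it is the identity on underlying DG abelian groups). Applying compactness of $M$ to the family $\{ \opn{rest}(N_z) \}$ gives the desired bijection. This same argument, applied in various guises, also shows the auxiliary fact that compactness of $M$ in $\cat{D}(A)$ passes to compactness of $A_t \ot_A M$ in $\cat{D}(A_t)$ for any $t \in \bar{A}$; in particular, under hypothesis (ii), for every product $t = s_{i_0} \cdots s_{i_p}$ the DG module $A_t \ot_A M$ is compact in $\cat{D}(A_t)$ (by iterating: $A_t = (A_{s_{i_0}})_{s_{i_1} \cdots s_{i_p}}$).

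For (ii) $\Rightarrow$ (i), I would imitate step 3 in the proof of Lemma~\ref{lem:400}. Write $\bsym{s} := (s_1, \ldots, s_n)$. For any family $\{ N_z \}_{z \in Z}$ in $\cat{D}(A)$, the \v{C}ech quasi-isomorphism $\opn{c}_N$ of Proposition~\ref{prop:70} is functorial in $N$ and compatible with direct sums, since $\opn{C}(-; \bsym{s}) = \opn{C}(A^0; \bsym{s}) \ot_{A^0} (-)$. Hence the canonical homomorphism
\[ \bigoplus_z \opn{Hom}_{\cat{D}(A)}(M, N_z) \to \opn{Hom}_{\cat{D}(A)}\Bigl( M, \bigoplus\nolimits_z N_z \Bigr) \]
becomes the analogous map with each $N_z$ replaced by $\opn{C}(N_z; \bsym{s})$. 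The finite decreasing filtration $\mu^\bullet$ on $\opn{C}(-;\bsym{s})$ of formula~(\ref{eqn:410}) induces, via the distinguished triangles~(\ref{eqn:411}), a finite tower of long exact $\opn{Hom}_{\cat{D}(A)}$-sequences; by the five-lemma, it is enough to verify the direct-sum statement on the graded pieces~(\ref{eqn:412}), which are finite direct sums of shifts of DG modules of the form $A_t \ot_A N_z$ with $t = s_{i_0} \cdots s_{i_p}$.

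Finally, for each such $t$, adjunction gives
\[ \opn{Hom}_{\cat{D}(A)}(M, A_t \ot_A N_z) \cong \opn{Hom}_{\cat{D}(A_t)}(A_t \ot_A M, A_t \ot_A N_z) , \]
and under this identification the required direct-sum bijection is exactly the compactness of $A_t \ot_A M$ in $\cat{D}(A_t)$, which holds by the auxiliary fact noted above applied to the (ii)-compact object $A_{s_{i_0}} \ot_A M$. The main obstacle is the bookkeeping in the middle paragraph: one has to use that tensor product, direct sums, and the \v{C}ech construction all commute, and that the iterated tensor product $A^0_{s_{i_0}} \ot_{A^0} \cdots \ot_{A^0} A^0_{s_{i_p}}$ really is the single localization $A^0_t$ for $t = s_{i_0} \cdots s_{i_p}$, so that the graded pieces are honest base changes to $A_t$ and adjunction is applicable.
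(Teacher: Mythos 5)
Your proof is correct and follows essentially the same strategy as the paper: pure adjunction along $A \to A_{s_i}$ (plus the fact that restriction commutes with direct sums) for (i) $\Rightarrow$ (ii), and for (ii) $\Rightarrow$ (i) the \v{C}ech resolution, the finite filtration $\mu^{\bullet}$ with its distinguished triangles, and a reduction to the graded pieces, exactly as in the paper's imitation of the proof of Lemma \ref{lem:400}. The one place you diverge is the very last step. The paper applies adjunction only for the single homomorphism $A \to A' := A_{s_{i_0}}$, viewing each $A_{\bsym{i}} \ot_A N_z$ merely as a DG $A'$-module by restriction, and then invokes hypothesis (ii) for $A' \ot_A M$ directly; this avoids both your auxiliary transfer of compactness to the multiple localization and the identification you lean on. That identification is also the one point where you are slightly imprecise: in the setting of Definition \ref{dfn:40}, where $A^0_s$ is the localization at the multiplicative set $\pi^{-1}(S) \cap A^0$, the iterated tensor product $A^0_{s_{i_0}} \ot_{A^0} \cdots \ot_{A^0} A^0_{s_{i_p}}$ need not be literally the ring $A^0_t$ for $t = s_{i_0} \cdots s_{i_p}$ (the multiplicative sets differ); what is true is that the two resulting DG rings have the same cohomology $\opn{H}(A)_t$ and are canonically quasi-isomorphic, and in any case your auxiliary compactness transfer works verbatim for the DG ring $A_{\bsym{i}} \ot_{A^0} A$ itself, since restriction along $A_{s_{i_0}} \to A_{\bsym{i}} \ot_{A^0} A$ commutes with direct sums and has base change as left adjoint. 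So nothing breaks, but the paper's shortcut renders this bookkeeping unnecessary.
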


\begin{proof}
(i) $\Rightarrow$ (ii): This is the easy implication. 
We write $A_i := A_{s_i}$ and $M_i := A_{i} \ot_A M$. 
Let $F_i : \cat{D}(A_{i}) \to \cat{D}(A)$ be the restriction functor. It 
commutes with all direct sums. Given a collection 
$\{ N_z \}_{z \in Z}$ in $\cat{D}(A_{i})$, we have canonical isomorphisms
\[ \begin{aligned}
& \opn{Hom}_{\cat{D}(A_{i})} \bigl( M_i , \,
\bigoplus\nolimits_{z} \, N_z \bigr) \cong 
\opn{Hom}_{\cat{D}(A_{})} \bigl( M , F_i \bigl( \bigoplus\nolimits_{z} \, 
N_z \bigr) \bigr)
\\
& \qquad 
\cong \opn{Hom}_{\cat{D}(A_{})} \bigl( M , \, \bigoplus\nolimits_{z}  
F_i(N_z) \bigr)
\cong \bigoplus\nolimits_{z} \opn{Hom}_{\cat{D}(A_{})} ( M , F_i(N_z))
\\
& \qquad 
\cong \bigoplus\nolimits_{z}
\opn{Hom}_{\cat{D}(A_{i})} ( M_i , N_z ) \ . 
\end{aligned} \]
We use the adjunction for $F_i$ and the fact that $M$ is compact. 
The conclusion is that $M_i$ is compact. 

\medskip \noindent 
(ii) $\Rightarrow$ (i): For any DG $A$-module $N$ we have the 
\v{C}ech resolution $\opn{c}_N : N \to \opn{C}(N; \bsym{s})$
from Definition \ref{dfn:72} and Proposition \ref{prop:70}.
Because 
\begin{equation*} 
\opn{C}(N; \bsym{s}) = \opn{C}(A^0; \bsym{s}) \ot_{A^0} N \cong
\opn{C}(A; \bsym{s}) \ot_A N , 
\end{equation*}
this functor commutes with all direct sums. Thus the canonical homomorphism
\begin{equation} \label{eqn:104}
\bigoplus\nolimits_{z \in Z} \opn{C}(N_z; \bsym{s}) \to 
\opn{C} \Bigl( \bigoplus\nolimits_{z \in Z} N_z \, ; \bsym{s} \Bigr)
\end{equation}
is an isomorphism in $\cat{C}(A)$. Using (\ref{eqn:104}) we obtain 
a commutative diagram of $\bar{A}$-modules
\begin{equation} \label{eqn:73}
\UseTips \xymatrix @C=5ex @R=5ex {
\displaystyle\bigoplus\nolimits_{z} \, \opn{Hom}_{\cat{D}(A)}(M, N_z) 
\ar[r]
\ar[d]^{\cong}
&
\opn{Hom}_{\cat{D}(A)} \Bigl( M, \displaystyle\bigoplus\nolimits_{z} N_z \Bigr)
\ar[d]^{\cong}
\\
\displaystyle\bigoplus\nolimits_{z} \, \opn{Hom}_{\cat{D}(A)} \bigl( M, 
\opn{C}(N_z; \bsym{s}) \bigr) 
\ar[r]
&
\opn{Hom}_{\cat{D}(A)} \Bigl( M, \displaystyle\bigoplus\nolimits_{z}
\opn{C}(N_z; \bsym{s}) \Bigr) 
} 
\end{equation}
where the vertical arrows are bijections. So it suffices to prove that the 
lower horizontal arrow is a bijection.

Consider the finite length filtration
$\bigl\{ \mu^k(\opn{C}(N_z; \bsym{s})) \bigr\}_{k \in \Z}$ 
on the DG module \lb $\opn{C}(N_z; \bsym{s})$, as in formula (\ref{eqn:410}). 
Passing to the associated distinguished triangles,
and using induction on $k$, as was done in the proof of Lemma \ref{lem:400}, 
we reduce the problem to the verification that 
\begin{equation} \label{eqn:413}
\bigoplus\nolimits_{z} \, 
\opn{Hom}_{\cat{D}(A)}(M, A_{\bsym{i}} \ot_{A} N_z ) 
\to 
\opn{Hom}_{\cat{D}(A)} \Bigl( M,  \bigoplus\nolimits_{z}
A_{\bsym{i}} \ot_{A} N_z  \Bigr)
\end{equation}
is a bijection, where 
$A_{\bsym{i}} :=  A^0_{s_{i_0}} \ot_{A^0} \cdots \ot_{A^0} A^0_{s_{i_k}}$
for some strictly increasing sequence
$\bsym{i} = (i_0, \ldots, i_k)$ in the integer interval $[1, n]$.
Let's write $A' := A_{s_{i_0}}$. Adjunction for the DG ring homomorphism 
$A \to A'$ allows us to replace (\ref{eqn:413}) with the homomorphism
\begin{equation} \label{eqn:74}
\bigoplus\nolimits_{z} \, 
\opn{Hom}_{\cat{D}(A')}(A' \ot_{A} M, A_{\bsym{i}} \ot_{A} N_z ) \to 
\opn{Hom}_{\cat{D}(A')} \Bigl( A' \ot_{A} M,  
\bigoplus\nolimits_{z} A_{\bsym{i}} \ot_{A} N_z  \Bigr) .
\end{equation}
But we are assuming that $A' \ot_A M$ is compact in $\cat{D}(A')$; so
(\ref{eqn:74}) is bijective.
\end{proof}

\begin{lem} \label{lem:73}
If $M$ is a compact object of $\cat{D}(A)$, then it belongs to 
$\cat{D}^-(A)$.
\end{lem}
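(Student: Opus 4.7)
The plan is to show that $\opn{id}_M$ factors in $\cat{D}(A)$ through some smart truncation $\opn{smt}^{\leq k}(M)$, which will immediately force $\opn{H}^i(M) = 0$ for all $i > k$.

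First I would organize the smart truncations into the sequence
\[ \opn{smt}^{\leq 0}(M) \xar{\si_0} \opn{smt}^{\leq 1}(M) \xar{\si_1} \opn{smt}^{\leq 2}(M) \xar{\si_2} \cdots \]
in $\cat{M}(A)$, built from the natural inclusions coming from \tup{(\ref{eqn:426})}, together with the canonical morphisms $\iota_k : \opn{smt}^{\leq k}(M) \to M$. Let $\Phi : \bigoplus_{k \geq 0} \opn{smt}^{\leq k}(M) \to \bigoplus_{k \geq 0} \opn{smt}^{\leq k}(M)$ be the DG module map given by $\Phi(m_0, m_1, m_2, \ldots) := (m_0, m_1 - \si_0(m_0), m_2 - \si_1(m_1), \ldots)$, in the style of the homotopy colimit construction in the proof of Proposition \ref{prop:310}, and set $L := \opn{cone}(\Phi) \in \cat{M}(A)$. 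The $\iota_k$ assemble into a morphism $L \to M$ in $\cat{D}(A)$, which I would verify is an isomorphism by applying $\opn{H}^i$ to the distinguished triangle
\[ \textstyle \bigoplus_k \opn{smt}^{\leq k}(M) \xar{\Phi} \bigoplus_k \opn{smt}^{\leq k}(M) \to L \xar{\vartriangle} \]
and noting that $\opn{H}^i(\opn{smt}^{\leq k}(M))$ equals $\opn{H}^i(M)$ for $k \geq i$ and vanishes otherwise, so the direct limit over $k$ is $\opn{H}^i(M)$.

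Next I would invoke the compactness of $M$: the functor $\opn{Hom}_{\cat{D}(A)}(M, -)$ commutes with direct sums, so applying it to the triangle above, and noting that the endomorphism induced by $\Phi$ on $\bigoplus_k \opn{Hom}_{\cat{D}(A)}(M, \opn{smt}^{\leq k}(M))$ is of the form $\opn{id} - (\si_k)_{*}$, yields
\[ \opn{Hom}_{\cat{D}(A)}(M, L) \cong \opn{colim}_k \ \opn{Hom}_{\cat{D}(A)}(M, \opn{smt}^{\leq k}(M)) . \]
Combined with the isomorphism $L \cong M$ from the previous step, the identity $\opn{id}_M$ is then represented by some $\phi_k : M \to \opn{smt}^{\leq k}(M)$ satisfying $\iota_k \circ \phi_k = \opn{id}_M$ in $\cat{D}(A)$. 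Passing to $\opn{H}^i$ for any $i > k$, the identity of $\opn{H}^i(M)$ factors through $\opn{H}^i(\opn{smt}^{\leq k}(M)) = 0$, so $\opn{H}^i(M) = 0$. This gives $\opn{sup}(\opn{H}(M)) \leq k$, whence $M \in \cat{D}^-(A)$.

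The main obstacle I anticipate is the careful verification that $L \cong M$ in $\cat{D}(A)$ — essentially the statement that $M$ is recovered as the sequential homotopy colimit of its smart truncations. In the nonpositive DG ring setting of the paper the smart truncations are genuine DG submodules and quotients, so this reduces to a routine computation with the long exact sequence of the triangle defining $L$, but some bookkeeping is needed to interpret the colimit formula correctly in $\cat{D}(A)$ and to ensure that $\opn{Hom}_{\cat{D}(A)}(M,-)$ really converts the cone $L$ into a colimit.
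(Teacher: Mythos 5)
Your proposal is correct, but it follows a genuinely different route from the paper's proof. You realize $M$ as the telescope (homotopy colimit) of the ascending system of lower truncations $\opn{smt}^{\leq k}(M)$, use compactness to identify $\opn{Hom}_{\cat{D}(A)}(M, L)$ with $\opn{colim}_k \opn{Hom}_{\cat{D}(A)}(M, \opn{smt}^{\leq k}(M))$, and then factor $\opn{id}_M$ through a single truncation; this is the Stacks Project/Neeman-style argument, and it even gives the stronger conclusion that the identity of $M$ factors through $\opn{smt}^{\leq k}(M)$, at the price of the bookkeeping you anticipate: checking that the cohomology of $\opn{cone}(\Phi)$ is the colimit of the $\opn{H}^i(\opn{smt}^{\leq k}(M))$, and (the one step you should state explicitly) that $\opn{id} - \si_*$ is injective on $\bigoplus_k \opn{Hom}_{\cat{D}(A)}(M, \opn{smt}^{\leq k}(M)[1])$, which is what kills the boundary term in the long exact sequence and gives the colimit formula. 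The paper's proof avoids homotopy colimits entirely: it uses the upper truncations, noting that the surjections $\phi_k : M \to \opn{smt}^{\geq k}(M)$ assemble into an honest homomorphism $\phi = \sum_k \phi_k : M \to \bigoplus_{k \in \N} \opn{smt}^{\geq k}(M)$ in $\cat{M}(A)$ (each element of $M$ maps to zero in all but finitely many summands); compactness then forces the component $M \to \opn{smt}^{\geq k_0 + 1}(M)$ to vanish in $\cat{D}(A)$ for some $k_0$, while $\opn{H}^l(\phi_{k_0+1})$ is an isomorphism for $l \geq k_0 + 1$, so $\opn{H}^l(M) = 0$ in that range. Both arguments use only compactness with respect to countable direct sums; the paper's is shorter, yours is the more standard and portable one (it is essentially the first step of the usual proof that compact implies perfect).
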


\begin{proof}
This is an argument from \cite{Ri}, slightly improved in the proof of
\cite[\href{http://stacks.math.columbia.edu/tag/07LT}{Proposition 07LT}]{SP}. 

Suppose  $\{ N_z \}_{z \in Z}$ is a collection of DG $A$-modules. 
Given a morphism $\psi : M \to \bigoplus_{z \in Z} N_z$ in $\cat{D}(A)$,
there is a finite subset $Z_0 \subseteq Z$ such that $\psi$ factors through 
$\bigoplus_{z \in Z_0} N_z$. So for any $z \notin Z_0$, the component 
$\psi_z : M \to N_z$ of $\psi$ is zero. 

For every $k \geq 0$ consider the smart truncation $\opn{smt}^{\geq k}(M)$
from (\ref{eqn:425}).
There is a canonical surjective homomorphism 
$\phi_k : M \to \opn{smt}^{\geq k}(M)$ in $\cat{C}(A)$,
and we know that $\opn{H}^l(\phi_k)$ is an isomorphism for all $l \geq k$.
Consider the homomorphism
\[ \phi : M \to  \bigoplus_{k \in \N} \, \opn{smt}^{\geq k}(M) \ , \ 
\phi := \sum \phi_k   \]
in $\cat{C}(A)$. Let $\psi := \opn{Q}(\phi)$; so the $k$-th component of $\psi$ 
is 
$\psi_k := \opn{Q}(\phi_k) : M \to \opn{smt}^{\geq k}(M)$.
As explained in the paragraph above, there is an integer $k_0$ such that 
$\psi_{k_0 + 1} = 0$. Therefore 
\[ \opn{H}^l(\psi_{k_0 + 1}) = \opn{H}^l(\phi_{k_0 + 1}) : 
\opn{H}^l(M) \to \opn{H}^l(\opn{smt}^{\geq k_0 + 1}(M)) \]
is zero for all $l$. We see that 
$\opn{H}^l(M) = 0$ for all  $l \geq k_0 + 1$.
\end{proof}

\begin{thm} \label{thm:74}
Let $A$ be a DG ring, and let $L$ be a DG $A$-module.
The following three conditions are equivalent\tup{:} 
\begin{enumerate}
\rmitem{i} $L$ is a geometrically perfect DG $A$-module.  

\rmitem{ii} $L$ is a compact object of $\cat{D}(A)$. 

\rmitem{iii} For any $M, N \in \cat{D}(A)$, the canonical morphism 
\[ \psi_{L, M, N} : \opn{RHom}_A(L, M) \ot^{\mrm{L}}_A N \to 
\opn{RHom}_A(L, M \ot^{\mrm{L}}_A N) \]
in $\cat{D}(A)$ is an isomorphism.
\end{enumerate}
\end{thm}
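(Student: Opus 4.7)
The plan is to prove the cyclic chain of implications (i) $\Rightarrow$ (iii) $\Rightarrow$ (ii) $\Rightarrow$ (i), leaning on results already established earlier in the paper. The implication (i) $\Rightarrow$ (iii) is literally the content of Lemma \ref{lem:400}, so no extra work is needed there.

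For (iii) $\Rightarrow$ (ii), I would specialize (iii) to $M = A$, which produces a functorial isomorphism $\opn{RHom}_A(L, A) \ot^{\mrm{L}}_A N \iso \opn{RHom}_A(L, N)$ in $\cat{D}(A)$ for every $N \in \cat{D}(A)$. Since $- \ot^{\mrm{L}}_A -$ commutes with arbitrary direct sums in each argument (computing it via K-flat resolutions, or using that it is a left adjoint), for any family $\{N_z\}_{z \in Z}$ in $\cat{D}(A)$ the canonical morphism $\bigoplus_{z \in Z} \opn{RHom}_A(L, N_z) \to \opn{RHom}_A(L, \bigoplus_{z \in Z} N_z)$ is an isomorphism in $\cat{D}(A)$. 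Applying $\opn{H}^0$, which itself commutes with direct sums, delivers precisely the bijection (\ref{eqn:402}), and hence $L$ is compact.

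The main direction is (ii) $\Rightarrow$ (i), and here the strategy is to reduce to the classical case of an ordinary ring by passing along the canonical homomorphism $A \to \bar{A}$ and then invoking Theorem \ref{thm:50}. By Lemma \ref{lem:73}, $L$ already lies in $\cat{D}^-(A)$, so it remains to verify condition (ii) of Theorem \ref{thm:50}, namely that $\bar{L} := \bar{A} \ot^{\mrm{L}}_A L$ is a geometrically perfect DG $\bar{A}$-module. The restriction functor $\opn{rest} : \cat{D}(\bar{A}) \to \cat{D}(A)$ is exact and commutes with direct sums (both are computed termwise on underlying complexes), so its left adjoint $\bar{A} \ot^{\mrm{L}}_A - $ preserves compact objects; consequently $\bar{L}$ is a compact object of $\cat{D}(\bar{A})$. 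Since $\bar{A}$ is an honest commutative ring, a compact object of $\cat{D}(\bar{A})$ is the same as a perfect complex by the classical result \cite[Section 6]{Ri}, \cite[\href{http://stacks.math.columbia.edu/tag/07LT}{Proposition 07LT}]{SP}, and a perfect complex over a ring is in particular geometrically perfect.

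Putting these pieces together, $L \in \cat{D}^-(A)$ and $\bar{A} \ot^{\mrm{L}}_A L$ is geometrically perfect over $\bar{A}$; the equivalence (i) $\Leftrightarrow$ (ii) of Theorem \ref{thm:50} (which does not require any noetherian hypothesis on $A$) then concludes that $L$ is geometrically perfect over $A$. I expect no substantial obstacle in this approach: once Theorem \ref{thm:50} and Lemmas \ref{lem:400}, \ref{lem:73} are in hand, the only external input is the compact-equals-perfect theorem for ordinary rings, and all the remaining steps are formal manipulations of adjunctions and of the canonical morphism $\psi_{L,M,N}$.
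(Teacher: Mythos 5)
Your proposal is correct and follows essentially the same route as the paper's proof: (i) $\Rightarrow$ (iii) is Lemma \ref{lem:400}, (iii) $\Rightarrow$ (ii) is the same direct-sum-plus-$\opn{H}^0$ argument, and (ii) $\Rightarrow$ (i) is exactly the paper's reduction along $A \to \bar{A}$ (the coproduct-preserving restriction functor makes $\bar{A} \ot^{\mrm{L}}_A L$ compact, the classical compact-equals-perfect theorem over the ring $\bar{A}$ applies, and Lemma \ref{lem:73} together with Theorem \ref{thm:50}(i)$\Leftrightarrow$(ii) finishes). The only difference is organizational: your cyclic chain (i) $\Rightarrow$ (iii) $\Rightarrow$ (ii) $\Rightarrow$ (i) makes the paper's separate direct proof of (i) $\Rightarrow$ (ii) via the locality Lemma \ref{lem:420} logically unnecessary.
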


See the text just after formula (\ref{eqn:400}) for a description of the  
morphism $\psi_{L, M, N}$ in condition (iii).

\begin{proof}
(i) $\Rightarrow$ (ii): Since a finite semi-free DG module 
is clearly compact, this follows from Lemma \ref{lem:420}.

\medskip \noindent 
(ii) $\Rightarrow$ (i): Assume $L$ is compact in $\cat{D}(A)$. 
Consider the DG $\bar{A}$-module $\bar{L} := \bar{A} \ot^{\mrm{L}}_A L$.
Adjunction shows that 
\[ \opn{Hom}_{\cat{D}(\bar{A})}(\bar{L}, M) \cong 
\opn{Hom}_{\cat{D}(A)}(L, F(M)) \]
functorially for $M \in \cat{D}(\bar{A})$. Here $F$ is the forgetful functor, 
that commutes with all direct sums. Thus $\bar{L}$ is a compact object of 
$\cat{D}(\bar{A})$. Now by \cite[Section 6]{Ri},  \cite[Example 1.13]{Ne} or 
\cite[\href{http://stacks.math.columbia.edu/tag/07LT}{Proposition 07LT}]{SP})
the DG $\bar{A}$-module $\bar{L}$ is algebraically perfect.
So there is an isomorphism $L \cong \bar{P}$ in $\cat{D}(\bar{A})$, where 
$\bar{P}$ is a bounded complex of finite projective $\bar{A}$-modules. But 
locally on $\opn{Spec} \bar{A}$ each $\bar{P}^i$ is a free module. 
Thus $\bar{L}$ is geometrically perfect.
By the lemma above we know that $L \in \cat{D}^-(A)$.
The implication (ii) $\Rightarrow$ (i) in Theorem \ref{thm:50} says that $L$ 
is geometrically perfect.

\medskip \noindent 
(i) $\Rightarrow$ (iii): This is Lemma \ref{lem:400}.

\medskip \noindent 
(iii) $\Rightarrow$ (ii): Take any collection of DG $A$-modules 
$\{ N_z \}_{z \in Z}$, and define 
$N := \lb \bigoplus_{z \in Z} N_z$. 
By assumption, the any $z$ the morphism 
\[ \psi_{L, A, N_z} : \opn{RHom}_A(L, A) \ot^{\mrm{L}}_A N_z \to 
\opn{RHom}_A(L, N_z) \]
is an isomorphism. Since derived tensor products commute with all direct sums, 
we get an isomorphism 
\[ \phi : \opn{RHom}_A(L, A) \ot^{\mrm{L}}_A N \iso
\bigoplus_{z \in Z} \, \opn{RHom}_A(L, N_z) . \]
Now the functor $\opn{H}^0$ also commutes with all direct sums. So we get a 
commutative diagram of $\bar{A}$-modules 
\[ \UseTips \xymatrix @C=13ex @R=5ex {
\opn{H}^0  \bigl( \opn{RHom}_A(L, A) \ot^{\mrm{L}}_A N \bigr)
\ar[r]^(0.54){ \opn{H}^0(\psi_{L, A, N}) }  
\ar[d]_{ \opn{H}^0(\phi) }
&
\opn{H}^0  \bigl( \opn{RHom}_A(L, N) \bigr) 
\ar[d]^{\cong}
\\
\displaystyle\bigoplus_{z \in Z} \, \opn{Hom}_{\cat{D}(A)}(L, N_z) 
\ar[r]^{ \opn{can} }
&
\opn{Hom}_{\cat{D}(A)}(L, N)
} \]
in which the vertical arrows are isomorphisms. 
Our assumption says that \lb $\opn{H}^0(\psi_{L, A, N})$ is an isomorphism.
Therefore the bottom arrow (marked ``$\opn{can}$'') is an isomorphism too.
But this is the morphism (\ref{eqn:402}).  
\end{proof}

\begin{cor} \label{cor:550}
Let $A$ be a DG ring and $M$ a DG $A$-module. The following two conditions are 
equivalent:  
\begin{enumerate}
\rmitem{i} $M$ is geometrically perfect \tup{(}Definition \tup{\ref{dfn:25})}. 
\rmitem{ii} $M$ is algebraically perfect \tup{(}Definition \tup{\ref{dfn:545})}.
\end{enumerate}
\end{cor}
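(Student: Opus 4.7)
The plan is to reduce the corollary to Theorem \ref{thm:74} plus the well-known fact that compact objects in a compactly generated triangulated category coincide with the thick subcategory classically generated by a set of compact generators (Neeman-Ravenel).

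For the easy direction (ii) $\Rightarrow$ (i), I would first observe that $A$ is itself a compact object of $\cat{D}(A)$: indeed, as noted in Example \ref{exa:520}, the functor $\opn{RHom}_A(A,-)$ is just the forgetful functor to $\cat{D}(\Z)$, and in particular $\opn{Hom}_{\cat{D}(A)}(A,-) \cong \opn{H}^0(-)$ commutes with arbitrary direct sums. The full subcategory of compact objects in any triangulated category with arbitrary coproducts is epaisse (closed under shifts, cones and direct summands). Hence the epaisse subcategory classically generated by $A$ lies inside the compact objects. By Theorem \ref{thm:74}, this is exactly the subcategory of geometrically perfect DG modules, giving (ii) $\Rightarrow$ (i).

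For the harder direction (i) $\Rightarrow$ (ii), I would start from Theorem \ref{thm:74}, which tells us that a geometrically perfect $M$ is compact in $\cat{D}(A)$. The triangulated category $\cat{D}(A)$ has arbitrary set-indexed coproducts, and $A$ is a compact generator, in the strong sense that $\opn{Hom}_{\cat{D}(A)}(A[i], N) \cong \opn{H}^{-i}(N)$ for all $i$, so $N = 0$ whenever these all vanish. Under these hypotheses, the theorem of Neeman (see \cite[Lemma 2.2 and Theorem 2.1]{Ne}; cf.\ also \cite{BV} and \cite{Ri}) asserts that the subcategory of compact objects in $\cat{D}(A)$ coincides with the epaisse subcategory classically generated by $A$. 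Thus $M$ is algebraically perfect in the sense of Definition \ref{dfn:545}.

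The main obstacle, if one wishes to avoid citing Neeman as a black box, is the implication ``compact $\Rightarrow$ classically generated by $A$'': given a compact $M$, one considers the smallest localizing (i.e.\ coproduct- and triangle-closed) subcategory $\mathsf{T} \subseteq \cat{D}(A)$ containing $A$ and shows, using the Brown representability argument together with compactness of $M$, that $\mathsf{T} = \cat{D}(A)$ forces $M$ to lie already in the thick subcategory classically generated by $A$. In our setup this is exactly the argument that goes through verbatim, so I would simply quote \cite{Ne} rather than reproduce it; the real content of the corollary is already contained in Theorem \ref{thm:74}.
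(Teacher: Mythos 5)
Your proposal is correct and follows essentially the same route as the paper: both reduce the statement to compactness via Theorem \ref{thm:74}, and then invoke the standard fact that the compact objects of $\cat{D}(A)$ are exactly the epaisse subcategory classically generated by $A$ (the paper cites \cite[Proposition 2.2.4]{BV}, you cite Neeman; same content). The extra detail you give for (ii) $\Rightarrow$ (i) (compactness of $A$ and closure of compacts under shifts, cones and summands) is fine but is subsumed in that cited result.
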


\begin{proof}
By Theorem \ref{thm:74}, $M$ is geometrically perfect iff it is a compact 
object of $\cat{D}(A)$. On the other hand, it is well-known (see 
\cite[Proposition 2.2.4]{BV}) that $M$ is algebraically 
perfect iff it is a compact object of $\cat{D}(A)$.
\end{proof}

\begin{conv}
From here on we use the expression ``perfect DG module'', rather than the 
two longer yet equivalent expressions. 
\end{conv}

\begin{rem} \label{rem:540}
Suppose $A$ is a noncommutative DG ring. Definition \ref{dfn:25} is worthless 
here: even if $A$ happens to be nonpositive, still the ring $\bar{A}$ is 
noncommutative, so we cannot localize on $\opn{Spec} \bar{A}$.

However, Definition \ref{dfn:545} is fine when $A$ is noncommutative, and also 
when $A$ has nontrivial positive components. So we can talk about algebraically 
perfect DG $A$-modules for any $A \in \cat{DGR}$. Indeed, this is the  
definition of perfect DG module that was used in \cite{ABIM}. 
Results of \cite{Ri}, \cite{BV} and \cite{ABIM} say that a DG $A$-module $M$ 
is algebraically perfect iff it is a compact object of $\cat{D}(A)$, iff it is 
a direct summand, in $\cat{D}(A)$, of a finite semi-free DG $A$-module. 
\end{rem}

\section{Tilting DG Modules} \label{sec:tilting}

Recall that all DG rings here are commutative (Convention \ref{conv:100}). In 
particular all rings are commutative. 

\begin{dfn}
Let $A$ be a DG ring.
A DG $A$-module $P$ is called a {\em tilting DG module} if there 
exists some DG $A$-module $Q$ such that $P \ot^{\mrm{L}}_A Q \cong A$
in $\cat{D}(A)$.  
\end{dfn}

The DG module $Q$ in the definition is called a {\em quasi-inverse} of $P$.
Due to the symmetry of the operation $- \ot^{\mrm{L}}_A- $, $Q$ is also 
tilting. It is easy to see that the quasi-inverse $Q$ is unique, up to a 
nonunique isomorphism. If $P_1$ and $P_2$ are tilting then so is 
$P_1 \ot^{\mrm{L}}_A P_2$; this is because of the associativity of  
$- \ot^{\mrm{L}}_A- $. Hence the next definition makes sense. 

\begin{dfn} \label{dfn:350}
The {\em commutative derived Picard group} of $A$ is the abelian group \lb 
$\opn{DPic}(A)$, whose elements are the isomorphism classes, in $\cat{D}(A)$,
of tilting DG $A$-modules. The product is induced by the operation
$- \ot^{\mrm{L}}_A -$, and the unit element is the class of  $A$. 
\end{dfn}

\begin{lem} \label{lem:80}
Let $f : A \to B$ be a homomorphism of DG rings.
\begin{enumerate}
\item For any $M, N \in \cat{D}(A)$ there is an isomorphism 
\[  (B \ot^{\mrm{L}}_A M) \ot^{\mrm{L}}_B (B \ot^{\mrm{L}}_A N) \cong 
B \ot^{\mrm{L}}_A (M \ot^{\mrm{L}}_A N) \]
in $\cat{D}(A)$. 

\item If $P$ is a tilting DG $A$-module, then $B \ot^{\mrm{L}}_A P$ is a 
tilting DG $B$-module.

\item If $f$ is a quasi-isomorphism and $Q$ is a tilting DG $B$-module, then 
$\opn{rest}_f(Q)$ is a tilting DG $A$-module. 
\end{enumerate}
\end{lem}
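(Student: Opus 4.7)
My plan is to prove the three parts in order, since each builds on the last.

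For part (1), the strategy is to replace $M$ and $N$ by K-flat resolutions $\tilde{M} \to M$ and $\tilde{N} \to N$ in $\cat{M}(A)$ (guaranteed to exist by the resolution facts recalled in Section \ref{sec:resol}), so that $B \ot^{\mrm{L}}_A M \cong B \ot_A \tilde{M}$ and similarly for $N$. The first step I would carry out is to verify that $B \ot_A \tilde{M}$ is K-flat as a DG $B$-module: for any acyclic DG $B$-module $Y$, one has $(B \ot_A \tilde{M}) \ot_B Y \cong \tilde{M} \ot_A Y$ by standard associativity of the underived tensor product, and this is acyclic since $\tilde{M}$ is K-flat over $A$ and $Y$ is acyclic over $A$ via restriction. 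With this in hand, the ordinary associativity isomorphism
\[ (B \ot_A \tilde{M}) \ot_B (B \ot_A \tilde{N}) \cong B \ot_A (\tilde{M} \ot_A \tilde{N}) \]
represents the desired isomorphism in $\cat{D}(A)$, since the right-hand side computes $B \ot^{\mrm{L}}_A (M \ot^{\mrm{L}}_A N)$.

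Part (2) is then essentially immediate: letting $Q$ be a quasi-inverse of $P$, applying part (1) with $M := P$ and $N := Q$ gives
\[ (B \ot^{\mrm{L}}_A P) \ot^{\mrm{L}}_B (B \ot^{\mrm{L}}_A Q) \cong B \ot^{\mrm{L}}_A (P \ot^{\mrm{L}}_A Q) \cong B \ot^{\mrm{L}}_A A \cong B \]
in $\cat{D}(B)$, exhibiting $B \ot^{\mrm{L}}_A Q$ as a quasi-inverse of $B \ot^{\mrm{L}}_A P$.

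For part (3), the main observation is that when $f$ is a quasi-isomorphism, the functor $B \ot^{\mrm{L}}_A - $ is an equivalence $\cat{D}(A) \to \cat{D}(B)$ quasi-inverse to $\opn{rest}_f$; in particular, for any acyclic DG $A$-module $Y$ the DG module $B \ot^{\mrm{L}}_A Y$ is acyclic. This implies that a K-flat resolution $\tilde{Q} \to Q$ over $B$ is also K-flat over $A$: for acyclic $Y \in \cat{M}(A)$, $Y \ot_A \tilde{Q} \cong (B \ot^{\mrm{L}}_A Y) \ot^{\mrm{L}}_B \tilde{Q}$, which vanishes in $\cat{D}(A)$. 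Using this, if $Q'$ is a quasi-inverse of $Q$ in $\cat{D}(B)$, then
\[ \opn{rest}_f(Q) \ot^{\mrm{L}}_A \opn{rest}_f(Q') \cong \tilde{Q} \ot_A \tilde{Q}' \cong \tilde{Q} \ot_B \tilde{Q}' \cong Q \ot^{\mrm{L}}_B Q' \cong B \cong A \]
in $\cat{D}(A)$, the middle isomorphism being justified by the fact that the underived map $\tilde{Q} \ot_A \tilde{Q}' \to \tilde{Q} \ot_B \tilde{Q}'$ is a quasi-isomorphism, which follows since both sides compute the same derived tensor product once $\tilde{Q}, \tilde{Q}'$ are seen to be K-flat over $A$.

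The main obstacle I anticipate is the bookkeeping in part (3): carefully verifying that K-flatness descends along the quasi-isomorphism $f$, and confirming that the natural comparison map $\opn{rest}_f(M) \ot^{\mrm{L}}_A \opn{rest}_f(N) \to \opn{rest}_f(M \ot^{\mrm{L}}_B N)$ is an isomorphism in $\cat{D}(A)$. Everything else reduces to standard manipulations with K-flat resolutions.
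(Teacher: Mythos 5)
Parts (1) and (2) of your proposal are correct and essentially identical to the paper's own proof: the paper also takes K-flat resolutions $\til{M}, \til{N}$ over $A$, notes that $B \ot_A \til{M}$ and $B \ot_A \til{N}$ are K-flat over $B$ and that $\til{M} \ot_A \til{N}$ is K-flat over $A$, and concludes by the associativity isomorphism; part (2) is then the same one-line deduction.

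Part (3), however, contains a genuine gap. You claim that, since $f$ is a quasi-isomorphism, a K-flat DG $B$-module $\til{Q}$ is automatically K-flat over $A$, via the identification $Y \ot_A \til{Q} \cong (B \ot^{\mrm{L}}_A Y) \ot^{\mrm{L}}_B \til{Q}$. What one actually has at the chain level is $Y \ot_A \til{Q} \cong (Y \ot_A B) \ot_B \til{Q}$, and to replace the underived $Y \ot_A B$ by $Y \ot^{\mrm{L}}_A B$ (which is what makes the right-hand side vanish for acyclic $Y$) you would need $B$ itself to be K-flat over $A$. This is not a hypothesis, and it can fail even for quasi-isomorphisms. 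Concretely: let $k$ be a field, $A := k[\epsilon]/(\epsilon^2)$, let $C$ be the acyclic nonpositive complex $\cdots \xrightarrow{\epsilon} A \xrightarrow{\epsilon} A \xrightarrow{\ \epsilon\,\cdot\ } (\epsilon) \to 0$ with the ideal $(\epsilon)$ placed in degree $0$, and let $B := A \oplus C$ be the trivial square-zero extension, a commutative nonpositive DG ring with $A \to B$ a quasi-isomorphism. If $C$ were K-flat then, being also acyclic, $C \ot_A M$ would be acyclic for every $M$; but $C \ot_A k$ has nonzero cohomology in degree $-2$. So $C$ is not K-flat over $A$, hence neither is its direct summand extension $B \cong A \oplus C$. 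Taking $Q = \til{Q} = B$ thus defeats your intermediate claim, and with it the first two isomorphisms in your chain for part (3).

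The assertion of part (3) is of course true; the paper obtains it by citing the compatibility of $\opn{rest}_f$ with derived tensor products along a quasi-isomorphism (\cite[Proposition 1.4]{YZ1}), which is exactly the comparison map you flag at the end of your proposal. If you want a self-contained argument, resolve on the $A$-side instead: choose a K-flat resolution $\til{P} \to \opn{rest}_f(Q)$ over $A$. Then $B \ot_A \til{P}$ is K-flat over $B$ (as in your part (1)), and the counit $B \ot_A \til{P} \to Q$ is a quasi-isomorphism -- this is where the hypothesis that $f$ is a quasi-isomorphism enters, via the equivalence $\cat{D}(B) \simeq \cat{D}(A)$. One then computes, with $Q'$ a quasi-inverse of $Q$,
\[ \opn{rest}_f(Q) \ot^{\mrm{L}}_A \opn{rest}_f(Q') \cong \til{P} \ot_A Q' \cong (B \ot_A \til{P}) \ot_B Q' \cong Q \ot^{\mrm{L}}_B Q' \cong B \cong A \]
in $\cat{D}(A)$, where the second isomorphism is a chain-level identity and the third uses that $B \ot_A \til{P}$ is K-flat over $B$ and isomorphic to $Q$ in $\cat{D}(B)$.
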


\begin{proof}
(1) Choose K-flat resolutions $\til{M} \to M$ and $\til{N} \to N$
over $A$. Then $B \ot_A \til{M}$ and $B \ot_A \til{N}$ are K-flat over $B$, 
$\til{M} \ot_A \til{N}$ is K-flat over $A$, and 
\[ \begin{aligned}
& (B \ot^{\mrm{L}}_A M) \ot^{\mrm{L}}_B (B \ot^{\mrm{L}}_A N) \cong 
(B \ot_A \til{M}) \ot_B (B \ot_A \til{N}) 
\\
& \quad 
\cong B \ot_A (\til{M} \ot_A \til{N}) \cong
B \ot^{\mrm{L}}_A (M \ot^{\mrm{L}}_A N) .
\end{aligned} \]

\medskip \noindent
(2) Let $P, Q \in \cat{D}(A)$ be such that 
$P \ot^{\mrm{L}}_A Q \cong A$. By (1)  we have 
\[ (B \ot^{\mrm{L}}_A P) \ot^{\mrm{L}}_B (B \ot^{\mrm{L}}_A Q) \cong  B . \]

\medskip \noindent
(3) Say $Q_1, Q_2 \in \cat{D}(B)$ satisfy
$Q_1 \ot^{\mrm{L}}_B Q_2 \cong B$. Let 
$P_i := \opn{rest}_f(Q_i) \in \cat{D}(A)$. 
By the equivalence for DG ring  
quasi-isomorphisms (see \cite[Proposition 1.4]{YZ1}) we have 
\[ P_1 \ot^{\mrm{L}}_A P_2 \cong 
\opn{rest}_f( Q_1 \ot^{\mrm{L}}_B Q_2 ) \cong 
\opn{rest}_f(B) \cong A . \]
\end{proof}

\begin{prop} \label{prop:32}
Let $f : A \to B$ be a homomorphism of DG rings.
\begin{enumerate}
\item There is a group homomorphism
\[ \opn{DPic}(f) : \opn{DPic}(A) \to \opn{DPic}(B)  \]
with formula
$P \mapsto B \ot^{\mrm{L}}_A P$.

\item If $f$ is a quasi-isomorphism then $\opn{DPic}(f)$ is bijective.
\end{enumerate}
\end{prop}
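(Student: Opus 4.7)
The plan is to verify (1) as a straightforward assembly of the facts already established in Lemma \ref{lem:80}, and then deduce (2) by invoking the derived-category equivalence induced by a DG ring quasi-isomorphism.

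For part (1), I would first note that the operation $P \mapsto B \ot^{\mrm{L}}_A P$ descends to isomorphism classes: the derived tensor product is a well-defined bifunctor on derived categories, hence sends isomorphisms in $\cat{D}(A)$ to isomorphisms in $\cat{D}(B)$. Lemma \ref{lem:80}(2) shows that this operation carries tilting DG $A$-modules to tilting DG $B$-modules, so the set-theoretic map $\opn{DPic}(f) : \opn{DPic}(A) \to \opn{DPic}(B)$ is defined. To see that it is a group homomorphism, I would use Lemma \ref{lem:80}(1), which gives the natural isomorphism
\[ (B \ot^{\mrm{L}}_A P_1) \ot^{\mrm{L}}_B (B \ot^{\mrm{L}}_A P_2) \cong B \ot^{\mrm{L}}_A (P_1 \ot^{\mrm{L}}_A P_2) \]
in $\cat{D}(B)$; and the unit is preserved because $B \ot^{\mrm{L}}_A A \cong B$ canonically in $\cat{D}(B)$.

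For part (2), I would use the standard fact (see \cite[Proposition 1.4]{YZ1}, referenced inside the proof of Lemma \ref{lem:80}(3)) that a DG ring quasi-isomorphism $f : A \to B$ induces an equivalence of triangulated categories between $\cat{D}(A)$ and $\cat{D}(B)$, with quasi-inverse functors given by $\opn{rest}_f$ and $B \ot^{\mrm{L}}_A -$. By Lemma \ref{lem:80}(3), the functor $\opn{rest}_f$ sends tilting DG $B$-modules to tilting DG $A$-modules, so it induces a map $\opn{DPic}(\opn{rest}_f) : \opn{DPic}(B) \to \opn{DPic}(A)$. The natural isomorphisms $\opn{rest}_f(B \ot^{\mrm{L}}_A P) \cong P$ for $P \in \cat{D}(A)$ and $B \ot^{\mrm{L}}_A \opn{rest}_f(Q) \cong Q$ for $Q \in \cat{D}(B)$ provided by the equivalence then imply that this map is a two-sided inverse to $\opn{DPic}(f)$, so $\opn{DPic}(f)$ is bijective.

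I do not expect any real obstacle here, since both items reduce essentially to book-keeping on the already-proved Lemma \ref{lem:80}; the only mildly nontrivial input is the equivalence of derived categories along a DG ring quasi-isomorphism, but this is quoted from prior work and used only as a black box.
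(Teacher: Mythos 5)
Your proposal is correct and follows essentially the same route as the paper: part (1) is assembled from Lemma \ref{lem:80}(1),(2), and part (2) uses Lemma \ref{lem:80}(3) together with the derived equivalence along a quasi-isomorphism to exhibit $Q \mapsto \opn{rest}_f(Q)$ as an inverse of $\opn{DPic}(f)$. The extra detail you supply (unit preservation, two-sidedness of the inverse) is just the book-keeping the paper leaves implicit.
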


\begin{proof}
(1) This follows from parts (1) and (2) of the lemma above. 

\medskip \noindent
(2) Part (3) of Lemma \ref{lem:80} shows that in case $f$ is a 
quasi-isomorphism, the function 
$Q \mapsto \opn{rest}_f(Q)$ is an inverse of  $\opn{DPic}(f)$.
\end{proof}

\begin{thm} \label{thm:76}
Let $A$ be a DG ring and $P \in \cat{D}(A)$. The following 
four conditions are equivalent.
\begin{enumerate}
\rmitem{i} The DG $A$-module $P$ is tilting.

\rmitem{ii} The functor $P \ot^{\mrm{L}}_A -$ is an equivalence of 
$\cat{D}(A)$. 

\rmitem{iii}  The functor $\opn{RHom}_{A}(P, -)$ is an equivalence of 
$\cat{D}(A)$. 

\rmitem{iv}  The DG $A$-module $P$ is perfect, and the adjunction morphism 
$A \to \opn{RHom}_A(P, P)$ in $\cat{D}(A)$ is an isomorphism.
\end{enumerate} 
\end{thm}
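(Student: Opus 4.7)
The plan is to establish the cycle of implications (i) $\Rightarrow$ (ii) $\Rightarrow$ (iii) $\Rightarrow$ (i) first, and then prove (i) $\Leftrightarrow$ (iv) separately using the compact-object characterization of perfectness from the previous section.

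For the easy direction (i) $\Rightarrow$ (ii): if $P$ is tilting with quasi-inverse $Q$, then the associativity of the derived tensor product (Lemma \ref{lem:80}(1)) makes $Q \ot^{\mrm{L}}_A -$ a quasi-inverse of $P \ot^{\mrm{L}}_A -$, so the latter is an equivalence. For (ii) $\Rightarrow$ (i), I would just set $Q := G(A)$, where $G$ is a quasi-inverse to $P \ot^{\mrm{L}}_A -$; then $P \ot^{\mrm{L}}_A Q \cong A$. For (ii) $\Leftrightarrow$ (iii) the argument is purely categorical: $P \ot^{\mrm{L}}_A -$ is left adjoint to $\opn{RHom}_A(P,-)$, and when a left adjoint is an equivalence its quasi-inverse is automatically right adjoint to it, which by uniqueness of adjoints forces $\opn{RHom}_A(P,-)$ to be that quasi-inverse (and conversely).

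For (i) $\Rightarrow$ (iv): assuming $P$ is tilting, I would first show $P$ is perfect by checking it is a compact object of $\cat{D}(A)$ (then invoke Theorem \ref{thm:74}). Indeed, for any family $\{N_z\}$ in $\cat{D}(A)$, using the quasi-inverse $Q \ot^{\mrm{L}}_A -$ and the fact that derived tensor commutes with direct sums gives
\[
\opn{Hom}_{\cat{D}(A)}(P, \boplus_z N_z) \cong \opn{Hom}_{\cat{D}(A)}(A, \boplus_z Q \ot^{\mrm{L}}_A N_z)
\cong \boplus_z \opn{Hom}_{\cat{D}(A)}(P, N_z).
\]
For the adjunction isomorphism $A \to \opn{RHom}_A(P,P)$: by the (ii) $\Leftrightarrow$ (iii) step, $\opn{RHom}_A(P,-)$ is a quasi-inverse to $P \ot^{\mrm{L}}_A -$, so applied to $P = P \ot^{\mrm{L}}_A A$ it yields $A$, and one checks (by compatibility of the counit of adjunction with the equivalence structure) that the natural map is the one that becomes an isomorphism.

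The reverse implication (iv) $\Rightarrow$ (i) is the place where Theorem \ref{thm:74} does the real work. Set $Q := \opn{RHom}_A(P, A)$. Applying Theorem \ref{thm:74}(iii) to $L = P$, $M = A$, $N$ arbitrary (legal since $P$ is perfect) gives a functorial isomorphism
\[
Q \ot^{\mrm{L}}_A N \;\cong\; \opn{RHom}_A(P, A) \ot^{\mrm{L}}_A N \;\iso\; \opn{RHom}_A(P, N).
\]
Taking $N = P$ and combining with the hypothesis $\opn{RHom}_A(P,P) \cong A$ yields $Q \ot^{\mrm{L}}_A P \cong A$, so $P$ is tilting with quasi-inverse $Q$. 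The main obstacle — or rather, the only non-formal point — is making sure that the isomorphism produced by Theorem \ref{thm:74} is genuinely the adjunction morphism composed with the hypothesis of (iv), so that no extraneous matching-up of isomorphisms is needed; this should follow by a diagram chase in the definition of $\psi_{P,A,P}$ from formula (\ref{eqn:401}).
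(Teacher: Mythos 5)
Your proof is correct and follows essentially the same route as the paper's: the cycle (i) $\Leftrightarrow$ (ii) $\Leftrightarrow$ (iii) via quasi-inverses and adjunction, perfectness of $P$ via compactness and Theorem \ref{thm:74}, and (iv) $\Rightarrow$ (i) with $Q := \opn{RHom}_A(P,A)$ and Theorem \ref{thm:74}(iii), giving $Q \ot^{\mrm{L}}_A P \cong \opn{RHom}_A(P,P) \cong A$. The one place you diverge is the second half of (iv): you obtain the isomorphism $A \to \opn{RHom}_A(P,P)$ from the fact that the unit of the adjunction $(P \ot^{\mrm{L}}_A -,\ \opn{RHom}_A(P,-))$ is invertible once the left adjoint is an equivalence, whereas the paper transports the endomorphism computation along $F = P \ot^{\mrm{L}}_A -$ using the graded ring homomorphisms $\al_A$ and $\al_P$; both work, yours being slightly more abstract (it only needs the one-line identification of the adjunction morphism of (iv) with the unit at $A$ under $P \ot^{\mrm{L}}_A A \cong P$), the paper's more explicit. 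Finally, the ``only non-formal point'' you flag in (iv) $\Rightarrow$ (i) is not actually needed: being tilting asks only for the existence of \emph{some} isomorphism $Q \ot^{\mrm{L}}_A P \cong A$ in $\cat{D}(A)$, so there is nothing to match between $\psi_{P,A,P}$ and the adjunction morphism, and the paper indeed makes no such verification.
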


\begin{proof}
(i) $\Rightarrow$ (ii): Let $Q$ be a quasi-inverse of $P$. Then the functor  
$G(M) := Q \ot^{\mrm{L}}_A M$ is a quasi-inverse of the functor 
$F(M) := P \ot^{\mrm{L}}_A M$.

\medskip \noindent
(ii) $\Rightarrow$ (i): The functor $F := P \ot^{\mrm{L}}_A -$ is 
essentially surjective on objects, so there is some $Q \in \cat{D}(A)$
such that $F(Q) \cong A$. Then $Q$ is a quasi-inverse of $P$. 

\medskip \noindent
(ii) $\Leftrightarrow$ (iii): The functors 
$F := P \ot^{\mrm{L}}_A -$ and 
$G := \opn{RHom}_{A}(P, -)$
are adjoints, so $F$ is an equivalence iff $G$ is an equivalence.

\medskip \noindent 
(ii) $\Rightarrow$ (iv): 
Consider the auto-equivalence $F := P \ot^{\mrm{L}}_A -$ of $\cat{D}(A)$.
Since $A$ is compact and $P = F(A)$, it follows that $P$ is compact. 
Now according to Theorem \ref{thm:74}, perfect is the same as compact. 

For any $M \in \cat{D}(A)$, the adjunction morphism 
$A \to \opn{RHom}_A(M, M)$ is an isomorphism iff the canonical 
graded ring homomorphism 
\[ \al_M : \opn{H}(A) \to 
\bigoplus_{k \in \Z} \, \opn{Hom}_{\cat{D}(A)}(M, M[k]) \]
is bijective. The equivalence $F$ induces a commutative diagram of graded rings 
\[ \UseTips \xymatrix @C=5ex @R=5ex {
\opn{H}(A)
\ar[d]_{\al_M}
\ar[dr]^{\al_{F(M)}}
\\
\displaystyle\bigoplus_{k \in \Z} \, \opn{Hom}_{\cat{D}(A)}(M, M[k]) 
\ar[r]^(0.44){F}
&
\displaystyle\bigoplus_{k \in \Z} \, 
\opn{Hom}_{\cat{D}(A)} \bigl( F(M), F(M)[k] \bigr)
} \]
in which the horizontal arrow is an isomorphism. 
Now take $M := A$. Because $\al_A$ is an isomorphism, so is $\al_P$.

\medskip \noindent
(iv) $\Rightarrow$ (i): 
Define $Q := \opn{RHom}_A(P, A) \in \cat{D}(A)$.
The implication (i) $\Rightarrow$ (iii) of Theorem \ref{thm:74} 
shows that 
\[ Q \ot^{\mrm{L}}_A P =  \opn{RHom}_A(P, A) \ot^{\mrm{L}}_A P
\cong \opn{RHom}_A(P, P) \cong A \]
in $\cat{D}(A)$. So $P$ is tilting, with quasi-inverse $Q$. 
\end{proof}

\begin{cor} \label{cor:405}
Let $P$ be a tilting DG $A$-module. Then the DG $A$-module 
$Q := \lb \opn{RHom}_A(P, A)$ is a quasi-inverse of $P$. 
\end{cor}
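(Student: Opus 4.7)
The plan is to read off the quasi-inverse directly from the proof of the implication (iv) $\Rightarrow$ (i) in Theorem \ref{thm:76}, since that argument constructs a quasi-inverse of $P$ without ever invoking the hypothetical $Q$ from the definition of tilting.

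First I would invoke Theorem \ref{thm:76}: since $P$ is tilting, condition (iv) holds, so $P$ is perfect and the adjunction morphism $A \to \opn{RHom}_A(P, P)$ is an isomorphism in $\cat{D}(A)$. Next, set $Q := \opn{RHom}_A(P, A)$. Because $P$ is perfect, Theorem \ref{thm:74} applies with $L := P$, $M := A$, $N := P$, yielding that the canonical morphism
\[ \psi_{P, A, P} : \opn{RHom}_A(P, A) \ot^{\mrm{L}}_A P \to \opn{RHom}_A(P, A \ot^{\mrm{L}}_A P) \]
is an isomorphism in $\cat{D}(A)$. Composing with the evident isomorphism $A \ot^{\mrm{L}}_A P \cong P$ and with the inverse of the adjunction isomorphism $A \iso \opn{RHom}_A(P, P)$ provided by (iv), we obtain an isomorphism
\[ Q \ot^{\mrm{L}}_A P = \opn{RHom}_A(P, A) \ot^{\mrm{L}}_A P \iso \opn{RHom}_A(P, P) \cong A \]
in $\cat{D}(A)$. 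This exhibits $Q$ as a quasi-inverse of $P$.

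Finally, uniqueness is a formal one-line check already noted just after the definition of tilting (any two quasi-inverses of $P$ are isomorphic via tensoring with $A \cong P \ot^{\mrm{L}}_A Q'$), so the $Q$ we constructed is the quasi-inverse. There is no real obstacle here; the work was already done in Theorem \ref{thm:74} (which identifies perfect DG modules with those for which the tensor-evaluation map $\psi_{L, M, N}$ is an isomorphism) and in Theorem \ref{thm:76} (which forces perfectness and the adjunction isomorphism from the tilting property). The corollary is just the explicit packaging of the quasi-inverse that these two theorems make available.
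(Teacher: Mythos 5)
Your proposal is correct and follows essentially the same route as the paper: the paper's proof of Corollary \ref{cor:405} simply points back to the implication (iv) $\Rightarrow$ (i) in the proof of Theorem \ref{thm:76}, which is exactly the computation you spell out — using Theorem \ref{thm:74} with $L = P$, $M = A$, $N = P$ together with the adjunction isomorphism $A \iso \opn{RHom}_A(P,P)$ to get $\opn{RHom}_A(P,A) \ot^{\mrm{L}}_A P \cong A$.
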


\begin{proof}
This was shown in the proof of the implication 
(iv) $\Rightarrow$ (i) above.
\end{proof}

\begin{rem} \label{rem:510}
A DG $A$-module $M$ for which the adjunction 
morphism $A \to \lb \opn{RHom}_A(M, M)$ is an isomorphism, is sometimes called 
{\em semidualizing}; cf.\ \cite{AIL}. Indeed, this condition is part of the 
definition of a {\em dualizing DG module} (see Definition \ref{dfn:253} below). 
But as we saw in Theorem \ref{thm:76}, this condition is also characteristic of 
tilting DG modules -- so the name semidualizing might be confusing. 
\end{rem}

\begin{cor} \label{cor:73}
Let $P$ be a tilting DG $A$-module, and let 
$F := P \ot^{\mrm{L}}_A -$
be the corresponding auto-equivalence of $\cat{D}(A)$.
\begin{enumerate}
\item The functor $F$ has finite cohomological dimension, and it preserves 
$\cat{D}^+(A)$, $\cat{D}^-(A)$ and 
$\cat{D}^{\mrm{b}}(A)$.

\item If $A$ is cohomologically pseudo-noetherian, then the auto-equivalence 
$F$ preserves the subcategory $\cat{D}_{\mrm{f}}(A)$. 
\end{enumerate}
\end{cor}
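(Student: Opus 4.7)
The plan is to deduce both parts from the fact that $P$ is perfect (which holds by Theorem \ref{thm:76}) together with the localization techniques of Section \ref{sec:localiz}. Throughout, let $\bsym{s} = (s_1,\ldots,s_n)$ be a covering sequence of $\bar{A}$ such that for each $i$ there is an isomorphism $A_i \ot_A P \cong P_i$ in $\cat{D}(A_i)$, with $P_i$ a finite semi-free DG $A_i$-module; here $A_i := A_{s_i}$. Choose an integer interval $[d_0, d_1]$ containing the generating interval (in the sense of Definition \ref{dfn:390}) of each $P_i$.

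For part (1), the main task is to show that $F$ has bounded cohomological displacement. I would do this via \v{C}ech descent. For arbitrary $N \in \cat{D}(A)$, Proposition \ref{prop:70} gives a quasi-isomorphism
\[ \opn{c}_{P \ot^{\mrm{L}}_A N} : P \ot^{\mrm{L}}_A N \iso \opn{C}(P \ot^{\mrm{L}}_A N; \bsym{s}) . \]
Just as in the proof of Lemma \ref{lem:400}, the filtration $\mu^k$ gives graded pieces that are finite direct sums of shifts of $A_{\bsym{i}} \ot_A (P \ot^{\mrm{L}}_A N) \cong (A_{\bsym{i}} \ot^{\mrm{L}}_A P) \ot^{\mrm{L}}_{A_{\bsym{i}}} (A_{\bsym{i}} \ot_A N)$, where $A_{\bsym{i}}$ is an iterated localization. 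Since $A_{\bsym{i}} \ot^{\mrm{L}}_A P$ is a further localization of some $P_i$, it is a finite semi-free DG $A_{\bsym{i}}$-module with generating interval still contained in $[d_0, d_1]$. Proposition \ref{prop:393} then bounds its flat concentration by $[d_0, d_1]$, so each graded piece of the \v{C}ech filtration has cohomology concentrated in $\opn{con}(\opn{H}(N)) + [d_0, d_1]$, shifted by at most $n-1$. Patching through the finitely many distinguished triangles (\ref{eqn:411}) yields
\[ \opn{con}(\opn{H}(F(N))) \subseteq \opn{con}(\opn{H}(N)) + [d_0, d_1 + n - 1] , \]
which is finite cohomological dimension. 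Preservation of $\cat{D}^+(A)$, $\cat{D}^-(A)$ and $\cat{D}^{\mrm{b}}(A)$ is then immediate from bounded displacement.

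For part (2), we assume $A$ is cohomologically pseudo-noetherian, so Lemma \ref{lem:34}(2) gives $F(A) \cong P \in \cat{D}^-_{\mrm{f}}(A)$. I would argue along the same four-step pattern as the proof of Theorem \ref{thm:10}, but applied to the statement ``$F(M) \in \cat{D}_{\mrm{f}}(A)$'' instead of ``$\eta_M$ is an isomorphism'': first, $F$ of any finite free DG $A$-module is a finite direct sum of shifts of $P$, hence in $\cat{D}_{\mrm{f}}(A)$; second, induction on the length of a finite semi-free filtration (using that $\cat{D}_{\mrm{f}}(A)$ is triangulated, since $\bar{A}$ is noetherian) extends this to all finite semi-free DG modules; third, for $M \in \cat{D}^-_{\mrm{f}}(A)$ choose a pseudo-finite semi-free resolution $\til{P} \to M$ (Proposition \ref{prop:101}(2)) and, fixing a cohomological degree $i$, use the triangle $\nu_j(\til{P}) \to \til{P} \to \til{P}/\nu_j(\til{P}) \xar{\vartriangle}$: for $j$ sufficiently large, the pseudo-finite condition together with the bounded-above displacement of $F$ makes $\opn{H}^i(F(\til{P}/\nu_j(\til{P})))$ and $\opn{H}^{i-1}(\cdots)$ vanish, so $\opn{H}^i(F(M))$ is a subquotient of the finitely generated module $\opn{H}^i(F(\nu_j(\til{P})))$; finally, for a general $M \in \cat{D}_{\mrm{f}}(A)$, smart truncation together with the finite cohomological dimension from part (1) reduces to the case $M \in \cat{D}^-_{\mrm{f}}(A)$ already handled.

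The principal obstacle is step (1): without the pseudo-noetherian hypothesis we cannot use Theorem \ref{thm:50}(iii) directly, so we need the \v{C}ech filtration argument to promote the local flat-dimension bound on each $P_i$ to a global bound on $F$. Everything else follows fairly mechanically from this, combined with Lemma \ref{lem:34} and an adaptation of the way-out machinery of Section \ref{sec:coh-dim}.
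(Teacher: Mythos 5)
Your argument is correct, and it follows the same overall strategy as the paper (perfectness of $P$ from Theorem \ref{thm:76}, the generation interval $[d_0,d_1]$ of the local models $P_i$, then way-out machinery for the finiteness claim), but the two middle steps are carried out differently. For the global displacement bound in part (1), the paper does not pass through the \v{C}ech resolution: since localization is flat, $\opn{H} \bigl( A_{s_i} \ot_A F(N) \bigr) \cong \bar{A}_{s_i} \ot_{\bar{A}} \opn{H}(F(N))$ and $A_{s_i} \ot_A F(N) \cong P_i \ot_{A_{s_i}} (A_{s_i} \ot_A N)$, so Proposition \ref{prop:390} plus faithful flatness of $\bar{A} \to \prod_i \bar{A}_{s_i}$ gives the sharp displacement bound $[d_0, d_1]$ directly; your \v{C}ech-filtration patching is also valid but yields only $[d_0, d_1 + n - 1]$, which costs nothing here since only finiteness is needed. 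For part (2) the paper simply invokes Theorem \ref{thm:30}(2) (in its evident covariant form), using $F(A) = P \in \cat{D}^-_{\mrm{f}}(A)$ from Theorems \ref{thm:76} and \ref{thm:50}; your four-step argument is essentially a hand re-derivation of that covariant variant, which is a reasonable thing to do given that Theorem \ref{thm:30} is stated for contravariant functors, at the price of some length. In short: both routes are sound; the paper's is shorter and gives a sharper bound, yours is more self-contained, resting only on Propositions \ref{prop:70}, \ref{prop:390} and \ref{prop:393}.
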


\begin{proof}
(1) The theorem says that $P$ is perfect. 
Let $s_1, \ldots, s_n \in \bar{A}$ and $P_1, \ldots, P_n$ be as in Definition 
\ref{dfn:25}.
Let $d_0 \leq d_1$ be integers such that each $P_i$ is generated in 
the integer interval $[d_0, d_1]$. Then the functor $F$
has cohomological displacement at most $[d_0, d_1]$ relative to 
$\cat{D}(A)$, and cohomological dimension at most $d_1 - d_0$. The claim about 
$\cat{D}^{\star}(A)$ is now clear. 

\medskip \noindent
(2) Use Theorem \ref{thm:30}(2), noting that 
$F(A) = P \in \cat{D}^-_{\mrm{f}}(A)$,
by Theorems \ref{thm:76} and \ref{thm:50}.
\end{proof}

\begin{prop} \label{prop:313}
Let $A$ be a DG ring, and let 
$\bsym{e} = (e_1, \ldots, e_n)$ be an idempotent covering sequence of 
$\bar{A} = \opn{H}^0(A)$. For 
any $i$ we have the localized DG ring $A_i = A_{e_i}$ from Definition 
\tup{\ref{dfn:40}}, and the DG ring homomorphism
$\la_i : A \to A_i$.
Then the group homomorphisms 
\[ \opn{DPic}(\la_i) : \opn{DPic}(A) \to \opn{DPic}(A_i) \]
induce a group isomorphism
\[ \opn{DPic}(A) \iso \prod_{i = 1}^n \, \opn{DPic}(A_i) . \]
\end{prop}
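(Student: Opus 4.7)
The plan is to reduce the problem to a genuine product DG ring via the quasi-isomorphism from Proposition \ref{prop:20}, and then exploit the categorical decomposition of Corollary \ref{cor:22}. First I would invoke Proposition \ref{prop:20} to get the quasi-isomorphism $\la : A \to B := \prod_{i = 1}^n A_i$, and then apply Proposition \ref{prop:32}(2) to obtain a group isomorphism $\opn{DPic}(\la) : \opn{DPic}(A) \iso \opn{DPic}(B)$. This reduces the problem to constructing a group isomorphism $\opn{DPic}(B) \cong \prod_{i = 1}^n \opn{DPic}(A_i)$ that is compatible with the projections $\pi_i : B \to A_i$ (so that composing with $\opn{DPic}(\la)$ recovers the maps $\opn{DPic}(\la_i)$).

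For the second step I would use the equivalence $\bigoplus_{i = 1}^n \cat{D}(A_i) \iso \cat{D}(B)$ of Corollary \ref{cor:22}: every object $P \in \cat{D}(B)$ decomposes uniquely as $P \cong \bigoplus_i \opn{rest}_{\pi_i}(P_i)$ with $P_i \in \cat{D}(A_i)$. The central technical claim is that the derived tensor product over $B$ respects this decomposition, namely
\[ \opn{rest}_{\pi_i}(P_i) \ot^{\mrm{L}}_B \opn{rest}_{\pi_j}(Q_j) \cong
\begin{cases} 0 & i \neq j \\ \opn{rest}_{\pi_i}(P_i \ot^{\mrm{L}}_{A_i} Q_i) & i = j . \end{cases} \]
This follows by picking a K-flat resolution of $P_i$ over $A_i$, viewing it as a K-flat DG $B$-module annihilated by the idempotents $e_j$ for $j \neq i$, and then using orthogonality of the idempotents. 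It then follows that $P \ot^{\mrm{L}}_B Q \cong B = \bigoplus_i \opn{rest}_{\pi_i}(A_i)$ iff $P_i \ot^{\mrm{L}}_{A_i} Q_i \cong A_i$ for every $i$, so $P$ is tilting over $B$ precisely when each $P_i$ is tilting over $A_i$. Hence the assignment $[P] \mapsto ([P_1], \ldots, [P_n])$ is a well-defined bijection, and the multiplicativity of the decomposition of $\ot^{\mrm{L}}_B$ makes it a group homomorphism.

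Finally I would verify the compatibility with the given maps $\opn{DPic}(\la_i)$: for $P \in \cat{D}(A)$ we have $A_i \ot^{\mrm{L}}_A P \cong A_i \ot^{\mrm{L}}_B (B \ot^{\mrm{L}}_A P)$ by associativity of the derived tensor product, and under the decomposition above the right-hand side is exactly the $i$-th component of the image of $B \ot^{\mrm{L}}_A P$. Composing the two isomorphisms therefore yields the asserted group isomorphism $\opn{DPic}(A) \cong \prod_{i = 1}^n \opn{DPic}(A_i)$. The main obstacle is the orthogonal decomposition of $\ot^{\mrm{L}}_B$; once that is in place, everything else is formal and follows from results already established in the paper.
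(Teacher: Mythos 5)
Your proposal is correct and follows essentially the same route as the paper: reduce to the product DG ring $B = \prod_{i=1}^n A_i$ via the quasi-isomorphism of Proposition \ref{prop:20} and the bijectivity of $\opn{DPic}$ on quasi-isomorphisms (Proposition \ref{prop:32}(2)), and then identify $\opn{DPic}(B)$ with $\prod_i \opn{DPic}(A_i)$. The only difference is that the paper declares this last identification ``obvious,'' whereas you spell out the orthogonal decomposition of $\ot^{\mrm{L}}_B$ coming from the genuine idempotents of $B$ -- a correct and harmless elaboration of the same argument.
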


\begin{proof}
(1) According to Proposition \ref{prop:20} there is a DG ring quasi-isomorphism 
$\la : A \to \prod_{i = 1}^n A_i$. By Proposition \ref{prop:32}(2) there is a 
group isomorphism 
\[ \opn{DPic}(\la) : \opn{DPic}(A) \iso 
\opn{DPic} \Bigl( \, \prod\nolimits_{i = 1}^n A_i \, \Bigr) . \]
And there is an obvious group isomorphism 
\[ \opn{DPic} \Bigl( \, \prod\nolimits_{i = 1}^n A_i \, \Bigr) \cong 
\prod_{i = 1}^n \, \opn{DPic}(A_i) . \]
\end{proof}

From here to Theorem \ref{thm:315} we consider a ring $A$. The first lemma 
goes back to \cite[Lemma V.3.3]{RD}, and it reappeared, in a noncommutative 
guise, in \cite{Ye2} and \cite{RZ}. But in the prior treatments the ring $A$ 
was assumed to be noetherian. Therefore we give a full proof of the general 
case. 

\begin{lem} \label{lem:575}
Let $A$ be a local ring, and let $P$ be a tilting DG $A$-module. 
Then $P \cong A[k]$ in $\cat{D}(A)$ for some integer $k$. 
\end{lem}

\begin{proof}
Since $P$ is a perfect complex of $A$-modules (Theorem \ref{thm:76}), it is 
iso\-morphic in $\cat{D}(A)$ to a bounded complex of finite projective 
$A$-modules. Say  $\opn{con}(\opn{H}(P)) = [i_0, i_1]$ for some integers $i_0 
\leq i_1$. By replacing $P$ with such a resolution, and then splitting off 
extra 
terms in high and low degrees, we can assume that $P$ is a complex of finite 
projective $A$-modules, concentrated in the degree interval $[i_0, i_1]$. Then 
$P' := \opn{H}^{i_1}(P)$ is a nonzero finitely presented $A$-module.  

Let $Q$ be a quasi-inverse of $P$. By the same reasoning, we can assume that 
$Q$ is a complex of finite projective $A$-modules, concentrated in a degree 
interval $[j_0, j_1]$, and $Q' := \opn{H}^{j_1}(Q)$ is a nonzero finitely 
presented $A$-module.

Now by the K\"unneth trick we have isomorphisms of $A$-modules
\[ P' \ot_A Q' \cong \opn{H}^{i_1 + j_1}(P \ot^{\mrm{L}}_A Q) \cong 
\opn{H}^{i_1 + j_1}(A) . \]  
On the other hand $P' \ot_A Q' \neq 0$, by Nakayama. The conclusion is that 
$i_1 + j_1 = 0$ and $P' \ot_A Q' \cong A$. 
This tells us that $P'$ and $Q'$ are flat $A$-modules. 
Again by Nakayama, we see that $P' \cong Q' \cong A$. 

Finally, we can split the complexes $P$ and $Q$ into 
$P \cong P'[-i_1] \oplus P^{\mrm{ex}}$
and 
$Q \cong Q'[-j_1] \oplus Q^{\mrm{ex}}$,
where $P^{\mrm{ex}}$ and $Q^{\mrm{ex}}$ are complexes of finite projective 
$A$-modules, concentrated in the degree intervals $[i_0, i_1 - 1]$ and 
$[j_0, j_1 - 1]$ respectively. 
Then $\opn{H}(P^{\mrm{ex}} \ot_A Q'[-j_1])$
is a direct summand of the graded module
$\opn{H}(P \ot^{\mrm{L}}_A Q) \cong \opn{H}(A)$. 
Since $\opn{H}(P^{\mrm{ex}} \ot_A Q'[-j_1])$ is concentrated in the interval
$[i_0 + j_1, -1]$,
it must be zero. So $P^{\mrm{ex}} = 0$ in $\cat{D}(A)$, and
$P \cong P'[-i_1]$. 
\end{proof}

Recall that an $A$-module $P$ is called {\em invertible} if it is projective of 
rank $1$. In other words, if $P$ is locally free of rank $1$.  
See \cite[Section II.5.2, Theorem 1]{Bo}. 


\begin{prop} \label{prop:590}
The following conditions are equivalent for an $A$-module $P$~\tup{:}
\begin{enumerate}
\rmitem{i} $P$ is invertible.
\rmitem{ii} When viewed as a DG module, $P$ is tilting. 
\end{enumerate}
\end{prop}

\begin{proof}
The implication (i) $\Rightarrow$ (ii) is trivial. For the other direction: 
the arguments in the first paragraph of the proof of Lemma 
\ref{lem:575} show that $P$ is a finitely presented $A$-module. 
Take any prime ideal $\p$. By Lemma \ref{lem:80}, the DG $A_{\p}$-module 
$P_{\p} := A_{\p} \ot_A P$ is tilting. Hence, by Lemma \ref{lem:575},
we have $P_{\p} \cong A_{\p}$.  
According to \cite[Section II.5.2, Theorem 1]{Bo} the module $P$ is 
invertible.
\end{proof}

\begin{lem} \label{lem:570}
Let $A$ be a ring, and let $P$ be a tilting DG $A$-module. 
Write $X := \opn{Spec} A$, and for any integer $i$ define the set 
\[ Y_i := \{ \p \in X \mid \opn{H}^i(P)_{\p} \neq 0 \} . \]
Then:
\begin{enumerate}
\item Only finitely many of the $Y_i$ are nonzero.
\item $Y_i \cap Y_j = \varnothing$ if $i \neq j$.
\item $X = \bigcup\nolimits_{i} \, Y_i$.
\item For any $i$, the set $Y_i$ is open-closed.
\end{enumerate}
\end{lem}

\begin{proof}
We may assume that  $A \neq 0$, i.e.\ that $X \neq \varnothing$.
This implies that $P \neq 0$. 
We know from Theorem \ref{thm:76} that $P$ is a perfect complex of 
$A$-modules. Therefore the cohomology $\opn{H}(P)$ is bounded, and thus
$\opn{con}(\opn{H}(P)) = [i_0, i_1]$
for some integers $i_0 \leq i_1$. This proves claim (1). 

Take any prime $\p \in X$. By Lemma \ref{lem:80} the DG $A_{\p}$-module 
$P_{\p} := A_{\p} \ot_A P$ is tilting. So by Lemma \ref{lem:575} we have 
$P_{\p} 
\cong A_{\p}[k]$ for some integer $k$. We see that $\p \in Y_{-k}$, and 
$\p \notin Y_{i}$ for $i \neq -k$. This proves claims (2) and (3). 

It remains to prove claim (4). By induction on 
$i_1 - i_0 = \opn{amp}(\opn{H}(P))$, it suffices to prove that $Y' := Y_{i_1}$
is open-closed in $X$. The reason is this: once we know that $Y'$ is 
open-closed, then so is its complement $Y'' := \bigcup_{i_0 \leq i < i_1} Y_i$. 
So $X = Y' \coprod Y''$ as schemes, and correspondingly 
$A = A' \times A''$ as rings, and $P \cong P' \oplus P''$ in $\cat{D}(A)$. 
But $\opn{amp}(\opn{H}(P'')) < i_1 - i_0$. 

As explained in the proof of Lemma \ref{lem:575}, we can 
assume that $P$ is a complex of finite projective $A$-modules, concentrated in 
the degree interval $[i_0, i_1]$. So $Q := \opn{H}^{i_1}(P)$ is a finitely 
presented $A$-module. Because $Y'$ is the support of the module 
$Q$, it is closed in $X$. 

Take a prime $\p \in Y'$. As we have already seen above, the 
$A_{\p}$-module $Q_{\p}$ is isomorphic to $A_{\p}$. According to \cite[Section 
II.5.1, Corollary]{Bo} there is an open neighborhood $U$ of $\p$ in $X$ such 
that $Q_{\q} \cong A_{\q}$ for all $\q \in U$. This shows that $U \subset Y'$. 
Therefore $Y'$ is open in $X$. 
\end{proof}

For a ring $A$ we have the following theorem, due to Negron 
\cite{Ng}. It is a refinement of earlier results, that are due (independently) 
to the author \cite{Ye2} and to Rouquier-Zimmermann \cite{RZ}. 
The earlier results focused on noetherian rings, and hence there was an 
assumption that $\opn{Spec} A$ has finitely many connected components. 
Negron recently noticed that this assumption is superfluous. 

As usual, $\opn{Pic}(A)$ denotes the (commutative) Picard group of $A$, whose 
elements are the isomorphism classes of invertible $A$-modules. 

The abelian group $\opn{F}_{\mrm{lc}}(\opn{Spec} A, \Z)$
was introduced in Definition \ref{dfn:560}.
As shown in Proposition \ref{prop:550}, each
function $f \in \opn{F}_{\mrm{lc}}(\opn{Spec} A, \Z)$ 
determines a decomposition 
$A = \prod_{i = 1}^n \, A_i$ of the ring $A$, 
and a nondecreasing sequence of integers $\bsym{k} = (k_1, \ldots, k_n)$. 
The relation is this: $f(\opn{Spec} A_i) = k_i$. 

\begin{thm} \label{thm:315}
Let $A$ be a commutative ring. 
There is a canonical group isomorphism
\[ \opn{DPic}(A) \cong 
\opn{Pic}(A) \times \opn{F}_{\mrm{lc}}(\opn{Spec} A, \Z) \, , \]
characterized as follows\tup{:}
\begin{itemize}
\item The homomorphism 
$\opn{Pic}(A) \to \opn{DPic}(A)$
sends the class of an invertible $A$-module $P$ to its class as a tilting DG 
$A$-module. 

\item Let $f \in \opn{F}_{\mrm{lc}}(\opn{Spec} A, \Z)$,
with corresponding ring decomposition
$A = \prod_{i = 1}^n \, A_i$
and integer sequence $(k_1, \ldots, k_n)$.
The tilting DG $A$-module associated to $f$ is 
$\bigoplus_{i = 1}^n  A_i[k_i]$.
\end{itemize}
\end{thm}

\begin{proof}
For an invertible $A$-module $Q$ and a function 
$f \in \opn{F}_{\mrm{lc}}(\opn{Spec} A, \Z)$
let 
\[ G(Q, f) := Q \ot_A \bigl( \bigoplus\nolimits_{i = 1}^n  A_i[k_i] \bigr) , \]
which is a tilting DG $A$-module. We have to prove that the group homomorphism 
\[ \opn{Pic}(A) \times \opn{F}_{\mrm{lc}}(\opn{Spec} A, \Z) \to 
\opn{DPic}(A) \]
induced by $G$ is bijective. 
It is certainly injective: if 
$G(Q, f) \cong A$, then $f$ must be the constant function $0$, as can be 
checked in $\opn{DPic} (A_{\p})$ for each $\p \in \opn{Spec} A$. 
But then $Q \cong A$ in $\cat{D}(A)$, which implies that 
$Q \cong A$ in $\cat{Mod} A$. 

It remains to prove that the group homomorphism induced by $G$ is surjective.
Take any tilting DG $A$-module $P$. Let 
$\opn{Spec} A = \coprod_{i = i_0}^{i_1} \, Y_i$ 
be the decomposition into open-closed sets induced by $P$, from Lemma 
\ref{lem:570}. Consider the locally constant function $f : X \to \Z$ 
defined by $f|_{Y_i} := i$. 
Then the tilting DG module 
$Q := P \ot^{\mrm{L}}_A G(A, f)$ 
has the property that 
$\opn{H}^i(Q) = 0$ for all $i \neq 0$.
By Proposition \ref{prop:590} we know that $Q$ is isomorphic to an invertible 
$A$-module, say $Q'$. But then $P \cong G(Q', -f)$. 
\end{proof}

Let $A$ be a noetherian ring, $\a$-adically complete with respect to some ideal 
$\a$, with reduction $\bar{A} := A / \a$. It is known that the group 
homomorphism $\opn{Pic}(A) \to \opn{Pic}(\bar{A})$ is bijective. For a proof 
see \cite[Exercises II.9.6 and III.4.6]{Ha}. 

The next theorem is a DG analogue of this fact. 
Recall that for a DG ring $A$ there is a canonical homomorphism 
$A \to \bar{A}$.

\begin{thm} \label{thm:40}
Let $A$ be a commutative DG ring.
Then the canonical group homomorphism 
\[ \opn{DPic}(A) \to \opn{DPic}(\bar{A}) \]
is bijective.
\end{thm}
 
\begin{proof}
Let us denote by $\pi : A \to \bar{A}$ the canonical DG ring homomorphism.
We begin by proving that the homomorphism $\opn{DPic}(\pi)$ is injective.
Suppose $P$ is a tilting DG $A$-module such that 
$\bar{A} \ot^{\mrm{L}}_A P \cong \bar{A}$ in $\cat{D}(\bar{A})$. 
By Corollary \ref{cor:73}(1) we know that $P \in \cat{D}^{-}(A)$.
Then Proposition \ref{prop:305}(1) says that $P \cong A$ in $\cat{D}(A)$. 

Now let us prove that  $\opn{DPic}(\pi)$ is surjective.
Take any tilting DG $\bar{A}$-module $\bar{P}$. By Theorem \ref{thm:315} there 
is an isomorphism 
$\bar{P} \cong G(\bar{P}_0, f)$ for some invertible $\bar{A}$-module 
$\bar{P}_0$ and some function $f \in \opn{F}_{\mrm{lc}}(\opn{Spec} A, \Z)$.

By Proposition \ref{prop:310}, there exists some DG module 
$P_0 \in \cat{D}^{-}(A)$ such that  \lb
$\bar{A} \ot^{\mrm{L}}_A P_0 \cong \bar{P}_0$ in $\cat{D}(\bar{A})$.
Let $\bar{Q}_0 \in \cat{Mod} \bar{A}$ be a quasi-inverse of the invertible 
module $\bar{P}_0$. By the same reason, there exists
$Q_0 \in \cat{D}^{-}(A)$ such that  
$\bar{A} \ot^{\mrm{L}}_A Q_0 \cong \bar{Q}_0$ in $\cat{D}(\bar{A})$.
Now by Lemma \ref{lem:80}(1) we have
\[ \bar{A} \ot^{\mrm{L}}_A (P_0 \ot^{\mrm{L}}_A Q_0) \cong 
\bar{P}_0 \ot^{\mrm{L}}_{\bar{A}} \bar{Q}_0 \cong \bar{A} \]
in $\cat{D}(\bar{A})$. Since 
$P_0 \ot^{\mrm{L}}_A Q_0 \in \cat{D}^{-}(A)$,
by Proposition \ref{prop:305}(1) we know that 
$P_0 \ot^{\mrm{L}}_A Q_0 \cong A$ in $\cat{D}(A)$. This shows that $P_0$ is a 
tilting DG $A$-module. 

Finally, let $\bsym{e} = (e_1, \ldots, e_n)$ and 
$\bsym{k} = (k_1, \ldots, k_n)$ be the data corresponding to $f$ from 
Proposition \ref{prop:550}, and let 
$A \to \prod_{i = 1}^n A_i$ be the $\bsym{e}$-induced decomposition of 
$A$ from Definition \ref{dfn:343}.
Consider the tilting DG $A$-module 
\[ P := P_0 \ot_A \bigl( \bigoplus\nolimits_{i = 1}^n  A_i[k_i] \bigr) . \]
Then 
$\bar{P} \cong \bar{A} \ot^{\mrm{L}}_A P$. 
\end{proof}

\begin{cor} \label{cor:330}
Let $A$ be a DG ring. There is a canonical group isomorphism 
\[ \opn{DPic}(A) \cong  \opn{Pic}(\bar{A}) \times 
\opn{F}_{\mrm{lc}}(\opn{Spec} \bar{A}, \Z) \, . \]
\end{cor}

\begin{proof}
Combine Theorems \ref{thm:40} and \ref{thm:315}.
\end{proof}

\begin{dfn} \label{dfn:330}
Let $A$ be a DG ring. We define $\opn{DPic}^0(A)$ to be the subgroup of 
$\opn{DPic}(A)$ that corresponds to $\opn{Pic}(\bar{A})$, under the 
canonical group isomorphism of Corollary \ref{cor:330}.
\end{dfn}

\begin{cor} \label{cor:315}
If $\bar{A}$ is a local ring, then $\opn{DPic}(A) \cong \Z$.
\end{cor}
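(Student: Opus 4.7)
The plan is to deduce this corollary immediately from Corollary \ref{cor:330}, so the work reduces to checking the hypotheses about $\bar{A}$ when it is local.

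First I would note that a local ring $\bar{A}$ has no nontrivial idempotents (since its only idempotents are $0$ and $1$), and therefore $\opn{Spec} \bar{A}$ is connected and nonempty. Thus $\opn{Spec} \bar{A}$ has exactly $n = 1$ connected components, so the hypothesis of Corollary \ref{cor:330} is satisfied and yields
\[ \opn{DPic}(A) \cong \opn{Pic}(\bar{A}) \times \Z . \]

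Next I would observe that the Picard group of a local ring is trivial. Indeed, any invertible (rank $1$ projective) module over a local ring is free of rank $1$, because finitely generated projective modules over local rings are free; hence $\opn{Pic}(\bar{A}) = 0$.

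Combining these two observations gives $\opn{DPic}(A) \cong 0 \times \Z \cong \Z$, as required. There is no real obstacle here; the content of the corollary is entirely packaged in Corollary \ref{cor:330} together with the two elementary facts about local rings noted above.
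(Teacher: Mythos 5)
Your proof is correct and follows exactly the paper's route: observe that $\opn{Spec} \bar{A}$ is connected and $\opn{Pic}(\bar{A})$ is trivial for a local ring, then apply Corollary \ref{cor:330}. You simply spell out the two elementary facts in a bit more detail than the paper does.
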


\begin{proof}
We know that $\opn{Pic}(\bar{A})$ is trivial, and 
$\opn{Spec} \bar{A}$ is connected. Now use Corollary \ref{cor:330}.
\end{proof}

\section{Dualizing DG Modules} \label{sec:dualizing}

Recall that all our DG rings are now commutative (Convention \ref{conv:100}),
and $\bar{A} = \opn{H}^0(A)$.
In this section we concentrate on cohomologically pseudo-noetherian  
DG rings (Definition \ref{dfn:391}). 

Here is a generalization of Grothendieck's definition of dualizing 
complex. \lb When $A$ is a ring, this is identical to the definition in
\cite[Section V.2]{RD}.

\begin{dfn} \label{dfn:253}
Let $A$ be a cohomologically pseudo-noetherian DG ring.
A DG $A$-module $R$ is called a {\em dualizing} if it satisfies these three 
conditions:
\begin{enumerate}
\rmitem{i} Each $\opn{H}^i(R)$ is a finite $\bar{A}$-module. 

\rmitem{ii} $R$ has finite injective dimension relative to 
$\cat{D}(A)$. 

\rmitem{iii} The adjunction morphism 
$A \to \opn{RHom}_A(R, R)$ in $\cat{D}(A)$
is an isomorphism. 
\end{enumerate}
\end{dfn}

In other words, condition (i) says that $R \in \cat{D}_{\mrm{f}}(A)$;
condition (ii) says that the functor 
$\opn{RHom}_A(-, R)$ has finite cohomological dimension relative to
$\cat{D}(A)$, as in Definitions \ref{dfn:10} and \ref{dfn:11}(2); and condition 
(iii) says that the canonical graded ring homomorphism 
\[ \opn{H}(A) \to 
\bigoplus_{k \in \Z} \, \opn{Hom}_{\cat{D}(A)}(R, R[k]) \]
is bijective. 

\begin{prop} \label{prop:250}
Suppose $R$ is a dualizing DG module over $A$, and consider the functor 
\[ D :=  \opn{RHom}_A(-, R) : \cat{D}(A) \to \cat{D}(A) . \]
Let $\cat{D}^{\star}$ denote either $\cat{D}$, 
$\cat{D}^{\mrm{b}}$, $\cat{D}^{+}$ or $\cat{D}^{-}$; and 
correspondingly let  $\cat{D}^{- \star}$ denote either $\cat{D}$, 
$\cat{D}^{\mrm{b}}$, $\cat{D}^{-}$ or $\cat{D}^{+}$.
\begin{enumerate}
\item For any $M \in \cat{D}^{\star}_{\mrm{f}}(A)$
we have $D(M) \in \cat{D}^{- \star}_{\mrm{f}}(A)$.
In particular, $R \in \cat{D}^{+}_{\mrm{f}}(A)$.

\item For any $M \in \cat{D}_{\mrm{f}}(A)$ the canonical morphism 
$M \to D(D(M))$ in $\cat{D}_{\mrm{f}}(A)$ is an isomorphism. 

\item The functor 
\[ D : \cat{D}^{\star}_{\mrm{f}}(A)^{\mrm{op}} \to 
\cat{D}^{- \star}_{\mrm{f}}(A) \]
is an equivalence of triangulated categories. 
\end{enumerate}
\end{prop}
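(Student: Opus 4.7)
The plan is to handle the three parts in order, with parts (1) and (2) doing the real work and part (3) being a formal consequence.

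For part (1), I would apply Theorem \ref{thm:30}(2) to the contravariant triangulated functor $D = \opn{RHom}_A(-, R) : \cat{D}(A)^{\mrm{op}} \to \cat{D}(A)$. Condition (ii) of Definition \ref{dfn:253} says precisely that $D$ has finite cohomological dimension on all of $\cat{D}(A)$. Condition (i) of that definition, combined with the canonical isomorphism $D(A) \cong R$, gives $D(A) \in \cat{D}^{+}_{\mrm{f}}(A)$. Since $A$ is cohomologically pseudo-noetherian by hypothesis, Theorem \ref{thm:30}(2) then yields $D(M) \in \cat{D}^{-\star}_{\mrm{f}}(A)$ for every $M \in \cat{D}^{\star}_{\mrm{f}}(A)$.

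For part (2), consider the morphism of triangulated functors $\eta : \opn{id}_{\cat{D}(A)} \to D \circ D$ given by the standard double evaluation (tensor--hom adjunction). Evaluated on $A$, this becomes the adjunction morphism $A \to \opn{RHom}_A(R, R)$, which is an isomorphism by condition (iii) of Definition \ref{dfn:253}. The identity functor has cohomological displacement $[0,0]$, while $D \circ D$ is covariant and triangulated; if $D$ has cohomological displacement $[d_0, d_1]$, then two applications of Definition \ref{dfn:10}(2) give $\opn{con}(\opn{H}(D(D(M)))) \subseteq \opn{con}(\opn{H}(M)) + [d_0 - d_1, d_1 - d_0]$, so $D \circ D$ has cohomological dimension at most $2(d_1 - d_0)$. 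Theorem \ref{thm:10}(2) now applies and gives that $\eta_M : M \to D(D(M))$ is an isomorphism for every $M \in \cat{D}_{\mrm{f}}(A)$.

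For part (3), the argument is formal. Part (1) shows that $D$ restricts to a functor $\cat{D}^{\star}_{\mrm{f}}(A)^{\mrm{op}} \to \cat{D}^{-\star}_{\mrm{f}}(A)$, and applied again (noting $-(-\star) = \star$) it gives $D : \cat{D}^{-\star}_{\mrm{f}}(A)^{\mrm{op}} \to \cat{D}^{\star}_{\mrm{f}}(A)$. Part (2) shows that both compositions are naturally isomorphic to the respective identity functors via $\eta$. Hence $D$ is a quasi-inverse to itself and is an equivalence of triangulated categories.

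The only potentially delicate step is the bookkeeping with opposite categories and the sign conventions governing cohomological displacement of contravariant functors in part (2); beyond that, the proof is just a careful packaging of Theorems \ref{thm:10} and \ref{thm:30}, which are themselves variants of the classical Way-Out Lemma.
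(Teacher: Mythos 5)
Your proof is correct and follows essentially the same route as the paper's own: part (1) is Theorem \ref{thm:30}(2) applied to $D$, using that $D(A) \cong R \in \cat{D}^{+}_{\mrm{f}}(A)$ and that condition (ii) of Definition \ref{dfn:253} gives finite cohomological dimension; part (2) is Theorem \ref{thm:10}(2) applied to $\eta : \opn{id} \to D \circ D$, whose value at $A$ is the adjunction isomorphism of condition (iii); and part (3) is the formal combination of the two. Your explicit displacement estimate showing $D \circ D$ has cohomological dimension at most $2(d_1 - d_0)$ merely spells out what the paper leaves implicit, and is accurate.
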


\begin{proof}
(1) The functor $D$ has finite cohomological dimension,
so we can apply Theorem \ref{thm:30}(2).
Since $A \in \cat{D}^{-}_{\mrm{f}}(A)$, 
we get $R = D(A) \in \cat{D}^{+}_{\mrm{f}}(A)$. 

\medskip \noindent
(2) There is a morphism of triangulated functors 
$\eta : \opn{id}_{\cat{D}_{\mrm{f}}(A)} \to D \circ D$.
Both functors have finite cohomological dimensions, 
and $\eta_A$ is an isomorphism.  We can apply Theorem \ref{thm:10}(2). 

\medskip \noindent
(3) Combine items (1) and (2).  
\end{proof}

\begin{cor} \label{cor:380} 
Let $A$ be a cohomologically pseudo-noetherian DG ring, and let $R$ be a 
dualizing DG $A$-module. The following two conditions are equivalent\tup{:}
\begin{itemize}
\rmitem{i} The DG ring $A$ is cohomologically bounded.
\rmitem{ii} The DG module $R$ is cohomologically bounded.
\end{itemize}
\end{cor}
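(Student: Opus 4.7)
The plan is to use the duality functor $D := \opn{RHom}_A(-, R)$ from Proposition \ref{prop:250}, together with the fact that $R$ and $A$ are interchanged (up to isomorphism) by $D$. Since $A$ is cohomologically pseudo-noetherian, $A \in \cat{D}^{-}_{\mrm{f}}(A)$, and by Proposition \ref{prop:250}(1) we have $R = D(A) \in \cat{D}^{+}_{\mrm{f}}(A)$. Moreover, Proposition \ref{prop:250}(2) applied to $M = A$ gives a canonical isomorphism $A \iso D(D(A)) \cong D(R)$ in $\cat{D}(A)$.

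For the implication (i) $\Rightarrow$ (ii), I would observe that if $A$ is cohomologically bounded then $A \in \cat{D}^{\mrm{b}}_{\mrm{f}}(A)$ (using the pseudo-noetherian hypothesis), so Proposition \ref{prop:250}(1) with $\star = \mrm{b}$ forces $R = D(A) \in \cat{D}^{\mrm{b}}_{\mrm{f}}(A)$, i.e.\ $R$ is cohomologically bounded.

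For the converse (ii) $\Rightarrow$ (i), I would use the reflexivity isomorphism $A \cong D(R)$. If $R \in \cat{D}^{\mrm{b}}_{\mrm{f}}(A)$, then another application of Proposition \ref{prop:250}(1) with $\star = \mrm{b}$ yields $D(R) \in \cat{D}^{\mrm{b}}_{\mrm{f}}(A)$, and hence $A \in \cat{D}^{\mrm{b}}(A)$.

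There is no real obstacle here, beyond correctly invoking Proposition \ref{prop:250}: the content of that proposition — that $D$ restricts to an equivalence $\cat{D}^{\star}_{\mrm{f}}(A)^{\mrm{op}} \to \cat{D}^{-\star}_{\mrm{f}}(A)$ for each boundedness condition $\star$, including $\star = \mrm{b}$ — makes both directions immediate once one notes that $D$ swaps the roles of $A$ and $R$.
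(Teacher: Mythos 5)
Your proposal is correct and is essentially the paper's own argument: the paper's proof is the one-line remark that the claim follows from Proposition \ref{prop:250}, since $R \cong D(A)$ and $A \cong D(R)$, which is exactly the two applications of Proposition \ref{prop:250}(1) with $\star = \mrm{b}$ (together with the biduality from Proposition \ref{prop:250}(2)) that you spell out.
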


\begin{proof}
This is by Proposition \ref{prop:250}, since $R \cong D(A)$ and $A \cong D(R)$.
\end{proof}

In Example \ref{exa:400} we demonstrate the unbounded option. 

Given a homomorphism $f : A \to B$  of DG rings, 
we denote by $\bar{f} := \opn{H}^0(f)$ the induced ring homomorphism.

\begin{dfn} \label{dfn:252}
Let $f : A \to B$ be a homomorphism between cohomologically pseudo-noetherian 
DG rings. We say that $f$ is a {\em cohomologically pseudo-finite 
homomorphism} if $\bar{f} : \bar{A} \to \bar{B}$ is a finite ring homomorphism, 
i.e.\ $\bar{f}$ makes $\bar{B}$ into a finite $\bar{A}$-module. 
\end{dfn}

Clearly if $f$ is cohomologically finite, then $\opn{rest}_f$ sends 
$\cat{D}^{\star}_{\mrm{f}}(B)$ into $\cat{D}^{\star}_{\mrm{f}}(A)$,
where  $\star$ is either $+, -, \mrm{b}$ or blank.

\begin{prop} \label{prop:252}
Let $f : A \to B$ be a cohomologically pseudo-finite homomorphism between 
cohomologically pseudo-noetherian DG rings.
\begin{enumerate}
\item If $R_A$ is a dualizing DG $A$-module, then 
$R_B := \opn{RHom}_A(B, R_A)$ is a dualizing DG $B$-module.

\item If $f$ is a quasi-isomorphism and $R_B$ is a dualizing DG $B$-module, 
then $R_A := \lb \opn{rest}_f(R_B)$ is a dualizing DG $A$-module.
\end{enumerate}
\end{prop}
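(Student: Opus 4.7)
The plan is to verify, in each part, the three conditions of Definition~\ref{dfn:253}, exploiting the Hom-tensor adjunction along $f : A \to B$.

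For part (1), the key preliminary is that cohomological pseudo-finiteness places $B \in \cat{D}^{-}_{\mrm{f}}(A)$ (viewed via $\opn{rest}_f$): each $\opn{H}^i(B)$ is finite over $\bar{B}$ by pseudo-noetherianness of $B$, and $\bar{B}$ is finite over $\bar{A}$ by pseudo-finiteness of $f$. I would then verify the conditions in turn. For condition (i), Proposition~\ref{prop:250}(1) applied to $R_A$ and $B \in \cat{D}^{-}_{\mrm{f}}(A)$ gives $R_B \in \cat{D}^{+}_{\mrm{f}}(A)$; since the $\bar{A}$-action on each $\opn{H}^i(R_B)$ factors through $\bar{f} : \bar{A} \to \bar{B}$, any $\bar{A}$-generating set is also a $\bar{B}$-generating set, so the cohomologies are finite over $\bar{B}$. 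For condition (ii), the standard adjunction yields a natural isomorphism $\opn{RHom}_B(N, R_B) \cong \opn{RHom}_A(\opn{rest}_f N, R_A)$ for $N \in \cat{D}(B)$; since $\opn{rest}_f$ preserves cohomology concentrations and $\opn{RHom}_A(-, R_A)$ has finite cohomological dimension by hypothesis, so does $\opn{RHom}_B(-, R_B)$. For condition (iii), I take $N := R_B$ in the same adjunction to obtain $\opn{RHom}_B(R_B, R_B) \cong \opn{RHom}_A(R_B, R_A) = D_A(D_A(B))$, where $D_A := \opn{RHom}_A(-, R_A)$, and then invoke Proposition~\ref{prop:250}(2) to conclude the biduality morphism $B \iso D_A(D_A(B))$ is an isomorphism.

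For part (2), since $f$ is a quasi-isomorphism the functor $\opn{rest}_f : \cat{D}(B) \to \cat{D}(A)$ is an equivalence of triangulated categories (as used in the proof of Lemma~\ref{lem:80}(3) via \cite[Proposition~1.4]{YZ1}), and $\bar{f} : \bar{A} \iso \bar{B}$ is a ring isomorphism. All three conditions of Definition~\ref{dfn:253} then transfer directly: finiteness of cohomology is preserved since $\bar{A} \cong \bar{B}$; finite injective dimension transfers because the equivalence intertwines the RHom functors, i.e., $\opn{RHom}_A(\opn{rest}_f M, \opn{rest}_f N) \cong \opn{rest}_f \, \opn{RHom}_B(M, N)$ naturally; and the adjunction isomorphism $A \iso \opn{RHom}_A(R_A, R_A)$ is obtained by applying $\opn{rest}_f$ to the adjunction isomorphism $B \iso \opn{RHom}_B(R_B, R_B)$ provided by $R_B$ being dualizing.

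The main obstacle I anticipate is the verification of condition (iii) in part~(1): identifying the biduality isomorphism $B \iso D_A(D_A(B))$ furnished by Proposition~\ref{prop:250}(2) with the adjunction morphism $B \to \opn{RHom}_B(R_B, R_B)$ under the tensor-Hom adjunction. While ultimately a naturality check, it requires careful tracking of the identifications, and it is precisely here that the pseudo-finiteness hypothesis plays its role, by guaranteeing $B \in \cat{D}_{\mrm{f}}(A)$ so that Proposition~\ref{prop:250}(2) is applicable.
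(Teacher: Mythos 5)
Your proposal is correct and follows essentially the same route as the paper: view $B$ as an object of $\cat{D}^{-}_{\mrm{f}}(A)$, apply Proposition \ref{prop:250}(1) for condition (i), use the adjunction $\opn{RHom}_B(N, R_B) \cong \opn{RHom}_A(N, R_A)$ for condition (ii), and combine the adjunction with the biduality of Proposition \ref{prop:250}(2) for condition (iii), with part (2) handled by the derived equivalence $\opn{rest}_f$. The compatibility issue you flag at the end is exactly the point the paper dispatches with the remark that the isomorphisms live in $\cat{D}(B)$ and are compatible with the canonical morphism from $B$.
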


\begin{proof}
(1) The proof is almost the same as in the case of a ring; see 
\cite[Proposition V.2.4]{RD}. Viewing $B$ as an object of 
$\cat{D}^{-}_{\mrm{f}}(A)$, Proposition \ref{prop:250}(1) tells us that 
$R_B \in \cat{D}^{+}_{\mrm{f}}(A)$; and hence 
$R_B \in \cat{D}^{+}_{\mrm{f}}(B)$.
For any $N \in \cat{D}(B)$ we have 
\[ \opn{RHom}_B(N, R_B) \cong \opn{RHom}_A(N, R_A) \]
by adjunction, and hence the injective dimension of $R_B$ relative to 
$\cat{D}(B)$ is at most the injective dimension of A relative to 
$\cat{D}(A)$, which is finite. And finally 
\[ \opn{RHom}_B(R_B, R_B) \cong 
\opn{RHom}_A \bigl( \opn{RHom}_A(B, R_A), R_A \bigr) \cong B \]
by Proposition \ref{prop:250}(2). These isomorphisms are actually inside 
$\cat{D}(B)$, and they are compatible with the 
canonical morphism from $B$. 

\medskip \noindent
(2) This is because here
$\opn{rest}_f : \cat{D}(B) \to \cat{D}(A)$
is an equivalence, preserving boundedness and finiteness of cohomology. 
\end{proof}

In commutative ring theory, many good properties of a ring $A$ can be deduced 
if it is ``not far'' from a ``nice'' ring $\K$ (such as a field). This is the 
underlying reason for the next definition. 

Recall that a ring homomorphism $A \to B$ is called of {\em essentially finite 
type} if it can be factored as $A \to B_{\mrm{ft}} \to B$, where $A \to 
B_{\mrm{ft}}$ is finite type (i.e.\ $B$ is finitely generated as $A$-ring), and 
$B_{\mrm{ft}} \to B$ is the localization at some multiplicatively closed subset 
of $B_{\mrm{ft}}$. 

\begin{dfn} \label{dfn:510}
Let $\K$ be a noetherian ring, let $A$ be a cohomologically pseudo-noetherian 
DG ring, and let $u : \K \to A$ a homomorphism of DG 
rings. We say that $u$ is of {\em cohomologically essentially finite type}, and 
that $A$ is a {\em cohomologically essentially finite type DG $\K$-ring}, if 
the ring homomorphism $\bar{u} : \K \to \bar{A}$ is of essentially finite type.
\end{dfn}

\begin{dfn} \label{dfn:46}
A DG ring $A$ is called {\em tractable} if it is cohomologically 
pseudo-noetherian, and there exists a cohomologically 
essentially finite type homomorphism $\K \to A$ from some finite dimensional 
regular noetherian ring $\K$. Such a homomorphism $\K \to A$ is called a {\em 
traction} for $A$.
\end{dfn}

\begin{lem} \label{lem:380}
Let $A$ be a DG ring, let $\K$ be a noetherian ring, and suppose there is a 
cohomologically essentially finite type homomorphism $u : \K \to A$. Then there 
is a commutative diagram of DG rings 
\[  \UseTips \xymatrix @C=5ex @R=5ex {
\K
\ar[r]^{u}
\ar[d]_{v}
&
A 
\ar[r]^{\pi}
\ar[d]_{f}
& 
\bar{A}
\\
A_{\mrm{eft}}
\ar[r]^{g}
&
A_{\mrm{loc}}
\ar[ur]
} \]
such that $\pi$ is the canonical homomorphism\tup{;} 
$f$ and $g$ are quasi-isomorphism\tup{;} and
$\K \to A^0_{\mrm{eft}}$ is essentially finite type.
\end{lem}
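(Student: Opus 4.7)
My plan is to build $A_{\mrm{loc}}$ as an explicit localization of $A$ (at a flat multiplicative set that acts trivially on cohomology) and then construct $A_{\mrm{eft}}$ as a Tate-style semi-free DG $\K$-algebra resolution of $A_{\mrm{loc}}$ with a carefully chosen essentially finite type component in degree $0$.

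First I would use the hypothesis to pick generators. Since $\bar{u} : \K \to \bar{A}$ is of essentially finite type, $\bar{A}$ can be written as $(\K[\bar{x}_1,\ldots,\bar{x}_n]/I)_{\bar{T}}$ for some ideal $I$ and multiplicative subset $\bar{T}$ of the polynomial ring. Let $R_0 := \K[x_1,\ldots,x_n]_T$, where $T$ is the preimage of $\bar{T}$; this is an essentially finite type $\K$-algebra, and it admits a canonical surjection $R_0 \surj \bar{A}$. Next, I would choose lifts $x_i \in A^0$ of the $\bar{x}_i$, and let $\til{T} := \pi^{-1}(\bar{T}) \cap A^0$, where $\pi : A \to \bar{A}$ is the canonical projection. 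Define $A_{\mrm{loc}} := \til{T}^{-1} A^0 \ot_{A^0} A$, a localization of $A$ in the sense of Definition \ref{dfn:40} (extended to a general multiplicative set). Then $f : A \to A_{\mrm{loc}}$ is flat, and Proposition \ref{prop:300} gives $\opn{H}^i(A_{\mrm{loc}}) \cong \opn{H}^i(A)_{\bar{T}}$. Since every element of $\bar{T}$ is a unit in $\bar{A}$, and each $\opn{H}^i(A)$ is an $\bar{A}$-module, this localization is the identity on cohomology, so $f$ is a quasi-isomorphism. By the universal property of localization, $\pi$ factors as $A \xar{f} A_{\mrm{loc}} \to \bar{A}$.

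For $A_{\mrm{eft}}$ I would use the Tate-style construction of semi-free resolutions in $\cat{DGR}^{\leq 0}_{\mrm{sc}} / \K$ (compare Remark \ref{rem:580} and \cite{YZ1}): the lifts $x_i \in A^0 \subseteq A^0_{\mrm{loc}}$ together with the fact that $T \subseteq \til{T}$ becomes invertible in $A^0_{\mrm{loc}}$ yield a $\K$-algebra map $R_0 \to A^0_{\mrm{loc}}$ whose composition with $A^0_{\mrm{loc}} \to \bar{A}$ is the surjection from the previous paragraph. Now iteratively adjoin strongly commutative graded variables $y_\beta$ in degrees $-1,-2,\ldots$, chosen so that $\d y_\beta$ kills representatives of excess cohomology, and simultaneously lift the $y_\beta$ to elements of $A_{\mrm{loc}}$ with matching boundaries. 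This produces a DG $\K$-algebra $A_{\mrm{eft}}$ whose underlying graded algebra has the form $R_0 \ot_{\K} \K_{\mrm{sc}}[Y]$ with $Y$ concentrated in strictly negative degrees, together with a quasi-isomorphism $g : A_{\mrm{eft}} \to A_{\mrm{loc}}$. In particular $A^0_{\mrm{eft}} = R_0$ is essentially finite type over $\K$, and commutativity of the outer diagram $\K \to A \to \bar{A}$ versus $\K \to A_{\mrm{eft}} \to A_{\mrm{loc}} \to \bar{A}$ is automatic from the matching of the chosen generators.

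The only slightly nonstandard point — and the main thing to verify — is that the Tate construction can be started from a degree-$0$ base $R_0$ that is a localization of a polynomial ring rather than a polynomial ring itself. This goes through verbatim: the inductive step only uses that the degree-$0$ part is a commutative $\K$-algebra surjecting onto $\bar{A}$, and all newly adjoined generators lie in strictly negative degrees, so $A^0_{\mrm{eft}}$ remains equal to the fixed essentially finite type ring $R_0$ throughout the construction.
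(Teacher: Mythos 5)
Your proposal is correct and follows essentially the same route as the paper's proof: localize $A$ at the preimage of a suitable set of elements that become units in $\bar{A}$ to get the quasi-isomorphism $f : A \to A_{\mrm{loc}}$, present $\bar{A}$ as a quotient of a localized polynomial ring $A^0_{\mrm{eft}}$ (your $R_0$) that is essentially finite type over $\K$, map it into $A^0_{\mrm{loc}}$, and then adjoin negative-degree variables Tate-style (as in \cite[Proposition 1.7(2)]{YZ1}) to obtain the quasi-isomorphism $g : A_{\mrm{eft}} \to A_{\mrm{loc}}$. The only cosmetic difference is that the paper localizes $A$ at all of $\pi^{-1}(\bar{A}^{\times}) \cap A^0$ rather than just at the preimage of the chosen denominators, which changes nothing essential.
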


\begin{proof}
Let $S := \pi^{-1}(\bar{A}^{\times}) \cap A^0$, namely $s \in S$ iff 
$\pi(s)$ is invertible in the ring $\bar{A}$.  
Define the DG ring 
$A_{\mrm{loc}} := (S^{-1} \cd A^0) \ot_{A^0} A$.
Then $\pi$ factors via $f$, and $f$ is a quasi-isomorphism. 

Since $\K \to \bar{A}$ is essentially finite type, there is a polynomial ring 
$\K[\bsym{t}]$ in finitely many variables of degree $0$, and a 
homomorphism $h : \K[\bsym{t}] \to \bar{A}$ which is essentially surjective,
i.e.\ it is surjective after a localization.
Thus, letting $T := h^{-1}(\bar{A}^{\times}) \lb \subseteq \K[\bsym{t}]$,
and defining 
$A_{\mrm{eft}}^0 := T^{-1} \cd \K[\bsym{t}]$,
the homomorphism  
$h_T : A_{\mrm{eft}}^0 \to \bar{A}$ 
is surjective.

Now the ring  $A_{\mrm{eft}}^0$ is noetherian.
The homomorphism $h_T : A_{\mrm{eft}}^0 \to \bar{A}$ factors via a homomorphism 
$g^0 : A_{\mrm{eft}}^0 \to A^0_{\mrm{loc}}$, 
and the composed homomorphism 
$A_{\mrm{eft}}^0 \to  \opn{H}^0(A_{\mrm{loc}})$ is surjective. 
Since the the modules $\opn{H}^i(A_{\mrm{loc}})$ are finite over 
$A_{\mrm{eft}}^0$, we can extend $A_{\mrm{eft}}^0$ to a DG ring 
$A_{\mrm{eft}}$, and simultaneously extend $g^0$
to a quasi-isomorphism 
$g : A_{\mrm{eft}} \to A_{\mrm{loc}}$,
by inductively introducing finitely many new variables (free ring generators) 
in negative degrees.
The process is the same as in the proof of 
\cite[Proposition 1.7(2)]{YZ1}. 
\end{proof}

\begin{thm} \label{thm:51}
Let $A$ be a tractable  DG ring.  Then $A$ has a dualizing DG module. 
\end{thm}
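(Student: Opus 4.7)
The plan is to reduce the problem to the classical construction of dualizing complexes for rings essentially of finite type over a regular noetherian base, and then transport the result through the DG resolution of Lemma \ref{lem:380}.

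First, fix a traction $\K \to A$, so $\K$ is a finite dimensional regular noetherian commutative ring and $\K \to \bar{A}$ is essentially of finite type. Apply Lemma \ref{lem:380} to obtain the diagram
\[
\K \xrightarrow{v} A_{\mrm{eft}} \xrightarrow{g} A_{\mrm{loc}} \xleftarrow{f} A,
\]
where $f$ and $g$ are quasi-isomorphisms, $A_{\mrm{eft}}^0$ is essentially of finite type (hence noetherian) over $\K$, and $A_{\mrm{eft}}$ is cohomologically pseudo-noetherian with $\overline{A_{\mrm{eft}}} \cong \bar{A}$.

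Second, invoke the classical theorem of Grothendieck (\cite[Section V.10]{RD}): since $\K$ is regular noetherian of finite Krull dimension, $\K$ itself is a dualizing complex over $\K$; and since $A_{\mrm{eft}}^0$ is essentially of finite type over $\K$, the ring $A_{\mrm{eft}}^0$ admits a dualizing complex $R^0$ (built as $\opn{RHom}_{\K}(A_{\mrm{eft}}^0, \K)$ in the finite-type case and then localized). Now view the canonical DG ring homomorphism $\iota : A_{\mrm{eft}}^0 \to A_{\mrm{eft}}$ as a map between cohomologically pseudo-noetherian DG rings. Its induced map on $\opn{H}^0$ is the surjection $A_{\mrm{eft}}^0 \twoheadrightarrow \overline{A_{\mrm{eft}}}$, which is obviously finite, so $\iota$ is cohomologically pseudo-finite in the sense of Definition \ref{dfn:252}. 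Proposition \ref{prop:252}(1) therefore produces a dualizing DG $A_{\mrm{eft}}$-module
\[
R_{\mrm{eft}} := \opn{RHom}_{A_{\mrm{eft}}^0}(A_{\mrm{eft}}, R^0).
\]

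Third, transport $R_{\mrm{eft}}$ along the quasi-isomorphism zigzag $A \xrightarrow{f} A_{\mrm{loc}} \xleftarrow{g} A_{\mrm{eft}}$. The quasi-isomorphism $g$ is (trivially) cohomologically pseudo-finite, so Proposition \ref{prop:252}(1) gives a dualizing DG $A_{\mrm{loc}}$-module $R_{\mrm{loc}} := \opn{RHom}_{A_{\mrm{eft}}}(A_{\mrm{loc}}, R_{\mrm{eft}})$ (in effect $R_{\mrm{loc}} \cong R_{\mrm{eft}}$ in $\cat{D}(A_{\mrm{eft}})$ since $g$ is a quasi-isomorphism). Then Proposition \ref{prop:252}(2), applied to the quasi-isomorphism $f$, yields that $R_A := \opn{rest}_f(R_{\mrm{loc}})$ is a dualizing DG $A$-module, completing the proof.

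The genuine technical obstacle has already been handled: it is the construction of $A_{\mrm{eft}}$ in Lemma \ref{lem:380}, which replaces $A$ by a DG ring whose degree-zero component is an honest noetherian $\K$-ring of essentially finite type, allowing the classical existence statement for dualizing complexes to take effect. Once $A_{\mrm{eft}}$ is available, the remainder is a straightforward application of the transfer properties of dualizing DG modules under cohomologically pseudo-finite homomorphisms and quasi-isomorphisms.
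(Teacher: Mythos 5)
Your proof is correct and takes essentially the same route as the paper: Lemma \ref{lem:380} plus the transfer results of Proposition \ref{prop:252} reduce everything to the existence of a dualizing complex over the noetherian ring $A^0_{\mrm{eft}}$, which is essentially of finite type over the regular finite-dimensional ring $\K$. The only slip is your parenthetical description of that complex: $\opn{RHom}_{\K}(A^0_{\mrm{eft}}, \K)$ is dualizing only when $A^0_{\mrm{eft}}$ is \emph{finite} over $\K$, not merely of (essentially) finite type; the paper instead factors $\K \to \K[\bsym{t}] \to B \to A^0_{\mrm{eft}}$ with $\K[\bsym{t}] \to B$ surjective and $B \to A^0_{\mrm{eft}}$ a localization, and takes $A^0_{\mrm{eft}} \ot_B \opn{RHom}_{\K[\bsym{t}]}(B, \Om^n_{\K[\bsym{t}]/\K}[n])$ by \cite[Theorem V.8.3]{RD} -- but since you only invoke Grothendieck's existence theorem from \cite{RD}, this does not affect the validity of your argument.
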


\begin{proof}
Let $\K \to A$ be a traction for $A$. 
Consider the diagram of homomorphisms in Lemma \ref{lem:380}.
Since $A \to A_{\mrm{loc}}$ is a quasi-isomorphism, and 
$A_{\mrm{eft}}^0 \to A_{\mrm{eft}} \to A_{\mrm{loc}}$
are cohomologically pseudo-finite, it suffices (by Proposition \ref{prop:252})
to show that $A_{\mrm{eft}}^0$ has a dualizing DG module 
(which is the same as a dualizing complex over this ring, in the sense of 
\cite{RD}). But the ring homomorphism $\K \to A_{\mrm{eft}}^0$ can be factored 
into 
$\K \to \K[\bsym{t}] \to B \to A_{\mrm{eft}}^0$,
where $\K[\bsym{t}]$ is a polynomial ring in $n$ variables, 
$\K[\bsym{t}] \to B$ is surjective, and $B \to A_{\mrm{eft}}^0$
is a localization. Thus, using \cite[Theorem V.8.3]{RD} and Proposition 
\ref{prop:252}(1) above, the DG module 
\[ A_{\mrm{eft}}^0 \ot_B \opn{RHom}_{\K[\bsym{t}]}
(B, \Om^n_{\K[\bsym{t}] / \K}[n]) \]
is a dualizing DG module over $A_{\mrm{eft}}^0$.
\end{proof}

\begin{thm} \label{thm:43}
Let $A$ be a cohomologically pseudo-noetherian DG ring, and let
$R$ be a dualizing DG module over $A$. 
\begin{enumerate}
\item If $P$ is a tilting DG module, then
$P \ot^{\mrm{L}}_A R$ is a dualizing DG module. 

\item If $R'$ is a dualizing DG module, then 
$P := \opn{RHom}_A(R, R')$ is a tilting DG module, and 
$R' \cong P \ot^{\mrm{L}}_A R$ in $\cat{D}(A)$.

\item If $P$ is a tilting DG module, and if $R \cong P \ot^{\mrm{L}}_A R$
in $\cat{D}(A)$, then $P \cong A$ in $\cat{D}(A)$.
\end{enumerate}
\end{thm}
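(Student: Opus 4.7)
I would verify each of the three parts in turn, leveraging the duality and tilting machinery already developed. \emph{For part (1),} the plan is to check the three conditions of Definition \ref{dfn:253} for $P \ot^{\mrm{L}}_A R$. Since $P$ is tilting, Theorem \ref{thm:76} shows $P$ is perfect, so Corollary \ref{cor:73}(2) places $P \ot^{\mrm{L}}_A R$ in $\cat{D}^+_{\mrm{f}}(A)$. For finite injective dimension, let $Q$ be a quasi-inverse of $P$. Full faithfulness of the tilting equivalence $P \ot^{\mrm{L}}_A -$ yields a natural isomorphism $\opn{RHom}_A(M, P \ot^{\mrm{L}}_A R) \cong \opn{RHom}_A(Q \ot^{\mrm{L}}_A M, R)$, exhibiting $\opn{RHom}_A(-, P \ot^{\mrm{L}}_A R)$ as the composition of the finite-cohomological-dimension functors $Q \ot^{\mrm{L}}_A -$ (Corollary \ref{cor:73}(1)) and $\opn{RHom}_A(-, R)$ (Definition \ref{dfn:253}(ii)). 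The adjunction morphism is an isomorphism because $P \ot^{\mrm{L}}_A -$ is fully faithful, giving $\opn{RHom}_A(P \ot^{\mrm{L}}_A R, P \ot^{\mrm{L}}_A R) \cong \opn{RHom}_A(R, R) \cong A$.

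\emph{For part (2),} set $D := \opn{RHom}_A(-, R)$ and $D' := \opn{RHom}_A(-, R')$; by Proposition \ref{prop:250} each is a contravariant auto-equivalence of $\cat{D}_{\mrm{f}}(A)$. Note that $P = D'(R) \in \cat{D}^-_{\mrm{f}}(A)$, and full faithfulness of $D'$ gives $\opn{RHom}_A(P, P) \cong \opn{RHom}_A(R, R) \cong A$. By Theorem \ref{thm:76}(iv), tilting of $P$ reduces to showing $P$ is perfect. The strategy is Theorem \ref{thm:50}(ii): it suffices that $\bar{A} \ot^{\mrm{L}}_A P$ be perfect over the noetherian ring $\bar{A}$. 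I would identify $\bar{A} \ot^{\mrm{L}}_A P$ with $\opn{RHom}_{\bar{A}}(\bar{R}, \bar{R}')$, where $\bar{R} := \opn{RHom}_A(\bar{A}, R)$ and $\bar{R}' := \opn{RHom}_A(\bar{A}, R')$ are dualizing DG modules over $\bar{A}$ (Proposition \ref{prop:252}(1)); the classical uniqueness of dualizing complexes (\cite[Theorem V.3.1]{RD}) then makes the right-hand side a perfect complex over $\bar{A}$. This base-change identification is the main technical obstacle, and I expect it to require a careful Hom-tensor manipulation exploiting the finite injective dimension of $R$ and the defining property of $\bar{R}$.

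Once $P$ is known to be tilting, I would construct the natural transformation $\eta : P \ot^{\mrm{L}}_A - \to D' \circ D$ of triangulated endofunctors of $\cat{D}(A)$, arising from the composition pairing that sends $\phi \ot m$ to the map $\psi \mapsto \phi(\psi(m))$. Direct inspection shows $\eta_A$ is the identity of $P$, since $D'(D(A)) = D'(R) = P$. Both endofunctors now have finite cohomological dimension — the source by Corollary \ref{cor:73}(1), the target as a composition of two finite-dimensional duality functors — so Theorem \ref{thm:10}(2) applies, yielding that $\eta_M$ is an isomorphism for every $M \in \cat{D}_{\mrm{f}}(A)$. Evaluating at $M = R$ gives $P \ot^{\mrm{L}}_A R \cong D'(D(R)) = D'(A) = R'$.

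\emph{For part (3),} apply $D$ to the given isomorphism $R \cong P \ot^{\mrm{L}}_A R$. Using Hom-tensor adjunction together with Definition \ref{dfn:253}(iii),
\[ A \cong \opn{RHom}_A(R, R) \cong \opn{RHom}_A(P \ot^{\mrm{L}}_A R, R) \cong \opn{RHom}_A(P, A) \]
in $\cat{D}(A)$. By Corollary \ref{cor:405}, $\opn{RHom}_A(P, A)$ is the quasi-inverse of $P$; so the quasi-inverse equals $A$, and consequently $P \cong P \ot^{\mrm{L}}_A A \cong A$.
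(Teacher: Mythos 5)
Your parts (1) and (3) are correct and essentially coincide with the paper's own arguments: the paper's part (1) is just a terser version of your verification of the three conditions of Definition \ref{dfn:253} via Corollary \ref{cor:73}, and your part (3) is the paper's computation, with Corollary \ref{cor:405} packaging the last two displayed isomorphisms. The issue is part (2).

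There your proof that $P := \opn{RHom}_A(R, R')$ is perfect hangs on the base-change identification
$\bar{A} \ot^{\mrm{L}}_A \opn{RHom}_A(R, R') \cong \opn{RHom}_{\bar{A}}(\bar{R}, \bar{R}')$,
which you flag as the main obstacle and do not prove; none of the tensor-evaluation lemmas in the paper (Lemma \ref{lem:400}, Lemma \ref{lem:45}) applies, since $R$ is not known to be perfect and lies only in $\cat{D}^{+}_{\mrm{f}}(A)$, not $\cat{D}^{-}_{\mrm{f}}(A)$ in general (Corollary \ref{cor:380}). As written this is a genuine gap. It can, however, be closed by the very device you introduce afterwards: the Hom-evaluation transformation $\eta : P \ot^{\mrm{L}}_A - \to D' \circ D$. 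Contrary to your ordering, no perfectness of $P$ is needed to use it: since $P = D'(R) \in \cat{D}^{-}_{\mrm{f}}(A)$, the functor $P \ot^{\mrm{L}}_A -$ has bounded above cohomological displacement (Proposition \ref{prop:35}), $D' \circ D$ has finite cohomological dimension, and $\eta_A$ is an isomorphism, so Theorem \ref{thm:10}(1) already gives that $\eta_M$ is an isomorphism for every $M \in \cat{D}^{-}_{\mrm{f}}(A)$; taking $M = \bar{A}$ and applying adjunction along $A \to \bar{A}$ yields exactly your identification. But once $\eta$ is available on $\cat{D}^{-}_{\mrm{f}}(A)$, the whole detour through $\bar{A}$, Theorem \ref{thm:50}(ii) and classical uniqueness over $\bar{A}$ becomes superfluous: this is where the paper takes a different (and shorter) route, setting $P' := \opn{RHom}_A(R', R)$ and computing, via $\eta$, $\eta'$ and biduality (Proposition \ref{prop:250}(2)),
$P \ot^{\mrm{L}}_A P' \cong (D' \circ D)(P') \cong (D' \circ D \circ D \circ D')(A) \cong A$,
so that $P$ is tilting directly from the definition, with no appeal to the perfectness criterion of Theorem \ref{thm:76}(iv). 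Your concluding step — upgrading to Theorem \ref{thm:10}(2) once $P$ is tilting (hence $P \ot^{\mrm{L}}_A -$ has finite cohomological dimension) and evaluating $\eta$ at $M = R$ to get $P \ot^{\mrm{L}}_A R \cong R'$ — is correct, and in fact makes explicit a point the paper leaves implicit, since $R$ lies only in $\cat{D}^{+}_{\mrm{f}}(A)$.
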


This is similar to \cite[Theorem V.3.1]{RD}, and the strategy of the proof 
is the same; cf.\ also \cite[Theorem 4.5]{Ye2}. 

\begin{proof}
(1) Assume $P$ is a tilting DG module, and let $R' := P \ot^{\mrm{L}}_A R$. 
According to Corollary \ref{cor:73} the functor $P \ot^{\mrm{L}}_A -$
is an auto-equivalence of $\cat{D}(A)$, it has finite cohomological dimension,
and it preserves $\cat{D}^{+}_{\mrm{f}}(A)$.
Therefore the DG module $R'$ is dualizing. 

\medskip \noindent (2)
Define the objects 
$P := \opn{RHom}_A(R, R')$ and 
$P' := \opn{RHom}_A(R', R)$, 
and the functors 
$D := \opn{RHom}_A(-, R)$, $D' := \opn{RHom}_A(-, R')$,
$F := P \ot^{\mrm{L}}_A -$ and $F' := P' \ot^{\mrm{L}}_A -$.
We know that the functors $D, D', D' \circ D, D \circ D'$ have finite 
cohomological dimensions relative to $\cat{D}(A)$; the DG modules 
$P, P' \in \cat{D}^{-}_{\mrm{f}}(A)$; and the functors $F, F'$ have 
bounded above cohomological displacements relative to 
$\cat{D}(A)$.
For any $M \in \cat{D}(A)$ there is a canonical morphism 
\[ \opn{RHom}_A(R, R') \ot^{\mrm{L}}_A M \to 
\opn{RHom}_A ( \opn{RHom}_A(M, R), R') , \]
so we get a morphism of triangulated functors 
$\eta : F \to D' \circ D$. By definition $\eta_A$ is an isomorphism, and 
Theorem \ref{thm:10}(1) says that $\eta_M$ is an isomorphism for every 
$M \in \cat{D}^{-}_{\mrm{f}}(A)$. Likewise there is 
an isomorphism $\eta'_M : F'(M) \to (D \circ D')(M)$ for every 
$M \in \cat{D}^{-}_{\mrm{f}}(A)$. 

Let us calculate $P \ot^{\mrm{L}}_A P'$~:
\[ \begin{aligned}
& P \ot^{\mrm{L}}_A P' \cong F(P') \cong (D' \circ D)(P') 
\\ & \quad 
\cong (D' \circ D)(F'(A)) \cong (D' \circ D \circ D \circ D')(A) \cong A .
\end{aligned} \]
This proves $P$ is tilting. And 
\[ P \ot^{\mrm{L}}_A R \cong F(R) 
\cong (D' \circ D)(D(A)) \cong D'(A) \cong R' . \]

\medskip \noindent (3)
If $P$ is tilting and $R \cong P \ot^{\mrm{L}}_A R$, then 
\[ \begin{aligned}
& A \cong \opn{RHom}_A(R, R) \cong 
\opn{RHom}_A(P \ot^{\mrm{L}}_A R, R)
\\
& \quad \cong^{*} \opn{RHom}_A(P, \opn{RHom}_A(R,  R)) 
\cong \opn{RHom}_A(P, A)  , 
\end{aligned}  \]
where the isomorphism $\cong^{*}$ is by adjunction. 
But then 
\[ P \cong A \ot^{\mrm{L}}_A P \cong 
\opn{RHom}_A(P, A) \ot^{\mrm{L}}_A P \cong^{\dag} 
\opn{RHom}_A(P, P) \cong^{\dag \dag} A , \]
where the isomorphism $\cong^{\dag}$ is by a combination of Theorems 
\ref{thm:76} and \ref{thm:74}, and the isomorphism $\cong^{\dag \dag}$ is by  
Theorem \ref{thm:76}.
\end{proof}

\begin{cor} \label{cor:50}
Assume $A$ has some dualizing DG module. 
The formula $R \mapsto P \ot^{\mrm{L}}_A R$
induces a simply transitive action of the group $\opn{DPic}(A)$ on 
the set of isomorphism classes of dualizing DG $A$-modules. 
\end{cor}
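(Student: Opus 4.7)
The plan is to deduce Corollary \ref{cor:50} as a rather direct consequence of Theorem \ref{thm:43}, broken into three verifications: well-definedness of the action, transitivity, and freeness.

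First I would verify that the formula $(P, R) \mapsto P \ot^{\mrm{L}}_A R$ is a well-defined action of $\opn{DPic}(A)$ on the set $\mathcal{R}$ of isomorphism classes of dualizing DG $A$-modules. Well-definedness on isomorphism classes is immediate from the functoriality of $- \ot^{\mrm{L}}_A -$, and the fact that the result is again dualizing is exactly Theorem \ref{thm:43}(1). The group action axioms follow from the associativity and unitality of the derived tensor product: $(P_1 \ot^{\mrm{L}}_A P_2) \ot^{\mrm{L}}_A R \cong P_1 \ot^{\mrm{L}}_A (P_2 \ot^{\mrm{L}}_A R)$ and $A \ot^{\mrm{L}}_A R \cong R$ in $\cat{D}(A)$.

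Next, fix a dualizing DG module $R$ (it exists by hypothesis). To show the action is transitive, let $R'$ be any other dualizing DG $A$-module. By Theorem \ref{thm:43}(2), the DG module $P := \opn{RHom}_A(R, R')$ is tilting, and there is an isomorphism $R' \cong P \ot^{\mrm{L}}_A R$ in $\cat{D}(A)$. Hence the class of $R'$ lies in the orbit of the class of $R$, proving transitivity.

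Finally, to show the action is free, suppose $P$ is tilting and $P \ot^{\mrm{L}}_A R \cong R$ in $\cat{D}(A)$. By Theorem \ref{thm:43}(3), this forces $P \cong A$ in $\cat{D}(A)$, i.e., the class of $P$ in $\opn{DPic}(A)$ is the identity. Since the action is transitive with trivial stabilizers, it is simply transitive. There is no serious obstacle here; the entire content of the corollary has been packaged into the three parts of Theorem \ref{thm:43}, and this proof is essentially a bookkeeping argument assembling them.
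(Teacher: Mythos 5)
Your proposal is correct and follows exactly the route the paper intends: the paper's proof of this corollary is simply ``Clear from the theorem,'' and your three verifications (well-definedness via Theorem \ref{thm:43}(1), transitivity via Theorem \ref{thm:43}(2), triviality of the stabilizer via Theorem \ref{thm:43}(3)) are precisely the intended unpacking. Nothing is missing.
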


\begin{proof}
Clear from the theorem.
\end{proof}

\begin{cor} \label{cor:80}
Assume $A$ has some dualizing DG module  \tup{(}e.g.\ $A$ is tractable\tup{)}. 
The formula 
$R \mapsto \opn{RHom}_A(\bar{A}, R)$ 
induces a bijection 
\[ \frac{ \{  \tup{dualizing DG} \, A \tup{-modules} \} }
{ \tup{isomorphism} } \iso 
\frac{ \{ \tup{dualizing DG} \, \bar{A} \tup{-modules} \}  }
{ \tup{isomorphism} } . \]
\end{cor}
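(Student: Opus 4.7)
\medskip

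\noindent\textbf{Proposed proof.} The plan is to interpret $\Phi : R \mapsto \opn{RHom}_A(\bar{A}, R)$ as a morphism of torsors along the group isomorphism $\opn{DPic}(A) \iso \opn{DPic}(\bar{A})$ of Theorem~\ref{thm:40}. Three ingredients are needed: (a) well-definedness of $\Phi$; (b) simple transitivity of both torsors, from Corollary~\ref{cor:50}; and (c) equivariance of $\Phi$ with respect to the $\opn{DPic}$-actions.

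For (a), the canonical homomorphism $\pi : A \to \bar{A}$ satisfies $\opn{H}^0(\pi) = \opn{id}_{\bar{A}}$, so it is cohomologically pseudo-finite in the sense of Definition~\ref{dfn:252}. Proposition~\ref{prop:252}(1) then guarantees that $\Phi$ sends dualizing DG $A$-modules to dualizing DG $\bar{A}$-modules; since the source set is nonempty by assumption, so is the target.

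The crux is (c): for every tilting $P \in \cat{D}(A)$ and every dualizing $R \in \cat{D}(A)$, I would produce an isomorphism
\[ \opn{RHom}_A \bigl( \bar{A},\, P \ot^{\mrm{L}}_A R \bigr) \cong
( \bar{A} \ot^{\mrm{L}}_A P ) \ot^{\mrm{L}}_{\bar{A}} \opn{RHom}_A(\bar{A}, R) \]
in $\cat{D}(\bar{A})$. Let $Q$ be the quasi-inverse of $P$, so that $P \cong \opn{RHom}_A(Q, A)$ by Corollary~\ref{cor:405}, and $Q$ is itself tilting, hence perfect. Applying Theorem~\ref{thm:74}(iii) to the perfect module $Q$ gives
\[ P \ot^{\mrm{L}}_A R \cong \opn{RHom}_A(Q, A) \ot^{\mrm{L}}_A R \cong \opn{RHom}_A(Q, R). \]
Tensor-hom adjunction then produces
\[ \opn{RHom}_A \bigl( \bar{A},\, \opn{RHom}_A(Q, R) \bigr) \cong
\opn{RHom}_A \bigl( \bar{A} \ot^{\mrm{L}}_A Q,\, R \bigr), \]
and the restriction-extension adjunction for $\pi$ rewrites this as
\[ \opn{RHom}_{\bar{A}} \bigl( \bar{A} \ot^{\mrm{L}}_A Q,\, \opn{RHom}_A(\bar{A}, R) \bigr). \]
Writing $\bar{P} := \bar{A} \ot^{\mrm{L}}_A P$ and $\bar{Q} := \bar{A} \ot^{\mrm{L}}_A Q$, Lemma~\ref{lem:80} ensures these are tilting quasi-inverses over $\bar{A}$. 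A second application of Theorem~\ref{thm:74}(iii), together with Corollary~\ref{cor:405}, now over $\bar{A}$ and with the perfect module $\bar{Q}$, yields the claimed formula.

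Given (a), (b) and (c), the conclusion is formal: Corollary~\ref{cor:50} makes the two sets of isomorphism classes into torsors under $\opn{DPic}(A)$ and $\opn{DPic}(\bar{A})$; Theorem~\ref{thm:40} identifies the two groups; equivariance makes $\Phi$ a morphism of torsors along this group isomorphism; and any morphism of nonempty torsors along a group isomorphism is a bijection. The main obstacle is the equivariance step: since $\bar{A}$ is typically not perfect over $A$, one cannot directly pull the tilting module $P$ out of the first slot of $\opn{RHom}_A(\bar{A},-)$ via Theorem~\ref{thm:74}(iii); the detour through the quasi-inverse $Q$, which is perfect, is essential.
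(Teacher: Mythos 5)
Your proposal is correct, and its overall architecture is exactly the paper's: both arguments view the two sets of isomorphism classes as torsors under $\opn{DPic}(A)$ and $\opn{DPic}(\bar{A})$ (Corollary \ref{cor:50}), identify the groups via Theorem \ref{thm:40}, and reduce everything to the equivariance isomorphism
$\opn{RHom}_A(\bar{A}, P \ot^{\mrm{L}}_A R) \cong (\bar{A} \ot^{\mrm{L}}_A P) \ot^{\mrm{L}}_{\bar{A}} \opn{RHom}_A(\bar{A}, R)$.
Where you genuinely diverge is in how that isomorphism is established. The paper gets it in one step from Lemma \ref{lem:45}, which is tailored precisely to the difficulty you identify: it allows the non-perfect first argument $\bar{A} \in \cat{D}^{-}_{\mrm{f}}(A)$, at the price of requiring $M = R \in \cat{D}^{+}(A)$ and the local bounded-below generation condition $(*)$ on $N = P$, which holds because $P$ is perfect. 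You instead keep within the perfect-module statements: replace $P \ot^{\mrm{L}}_A R$ by $\opn{RHom}_A(Q,R)$ using the perfect quasi-inverse $Q \cong \opn{RHom}_A(P,A)$ (Corollary \ref{cor:405}) and Theorem \ref{thm:74}(iii), pass through Hom-tensor adjunction and the restriction--coinduction adjunction for $\pi : A \to \bar{A}$ (the same adjunction used in Proposition \ref{prop:252}), and then repeat the trick over $\bar{A}$ with the perfect tilting module $\bar{Q}$. Both routes are valid; yours trades the \v{C}ech-based Lemma \ref{lem:45} for two applications of Theorem \ref{thm:74}(iii) plus standard adjunctions, which makes the argument slightly longer but keeps all hypotheses on the perfect object where Theorem \ref{thm:74} wants them, and your explicit verification of well-definedness via Proposition \ref{prop:252}(1) (with $\pi$ cohomologically pseudo-finite) is a point the paper leaves implicit.
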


\begin{proof}
By Corollary \ref{cor:50} the actions of the groups $\opn{DPic}(A)$ and 
$\opn{DPic}(\bar{A})$ on these two sets, respectively, are simply transitive.
And by Theorem \ref{thm:40} the group homomorphism 
$\opn{DPic}(A) \to \opn{DPic}(\bar{A})$
induced by $P \mapsto \bar{A} \ot^{\mrm{L}}_A P$ is bijective. 
Thus it suffices to prove that the function induced by 
$R \mapsto \opn{RHom}_A(\bar{A}, R)$ is equivariant for the action of 
$\opn{DPic}(A)$. Here is the calculation:
\[ \begin{aligned}
& \opn{RHom}_A(\bar{A}, P \ot^{\mrm{L}}_A R) \cong^{\ddag}
P \ot^{\mrm{L}}_A  \opn{RHom}_A(\bar{A}, R) 
\\ & \qquad \cong 
(\bar{A} \ot^{\mrm{L}}_A P) \ot^{\mrm{L}}_{\bar{A}}  \opn{RHom}_A(\bar{A}, R) . 
\end{aligned} \]
The isomorphism $\cong^{\ddag}$ comes from Lemma \ref{lem:45}  
below, noting that the tilting DG module $P$ satisfies condition ($*$) of the 
lemma, since it is perfect.
\end{proof}

We say that a DG $A$-module $N$ has {\em bounded below generation} if it is 
generated in the integer interval $[i_0, \infty]$ for some integer $i_0$;
see Definition \ref{dfn:390}. 

\begin{lem} \label{lem:45}
Let $L \in \cat{D}^{-}_{\mrm{f}}(A)$ and $M, N \in \cat{D}^{+}(A)$.
Assume that $N$ satisfies this condition\tup{:}
\begin{enumerate}
\item[($*$)] There is a covering sequence $(s_1, \ldots, s_n)$ of 
$\bar{A}$, and for every $i$ there is 
an isomorphism $A_{s_i} \ot_A N \cong \til{N}_i$ in $\cat{D}(A_{s_i})$,
where $\til{N}_i$ is a K-flat DG $A_{s_i}$-module with bounded below 
generation.
\end{enumerate}
Then the canonical morphism 
\[ \psi_{L, M, N} : \opn{RHom}_A(L, M) \ot^{\mrm{L}}_A N \to 
\opn{RHom}_A(L, M \ot^{\mrm{L}}_A N) \]
in $\cat{D}(A)$, from formula \tup{(\ref{eqn:400})}, is an isomorphism.
\end{lem}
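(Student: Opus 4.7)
I would model the proof on Lemma \ref{lem:400}, reducing in two stages: first eliminate the local hypothesis via a \v{C}ech resolution of $N$, then handle the resulting K-flat case by filtering a pseudo-finite semi-free resolution of $L$.

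For the first stage, invoke the \v{C}ech resolution $\opn{c}_N : N \to \opn{C}(N; \bsym{s})$ associated to the covering sequence $\bsym{s} = (s_1, \ldots, s_n)$ furnished by ($*$) (Proposition \ref{prop:70}). Since $\psi$ is natural in $N$ and $\opn{c}_N$ is a quasi-isomorphism, it suffices to verify the assertion with $\opn{C}(N; \bsym{s})$ in place of $N$. The degree filtration of $\opn{C}(A^0; \bsym{s})$ has finite length, and the induced distinguished triangles on $\opn{C}(N; \bsym{s})$ (as in formula (\ref{eqn:411})) reduce the problem to the graded pieces, which are finite direct sums of shifts of DG modules of the form $A_{\bsym{i}} \ot_{A^0} N$ for strictly increasing $\bsym{i} = (i_0, \ldots, i_p)$. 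Using that $A_{s_{i_0}} \ot_A N \cong \til{N}_{i_0}$ in $\cat{D}(A_{s_{i_0}})$, and that the remaining factors $A^0_{s_{i_1}}, \ldots, A^0_{s_{i_p}}$ are flat localizations of $A^0$, each such DG module is K-flat over $A$ with bounded below generation over $A$. Thus the problem is reduced to the case where $N$ is itself a K-flat DG $A$-module generated in some interval $[n_0, \infty]$.

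In this reduced situation, choose a pseudo-finite semi-free resolution $P \to L$ with semi-free filtration $\{\nu_j(P)\}$ (Proposition \ref{prop:101}(2), available because this lemma is used in a cohomologically pseudo-noetherian setting) and a K-injective resolution $M \to I$ with $m_0 := \inf(I) = \inf(\opn{H}(M))$ (Proposition \ref{prop:101}(3)). Then $\psi_{L, M, N}$ is represented on the chain level by the homomorphism $\til{\psi} : \opn{Hom}_A(P, I) \ot_A N \to \opn{Hom}_A(P, I \ot_A N)$. Fix $i \in \Z$, and set $P'_j := \nu_j(P)$ and $P''_j := P/\nu_j(P)$. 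The distinguished triangle $P'_j \to P \to P''_j \xar{\vartriangle}$ yields a commutative diagram of long exact sequences linking $\til{\psi}_{P'_j}$, $\til{\psi}_P$ and $\til{\psi}_{P''_j}$. Applying Proposition \ref{prop:390}, both $\opn{Hom}_A(P''_j, I) \ot_A N$ and $\opn{Hom}_A(P''_j, I \ot_A N)$ are concentrated in degrees $\geq m_0 + n_0 - \sup(P''_j)$; pseudo-finiteness forces $\sup(P''_j) \to -\infty$, so for $j$ sufficiently large the error terms vanish in degrees $i - 1, i, i + 1$. This identifies $\opn{H}^i(\til{\psi}_P)$ with $\opn{H}^i(\til{\psi}_{P'_j})$, which is bijective since $\til{\psi}_{P'_j}$ is a chain-level isomorphism on the finite semi-free $P'_j$ (verified by induction on the filtration length, reducing to free summands $A[-k]$, where the map is manifestly the identity).

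The principal technical obstacle is the careful bookkeeping of degree bounds, in particular ensuring that K-flatness and the ``bounded below generation'' of $N$ survive the \v{C}ech decomposition; once this is confirmed, each reduction closely parallels its counterpart in the proof of Lemma \ref{lem:400}.
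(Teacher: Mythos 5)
Your proposal is correct, but it routes around the two key steps differently from the paper. For the local-to-global passage, you \v{C}ech-resolve $N$ and filter $\opn{C}(N; \bsym{s})$, reducing to graded pieces $A^0_{\bsym{i}} \ot_{A^0} N$, which (after the observation that they are isomorphic in $\cat{D}(A)$ to $A_{\bsym{i}} \ot_{A_{s_{i_0}}} \til{N}_{i_0}$, hence again K-flat over $A$ with bounded below generation) puts you in the globally K-flat case; the paper instead applies the localization $A^0_{s_i} \ot_{A^0} -$ directly to $\psi_{L,M,N}$ and uses faithful flatness of $\bar{A} \to \prod_i \bar{A}_{s_i}$, at the cost of first having to prove that localization commutes with $\opn{RHom}_A(L, M \ot^{\mrm{L}}_A N)$ -- which it does by re-applying its core case with $A_{s_i}$ (K-flat, generated in degree $0$) playing the role of $N$. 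For the core case itself, you take a K-injective resolution $I$ of $M$, a pseudo-finite semi-free resolution $P \to L$, and run a cut-off argument over the filtration $\{\nu_j(P)\}$ with long exact sequences (in the style of Theorem \ref{thm:10}, Step 3); the paper's Step 1 is leaner: after smart truncation of $M$ it checks directly that $\til{\psi}_{\til{L}, M, \til{N}}$ is bijective in each degree, because $\til{L}$ is bounded above with finite bases while $M$ and $M \ot_A \til{N}$ are bounded below, so no passage to a limit is needed. Your route buys uniformity with the proof of Lemma \ref{lem:400} (the same \v{C}ech filtration machinery), while the paper's buys a shorter core argument and avoids re-verifying that K-flatness and bounded-below generation survive the \v{C}ech pieces. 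Two small points of bookkeeping in your write-up: the graded \v{C}ech pieces are only isomorphic in $\cat{D}(A)$ to K-flat modules of bounded below generation (harmless, by naturality of $\psi$ in the third argument), and the vanishing bound for the terms involving $P''_j = P / \nu_j(P)$ should be phrased via the top basis degree of $P''_j$ (Proposition \ref{prop:390} is about generation intervals), which here coincides with $\sup(P''_j)$ and tends to $-\infty$ by pseudo-finiteness; also note that, like the paper, you are using Proposition \ref{prop:101}(2), so the cohomologically pseudo-noetherian hypothesis of the surrounding section is genuinely needed.
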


\begin{proof}
Step 1. Here we assume that $N \cong \til{N}$ in $\cat{D}(A)$, where $\til{N}$ 
is a K-flat DG $A$-module of bounded below generation. Using smart truncation 
if needed, we can assume that the DG 
$B$-module $M$ is bounded below. Let $\til{L} \to L$ be a pseudo-finite 
semi-free resolution over $A$ (see Proposition \ref{prop:101}).
The morphism $\psi_{L, M, N}$ is represented by the homomorphism 
\[ \til{\psi}_{\til{L}, M, \til{N}} :  \opn{Hom}_A(\til{L}, M) \ot^{}_A \til{N} 
\to \opn{Hom}_A(\til{L}, M \ot^{}_A \til{N}) \]
in $\cat{C}(A)$. 
Because the semi-free DG $A$-module $\til{L}$ is bounded above and has 
finitely many basis elements in each degree, and both $M$ and $M \ot_A \til{N}$ 
are bounded below, we see that $\til{\psi}_{\til{L}, M, \til{N}}$ is bijective. 

\medskip \noindent 
Step 2. 
Here $N$ satisfies condition ($*$).  We claim that the obvious morphisms
\begin{equation} \label{eqn:415}
\bigl( \opn{RHom}_A(L, M) \ot^{\mrm{L}}_A N \bigr) \ot_{A^0} A^0_{s_i} \to
\opn{RHom}_A(L, M) \ot^{\mrm{L}}_{A} \til{N}_i 
\end{equation}
and 
\begin{equation} \label{eqn:416}
\opn{RHom}_A(L, M \ot^{\mrm{L}}_A N) \ot_{A^0} A^0_{s_i} \to
\opn{RHom}_A(L, M \ot^{\mrm{L}}_{A} \til{N}_i) 
\end{equation}
in $\cat{D}(A)$, that come from the given isomorphisms 
$A_{s_i} \ot_A N \iso \til{N}_i$, are isomorphisms.
That  (\ref{eqn:415}) is an isomorphism is trivial.
As for the morphism  (\ref{eqn:416}): condition ($*$) implies that 
$M \ot^{\mrm{L}}_A N$ belongs to $\cat{D}^{+}(A)$. Since $A^0_{s_i}$ is a 
K-flat DG module over $A^0$ generated in degree $0$, we can use Step 1.

\medskip \noindent 
Step 3. Now we are in the general situation. 
Let $\psi_i$ be the morphism gotten from $\psi_{L, M, N}$ by the localization 
$A^0_{s_i} \ot_{A^0} - $, so $\psi_i$  goes from the first object in 
(\ref{eqn:415}) to the first object in (\ref{eqn:416}). 
Since $\bar{A} \to \prod_i \bar{A}_{s_i}$ is faithfully flat, it suffices to 
prove that all the $\psi_i$ are isomorphisms. 
But by step 2 it suffices to show that 
\[ \psi_{L, M, \til{N}_i} : \opn{RHom}_A(L, M) \ot^{\mrm{L}}_{A} \til{N}_i \to 
\opn{RHom}_A(L, M \ot^{\mrm{L}}_{A} \til{N}_i)  \]
is an isomorphism. Since $\til{N}_i$ is a K-flat DG $A$-module with bounded 
below generalization, we can use step 1.
\end{proof}

\begin{cor} \label{cor:41}
If the ring $\bar{A}$ is local, then any two dualizing DG $A$-modules 
$R$ and $R'$ satisfy $R' \cong R[m]$ for some integer $m$. 
\end{cor}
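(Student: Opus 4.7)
The plan is to combine the simply transitive action from Corollary \ref{cor:50} with the explicit structure of $\opn{DPic}(A)$ in the local case from Corollary \ref{cor:315}. Since both $R$ and $R'$ are dualizing DG $A$-modules (assuming $A$ has at least one dualizing DG module, which is required implicitly for the statement to be nontrivial; note that such corollaries are typically stated in the context where dualizing DG modules exist), Corollary \ref{cor:50} produces a tilting DG $A$-module $P$, unique up to isomorphism, such that $R' \cong P \ot^{\mrm{L}}_A R$ in $\cat{D}(A)$.

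Now I would invoke Corollary \ref{cor:315}, which tells us that when $\bar{A}$ is local the group $\opn{DPic}(A)$ is isomorphic to $\Z$. Tracing through the isomorphism of Corollary \ref{cor:330} (which is the source of Corollary \ref{cor:315}), the generator of $\opn{DPic}(A) \cong \Z$ is represented by $A[1]$, since $\opn{Pic}(\bar{A})$ is trivial for a local ring and the $\Z^n$ factor is generated by shifts of the connected component summands of $A$ (here $n=1$). Consequently the tilting DG module $P$ must be isomorphic to $A[m]$ in $\cat{D}(A)$ for some integer $m$.

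Finally, since $A[m] \ot^{\mrm{L}}_A R \cong R[m]$ in $\cat{D}(A)$, we obtain $R' \cong R[m]$, as desired. There is no real obstacle here beyond unwinding the identification of the generator of $\opn{DPic}(A) \cong \Z$; everything else is a direct citation of the two corollaries already established.
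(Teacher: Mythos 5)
Your proposal is correct and follows essentially the same route as the paper's proof: Corollary \ref{cor:315} gives $\opn{DPic}(A) \cong \Z$ with generator the class of $A[1]$, and the simply transitive action of Corollary \ref{cor:50} then forces $R' \cong A[m] \ot^{\mrm{L}}_A R \cong R[m]$. The extra care you take in identifying the generator via Corollary \ref{cor:330} and in noting that the existence hypothesis of Corollary \ref{cor:50} is supplied by $R$ itself is sound but does not change the argument.
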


\begin{proof}
By Corollary \ref{cor:315} we have 
$\opn{DPic}(A) \cong \Z$, generated by the class of $A[1]$. Now use 
Corollary \ref{cor:50}.
\end{proof}

\begin{prop} \label{prop:80}
Let $A$ be a cohomologically noetherian  DG ring, and let 
$R$ be a dualizing DG $A$-module. Assume $A$ is cohomologically bounded.
Then $R$ is dualizing in the sense of 
\cite[Definition 1.8]{FIJ}.
\end{prop}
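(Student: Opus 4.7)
The plan is to match, clause by clause, our Definition \ref{dfn:253} of dualizing DG module against the definition in \cite[Definition 1.8]{FIJ}. Since I am assuming $A$ is cohomologically bounded and cohomologically noetherian, both conditions are exactly the standing assumptions in \cite{FIJ}, so the ambient categorical setup coincides with theirs, and the question reduces to verifying three properties: a finiteness/boundedness property on $R$, the existence of a ``finite'' injective resolution, and the biduality isomorphism $A \to \opn{RHom}_A(R, R)$.

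For the finiteness clause: by Corollary \ref{cor:380}, the assumption that $A$ is cohomologically bounded, combined with the fact that $R$ is dualizing in our sense, forces $R \in \cat{D}^{\mrm{b}}_{\mrm{f}}(A)$. Since $A$ is cohomologically noetherian (in particular cohomologically pseudo-noetherian), the graded $\opn{H}(A)$-module $\opn{H}(R)$ is finitely generated and bounded, which is what \cite{FIJ} demand.

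For the injective-dimension clause: our Definition \ref{dfn:11}(2) says that the functor $\opn{RHom}_A(-, R) : \cat{D}(A)^{\mrm{op}} \to \cat{D}(\Z)$ has bounded cohomological displacement relative to all of $\cat{D}(A)$. The version in \cite{FIJ} is phrased in terms of existence of a K-injective resolution $R \to I$ with $I$ cohomologically/termwise concentrated in a finite interval, or equivalently vanishing of $\opn{Ext}_A^i(M, R)$ for $\abs{i}$ large and $M$ ranging over a suitable subcategory. I would produce such a resolution by first invoking Proposition \ref{prop:101}(3) to get a K-injective resolution $R \to I$ with $\inf(I) = \inf(\opn{H}(R))$, and then applying a smart-truncation argument together with the finite injective dimension (in our sense) to bound $I$ from above as well, up to quasi-isomorphism. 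This yields the bounded resolution required by \cite{FIJ}.

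The biduality clause $A \iso \opn{RHom}_A(R, R)$ is literally the same statement in both definitions, so it transfers with no work. The main obstacle I anticipate is purely one of translation: \cite{FIJ} work in a slightly more general DG setting and formulate ``finite injective dimension'' in a specific way (tied to a class of test modules), so the real content of the proof is verifying that our global condition, finite injective dimension relative to $\cat{D}(A)$, specializes to theirs once $A$ is cohomologically bounded. Given the boundedness of $R$ and the fact that $\cat{D}^{\mrm{b}}_{\mrm{f}}(A) \subseteq \cat{D}(A)$, this specialization is immediate; the only subtlety is showing that the K-injective resolution produced above can simultaneously be taken bounded, which is exactly what the combination of Proposition \ref{prop:101}(3) with a smart truncation affords.
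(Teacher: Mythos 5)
There is a genuine gap here: your proof verifies the wrong target conditions, because you have guessed the content of \cite[Definition 1.8]{FIJ} rather than checked it. That definition is \emph{not} a copy of Definition \ref{dfn:253}; it does not ask for finite injective dimension or for a bounded K-injective resolution at all. Its conditions (as used in the paper's proof) are: (1) existence of suitable resolutions, which is trivial in the commutative setting; (2) the functor $\opn{RHom}_A(-,R)$ sends $\cat{D}^{\mrm{b}}_{\mrm{f}}(A)$ into $\cat{D}^{\mrm{b}}_{\mrm{f}}(A)$; (3) for every $M \in \cat{D}^{\mrm{b}}_{\mrm{f}}(A)$, the biduality morphism $N \to \opn{RHom}_A(\opn{RHom}_A(N,R),R)$ is an isomorphism both for $N = M$ and for $N = M \ot^{\mrm{L}}_A R$; (4) a left/right condition subsumed by (3) in the commutative case. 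So the ``biduality clause'' is not ``literally the same statement'': it concerns all objects of $\cat{D}^{\mrm{b}}_{\mrm{f}}(A)$, not merely the adjunction $A \to \opn{RHom}_A(R,R)$, and it additionally concerns the objects $M \ot^{\mrm{L}}_A R$, which need not lie in $\cat{D}^{\mrm{b}}_{\mrm{f}}(A)$. None of this is addressed in your argument; the bounded K-injective resolution you construct is answering a question the definition does not ask.

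The actual work, which your proposal omits, is the following. Condition (2) follows from Proposition \ref{prop:250}(1), i.e.\ from the finite cohomological dimension of $D = \opn{RHom}_A(-,R)$ together with the way-out machinery of Theorem \ref{thm:30}(2). For condition (3) one first uses Corollary \ref{cor:380} (this is where cohomological boundedness of $A$ enters) to get $R \in \cat{D}^{\mrm{b}}_{\mrm{f}}(A)$, then combines Proposition \ref{prop:35} with Theorem \ref{thm:30}(1) to conclude $M \ot^{\mrm{L}}_A R \in \cat{D}^{-}_{\mrm{f}}(A)$; since both candidates for $N$ then lie in $\cat{D}^{-}_{\mrm{f}}(A)$, Proposition \ref{prop:250}(2) gives the biduality isomorphism. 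Without these steps -- in particular without any argument that biduality holds for arbitrary $M \in \cat{D}^{\mrm{b}}_{\mrm{f}}(A)$ and for $M \ot^{\mrm{L}}_A R$ -- the proposition is not proved.
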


\begin{proof}
There are four conditions in \cite[Definition 1.8]{FIJ}. Condition (1) -- 
the existence of resolutions -- is trivial in our commutative situation. 
Condition (2) says that if $M \in \cat{D}^{\mrm{b}}_{\mrm{f}}(A)$ then 
$\opn{RHom}_A(M, R) \in  \cat{D}^{\mrm{b}}_{\mrm{f}}(A)$; and this is true by 
Proposition \ref{prop:250}(1). 
Condition (3) requires that for any $M \in \cat{D}^{\mrm{b}}_{\mrm{f}}(A)$,
letting $N$ be either $M$ or $M \ot^{\mrm{L}}_{A} R$, 
the adjunction morphisms 
\[ N \to \opn{RHom}_A \bigl( \opn{RHom}_A(N, R), R \bigr) \]
are isomorphisms. Now by Corollary \ref{cor:380} we know that 
$R \in \cat{D}^{\mrm{b}}_{\mrm{f}}(A)$.
A combination of Proposition \ref{prop:35} and Theorem \ref{thm:30}(1)
tells us that $M \ot^{\mrm{L}}_{A} R \in \cat{D}^{-}_{\mrm{f}}(A)$.
Thus in both cases $N \in \cat{D}^{-}_{\mrm{f}}(A)$, and 
according to Proposition \ref{prop:250}(2) the morphism in question is an 
isomorphism. Condition (4) is part of condition (3) in the commutative 
situation. 
\end{proof}

\begin{dfn} \label{dfn:405}
A cohomologically noetherian cohomologically bounded DG ring $A$ is 
called {\em Gorenstein} if the DG module $A$ has finite injective dimension 
relative to $\cat{D}(A)$.
\end{dfn}

\begin{prop} \label{prop:45}
Let $A$ be a cohomologically noetherian cohomologically bounded DG ring. The 
following conditions are equivalent\tup{:}
\begin{enumerate}
\rmitem{i} $A$ is Gorenstein. 
\rmitem{ii} The DG $A$-module $A$ is dualizing. 
\end{enumerate}
\end{prop}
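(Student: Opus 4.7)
The plan is to observe that the equivalence is essentially tautological once we check that two of the three conditions defining ``dualizing'' are automatic under the hypotheses. Specifically, the Gorenstein condition of Definition \ref{dfn:405} is literally condition (ii) in Definition \ref{dfn:253}. So I only need to show that conditions (i) and (iii) of Definition \ref{dfn:253} are automatic for the DG module $A$ itself.

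For condition (i) of Definition \ref{dfn:253}, I need $\opn{H}^i(A)$ to be finite over $\bar{A}$ for every $i$. Since $A$ is cohomologically noetherian, the graded ring $\opn{H}(A)$ is noetherian; combined with cohomological boundedness, the remark just after Definition \ref{dfn:391} tells us that $A$ is cohomologically pseudo-noetherian, so each $\opn{H}^i(A)$ is finite over $\bar{A}$, as required. This also makes sense of applying the notion ``dualizing'' to $A$ in the first place.

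For condition (iii), I need the adjunction morphism $A \to \opn{RHom}_A(A, A)$ to be an isomorphism in $\cat{D}(A)$. But $A$ is a free (hence K-projective) DG $A$-module, so $\opn{RHom}_A(A, A) \cong \opn{Hom}_A(A, A) \cong A$ in $\cat{M}(A)$ via the evaluation $\phi \mapsto \phi(1)$, and this map is precisely the adjunction morphism. Thus condition (iii) holds automatically.

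Combining these two observations, the only nontrivial requirement for $A$ to be dualizing over itself is condition (ii) of Definition \ref{dfn:253}, which is exactly the definition of $A$ being Gorenstein. Hence (i) $\Leftrightarrow$ (ii). I do not anticipate any real obstacle here; the entire content is recognizing that the hypotheses make two of the three dualizing conditions free.
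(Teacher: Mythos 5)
Your proposal is correct and matches the paper's own argument, which simply notes that conditions (i) and (iii) of Definition \ref{dfn:253} are automatic for $R := A$, leaving condition (ii) as the Gorenstein condition. Your elaboration of why they are automatic (pseudo-noetherianness for (i), K-projectivity of $A$ over itself for (iii)) is exactly the implicit content of the paper's one-line proof.
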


\begin{proof}
Since conditions (i) and (iii) of Definition \ref{dfn:253}
are automatic for $R := A$, this is clear. 
\end{proof}

\begin{rem}
For DG rings that are not cohomologically bounded, a comparison like in 
Proposition \ref{prop:80}  does not seem to work nicely.  
Corollary \ref{cor:41} is very similar to \cite[Theorem III]{FIJ}; 
but of course the assumptions are not the same. 

We do not know a reasonable definition of Gorenstein DG rings that are not 
cohomologically bounded. 
\end{rem}

Here is a rather surprising result, that was pointed out to us 
by J{\o}rgensen. 

\begin{thm} \label{thm:73}
Let $A$ be a DG ring, which is cohomologically bounded and cohomologically 
essentially finite type over some noetherian ring $\K$. If $\bar{A}$ is a 
perfect DG $A$-module, then the canonical homomorphism $A \to \bar{A}$ is a 
quasi-isomorphism. 
\end{thm}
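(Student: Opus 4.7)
My strategy is to reduce to the case where $\bar{A}$ is local and then invoke J{\o}rgensen's theorem \cite{Jo}. The canonical morphism $\pi : A \to \bar{A}$ is a quasi-isomorphism precisely when $\opn{H}^i(A) = 0$ for all $i < 0$. By the cohomologically essentially finite type hypothesis, each $\opn{H}^i(A)$ is a finite $\bar{A}$-module, so this vanishing may be checked after localization at every prime $\mathfrak{p} \subset \bar{A}$.

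Fix such a $\mathfrak{p}$, set $S := \bar{A} \setminus \mathfrak{p}$, and form the localized DG ring $A_S$ of Definition \ref{dfn:40}. Then $\overline{A_S} = \bar{A}_\mathfrak{p}$ is a local ring. Since $A_S^0 = \til{S}^{-1} A^0$ is flat over $A^0$, the functor $A_S \ot_A (-)$ is exact on $\cat{M}(A)$, so $A_S$ is K-flat over $A$. By Proposition \ref{prop:300}, $A_S$ is cohomologically pseudo-noetherian; it is also cohomologically bounded, and cohomologically essentially finite type over $\K$ (since $\bar{A}_\mathfrak{p}$ remains essentially finite type over $\K$, and each $\opn{H}^i(A)_\mathfrak{p}$ remains finite over $\bar{A}_\mathfrak{p}$). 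Moreover
\[ A_S \ot_A^{\mrm{L}} \bar{A} \ \cong \ A_S \ot_A \bar{A} \ \cong \ \bar{A}_\mathfrak{p} \]
in $\cat{D}(A_S)$, and Proposition \ref{prop:81} ensures that $\bar{A}_\mathfrak{p}$ is a perfect DG $A_S$-module.

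The local case of J{\o}rgensen's theorem now applies to $A_S$, giving that the canonical morphism $A_S \to \bar{A}_\mathfrak{p}$ is a quasi-isomorphism; equivalently, $\opn{H}^i(A_S) = 0$ for all $i < 0$. By Proposition \ref{prop:300}(2), $\opn{H}^i(A_S) \cong \opn{H}^i(A)_\mathfrak{p}$. As $\mathfrak{p}$ was arbitrary, the finite $\bar{A}$-module $\opn{H}^i(A)$ must vanish for every $i < 0$, and hence $\pi$ is a quasi-isomorphism. The only nontrivial input here is J{\o}rgensen's theorem itself, which is the main obstacle; the rest of the argument amounts to verifying that localization preserves all hypotheses, so that J{\o}rgensen's local statement can legitimately be applied to $A_S$.
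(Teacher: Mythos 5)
Your overall route is the same as the paper's: reduce to the local case by localizing at each prime $\p \in \opn{Spec} \bar{A}$, check that the hypotheses persist, and then quote J{\o}rgensen. The localization bookkeeping you do (K-flatness of $A_S$ over $A$, $\opn{H}^i(A_S) \cong \opn{H}^i(A)_{\p}$, perfectness of $\bar{A}_{\p}$ over $A_S$ via Proposition \ref{prop:81}, and checking vanishing of $\opn{H}^i(A)$ prime by prime) is all correct.

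The gap is exactly at the point you wave through: ``the local case of J{\o}rgensen's theorem now applies to $A_S$.'' What the paper actually invokes is \cite[Theorem 0.2]{Jo}, an amplitude inequality $\opn{amp}(\opn{H}(M)) \geq \opn{amp}(\opn{H}(R))$ for nonzero compact DG $R$-modules over a \emph{local} DG algebra, and in J{\o}rgensen's setting ``local'' carries chain-level hypotheses on the underlying graded ring (the degree-zero ring is noetherian local and the negative components are finite modules over it), not merely conditions on cohomology. The hypotheses of Theorem \ref{thm:73} are only cohomological: $A^0$ itself can be enormous and non-noetherian, and your localized DG ring $A_S$, whose degree-zero part is $(A^0)_{\q}$ for $\q$ the preimage of $\p$, inherits none of the required chain-level properties; so the citation does not apply to $A_S$ as it stands. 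The paper's proof handles this by first observing that the assertion is invariant under DG ring quasi-isomorphisms and using Lemma \ref{lem:380} to replace $A$ by the quasi-isomorphic DG ring $A_{\mrm{eft}}$, whose degree-zero ring is essentially finite type over $\K$ (hence noetherian) and whose negative components are finite over it; only after that replacement does localizing at $\p$ produce a DG ring with noetherian local degree-zero part, surjecting onto $\bar{A}_{\p}$, to which \cite[Theorem 0.2]{Jo} legitimately applies, giving $\opn{amp}(\opn{H}(A_{\p})) \leq \opn{amp}(\opn{H}(\bar{A}_{\p})) = 0$. Your argument is repaired by inserting this reduction before localizing; without it, the step you yourself identify as ``the main obstacle'' is precisely the one left unjustified.
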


Example \ref{exa:400} shows that the assumption that $A$ is cohomologically 
bounded is really needed. 

\begin{proof}
We will prove that $\bar{A}_{\p} \ot_{\bar{A}} \opn{H}^i(A) = 0$
for every $i < 0$ and every $\p \in \opn{Spec} \bar{A}$. 

Fix such $i$ and $\p$. Because the assertion is invariant under DG ring 
quasi-iso\-morphisms, we may assume, by Lemma \ref{lem:380} (replacing 
$A$ with $A_{\mrm{eft}}$), that $A^0$ 
is a noetherian ring. Consider the ring 
$A_{\p}^0 := \til{S}^{-1} \cd A^0$, 
 where $\pi : A \to \bar{A}$ is the canonical homomorphism, 
$S := \bar{A} - \p$, and $\til{S} := \pi^{-1}(S) \cap A^0$. 
Next define the DG ring 
$A_{\p} := A_{\p}^0 \ot_{A^0} A$.
Then $A^0_{\p} \to \bar{A}_{\p}$ is surjective, and
$A^0_{\p}$ is a noetherian local ring. By Proposition \ref{prop:81}(2) 
the DG $A_{\p}$-module $\bar{A}_{\p} \cong A_{\p} \ot_A \bar{A}$ is perfect; 
and by Theorem \ref{thm:74} this is a compact object of 
$\cat{D}(A_{\p})$. Also $\bar{A}_{\p}$ is nonzero. 
According to \cite[Theorem 0.2]{Jo} we have 
$\opn{amp}(\opn{H}(A_{\p})) \leq \opn{amp}(\opn{H}(\bar{A}_{\p})) = 0$.
Therefore 
$\opn{H}^i(A_{\p}) = 0$ for all $i < 0$. 
But $\opn{H}^i(A_{\p}) \cong \bar{A}_{\p} \ot_{\bar{A}} \opn{H}^i(A)$.
\end{proof}

We conclude this section with several examples and remarks. 

\begin{exa}
Suppose $A$ is a Gorenstein noetherian ring, and 
$\bsym{a} = (a_1, \ldots, a_n)$ is a sequence of elements in $A$. 
Let $B := \opn{K}(A; \bsym{a})$, the Koszul complex, which is cohomologically 
noetherian and cohomologically bounded. The DG ring homomorphism 
$A \to B$ is cohomologically pseudo-finite, $R_A := A[n]$ is a dualizing DG 
$A$-module, and hence 
$R_B := \opn{RHom}_A(B, R_A)$ is a dualizing DG $B$-module. But $B$ is 
semi-free as DG $A$-module, and therefore 
\[ R_B = \opn{RHom}_A(B, A[n]) \cong \opn{Hom}_A(B, A[n]) \cong B \]
in $\cat{D}(B)$. We see that $B$ is Gorenstein, in the sense of Definition 
\ref{dfn:405}.
\end{exa}

\begin{exa} \label{exa:113}
Here is a comparison to Hinich's notion of dualizing DG module from \cite{Hi1}.
Let $A$ be a noetherian local ring, with maximal ideal $\m$. 
For the sake of simplicity, let us assume that $A$ contains a field $\K$, such 
that $\K \to A / \m$ is finite. 
Let $\bsym{a} = (a_1, \ldots, a_n)$ be a sequence of elements in $A$ that 
generates an $\m$-primary ideal, and let 
$B := \opn{K}(A; \bsym{a})$ be the associated Koszul complex. 
Thus $\K \to B$ is a cohomologically pseudo-finite homomorphism of DG rings, 
and according to Proposition \ref{prop:252} the DG $B$-module
$R_B := \opn{Hom}_{\K}(B, \K)$ is dualizing. Now 
$\opn{Hom}_{\K} \bigl( \opn{H}(R_B), \K \bigr) \cong \opn{H}(B)$
as graded $\opn{H}(B)$-modules, so $R_B$ is a dualizing DG module in the sense 
of Hinich. Taking Corollary \ref{cor:41} into consideration, we see that any 
dualizing DG $B$-module $R$ (in our sense) satisfies the condition of Hinich. 
\end{exa}

\begin{rem} \label{rem:317}
The results in our paper so far suggest an analogy between the 
following two scenarios:
\begin{itemize}
\item[(dg)] The {\em DG scenario}: a cohomologically pseudo-noetherian 
DG ring $A$, with reduction $\bar{A} := \opn{H}^0(A)$. 
\item[(ad)] The {\em adic scenario}: a noetherian ring $A$, 
$\a$-adically complete w.r.t.\ an ideal $\a$,  with reduction 
$\bar{A} := A / \a$.
\end{itemize}
We refer to this as the {\em DG vs.\ adic analogy}.
This analogy restricts to the ``degenerate cases'' in these scenarios:
\begin{itemize}
\item[(dg)] The cohomology $\opn{H}(A)$ is bounded.
\item[(ad)] The defining ideal $\a$ is nilpotent. 
\end{itemize}
Of course, this observation is not new (cf.\ \cite{Lu1}, \cite{Lu2}, \cite{TV} 
and \cite{AG}). 

The DG vs.\ adic analogy holds also for ``finite homomorphisms'':
\begin{itemize}
\item[(dg)] A cohomologically pseudo-finite homomorphism $f : A \to B$
between cohomologically pseudo-noetherian DG rings 
(Definition \ref{dfn:252}).
\item[(ad)] A {\em formally finite} or {\em pseudo-finite} 
homomorphism $f : A \to B$ between adically complete noetherian  
rings, as in \cite{Ye1} and  \cite{AJL2} respectively.
\end{itemize}
There is a further analogy between ``dualizing objects'' in the two scenarios:
\begin{itemize}
\item[(dg)] A dualizing DG module $R$ over a cohomologically pseudo-noetherian 
 DG ring $A$ (Definition \ref{dfn:253}).
\item[(ad)] A {\em t-dualizing complex} $R$ over an adically complete 
noetherian ring $A$, as in \cite{Ye1} and \cite{AJL1}.
\end{itemize}

The analogies above raise two questions: 
\begin{enumerate}
\item Is there a DG analogue of the {\em c-dualizing complex} of \cite{AJL1}?
\item Is there a DG analogue of the {\em GM Duality} of \cite{AJL1} and the
{\em MGM Equivalence} of \cite{PSY}?
\end{enumerate}
\end{rem}

\begin{rem} \label{rem:70}
Recall that a noetherian ring $A$ of finite Krull dimension is regular (i.e.\ 
all its local rings $A_{\p}$ are regular) iff it has finite global 
cohomological dimension. 

Now suppose $A$ is a cohomologically pseudo-noetherian DG ring. 
By ``Krull dimension'' we could mean that of $\bar{A}$, but 
``regular local ring'' has no apparent meaning here.
Hence we propose this definition:  $A$ is  called
{\em regular} if it has {\em finite global cohomological dimension}. By this we 
mean that there is a natural number $d$, such that if
$M \in \cat{C}(A)$ is generated in the integer interval $[i_0, i_1]$, then 
$M$ has projective dimension at most $d + i_1 - i_0$; cf.\ Definitions 
\ref{dfn:390} and \ref{dfn:11}, and Examples \ref{exa:390} and \ref{exa:520}.
According to Theorem \ref{thm:50}, we see that any 
$M \in \cat{D}^{\mrm{b}}_{\mrm{f}}(A)$, including $M = \bar{A}$, is perfect.

Now assume that $A$ is a regular DG ring, but with {\em bounded 
cohomology}. Then, taking $M = \bar{A}$, Theorem \ref{thm:73} says that 
$A \to \bar{A}$ is a quasi-isomorphism. The conclusion is that {\em 
the only regular DG rings with bounded cohomology are the regular rings} (up to 
quasi-isomorphism). 

Under the DG vs.\ adic analogy of Remark \ref{rem:317}, this corresponds 
to an adic ring $A$ with a nilpotent defining ideal $\a$. If $A$ is 
regular, then it cannot have nonzero nilpotent elements. Therefore $\a = 0$ 
here, and $A \to \bar{A}$ is bijective.
\end{rem}

\begin{exa} \label{exa:400}
Take a field $\K$, and let $A := \K[t]$, the polynomial ring in a variable $t$ 
of degree $-2$. We view $A$ as a DG ring with zero differential, so 
$\opn{H}(A) \cong A$, and it is cohomologically noetherian, but not 
cohomologically bounded below. The DG ring 
homomorphism $\K \to A$ is cohomologically pseudo-finite. Hence the DG 
$A$-module $R := \opn{Hom}_{\K}(A, \K)$ is dualizing. This DG module is not 
bounded above.
 
Note that here $\bar{A} \cong \K$ is a perfect DG $A$-module.
To show this, we shall produce a finite semi-free resolution of $\bar{A}$.
The DG module $A^{\leq -2}$, which is both the stupid 
and the smart truncation of $A$ at $-2$, is free, since 
$A^{\leq -2} \cong A[2]$ as DG $A$-modules. Let 
$\phi : A^{\leq -2} \to A$ be the inclusion, and let 
$P := \opn{cone}(\phi)$. There is an obvious quasi-isomorphism 
$P \to \bar{A}$.   

The adic analogue is the ring of formal power series 
$A := \K[[t]]$, with ideal of definition $\a := (t)$. The corresponding 
t-dualizing complex is $R := \opn{Hom}_{\K}^{\mrm{cont}}(A, \K)$,
which is an artinian $A$-module of infinite length. 
\end{exa}

\begin{rem} \label{rem:316}
Our definition of dualizing DG modules, Definition \ref{dfn:253}, might seem 
an almost straightforward generalization of Grothendieck's original definition 
in \cite{RD}. However there are at least two subtle points: (a) Finding 
the correct notion of injective dimension of a DG module (condition (ii) of 
Definition \ref{dfn:253}). (b) Allowing a dualizing DG module to have unbounded 
above cohomology (condition (i) of Definition \ref{dfn:253}; 
cf.\ Corollary \ref{cor:380} and Remark \ref{rem:317}).

All results in this section, up to and including Corollary 
\ref{cor:50}, might also seem to be straightforward generalizations of 
Grothendieck's corresponding results in \cite{RD}. But the technical 
difficulties (mainly when $A$ is cohomologically unbounded) cannot be 
neglected. 

We should mention that Theorem \ref{thm:51} can be made a bit stronger, by 
replacing the condition that $A$ is tractable with the weaker condition that 
there is a cohomologically essentially finite type homomorphism $\K \to A$, 
where $\K$ is a noetherian ring with a dualizing complex.
Theorem \ref{thm:73} can be similarly strengthened.

Some earlier papers, notably \cite{Hi1} and \cite{FIJ}, had adopted
other definitions of dualizing DG modules; see Example \ref{exa:113} and
Proposition \ref{prop:80} respectively. These definitions are not consistent 
with our definition in general, and there does not appear to be a 
well-developed theory for them.  

In \cite[Definition 4.2.5]{Lu2}, Lurie gives a definition of a
dualizing $\mrm{E}_{\infty}$ module 
over an $\mrm{E}_{\infty}$ ring. Now any DG ring $A$ can be viewed as an 
$\mrm{E}_{\infty}$ ring, and DG $A$-modules can be viewed $\mrm{E}_{\infty}$ 
$A$-modules. Under this correspondence, it seems that
a dualizing DG $A$-module, in the sense of Definition \ref{dfn:253} above,
becomes a dualizing module in the sense of \cite{Lu2}. (We are being careful, 
because a precise comparison of the definitions is not so easy.)
Thus our results in this section, up to and including Corollary 
\ref{cor:50}, might be viewed as special instances of Lurie's  
statements. Still, an attempt to produce a full proof of our results
based on the corresponding results in \cite{Lu2} (e.g.\ deducing our Theorem 
\ref{thm:51} from \cite[Theorem 4.3.14]{Lu2}, or deducing our Theorem 
\ref{thm:43} from \cite[Proposition 4.2.9]{Lu2}) might be nontrivial, and most 
likely it would be longer than our own direct proofs. This is because, as far 
as we know, there do not exist full comparison results for the monoidal 
operations between the $\mrm{E}_{\infty}$ and the DG setups. 
 
Our Corollary \ref{cor:80} appears to be totally new. We could not find 
anything resembling it in Lurie's papers, nor elsewhere in the literature. 
Likewise for Theorem \ref{thm:73} (except for J{\o}rgensen's original local 
result).
\end{rem}

\begin{rem} \label{rem:315}
A result that is noticeably missing from our paper is a DG analogue of 
\cite[Theorem 4.3.5]{Lu2}. Translated to the DG terminology, it states that if 
the ring $\bar{A}$ admits a dualizing DG module, then the DG ring $A$ 
admits a dualizing DG module. We do not know whether this result can be 
proved within the DG framework; this is a question that we find interesting.

Note however that the corresponding result in the adic case, namely when 
$A$ is a complete $\a$-adic ring extension of $\bar{A}$ (cf.\ Remark 
\ref{rem:317}), was proved a long time ago by Faltings \cite{Fa}.
The proof of the nilpotent case in \cite{Fa} is quite easy; but the  
passage to the complete adic case is somewhat involved there. The proof can be 
greatly simplified by first proving the existence of a t-dualizing complex
$R'_A$ over $A$, and then applying derived completion to obtain a c-dualizing 
complex $R_A := \mrm{L} \Lambda_{\a}(R'_A)$. 
See \cite{Ye1} \cite{AJL1}, \cite{AJL2} and \cite{PSY} for information on 
derived completion, and on dualizing complexes over adic rings.
\end{rem}

\section{Cohen-Macaulay DG Modules} \label{sec:cm}

In this section we work with cohomologically pseudo-noetherian
commutative DG rings (see Convention \ref{conv:100} and Definition 
\ref{dfn:391}).  

Let $A$ be such a DG ring. Recall that $\bar{A} = \opn{H}^0(A)$,  
and $\cat{D}^0(A)$ is the full subcategory of $\cat{D}(A)$ 
consisting of the DG modules $M$ such that $\opn{H}^i(M) = 0$ for all 
$i \neq 0$. 
Inside $\cat{D}^{0}(A)$ we have 
$\cat{D}^{0}_{\mrm{f}}(A) =
\cat{D}_{\mrm{f}}(A) \cap \cat{D}^{0}(A)$.

\begin{lem} \label{lem:530} 
Consider the canonical DG ring homomorphism $\pi :A \to \bar{A}$. 
The functor 
\[ \opn{Q} \circ \opn{rest}_{\pi} : \cat{Mod} \bar{A} \to 
\cat{D}^0(A) \]
is an equivalence. It restricts to an equivalence 
\[ \opn{Q} \circ \opn{rest}_{\pi} : \cat{Mod}_{\mrm{f}} \bar{A} \to 
\cat{D}^0_{\mrm{f}}(A) . \]
\end{lem}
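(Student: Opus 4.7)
The plan is to establish this as a two-step equivalence, with essential surjectivity handled by truncation and fully-faithfulness by a direct computation on a suitable resolution.

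First I would verify essential surjectivity. Given $M \in \cat{D}^0(A)$, set $N := \opn{H}^0(M) \in \cat{Mod} \bar{A}$. Using the smart truncations from (\ref{eqn:425})--(\ref{eqn:426}), the canonical maps
\[ M \xleftarrow{\ \ } \opn{smt}^{\leq 0}(M) \xrightarrow{\ \ } \opn{smt}^{\geq 0}\bigl( \opn{smt}^{\leq 0}(M) \bigr) \]
in $\cat{M}(A)$ are both quasi-isomorphisms: the first induces isomorphisms in $\opn{H}^{\leq 0}$, and both source and target vanish in $\opn{H}^{> 0}$ because $M \in \cat{D}^0(A)$; similarly for the second. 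The rightmost object is concentrated in degree $0$ with cohomology equal to $N$, so in fact it equals $\opn{rest}_{\pi}(N)$. Hence $\opn{Q}(\opn{rest}_{\pi}(N)) \cong M$ in $\cat{D}(A)$.

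Next I would prove fully-faithfulness by computing morphisms directly. Take $N, N' \in \cat{Mod} \bar{A}$, and choose a semi-free resolution $P \to N$ over $A$ with $\opn{sup}(P) = \opn{sup}(\opn{H}(N)) = 0$ (Proposition \ref{prop:101}(1)); thus $P$ is semi-free, generated in $[-\infty, 0]$, and in particular $P^k = 0$ for $k > 0$. Since $N'$ is concentrated in degree $0$ as a DG $A$-module, any degree-$k$ $A$-linear map $\phi : P \to N'$ is determined by $\phi^{-k} : P^{-k} \to N'$, and the $A$-linearity constraints force $\phi^{-k}$ to be $A^0$-linear and to vanish on $\sum_{j < 0} A^j \cd P^{-k - j}$. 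For $k < 0$ this gives $\opn{Hom}_A(P, N')^k = 0$; for $k = 0$ the vanishing conditions are vacuous (the relevant $P^{-j}$ are zero), so $\opn{Hom}_A(P, N')^0 \cong \opn{Hom}_{A^0}(P^0, N')$. A degree-$0$ cocycle is then precisely an $A^0$-linear map $P^0 \to N'$ killing $\d_P(P^{-1})$, and there are no coboundaries; therefore
\[ \opn{Hom}_{\cat{D}(A)}(N, N') = \opn{H}^0\bigl( \opn{Hom}_A(P, N') \bigr) \cong \opn{Hom}_{A^0}\bigl( \opn{H}^0(P), N' \bigr) \cong \opn{Hom}_{\bar{A}}(N, N') , \]
where the last step uses that both $N$ and $N'$ are $\bar{A}$-modules, so $A^0$-linearity agrees with $\bar{A}$-linearity.

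The restriction of the equivalence to the finite subcategories is then tautological: an $\bar{A}$-module $N$ is finite iff $\opn{H}^0(\opn{rest}_{\pi}(N)) = N$ is a finite $\bar{A}$-module, which is exactly the defining condition for $\opn{rest}_{\pi}(N) \in \cat{D}^0_{\mrm{f}}(A)$. The only mild subtlety in the whole argument is checking the vanishing of the relevant graded pieces that appear in the $A$-linearity analysis of $\opn{Hom}_A(P, N')$, which is what forces the result to come out as a non-derived $\opn{Hom}$ over $\bar{A}$ rather than something involving higher $\opn{Ext}$.
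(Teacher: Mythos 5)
Your argument is correct and follows essentially the same route as the paper: essential surjectivity via smart truncation, and fully-faithfulness by choosing a semi-free resolution $P \to N$ with $\opn{sup}(P) \leq 0$ and computing $\opn{Hom}_{\cat{D}(A)}(N, N') \cong \opn{H}^0\bigl(\opn{Hom}_A(P, N')\bigr) \cong \opn{Hom}_{\bar{A}}\bigl(\opn{H}^0(P), N'\bigr) \cong \opn{Hom}_{\bar{A}}(N, N')$. You merely spell out the degree bookkeeping that the paper leaves implicit, so there is nothing to add.
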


\begin{proof}
Smart truncation shows that any object of $\cat{D}^0(A)$ is isomorphic to an 
object of $\cat{Mod} \bar{A}$. Finiteness of $\bar{A}$-modules is preserved. It 
remains to show that $\opn{Q} \circ \opn{rest}_{\pi}$ is a fully faithful 
functor. 

So take $M, N \in \cat{Mod} \bar{A}$, and let 
$\til{M} \to M$ be a semi-free resolution over $A$ with 
$\opn{sup}(\til{M}) \leq 0$. Then 
\[ \opn{Hom}_{\cat{D}(A)}(M, N) \cong 
\opn{H}^0 \bigl( \opn{Hom}_{A}(\til{M}, N) \bigr) \cong
\opn{Hom}_{\bar{A}}(\opn{H}^0 (\til{M}), N) \cong
\opn{Hom}_{\bar{A}}(M, N) . \]
\end{proof}

\begin{dfn}
Let let $R$ be a dualizing DG $A$-module. A 
DG module $M \in \cat{D}^{\mrm{b}}_{\mrm{f}}(A)$ is called
{\em Cohen-Macaulay with respect to $R$} if 
$\opn{RHom}_A(M, R) \in \cat{D}^0_{\mrm{f}}(A)$.
\end{dfn}

In other words, the condition is that $\opn{RHom}_A(M, R)$ is 
isomorphic, in $\cat{D}(A)$, to an object of $\cat{Mod}_{\mrm{f}} \bar{A}$. 
As usual ``Cohen-Macaulay'' is abbreviated to ``CM''. Let us denote by 
$\cat{D}^{\mrm{b}}_{\mrm{f}}(A)_{\mrm{CM} : R}$
the full  subcategory of $\cat{D}^{\mrm{b}}_{\mrm{f}}(A)$ consisting of DG 
modules that are CM w.r.t.\ $R$. 

\begin{rem}
Observe that the functor $\opn{RHom}_A(-, R)$
gives rise to a duality between 
$\cat{D}^{\mrm{b}}_{\mrm{f}}(A)_{\mrm{CM} : R}$
and $\cat{D}^0_{\mrm{f}}(A)$. And the latter is equivalent to 
$\cat{Mod}_{\mrm{f}} \bar{A}$. Therefore 
$\cat{D}^{\mrm{b}}_{\mrm{f}}(A)_{\mrm{CM} : R}$
is an artinian abelian category.

If $A \to \bar{A}$ is not a quasi-isomorphism, then $A$ does not belong to 
$\cat{D}^0(A)$, and therefore $R$ is not a CM DG module w.r.t.\ itself. 

We do not know any definition of Cohen-Macaulay DG rings; except when 
$A \to \bar{A}$ is a quasi-isomorphism, in which case the condition is that the 
ring $\bar{A}$ should be CM. 

For a comparison to Cohen-Macaulay modules and Grothendieck's notion of 
Cohen-Macaulay complexes, see \cite[Theorem 6.2]{YZ3} and 
\cite[Section 7]{YZ4}.
\end{rem}

The groups $\opn{DPic}^0(A) \subseteq \opn{DPic}(A)$ were introduced in 
Definitions \ref{dfn:350} and \ref{dfn:330}.

\begin{lem} \label{lem:525} 
Let $P$ be a tilting DG $A$-module. The following are equivalent\tup{:}
\begin{enumerate}
\rmitem{i} The auto-equivalence $P \ot^{\mrm{L}}_A -$ of $\cat{D}(A)$ preserves 
the subcategory $\cat{D}^0_{\mrm{f}}(A)$. 

\rmitem{ii} The class of $P$ is in $\opn{DPic}^0(A)$.
\end{enumerate}
\end{lem}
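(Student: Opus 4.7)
The plan is to compare the situation on $A$ with the situation on $\bar{A}$ via the reduction functor, and use the explicit description of $\opn{DPic}(\bar{A})$ from Theorem \ref{thm:315}. By Corollary \ref{cor:330} and Definition \ref{dfn:330}, combined with the isomorphism $\opn{DPic}(A) \iso \opn{DPic}(\bar{A})$ of Theorem \ref{thm:40}, the class of $P$ lies in $\opn{DPic}^0(A)$ if and only if $\bar{P} := \bar{A} \ot^{\mrm{L}}_A P$ is isomorphic in $\cat{D}(\bar{A})$ to an invertible $\bar{A}$-module (i.e.\ the shift components in the $\Z^n$ factor all vanish). So the task reduces to relating the operation $P \ot^{\mrm{L}}_A (-)$ on $\cat{D}^0_{\mrm{f}}(A)$ with the invertibility of $\bar{P}$.

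For (ii) $\Rightarrow$ (i): the key ingredient is a base change identity
\[ P \ot^{\mrm{L}}_A \opn{rest}_{\pi}(N) \cong \opn{rest}_{\pi} \bigl( \bar{P} \ot^{\mrm{L}}_{\bar{A}} N \bigr) \]
for every $N \in \cat{D}(\bar{A})$, where $\pi : A \to \bar{A}$. This is immediate from choosing a semi-free resolution $\til{P} \to P$ over $A$ (Proposition \ref{prop:101}): the graded $A^{\natural}$-module underlying $\til{P}$ is free, so $\bar{A} \ot_{A} \til{P}$ is semi-free (in particular K-flat) over $\bar{A}$ and represents $\bar{P}$, while the associativity $\til{P} \ot_A N = (\til{P} \ot_A \bar{A}) \ot_{\bar{A}} N$ yields both sides. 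Given $M \in \cat{D}^0_{\mrm{f}}(A)$, Lemma \ref{lem:530} lets us assume $M = \opn{rest}_{\pi}(N)$ for some $N \in \cat{Mod}_{\mrm{f}} \bar{A}$; then if $\bar{P}$ is invertible, $\bar{P} \ot^{\mrm{L}}_{\bar{A}} N = \bar{P} \ot_{\bar{A}} N$ is a finite $\bar{A}$-module, so $P \ot^{\mrm{L}}_A M \in \cat{D}^0_{\mrm{f}}(A)$.

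For (i) $\Rightarrow$ (ii): observe that $\bar{A}$ itself is an object of $\cat{D}^0_{\mrm{f}}(A)$, so condition (i) gives $P \ot^{\mrm{L}}_A \bar{A} \in \cat{D}^0_{\mrm{f}}(A)$. By symmetry of $\ot^{\mrm{L}}_A$ this object is $\bar{P}$, restricted to $A$. Hence $\bar{P}$, viewed in $\cat{D}(\bar{A})$, has cohomology concentrated in degree $0$ and finite over $\bar{A}$. But $\bar{P}$ is a tilting DG $\bar{A}$-module by Lemma \ref{lem:80}(2); writing out its class in the decomposition $\opn{DPic}(\bar{A}) \cong \opn{Pic}(\bar{A}) \times \Z^n$ of Theorem \ref{thm:315}, the requirement that $\bar{P}$ be concentrated in degree $0$ on each connected component of $\opn{Spec} \bar{A}$ forces every integer shift to vanish. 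Therefore the class of $\bar{P}$ lies in $\opn{Pic}(\bar{A})$, i.e.\ $P \in \opn{DPic}^0(A)$.

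The only real point to verify carefully is the base change isomorphism invoked in the forward direction; this is not packaged as a standalone lemma in the paper but follows at once from the semi-free resolution techniques of Section \ref{sec:resol}, so no serious obstacle is expected.
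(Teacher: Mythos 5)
Your proposal is correct and follows essentially the same route as the paper: reduce everything to $\bar{A}$ via the isomorphism $P \ot^{\mrm{L}}_A M \cong \bar{P} \ot^{\mrm{L}}_{\bar{A}} M$ for $M \in \cat{D}^0_{\mrm{f}}(A)$ (your base change identity, which the paper obtains by writing $P \ot^{\mrm{L}}_A M \cong P \ot^{\mrm{L}}_A \bar{A} \ot^{\mrm{L}}_{\bar{A}} M$), and for (i) $\Rightarrow$ (ii) apply condition (i) to $\bar{A} \in \cat{D}^0_{\mrm{f}}(A)$ to conclude $\bar{P} \in \cat{D}^0_{\mrm{f}}$, hence its class lies in $\opn{Pic}(\bar{A})$ by Theorem \ref{thm:315}. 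Your extra justification of the base change step via a semi-free resolution is fine but not a different method.
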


\begin{proof}
(i) $\Rightarrow$ (ii): 
Let $\bar{P} := \bar{A} \ot^{\mrm{L}}_A P$. By assumption it belongs to 
$\cat{D}^0_{\mrm{f}}(A)$. But then the class of $\bar{P}$ is in  
$\opn{Pic}(\bar{A}) = \opn{DPic}^0(\bar{A})$, 
so by definition the class of $P$ is in $\opn{DPic}^0(A)$.

\medskip \noindent 
(ii) $\Rightarrow$ (i): Here $\bar{P} := \bar{A} \ot^{\mrm{L}}_A P$
is isomorphic to an invertible $\bar{A}$-module, so 
$\bar{P} \ot^{\mrm{L}}_{\bar{A}} -$ preserves 
$\cat{D}^0_{\mrm{f}}(\bar{A})$.
Now take any $M \in \cat{D}^0_{\mrm{f}}(A)$. 
By Lemma \ref{lem:530} we can assume that $M \in 
\cat{Mod}_{\mrm{f}}(\bar{A})$. Then 
\[ P \ot^{\mrm{L}}_A M \cong 
P \ot^{\mrm{L}}_A \bar{A} \ot^{\mrm{L}}_{\bar{A}} M \cong 
\bar{P} \ot^{\mrm{L}}_{\bar{A}} M \in \cat{D}^0_{\mrm{f}}(\bar{A}) . \]
We see that $\opn{H}^i(P \ot^{\mrm{L}}_A M) = 0$ for all $ i \neq 0$. 
\end{proof}

Recall the connected component idempotent functors of a DG ring from Definition 
\ref{dfn:343}.

\begin{thm} \label{thm:250}
Let $f : A \to B$ be a cohomologically pseudo-finite homomorphism between 
cohomologically pseudo-noetherian DG rings. Assume that $A$ and 
$B$ have dualizing DG modules $R_A$ and $R_B$ respectively, and that $\bar{B}$ 
is nonzero. Let $E_1, \ldots, E_n$ be the connected component decomposition 
functors of $B$. 
\begin{enumerate}
\item There are unique integers $k_1, \ldots, k_n$ such that, letting 
\[ R'_B := \bigoplus_{i = 1}^n E_i(R_{B})[k_i] \in \cat{D}(B) , \] 
the class of the tilting DG $B$-module $\opn{RHom}_A(R'_B, R_A)$ 
is inside $\opn{DPic}^0(B)$. 

\item Let $M \in \cat{D}^{\mrm{b}}_{\mrm{f}}(B)$. 
The following conditions are equivalent\tup{:}
\begin{enumerate}
\rmitem{i} $M$ is CM w.r.t.\ to $R'_B$. 
  
\rmitem{ii} $\opn{rest}_f(M)$ is CM w.r.t.\ to $R_A$. 
\end{enumerate}
\end{enumerate}
\end{thm}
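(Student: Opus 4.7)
The plan is to exploit the simply transitive action of $\opn{DPic}(B)$ on isomorphism classes of dualizing DG $B$-modules (Corollary \ref{cor:50}), together with the product decomposition $\opn{DPic}(B) \cong \opn{Pic}(\bar{B}) \times \Z^n$ from Corollary \ref{cor:330}, in order to normalize a second dualizing DG $B$-module so that only the $\opn{Pic}(\bar{B})$-part of the twist relating it to $R_A$ survives.

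For part (1), first note that $R''_B := \opn{RHom}_A(B, R_A)$ is a dualizing DG $B$-module by Proposition \ref{prop:252}(1). Applying Theorem \ref{thm:43}(2) there is a unique (up to isomorphism) tilting DG $B$-module $P \cong \opn{RHom}_A(R_B, R_A)$ satisfying $R''_B \cong P \ot^{\mrm{L}}_B R_B$. Writing the class of $P$ in $\opn{DPic}(B) \cong \opn{Pic}(\bar{B}) \times \Z^n$ as $([\bar{L}], l_1, \ldots, l_n)$, I would set $k_i := l_i$. Then $Q := \bigoplus_i E_i(B)[k_i]$ is tilting with class $(1, k_1, \ldots, k_n)$. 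Since each $E_i(B) = B_{e_i}$ is K-flat over $B$, a direct computation yields $R'_B \cong Q \ot^{\mrm{L}}_B R_B$. Consequently $R'_B$ is dualizing, and Theorem \ref{thm:43}(2) applied once more identifies $P' := \opn{RHom}_A(R'_B, R_A)$ as the tilting module with $R''_B \cong P' \ot^{\mrm{L}}_B R'_B$. Comparing the two presentations of $R''_B$ gives $P' \cong Q^{-1} \ot^{\mrm{L}}_B P$, whose class is $([\bar{L}], 0, \ldots, 0) \in \opn{DPic}^0(B)$. Uniqueness of the $k_i$ is forced, since any different choice would leave a non-zero $\Z^n$-component in $\opn{DPic}(B)$.

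For part (2), the change-of-rings adjunction yields the isomorphism
\[ \opn{RHom}_A(\opn{rest}_f(M), R_A) \ \cong \ \opn{rest}_f \bigl( \opn{RHom}_B(M, R''_B) \bigr) \]
in $\cat{D}(A)$. Using $R''_B \cong P' \ot^{\mrm{L}}_B R'_B$ from (1) together with a short Yoneda argument — exploiting that $P' \ot^{\mrm{L}}_B -$ is an auto-equivalence with quasi-inverse $\opn{RHom}_B(P', -)$ — produces the projection-type formula
\[ \opn{RHom}_B(M, P' \ot^{\mrm{L}}_B R'_B) \ \cong \ P' \ot^{\mrm{L}}_B \opn{RHom}_B(M, R'_B) \]
in $\cat{D}(B)$. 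Next, since $\bar{f}$ is finite, for any $N \in \cat{D}(B)$ one has $N \in \cat{D}^{0}_{\mrm{f}}(B)$ if and only if $\opn{rest}_f(N) \in \cat{D}^{0}_{\mrm{f}}(A)$; this is an elementary verification at the level of $\opn{H}^0$, extending $\bar{A}$-linear surjections onto it to $\bar{B}$-linear ones. Chaining these reductions, condition (ii) is equivalent to $P' \ot^{\mrm{L}}_B \opn{RHom}_B(M, R'_B) \in \cat{D}^{0}_{\mrm{f}}(B)$. Since $[P'] \in \opn{DPic}^0(B)$ by (1), Lemma \ref{lem:525} (applied both to $P'$ and to its inverse, which also lies in $\opn{DPic}^0(B)$) shows that $P' \ot^{\mrm{L}}_B -$ restricts to an auto-equivalence of $\cat{D}^{0}_{\mrm{f}}(B)$, whence the last condition is equivalent to condition (i).

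The conceptual heart of the argument lies in part (1): the shifts $k_i$ are engineered precisely so as to kill the $\Z^n$-component of $P$, thereby placing $P'$ inside $\opn{DPic}^0(B)$ where Lemma \ref{lem:525} becomes applicable in (2). The remaining ingredients — the projection-type formula and the restriction compatibility of $\cat{D}^{0}_{\mrm{f}}$ — are routine but essential verifications.
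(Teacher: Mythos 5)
Your proposal is correct and follows essentially the same route as the paper: normalize the shifts $k_i$ via the decomposition $\opn{DPic}(B) \cong \opn{Pic}(\bar{B}) \times \Z^n$ so that the relevant tilting module lands in $\opn{DPic}^0(B)$, then combine Lemma \ref{lem:525}, a projection-type formula, and the compatibility of $\cat{D}^0_{\mrm{f}}$ with restriction along the finite map $\bar{f}$. The only (harmless) deviations are that in (1) you identify $\opn{RHom}_A(R'_B, R_A)$ using the simply transitive action rather than the paper's explicit adjunction computation, and in (2) you justify the isomorphism $\opn{RHom}_B(M, P' \ot^{\mrm{L}}_B R'_B) \cong P' \ot^{\mrm{L}}_B \opn{RHom}_B(M, R'_B)$ by a Yoneda/adjunction argument for the auto-equivalence $P' \ot^{\mrm{L}}_B -$, where the paper instead invokes Lemma \ref{lem:45}.
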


Note that $R'_B$ and $R''_B := \opn{RHom}_A(B, R_A)$ are dualizing DG 
$B$-modules, so 
\[ \opn{RHom}_A(R'_B, R_A) \cong \opn{RHom}_B ( R'_B, R''_B)  \]
is a tilting DG $B$-module (by Theorem \ref{thm:43}(2)), and hence item (1) 
above makes sense.

\begin{proof}
(1) Let $R''_B$ be as above.
By the classifications in Corollaries \ref{cor:50} and \ref{cor:330},
there are unique $k_1, \ldots, k_n \in \Z$, and a tilting DG $B$-module 
$Q$ whose class in $\opn{DPic}^0(B)$ is unique, such that 
$R''_B \cong Q \ot^{\mrm{L}}_B R'_B$. 
Let $P := \opn{RHom}_B(Q, B)$, which by Corollary \ref{cor:405}
is the quasi-inverse of $Q$. Then  
$R'_B \cong P \ot^{\mrm{L}}_B R''_B$. 
Using adjunction we get isomorphisms
\[  \begin{aligned}
& \opn{RHom}_A(R'_B, R_A) \cong \opn{RHom}_B(R'_B, R''_B)
\cong \opn{RHom}_B(P \ot^{\mrm{L}}_B R''_B, R''_B)
\\
& \quad \cong 
\opn{RHom}_B \bigl(P, \opn{RHom}_B(R''_B, R''_B) \bigr)
\cong \opn{RHom}_B(P, B) \cong Q
\end{aligned} \]
in $\cat{D}(B)$. This proves (1). 

\medskip \noindent 
(2) By Lemma \ref{lem:525} the
functor $Q \ot^{\mrm{L}}_B -$ preserves 
$\cat{D}^0_{\mrm{f}}(B)$.
Take any $M \in \cat{D}^{\mrm{b}}_{\mrm{f}}(B)$.
Then, using Lemma \ref{lem:45}, we get isomorphisms
\[ \begin{aligned}
& \opn{RHom}_A(M, R_A) \cong \opn{RHom}_B(M, R''_B)
\\
& \quad 
\cong \opn{RHom}_B(M, Q \ot^{\mrm{L}}_B R'_B) 
\cong \opn{RHom}_B(M, R'_B) \ot^{\mrm{L}}_B Q . 
\end{aligned} \]
This gives (i) $\Rightarrow$ (ii). The converse is very similar. 
\end{proof}

\begin{cor}
Let $f : A \to B$ be a cohomologically finite homomorphism between tractable DG 
rings. Assume $\opn{Spec} \bar{B}$ is connected. The following are equivalent 
for $M \in \cat{D}^{\mrm{b}}_{\mrm{f}}(B)$~\tup{:}
\begin{enumerate}
\rmitem{i} $M$ is CM w.r.t.\ some dualizing DG $B$-module.
\rmitem{ii} $\opn{rest}_f(M)$ is CM w.r.t.\ some dualizing DG $A$-module.
\end{enumerate}
\end{cor}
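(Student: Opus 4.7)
The plan is to reduce both implications to Theorem \ref{thm:250}, exploiting the freedom to shift dualizing DG modules independently on $A$ and on $B$. Since $A$ and $B$ are tractable, Theorem \ref{thm:51} supplies dualizing DG modules on both; since $\opn{Spec}\bar{B}$ is connected, we have $n=1$ in Theorem \ref{thm:250}, so the construction in part (1) produces from any dualizing DG $B$-module $R_B$ simply a single shift $R_B[k]$, where $k\in\Z$ is the unique integer such that $\opn{RHom}_A(R_B[k],R_A)\in\opn{DPic}^0(B)$.

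The key bookkeeping observation is the following. Given a pair $(R_A,R_B)$ of dualizing DG modules with associated integer $k$, replace $R_A$ by $S_A:=R_A[-k]$. This $S_A$ is again dualizing by Theorem \ref{thm:43}(1), because $R_A[-k]\cong A[-k]\ot^{\mrm{L}}_A R_A$ and $A[-k]$ is tilting. For any $k'\in\Z$,
\[
\opn{RHom}_A(R_B[k'],S_A)\cong \opn{RHom}_A(R_B,R_A)[-k-k'],
\]
and since the class of $\opn{RHom}_A(R_B,R_A)[-k]$ lies in $\opn{DPic}^0(B)$, Corollary \ref{cor:330} (the $\Z$-factor of $\opn{DPic}(B)$ is generated by the shift, because $\opn{Spec}\bar{B}$ is connected) shows the right-hand side is in $\opn{DPic}^0(B)$ iff $k'=0$. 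Hence the integer associated to the pair $(S_A,R_B)$ is $0$, so Theorem \ref{thm:250}(2) applied to this new pair yields the equivalence
\[
M \ \text{is CM w.r.t.\ }R_B \iff \opn{rest}_f(M) \ \text{is CM w.r.t.\ }S_A,
\]
without any extra shift on $R_B$.

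For (i) $\Rightarrow$ (ii): given a dualizing $S_B$ with $M$ CM w.r.t.\ $S_B$, pick any dualizing $R_A$, let $k$ be the integer produced by Theorem \ref{thm:250}(1) for the pair $(R_A,S_B)$, and set $S_A:=R_A[-k]$; by the preceding paragraph, $\opn{rest}_f(M)$ is CM w.r.t.\ the dualizing DG $A$-module $S_A$. The converse (ii) $\Rightarrow$ (i) is symmetric: given a dualizing $S_A$ with $\opn{rest}_f(M)$ CM w.r.t.\ $S_A$, pick any dualizing $R_B$, let $k$ be the associated integer for $(S_A,R_B)$, and set $S_B:=R_B[k]$; then $M$ is CM w.r.t.\ $S_B$ by Theorem \ref{thm:250}(2). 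There is no real obstacle here—the proof is entirely a matter of tracking shifts against the $\Z$-factor of $\opn{DPic}(B)\cong\opn{Pic}(\bar{B})\times\Z$ identified in Corollary \ref{cor:330}.
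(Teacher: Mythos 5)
Your proof is correct and follows essentially the same route as the paper: both reduce to Theorem \ref{thm:250}, use connectedness of $\opn{Spec}\bar{B}$ so that only a single shift $k$ appears, and absorb that shift into the dualizing DG $A$-module (the paper's phrasing ``$\opn{rest}_f(M)$ is CM w.r.t.\ $R_A[-k]$'' is exactly your bookkeeping observation that the pair $(R_A[-k],R_B)$ has associated integer $0$). The only cosmetic difference is that the paper observes (ii) $\Rightarrow$ (i) follows from Theorem \ref{thm:250}(2) directly, with no connectedness hypothesis, whereas you run the same shift argument symmetrically in both directions.
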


\begin{proof}
The implication (ii) $\Rightarrow$ (i) comes from Theorem \ref{thm:250}(2) -- 
and does not require $\opn{Spec} \bar{B}$ to be connected.

For the implication (i) $\Rightarrow$ (ii), let $R_B$ be a dualizing DG 
$B$-module such that $M$ is CM with respect to it. Take any dualizing DG 
$A$-module $R_A$, and let $k \in \Z$ be such that 
$\opn{RHom}_A(R_B[k], R_A)$ 
is inside $\opn{DPic}^0(B)$. See Theorem \ref{thm:250}(1). Then by 
Theorem \ref{thm:250}(2) we know that $\opn{rest}_f(M)$ is CM w.r.t.\ 
$R_A[-k]$. 
\end{proof}

\begin{thm} \label{thm:251}
Let $f : A \to B$ be a homomorphism between 
cohomologically pseudo-noetherian DG rings, such that 
$\bar{f} : \bar{A}  \to \bar{B}$
is surjective. Assume $A$ and $B$ have dualizing DG modules.
Let $R_B$ be a dualizing DG $B$-module and let  
$M, N \in \cat{D}^{\mrm{b}}_{\mrm{f}}(B)$. 
\begin{enumerate}
\item If $M$ is CM w.r.t.\ $R_B$, and there is an isomorphism
$\opn{rest}_f(M) \cong \opn{rest}_f(N)$ in $\cat{D}(A)$, then 
$N$ is also CM w.r.t.\ $R_B$. 

\item If $M$ and $N$ are CM w.r.t.\ $R_B$, then the homomorphism 
\[ \opn{rest}_f : \opn{Hom}_{\cat{D}(B)}(M, N) \to 
\opn{Hom}_{\cat{D}(A)} \bigl( \opn{rest}_f(M), \opn{rest}_f(N) \bigr) \]
is bijective. 
\end{enumerate}
\end{thm}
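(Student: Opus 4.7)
The plan is to reduce to the case where $\opn{Spec}\bar{B}$ is connected, choose a dualizing DG $A$-module $R_A$ compatible with $R_B$ via Theorem \ref{thm:250}, and then translate the restriction map on Hom-sets into the restriction map $\opn{Hom}_{\bar{B}}(-,-) \to \opn{Hom}_{\bar{A}}(-,-)$ between Hom-sets of $\bar{B}$-modules, which is a bijection because $\bar{f}$ is surjective.

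First I would apply Proposition \ref{prop:21} (which uses surjectivity of $\bar{f}$) together with the connected-component idempotent functors $E_1,\dots,E_n$ of $B$ to split everything: $M$, $N$ and $R_B$ decompose as $M \cong \bigoplus E_i(M)$ etc., the CM condition w.r.t.\ $R_B$ holds iff each $E_i(M)$ is CM w.r.t.\ $E_i(R_B)$, the hypothesized isomorphism in (1) decomposes because $\opn{Hom}_{\cat{D}(A)}(\opn{rest}_f(E_i(M)), \opn{rest}_f(E_j(N))) = 0$ for $i \neq j$, and the Hom-sets in (2) split analogously. So I may reduce to the connected case. Now fix any dualizing DG $A$-module $R_A$ (it exists by assumption). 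By Theorem \ref{thm:250}(1) there is a unique $k \in \Z$ with $\opn{RHom}_A(R_B[k], R_A) \in \opn{DPic}^0(B)$; after replacing $R_A$ by $R_A[-k]$ I may assume $k=0$, i.e.\ $Q := \opn{RHom}_A(R_B, R_A)$ lies in $\opn{DPic}^0(B)$. Then Theorem \ref{thm:250}(2) with $R'_B = R_B$ yields: $L \in \cat{D}^{\mrm{b}}_{\mrm{f}}(B)$ is CM w.r.t.\ $R_B$ iff $\opn{rest}_f(L)$ is CM w.r.t.\ $R_A$. Part (1) is immediate, since CM-ness is isomorphism-invariant and $\opn{rest}_f(M) \cong \opn{rest}_f(N)$.

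For (2), let $D_B := \opn{RHom}_B(-, R_B)$ and $D_A := \opn{RHom}_A(-, R_A)$; these are dualities by Proposition \ref{prop:250}(3). Set $M'' := D_B(M)$, $N'' := D_B(N)$, which lie in $\cat{D}^0_{\mrm{f}}(B) \approx \cat{Mod}_{\mrm{f}} \bar{B}$ by Lemma \ref{lem:530}. Adjunction gives $D_A \circ \opn{rest}_f \cong \opn{rest}_f \circ \opn{RHom}_B(-, R''_B)$, where $R''_B := \opn{RHom}_A(B, R_A)$; and from Theorem \ref{thm:43}(2) (applied to the pair of dualizing DG $B$-modules $R_B, R''_B$) we have $R''_B \cong Q \ot^{\mrm{L}}_B R_B$ with $Q$ the tilting DG module above. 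Since $Q$ is tilting, hence perfect, Lemma \ref{lem:45} yields $\opn{RHom}_B(L, Q \ot^{\mrm{L}}_B R_B) \cong Q \ot^{\mrm{L}}_B D_B(L)$ for $L \in \cat{D}^{\mrm{b}}_{\mrm{f}}(B)$, so
\[
D_A(\opn{rest}_f(L)) \cong \opn{rest}_f \bigl( Q \ot^{\mrm{L}}_B D_B(L) \bigr).
\]
By Lemma \ref{lem:525}, $Q \ot^{\mrm{L}}_B -$ preserves $\cat{D}^0_{\mrm{f}}(B)$, and on $\cat{Mod}_{\mrm{f}} \bar{B}$ it is the auto-equivalence $\bar{Q} \ot_{\bar{B}} -$, where $\bar{Q} := \bar{A} \ot^{\mrm{L}}_A Q \in \opn{Pic}(\bar{B})$ is invertible.

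Combining the two dualities with the displayed identification, the map $\opn{rest}_f : \opn{Hom}_{\cat{D}(B)}(M, N) \to \opn{Hom}_{\cat{D}(A)}(\opn{rest}_f(M), \opn{rest}_f(N))$ is identified with the composition
\[
\opn{Hom}_{\bar{B}}(N'', M'') \xrightarrow{\bar{Q} \ot_{\bar{B}} -}
\opn{Hom}_{\bar{B}}(\bar{Q} \ot_{\bar{B}} N'', \bar{Q} \ot_{\bar{B}} M'')
\to
\opn{Hom}_{\bar{A}}(\bar{Q} \ot_{\bar{B}} N'', \bar{Q} \ot_{\bar{B}} M''),
\]
where the last arrow is restriction of scalars along $\bar{f}$. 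The first arrow is a bijection since $\bar{Q}$ is invertible, and the second is a bijection because $\bar{f}$ is surjective (any $\bar{A}$-linear map between $\bar{B}$-modules is automatically $\bar{B}$-linear, as the $\bar{B}$-action on a $\bar{B}$-module factors through $\bar{f}$). This proves (2). The main obstacle will be the bookkeeping in the third paragraph: making precise the functorial identification $D_A \circ \opn{rest}_f \cong \opn{rest}_f \circ (Q \ot^{\mrm{L}}_B D_B(-))$ on $\cat{D}^{\mrm{b}}_{\mrm{f}}(B)$ and verifying that it assembles, together with the duality equivalences, into a commutative square that identifies $\opn{rest}_f$ on Hom-sets with the restriction-of-$\bar{B}$-modules map.
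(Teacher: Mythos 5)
Your proposal is correct and follows essentially the same route as the paper's proof: split off connected components using Proposition \ref{prop:21} and the idempotent functors, normalize via Theorem \ref{thm:250} so that the comparison tilting module $Q$ lies in $\opn{DPic}^0(B)$, and then transport the Hom-sets through the dualities and Lemma \ref{lem:530} into Hom-sets of finite $\bar{B}$-modules, where restriction along the surjection $\bar{f}$ is bijective; the paper only packages this differently, absorbing your twist by $Q$ into the dualizing module $R''_B = \opn{RHom}_A(B, R_A)$ and working with the shifted sums $M' = \bigoplus_i E_i(M)[k_i]$ over $B$ itself rather than literally passing to the connected case. One small slip to fix: the reduction of $Q$ should be $\bar{Q} := \bar{B} \ot^{\mrm{L}}_B Q \in \opn{Pic}(\bar{B})$, not $\bar{A} \ot^{\mrm{L}}_A Q$.
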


\begin{proof}
(1)  We may assume that $\bar{B} \neq 0$. 
Let $E_1, \ldots, E_n$ be the connected component decomposition functors 
of $B$. Write $F := \opn{rest}_f$, $M_i := E_i(M)$ and $N_i := E_i(N)$.
Choose some dualizing DG $A$-module $R_A$, and let 
$k_1, \ldots, k_n$ be the integers from Theorem \ref{thm:250}(1). 
Thus the dualizing DG $B$-module 
$R'_B := \bigoplus_{i = 1}^n E_i(R_{B})[k_i]$
satisfies this: the tilting DG $B$-module $Q := \opn{RHom}_A(R'_B, R_A)$ 
is inside $\opn{DPic}^0(B)$. 
Define $M' := \bigoplus_i M_i[k_i]$ and $N' := \bigoplus_i N_i[k_i]$.

We are given that $M$ is CM w.r.t.\ $R_B$. 
Using the equivalence of Corollary \ref{cor:22} we see that 
$M'$ is CM w.r.t.\ $R'_B$. Thus by Theorem \ref{thm:250}(2) 
the DG $A$-module $F(M')$ is CM w.r.t.\ $R_A$.
Proposition \ref{prop:21} implies that 
$F(M') \cong F(N')$ in $\cat{D}(A)$, and hence  $F(N')$ is CM w.r.t.\ $R_A$.
Using Theorem \ref{thm:250}(2) once more we conclude that 
$N'$ is CM w.r.t.\ $R'_B$; and hence $N$ is CM w.r.t.\ $R_B$. 

\medskip \noindent
(2) Let us write
$R''_B := \opn{RHom}_A(B, R_A)$, $D''_B := \opn{RHom}_B(-, R''_B)$ and 
$D_A := \lb \opn{RHom}_A(-, R_A)$. 
Define $M' := \bigoplus_i M_i[k_i]$ and $N' := \bigoplus_i N_i[k_i]$
as above, so these are CM w.r.t.\ $R''_B$. There are isomorphisms 
\[ \begin{aligned}
& \opn{Hom}_{\cat{D}(B)}(M, N) \cong^1
\opn{Hom}_{\cat{D}(B)}(M', N') 
\cong^2 \opn{Hom}_{\cat{D}(B)} \bigl( D''_B(N'), D''_B(M') \bigr) 
\\ & \quad 
\cong^3 \opn{Hom}_{\cat{Mod} \bar{B}} \bigl( D''_B(N'), D''_B(M') \bigr) 
\cong^{4} \opn{Hom}_{\cat{Mod} \bar{A}} \bigl( F(D''_B(N')), F(D''_B(M')) 
\bigr) 
\\ & \quad 
\cong^5 \opn{Hom}_{\cat{Mod} \bar{A}} \bigl( D_A(F(N')), D_A(F(M')) \bigr)
\cong^{2, 3} \opn{Hom}_{\cat{D} (A)} \bigl( F(M'), F(N') \bigr)
\\ & \quad 
\cong^{6} \opn{Hom}_{\cat{D} (A)} \bigl( F(M), F(N) \bigr) \ .
\end{aligned} \]
They are gotten as follows:
the isomorphism $\cong^{1}$ is by Corollary \ref{cor:22} and Proposition 
\ref{prop:90};
the isomorphism $\cong^{2}$ is by Proposition \ref{prop:250}(3);
the isomorphism $\cong^{3}$ is by Lemma \ref{lem:530};
the isomorphism $\cong^{4}$ is because
$\opn{H}^0(f) : \bar{A}  \to \bar{B}$ is surjective;
the isomorphism $\cong^{5}$ is because 
$F \circ D''_B \cong D_A \circ F$ as functors;
and isomorphism $\cong^{6}$ is due to Proposition \ref{prop:21}. 
The composition of all these isomorphisms is $F$.
\end{proof}

\begin{cor}
Let $f : A \to B$ be a homomorphism between 
cohomologically pseudo-noetherian DG rings, such that 
$\bar{f} : \bar{A}  \to \bar{B}$ is bijective. 
Let $R_A$ be a dualizing DG $A$-module, and define 
$R_B := \opn{RHom}_{A}(B, R_A)$. Then the functor
\[ \opn{rest}_f : \cat{D}^{\mrm{b}}_{\mrm{f}}(B)_{\mrm{CM} : R_B} \to 
\cat{D}^{\mrm{b}}_{\mrm{f}}(A)_{\mrm{CM} : R_A} \]
is an equivalence.
\end{cor}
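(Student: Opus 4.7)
The plan is to establish the equivalence via a commutative square of duality functors, in which the horizontal arrows and the opposite vertical arrow are visibly equivalences, forcing the arrow we care about to be an equivalence as well.

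First, note that since $\bar{f} : \bar{A} \to \bar{B}$ is bijective, $\bar{B}$ is a finite $\bar{A}$-module, so $f$ is cohomologically pseudo-finite in the sense of Definition \ref{dfn:252}. By Proposition \ref{prop:252}(1), this guarantees that $R_B = \opn{RHom}_A(B, R_A)$ is indeed a dualizing DG $B$-module, so the right-hand side of the statement makes sense. Write $D_A := \opn{RHom}_A(-, R_A)$ and $D_B := \opn{RHom}_B(-, R_B)$, and write $F := \opn{rest}_f$.

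The key computation is that $F$ intertwines the two dualities: for any $M \in \cat{D}(B)$, adjunction together with $R_B = \opn{RHom}_A(B, R_A)$ gives a natural isomorphism
\[ F \circ D_B(M) = \opn{rest}_f \, \opn{RHom}_B\bigl( M, \, \opn{RHom}_A(B, R_A) \bigr) \cong \opn{RHom}_A(M, R_A) = D_A \circ F(M) . \]
Thus we obtain a commutative (up to natural isomorphism) square
\[ \UseTips \xymatrix @C=8ex @R=5ex {
\cat{D}^{\mrm{b}}_{\mrm{f}}(B)_{\mrm{CM} : R_B} \ar[r]^{D_B} \ar[d]_{F}
& \cat{D}^0_{\mrm{f}}(B)^{\mrm{op}} \ar[d]^{F} \\
\cat{D}^{\mrm{b}}_{\mrm{f}}(A)_{\mrm{CM} : R_A} \ar[r]^{D_A}
& \cat{D}^0_{\mrm{f}}(A)^{\mrm{op}}
} \]
I claim that the top horizontal arrow is an equivalence. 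By the definition of CM the functor $D_B$ takes values in $\cat{D}^0_{\mrm{f}}(B)$; it is fully faithful by Proposition \ref{prop:250}(3); and essential surjectivity is obtained by applying $D_B$ in the other direction: for $X \in \cat{D}^0_{\mrm{f}}(B) \subseteq \cat{D}^{\mrm{b}}_{\mrm{f}}(B)$, Theorem \ref{thm:30}(2) (or Proposition \ref{prop:250}(1)) gives $D_B(X) \in \cat{D}^{\mrm{b}}_{\mrm{f}}(B)$, and Proposition \ref{prop:250}(2) yields $D_B D_B(X) \cong X \in \cat{D}^0_{\mrm{f}}(B)$, so $D_B(X)$ is CM w.r.t.\ $R_B$. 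The same argument works for $D_A$, so the bottom horizontal arrow is an equivalence too.

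Finally, the right vertical arrow $F : \cat{D}^0_{\mrm{f}}(B) \to \cat{D}^0_{\mrm{f}}(A)$ is an equivalence: by Lemma \ref{lem:530} both sides are equivalent to $\cat{Mod}_{\mrm{f}} \bar{B}$ and $\cat{Mod}_{\mrm{f}} \bar{A}$ respectively, and under these identifications $F$ becomes restriction along the ring isomorphism $\bar{f} : \bar{A} \iso \bar{B}$, which is obviously an equivalence. Combining the three equivalences with the commutative square, the left vertical arrow $F : \cat{D}^{\mrm{b}}_{\mrm{f}}(B)_{\mrm{CM} : R_B} \to \cat{D}^{\mrm{b}}_{\mrm{f}}(A)_{\mrm{CM} : R_A}$ is an equivalence, as required. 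No step is a real obstacle here: fully faithfulness of $F$ is already contained in Theorem \ref{thm:251}(2), and the only genuinely new input is the essential surjectivity, which is forced by the diagram chase above — the subtle point to keep track of is simply that bijectivity of $\bar{f}$ is what makes $F$ on the degree-zero categories an equivalence rather than only a fully faithful embedding.
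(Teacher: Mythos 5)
Your proof is correct, and it reorganizes the same ingredients the paper uses — the adjunction isomorphism $\opn{rest}_f \opn{RHom}_B(M, R_B) \cong \opn{RHom}_A(M, R_A)$, the duality between $\cat{D}^{\mrm{b}}_{\mrm{f}}(-)_{\mrm{CM}}$ and $\cat{D}^{0}_{\mrm{f}}(-)$ coming from Proposition \ref{prop:250}, and the identification of $\cat{D}^{0}_{\mrm{f}}$ with finite modules over $\opn{H}^0$ (Lemma \ref{lem:530}) — into a genuinely different logical shape. The paper imports full faithfulness wholesale from Theorem \ref{thm:251}(2), whose proof (valid for merely surjective $\bar{f}$) goes through the connected-component machinery of Theorem \ref{thm:250} and Proposition \ref{prop:21}, and then only verifies essential surjectivity, by lifting $\opn{RHom}_A(M, R_A) \in \cat{Mod}_{\mrm{f}} \bar{A}$ through the equivalence $\opn{rest}_{\bar{f}} : \cat{Mod}_{\mrm{f}} \bar{B} \to \cat{Mod}_{\mrm{f}} \bar{A}$ and dualizing back; that is exactly the pointwise instance of your diagram chase. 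Your two-out-of-three argument on the commutative square handles full faithfulness and essential surjectivity at once and makes the corollary independent of Theorem \ref{thm:251}; the price is that it exploits bijectivity of $\bar{f}$ in an essential way (it is what makes the right-hand vertical functor an equivalence), so it cannot be recycled for the more general surjective situation of Theorems \ref{thm:250} and \ref{thm:251}, which of course the corollary does not require. One point worth stating explicitly in your write-up: the intertwining isomorphism $F \circ D_B \cong D_A \circ F$ is also what shows the left vertical arrow is well defined, i.e.\ that $\opn{rest}_f$ carries $\cat{D}^{\mrm{b}}_{\mrm{f}}(B)_{\mrm{CM} : R_B}$ into $\cat{D}^{\mrm{b}}_{\mrm{f}}(A)_{\mrm{CM} : R_A}$, since restriction preserves bounded cohomology and (because $\bar{f}$ is bijective) finiteness of the cohomology modules.
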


\begin{proof}
In view of the last theorem, it suffices to show that $\opn{rest}_f$ is 
essentially surjective on objects. Take any 
$M \in \cat{D}^{\mrm{b}}_{\mrm{f}}(A)_{\mrm{CM} : R_A}$.
Then 
$N :=  \opn{RHom}_{A}(M, R_A)$ is (isomorphic to) a module in 
$\cat{Mod}_{\mrm{f}} \bar{A}$. Because 
$\opn{rest}_{\bar{f}} : \cat{Mod}_{\mrm{f}} \bar{B} \to
\cat{Mod}_{\mrm{f}} \bar{A}$
is an equivalence, there is $N' \in \cat{Mod}_{\mrm{f}} \bar{B}$
that is sent to $N$. Therefore the DG module 
\[ M' := \opn{RHom}_{A}(N', R_B) \in 
\cat{D}^{\mrm{b}}_{\mrm{f}}(B)_{\mrm{CM} : R_B} \]
satisfies 
$\opn{rest}_f(M') \cong M$. 
\end{proof}

\begin{rem} \label{rem:120}
Here is a quick explanation of the role of CM DG modules in \cite{Ye5}. 
Suppose $\K \to A \to B$ are ring homomorphisms, $M \in \cat{D}(A)$ and 
$N \in \cat{D}(B)$. Under suitable assumptions we want to have a canonical 
isomorphism 
\[  \smallsmile \, : \opn{Sq}_{A / \K}(M) \ot^{\mrm{L}}_A   
\opn{Sq}_{B / A}(N) \iso
\opn{Sq}_{B / \K} (M \ot^{\mrm{L}}_A N) \]
in $\cat{D}(B)$, that we call the {\em cup product}. 
This isomorphism was already constructed in \cite[Theorem 4.11]{YZ1}; but 
unfortunately this part of \cite{YZ1} also contained a serious mistake. 

The construction in \cite{Ye5} goes like this. We choose a semi-free DG ring 
resolution $\K \to \til{A}$ of $\K \to A$, and then a semi-free DG ring 
resolution $\til{A} \to \til{B}$ of $\til{A} \to B$. 
So 
\[ \opn{Sq}_{A / \K}(M) = 
\opn{RHom}_{\til{A} \ot_{\K} \til{A}} (A, M \ot^{\mrm{L}}_{\K} M) \]
etc. The construction goes through a few ``standard moves'' (adjunction 
formulas mostly), until we arrive at the following situation. 
Consider the surjective DG ring homomorphism 
$f : \til{B} \ot_{\K} \til{B} \to \til{B} \ot_{\til{A}} \til{B}$,
and the DG modules 
\[ K := N \ot^{\mrm{L}}_A N \ot^{\mrm{L}}_A \opn{Sq}_{A / \K}(M) \]
and 
\[ L := \opn{RHom}_{\til{B} \ot_{\K} \til{B}}
\bigl( \til{B} \ot_{\til{A}} \til{B} , 
(M \ot^{\mrm{L}}_{A} N) \ot^{\mrm{L}}_{\K}
(M \ot^{\mrm{L}}_{A} N) \bigr) \]
in $\cat{D}(\til{B} \ot_{\til{A}} \til{B})$.
The ``standard moves'' give us a canonical isomorphism  \lb 
$\chi : \opn{rest}_f(K) \iso \opn{rest}_f(L)$
in $\cat{D}(\til{B} \ot_{\K} \til{B})$; but what we need to continue the 
construction is a canonical isomorphism
$\bar{\chi} : K \iso L$ in $\cat{D}(\til{B} \ot_{\til{A}} \til{B})$
such that $\opn{rest}_f(\bar{\chi}) = \chi$. 
The only conceivable hope was that something like Theorem \ref{thm:251} should 
appear. 

Fortunately, in the situation where we require the cup product, the ring $\K$ 
is a regular noetherian ring; $\K \to A$ is essentially finite type; 
$A \to B$ is essentially Gorenstein (i.e.\ essentially finite type, 
flat, and the fibers are Gorenstein rings); $M$ is a rigid dualizing complex 
over $A$ relative to $\K$; and $N$ is a tilting complex over $B$ (and hence it 
is a relative dualizing complex for $A \to B$). These assumptions imply that 
$K$ is a dualizing DG module over the DG ring 
$\til{B} \ot_{\til{A}} \til{B}$, and therefore it is a CM DG module w.r.t.\ 
itself. Now Theorem \ref{thm:251} says that there 
exists a unique isomorphism $\bar{\chi} : K \iso L$ satisfying 
$\opn{rest}_f(\bar{\chi}) = \chi$. 
\end{rem}

\section{Rigid DG Modules} \label{sec:rigid}

Recall that all our DG rings are commutative (Convention \ref{conv:100}); 
namely we work inside the category $\cat{DGR}^{\leq 0}_{\mrm{sc}}$.
At the beginning of this section we do not make any finiteness assumptions on 
DG rings. 

In our new paper \cite{Ye5} we introduced the {\em squaring operation} for
commutative DG rings. It is summarized in Theorem \ref{thm:500}  below,
which is a combination of \cite[Theorems 0.3.4, 0.3.5 and 7.16]{Ye5}.

Given a homomorphism $u : A \to B$ of DG rings, a {\em K-flat resolution} of 
$u$ is a commutative diagram 
\[ \UseTips \xymatrix @C=8ex @R=6ex {
\til{A}
\ar[r]^{\til{u}} ="tilu"
\ar[d]_{v} ="v"
{} \save 
[]+<-12ex,-1ex> *+[F-:<3pt>]{\scriptstyle \text{qu-isom}} 
\ar@(r,l)@{..} "v" 
\restore
&
\til{B}
\ar[d]^{w} ="w"
{} \save 
[]+<10ex,4ex> *+[F-:<3pt>]{\scriptstyle \text{K-flat}} 
\ar@(l,ur)@{..} "tilu" 
\restore
&
{} \save 
[]+<4ex,-1ex> *+[F-:<3pt>]{\scriptstyle \text{surj qu-isom}} 
\ar@(l,r)@{..} "w" 
\restore
\\
A
\ar[r]_{u} ="u"
&
B
} \]
in $\cat{DGR}^{\leq 0}_{\mrm{sc}}$, where $v$ is a quasi-isomorphism, $w$ is a 
surjective quasi-isomorphism, and $\til{B}$ is K-flat as a DG $\til{A}$-module. 
Such resolutions always exist. 

A K-flat resolution $\til{A} \to \til{B}$ of $A \to B$ gives rise to a functor 
$\opn{Sq}_{B / A}^{\til{B} / \til{A}} : \cat{D}(B) \to \cat{D}(B)$,
\[ \opn{Sq}_{B / A}^{\til{B} / \til{A}}(M) := 
\opn{RHom}_{\til{B} \ot_{\til{A}} \til{B}}(B, M \ot^{\mrm{L}}_{\til{A}} M) . \]

\begin{thm}[\cite{Ye5}] \label{thm:500}
Let $A \to B$ be a homomorphism of DG rings. 
\begin{enumerate}
\item There is a functor 
\[ \opn{Sq}_{B / A} : \cat{D}(B) \to \cat{D}(B) \ , \]
called the squaring operation, together with a compatible system of 
isomorphisms of functors
$\opn{Sq}_{B / A} \cong \opn{Sq}_{B / A}^{\til{B} / \til{A}}$,
where $\til{A} \to \til{B}$ runs over all K-flat resolutions  of $A \to B$
in $\cat{DGR}^{\leq 0}_{\mrm{sc}}$.

\item The functor $\opn{Sq}_{B / A}$ is quadratic, in the following 
sense\tup{:} for any morphism $\phi : M \to N$ in $\cat{D}(B)$, and any element
$b \in \bar{B}$, there is equality 
\[ \opn{Sq}_{B / A}(b \cd \phi) = b^2 \cd \phi \ . \]
\end{enumerate}
\end{thm}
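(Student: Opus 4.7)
The plan is to construct, for each K-flat resolution $\til{A} \to \til{B}$ of $A \to B$, a triangulated functor $\opn{Sq}_{B / A}^{\til{B} / \til{A}} : \cat{D}(B) \to \cat{D}(B)$, and then to show that the collection of these functors, as the resolution varies, forms a coherent system of canonically isomorphic functors. The squaring functor $\opn{Sq}_{B / A}$ is then defined as any representative of this system, together with the canonical identifications.

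First I would verify the construction for a fixed K-flat resolution. Given $M \in \cat{D}(B)$, restrict $M$ to $\cat{D}(\til{B})$ along $w : \til{B} \to B$, choose a K-flat resolution $\til{M} \to M$ over $\til{A}$, and take a K-injective resolution of $\til{M} \ot_{\til{A}} \til{M}$ over $\til{B} \ot_{\til{A}} \til{B}$. Since $\til{B}$ is K-flat over $\til{A}$, the ring $\til{B} \ot_{\til{A}} \til{B}$ computes the derived tensor product, and standard arguments of the kind recalled in Section \ref{sec:resol} show that the resulting object of $\cat{D}(B)$ does not depend on the auxiliary choices. This gives a well-defined triangulated functor $\opn{Sq}_{B / A}^{\til{B} / \til{A}}$.

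The hard part, and the reason why the original proof in \cite{YZ1} required later correction, will be producing a \emph{canonical} natural isomorphism between $\opn{Sq}_{B / A}^{\til{B}_1 / \til{A}_1}$ and $\opn{Sq}_{B / A}^{\til{B}_2 / \til{A}_2}$ for any two K-flat resolutions $\til{A}_i \to \til{B}_i$. My strategy would be to build a mediating K-flat resolution $\til{A}_3 \to \til{B}_3$ together with DG ring quasi-isomorphisms $\til{A}_3 \to \til{A}_i$ and $\til{B}_3 \to \til{B}_i$ compatible with the augmentations to $A$ and $B$; such mediators exist by iterated semi-free resolution inside $\cat{DGR}^{\leq 0}_{\mrm{sc}}$, which is possible precisely because this category has enough K-flat resolutions (see Remark \ref{rem:580}). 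Each leg of the mediator induces a quasi-isomorphism of the corresponding squaring functors, yielding a chain $\opn{Sq}_{B / A}^{\til{B}_1 / \til{A}_1} \cong \opn{Sq}_{B / A}^{\til{B}_3 / \til{A}_3} \cong \opn{Sq}_{B / A}^{\til{B}_2 / \til{A}_2}$. The delicate step is to show that the composed isomorphism is independent of the mediator: one passes to a common refinement $\til{A}_4 \to \til{B}_4$ of two candidate mediators, and checks by diagram chase that the induced square of natural isomorphisms commutes, using the rigidity of the construction $\opn{RHom}_{\til{B} \ot_{\til{A}} \til{B}}(B, - \ot^{\mrm{L}}_{\til{A}} -)$ (no nontrivial automorphisms arise because the target and source are fixed objects of $\cat{D}(B)$). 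I expect this bookkeeping of canonicity, together with the verification of the cocycle condition across three resolutions, to be the main technical obstacle.

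Finally, the quadratic property of part (2) is essentially formal. A cohomology class $b \in \bar{B} = \opn{H}^0(B)$ acts on $M \in \cat{D}(B)$ via the $\bar{B}$-linear structure of $\cat{D}(B)$ coming from Proposition \ref{prop:311}; on the two-sided object $M \ot^{\mrm{L}}_{\til{A}} M$ this action multiplies by $b$ on each factor, hence by $b \ot b$ on the tensor product, which becomes $b \cd b = b^2$ after pullback along $B \to \til{B} \ot_{\til{A}} \til{B}$ and application of $\opn{RHom}_{\til{B} \ot_{\til{A}} \til{B}}(B, -)$. Thus $\opn{Sq}_{B / A}(b \cd \phi) = b^2 \cd \phi$ for every morphism $\phi$, independently of the chosen resolution by part (1).
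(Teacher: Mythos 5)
The paper does not actually prove this statement: Theorem \ref{thm:500} is imported verbatim from \cite{Ye5} (it is a combination of Theorems 0.3.4, 0.3.5 and 7.16 there), and the Introduction explicitly warns that the independence of the resolution ``is quite difficult'' and that the original argument of this kind in \cite{YZ1} contained a mistake. So your proposal is attempting to supply a proof that this paper deliberately outsources, and the sketch stops exactly at the point where the real difficulty lies.

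Concretely, the gap is in your comparison step. Given two K-flat resolutions and a mediator $\til{A}_3 \to \til{B}_3$, a DG ring homomorphism $\til{B}_3 \to \til{B}_1$ lifting $\mathrm{id}_B$ is only unique up to homotopy, and the induced isomorphism
$\opn{Sq}_{B/A}^{\til{B}_3 / \til{A}_3} \cong \opn{Sq}_{B/A}^{\til{B}_1 / \til{A}_1}$
a priori depends on that choice (and on the auxiliary choice of a K-flat resolution of $M$ over each $\til{A}_i$, which must also be compared). Your justification that the composed isomorphism is independent of the mediator --- ``no nontrivial automorphisms arise because the target and source are fixed objects of $\cat{D}(B)$'' --- is not an argument: a fixed object of $\cat{D}(B)$ has many automorphisms (e.g.\ multiplication by units of $\bar{B}$), and the whole content of the uniqueness statement is that the comparison isomorphisms can be chosen to form a compatible (cocycle) system. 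Showing that homotopic DG ring homomorphisms between resolutions induce the \emph{same} comparison isomorphism is precisely the subtle point where the proof in \cite{YZ1} broke down, and repairing it in \cite{Ye5} requires substantial machinery (careful treatment of resolutions of the pair $A \to B$, of DG bimodule resolutions over $\til{B} \ot_{\til{A}} \til{B}$, and homotopy-invariance statements for the induced maps), none of which is supplied or replaced by your diagram chase. Your part (2) is essentially fine once part (1) is in place, but as it stands part (1) rests on an unproved --- and historically false-looking --- rigidity claim, so the proposal does not constitute a proof; citing \cite{Ye5}, as the paper does, is the honest alternative.
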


The compatible system of isomorphisms of functors in item (1) of the theorem is 
stated in detail in \cite[Theorem 0.3.4]{Ye5}. These isomorphisms make the 
functor $\opn{Sq}_{B / A}$ unique up to a unique isomorphism. 
In part (2) we use the fact that $\cat{D}(B)$ is a 
$\bar{B}$-linear category (see Proposition \ref{prop:311}). 
 
\begin{dfn} \label{dfn:500}
Let $A \to B$ be a homomorphism of DG rings.
\begin{enumerate}
\item Let $M$ be a DG $B$-module. A {\em rigidifying isomorphism} for $M$ 
relative to $A$ is an isomorphism 
\[ \rho : M \iso \opn{Sq}_{B / A}(M) \]
in $\cat{D}(B)$.

\item A {\em rigid DG module over $B$ relative to $A$} is a pair 
$(M, \rho)$, consisting of a DG $B$-module $M$, and a rigidifying isomorphism
$\rho : M \iso \opn{Sq}_{B / A}(M)$ in $\cat{D}(B)$.
\end{enumerate}
\end{dfn}

Unlike in \cite{YZ1}, here we do not impose any finiteness conditions on $M$ 
as part of the definition of rigidity. 

\begin{dfn} \label{dfn:501}
Let $A \to B$ be a homomorphism of DG rings.
\begin{enumerate}
\item Let $(M, \rho)$ and $(M', \rho')$ be rigid DG modules over $B$ relative 
to $A$. A {\em rigid morphism} 
\[ \phi : (M, \rho) \to (M', \rho') \]
is a morphism $\phi : M \to M'$ in $\cat{D}(B)$, such that the diagram 
\[ \UseTips \xymatrix @C=5ex @R=5ex {
M 
\ar[r]^(0.35){\rho}
\ar[d]_{\phi}
&
\opn{Sq}_{B / A}(M)
\ar[d]^{\opn{Sq}_{B / A}(\phi)}
\\
M' 
\ar[r]^(0.35){\rho'}
&
\opn{Sq}_{B / A}(M')
} \]
in $\cat{D}(B)$ is commutative. 

\item The category of rigid DG modules over $B$ relative to $A$ is denoted by 
\lb $\cat{D}(B)_{\mrm{rig} / A}$.
\end{enumerate}
\end{dfn}

\begin{thm}[Uniqueness of Rigid Automorphisms] \label{thm:501}
Let $A \to B$ be a homomorphism of DG rings, and let 
$(M, \rho)$ be a rigid DG module over $B$ relative to $A$. Assume that the 
adjunction morphism 
$B \to \opn{RHom}_{B}(M, M)$ is an isomorphism. Then the only automorphism of 
$(M, \rho)$ in $\cat{D}(B)_{\mrm{rig} / A}$ is the identity. 
\end{thm}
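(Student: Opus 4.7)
The plan is to reduce the statement to a simple algebraic identity using the quadratic nature of the squaring operation together with the rigidity diagram.

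First I would exploit the hypothesis $B \xrightarrow{\simeq} \opn{RHom}_B(M,M)$: taking $\opn{H}^0$ gives a ring isomorphism $\bar{B} \iso \opn{End}_{\cat{D}(B)}(M)$, where the map sends $b \in \bar{B}$ to $b \cd \opn{id}_M$ using the $\bar{B}$-linear structure on $\cat{D}(B)$ from Proposition \ref{prop:311}. Consequently any automorphism $\phi$ of $M$ in $\cat{D}(B)$ is of the form $\phi = b \cd \opn{id}_M$ for a unique unit $b \in \bar{B}^{\times}$.

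Next I would spell out what rigidity of $\phi$ means. Since $\phi = b \cd \opn{id}_M$, functoriality gives $\rho \circ \phi = b \cd \rho$. On the other side, the quadratic property of the squaring operation (Theorem \ref{thm:500}(2)) applied to $\opn{id}_M$ scaled by $b$ yields
\[ \opn{Sq}_{B/A}(\phi) = \opn{Sq}_{B/A}(b \cd \opn{id}_M) = b^2 \cd \opn{id}_{\opn{Sq}_{B/A}(M)} , \]
so $\opn{Sq}_{B/A}(\phi) \circ \rho = b^2 \cd \rho$. The rigidity condition from Definition \ref{dfn:501}(1) then becomes the equation $b \cd \rho = b^2 \cd \rho$ in $\opn{Hom}_{\cat{D}(B)}(M, \opn{Sq}_{B/A}(M))$.

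Finally I would conclude: because $\rho$ is an isomorphism in $\cat{D}(B)$, the relation $b \cd \rho = b^2 \cd \rho$ forces $b = b^2$ in the endomorphism ring $\opn{End}_{\cat{D}(B)}(M) \cong \bar{B}$, i.e.\ $b(b-1) = 0$ in $\bar{B}$. Since $b$ is a unit, we deduce $b = 1$, hence $\phi = \opn{id}_M$. There is no real obstacle in this argument; the only point to verify carefully is that the $\bar{B}$-action on the Hom-set $\opn{Hom}_{\cat{D}(B)}(M, \opn{Sq}_{B/A}(M))$ used on both sides of the rigidity equation is the same (given by post- or pre-composition with scalar multiplication on either object), which follows from the $\bar{B}$-linearity of $\cat{D}(B)$ and of the functor $\opn{Sq}_{B/A}$.
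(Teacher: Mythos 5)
Your argument is correct and is essentially the paper's own proof: identify $\opn{End}_{\cat{D}(B)}(M)$ with $\bar{B}$ via the adjunction hypothesis, write $\phi = b \cd \opn{id}_M$, use the quadratic property to get $\opn{Sq}_{B/A}(\phi) = b^2 \cd \opn{id}_{\opn{Sq}_{B/A}(M)}$, and compare with the rigidity square to obtain $b^2 = b$, hence $b = 1$ since $b$ is a unit. One small slip in your closing remark: $\opn{Sq}_{B/A}$ is quadratic, not $\bar{B}$-linear, but your computation never actually needs linearity of the functor --- only the $\bar{B}$-bilinearity of composition in $\cat{D}(B)$ (Proposition \ref{prop:311}) --- so nothing breaks.
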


\begin{proof}
The idea of the proof is already in \cite[Theorem 5.2]{Ye2}.
The adjunction condition implies that the ring homomorphism 
$\bar{B} \to \opn{End}_{\cat{D}(B)}(M)$
is bijective. Take any automorphism $\phi$ of $(M, \rho)$ in 
$\cat{D}(B)_{\mrm{rig} / A}$. Then $\phi = b \cd \opn{id}_M$, for a unique 
invertible element $b \in \bar{B}$. By item (2) of Theorem \ref{thm:500}
we know that 
$\opn{Sq}_{B / A}(\phi) = b^2 \cd \opn{Sq}_{B / A}(\opn{id}_M)$. 
On the other hand, because $\phi$ and $\opn{id}_M$ are both rigid, we get 
\[ \begin{aligned}
& \opn{Sq}_{B / A}(\phi) = \rho \circ \phi \circ \rho^{-1} = 
\rho \circ (b \cd \opn{id}_M) \circ \rho^{-1} 
\\
& \qquad = b \cd (\rho \circ \opn{id}_M \circ \rho^{-1}) = 
b \cd \opn{Sq}_{B / A}(\opn{id}_M) .
\end{aligned} \]
This shows that
$b^2 \cd \opn{id}_M = b \cd \opn{id}_M$, so $b^2 = b$, and hence $b = 1$. 
\end{proof}

\begin{dfn} \label{dfn:503}
Let $A$ be a tractable cohomologically pseudo-noetherian DG ring, with traction 
$\K \to A$ (see Definition \ref{dfn:46}). 
A {\em rigid dualizing DG module over $A$ relative to $\K$} is a 
rigid DG module $(R, \rho)$ over $A$ relative to $\K$ (Definition 
\ref{dfn:500}), such that $R$ is a dualizing DG module over $A$
 (Definition \ref{dfn:253}).
\end{dfn}

\begin{lem} \label{lem:526}
Suppose $\K$ is a noetherian ring, $A$ is a cohomologically pseudo-noetherian 
DG ring, and $\K \to A$ is a cohomologically essentially finite type 
homomorphism. Let $\K \to \til{A}$ be a K-flat resolution of $\K \to A$, and 
define 
$\til{A}^{\mrm{en}} := \til{A} \ot_{\K} \til{A}$. 
\begin{enumerate}
\item The DG ring $\til{A}^{\mrm{en}}$ is cohomologically pseudo-noetherian,
and the homomorphism $\K \to \til{A}^{\mrm{en}}$ is a cohomologically 
essentially finite type. 

\item Let $P \in \cat{D}(A)$ be a tilting DG $A$-module. Then 
$P \ot^{\mrm{L}}_{\K} P \in \cat{D}(\til{A}^{\mrm{en}})$
is a tilting DG $\til{A}^{\mrm{en}}$-module.
\end{enumerate}
\end{lem}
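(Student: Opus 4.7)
My plan is to handle parts (1) and (2) separately: for (1) the key is to reduce to a convenient K-flat resolution, while for (2) it suffices to exhibit an explicit quasi-inverse.

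\textbf{Part (1).} Since $\til{A}$ is nonpositive and K-flat over $\K$, a direct calculation of degree-$0$ cohomology yields $\opn{H}^0(\til{A}^{\mrm{en}}) = \bar{A} \ot_{\K} \bar{A}$. Because $\K \to \bar{A}$ is essentially of finite type and $\K$ is noetherian, this ring is itself essentially of finite type over $\K$ and hence noetherian, settling the essentially-finite-type part. For the finiteness of each $\opn{H}^i(\til{A}^{\mrm{en}})$ as a module over $\bar{A} \ot_{\K} \bar{A}$, I will reduce to a convenient K-flat resolution. The graded ring $\opn{H}(\til{A} \ot_{\K} \til{A})$ depends only on $A$ and $\K$ up to canonical graded ring isomorphism: any two K-flat resolutions of $A$ admit a common K-flat refinement with surjective quasi-isomorphisms down to both, and this induces quasi-isomorphisms between the corresponding enveloping DG rings. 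By Lemma \ref{lem:380} I may therefore replace $\til{A}$ by a resolution whose degree-$0$ part $\til{A}^0$ is a localization of a polynomial ring $\K[\bsym{t}]$ (in particular $\K$-flat and essentially of finite type), and such that $\til{A}$ is a strongly commutative polynomial DG ring extension of $\til{A}^0$ with finitely many generators in each negative degree. For such a nice $\til{A}$, the ring $(\til{A}^{\mrm{en}})^0 = \til{A}^0 \ot_{\K} \til{A}^0$ is noetherian, and $(\til{A}^{\mrm{en}})^i$ is a finitely generated $(\til{A}^{\mrm{en}})^0$-module for every $i$. Consequently, each cohomology $\opn{H}^i(\til{A}^{\mrm{en}})$ is a subquotient of a finitely generated module over the noetherian ring $(\til{A}^{\mrm{en}})^0$, hence is itself finitely generated; and since the $(\til{A}^{\mrm{en}})^0$-action on cohomology factors through the quotient $\opn{H}^0(\til{A}^{\mrm{en}}) = \bar{A} \ot_{\K} \bar{A}$, the finite generation holds over $\bar{A} \ot_{\K} \bar{A}$ as well.

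\textbf{Part (2).} Let $Q := \opn{RHom}_A(P, A)$, the quasi-inverse of $P$ provided by Corollary \ref{cor:405}, so that $P \ot^{\mrm{L}}_A Q \cong A$ in $\cat{D}(A)$. Viewing $P$ and $Q$ as DG $\til{A}$-modules via restriction along the quasi-isomorphism $\til{A} \to A$, I form $P \ot^{\mrm{L}}_{\K} P$ and $Q \ot^{\mrm{L}}_{\K} Q$ in $\cat{D}(\til{A}^{\mrm{en}})$ using the two commuting $\til{A}$-actions. The key tool is the bilinear tensor-product identity
\[ (M_1 \ot^{\mrm{L}}_{\K} M_2) \ot^{\mrm{L}}_{\til{A}^{\mrm{en}}} (N_1 \ot^{\mrm{L}}_{\K} N_2) \cong (M_1 \ot^{\mrm{L}}_{\til{A}} N_1) \ot^{\mrm{L}}_{\K} (M_2 \ot^{\mrm{L}}_{\til{A}} N_2) \]
for DG $\til{A}$-modules $M_i, N_i$, which is obtained by choosing K-flat resolutions and observing that the relations imposed when tensoring over $\til{A}^{\mrm{en}} = \til{A} \ot_{\K} \til{A}$ split into separate relations from each factor. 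Specializing to $M_i = P$ and $N_i = Q$, and combining with $P \ot^{\mrm{L}}_{\til{A}} Q \cong P \ot^{\mrm{L}}_A Q \cong A$ (quasi-isomorphism invariance of derived tensor) and $A \ot^{\mrm{L}}_{\K} A \cong \til{A} \ot_{\K} \til{A} = \til{A}^{\mrm{en}}$ (K-flatness of $\til{A}$), I obtain
\[ (P \ot^{\mrm{L}}_{\K} P) \ot^{\mrm{L}}_{\til{A}^{\mrm{en}}} (Q \ot^{\mrm{L}}_{\K} Q) \cong \til{A}^{\mrm{en}} \]
in $\cat{D}(\til{A}^{\mrm{en}})$, exhibiting $Q \ot^{\mrm{L}}_{\K} Q$ as a quasi-inverse of $P \ot^{\mrm{L}}_{\K} P$.

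The hardest parts are the two ``separation'' claims: the independence of $\opn{H}(\til{A}^{\mrm{en}})$ from the choice of K-flat resolution used in (1), and the bilinear tensor identity used in (2). Both are standard consequences of manipulating K-flat resolutions together with the definition of $\til{A}^{\mrm{en}}$, but require the kind of careful bookkeeping systematically developed in \cite{Ye5}; once they are granted, the remaining arguments reduce to routine noetherian module theory and basic properties of derived tensor products.
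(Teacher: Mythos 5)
Your proof is correct and follows essentially the same route as the paper: part (1) is handled by passing (via Lemma \ref{lem:380}) to a model $\til{A}_{\mrm{eft}}$ whose degree-zero part is essentially of finite type over $\K$ with each $\til{A}^i_{\mrm{eft}}$ finite over it, and part (2) by exactly the same bilinear tensor identity applied to a quasi-inverse $Q$ of $P$, giving $(P \ot^{\mrm{L}}_{\K} P) \ot^{\mrm{L}}_{\til{A}^{\mrm{en}}} (Q \ot^{\mrm{L}}_{\K} Q) \cong \til{A}^{\mrm{en}}$. The only cosmetic difference is in the reduction step of (1): the paper compares $\til{A}^{\mrm{en}}$ and $\til{A}_{\mrm{eft}}^{\mrm{en}}$ directly through $\til{A}_{\mrm{loc}} \ot_{\K} \til{A}_{\mrm{loc}}$ (all three DG rings being K-flat over $\K$), whereas you invoke the general resolution-independence of $\opn{H}(\til{A} \ot_{\K} \til{A})$; both justifications are sound.
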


The derived tensor product $P \ot^{\mrm{L}}_{\K} P$ in item (2) is calculated 
as follows: we take any K-flat resolution $\til{P} \to P$ in 
$\cat{C}(\til{A})$, and define 
$P \ot^{\mrm{L}}_{\K} P := \til{P} \ot_{\K} \til{P}$.
Because $\til{P}$ is K-flat over $\K$, this is independent of the resolution, 
up to a canonical isomorphism in $\cat{D}(\til{A}^{\mrm{en}})$. 

\begin{proof}
(1) As in the proof of Lemma \ref{lem:380}, we can find a commutative diagram 
in $\cat{DGR}^{\leq 0}_{\mrm{sc}}$
\[  \UseTips \xymatrix @C=5ex @R=5ex {
\K
\ar[r]
\ar[d]
&
\til{A}
\ar[r]
\ar[d]^{\til{f}}
&
A 
\ar[r]
\ar[d]
& 
\bar{A}
\\
\til{A}_{\mrm{eft}}
\ar[r]^{\til{g}}
&
\til{A}_{\mrm{loc}}
\ar[r]
&
A_{\mrm{loc}}
\ar[ur]
} \]
such that $\til{f}$ is a K-flat quasi-isomorphism, $\til{g}$ is a 
quasi-isomorphism, the DG ring $\til{A}_{\mrm{eft}}$ is K-flat over $\K$, 
the ring $\til{A}^0_{\mrm{eft}}$ is essentially finite type over $\K$, and each 
module $\til{A}^i_{\mrm{eft}}$ is finite over $\til{A}^0_{\mrm{eft}}$.
Now $\til{A}$, $\til{A}_{\mrm{eft}}$ and $\til{A}_{\mrm{loc}}$
are all K-flat over $\K$. Hence, letting 
$\til{A}_{\mrm{loc}}^{\mrm{en}} := \til{A}_{\mrm{loc}} \ot_{\K} 
\til{A}_{\mrm{loc}}$
and
$\til{A}_{\mrm{eft}}^{\mrm{en}} := \til{A}_{\mrm{eft}} \ot_{\K} 
\til{A}_{\mrm{left}}$,
we get DG ring quasi-isomorphisms 
$\til{A}_{\mrm{eft}}^{\mrm{en}} \to \til{A}_{\mrm{loc}}^{\mrm{en}}$
and 
$\til{A}_{}^{\mrm{en}} \to \til{A}_{\mrm{loc}}^{\mrm{en}}$.
Therefore it suffices to prove the required assertions for 
$\til{A}_{\mrm{eft}}^{\mrm{en}}$. 

Consider the ring 
$(\til{A}_{\mrm{eft}}^{\mrm{en}})^0 = \til{A}^0_{\mrm{eft}} \ot_{\K} 
\til{A}^0_{\mrm{eft}}$. 
It is essentially finite type over $\K$, and thus it is also noetherian. 
For any $i \leq 0$ we have 
\[ (\til{A}_{\mrm{eft}}^{\mrm{en}})^i = 
\bigoplus_{i \leq p \leq 0} \til{A}^p_{\mrm{eft}} \ot_{\K} 
\til{A}^{i - p}_{\mrm{eft}} , \]
and this is a finite module over  
$(\til{A}_{\mrm{eft}}^{\mrm{en}})^0$. 
Therefore the ring $\opn{H}^0(\til{A}_{\mrm{eft}}^{\mrm{en}})$,
being a quotient of $(\til{A}_{\mrm{eft}}^{\mrm{en}})^0$, is 
essentially finite type over $\K$. Each 
$\opn{H}^i(\til{A}_{\mrm{eft}}^{\mrm{en}})$,
being a subquotient of $(\til{A}_{\mrm{eft}}^{\mrm{en}})^i$, 
is a finite module over $\opn{H}^0(\til{A}_{\mrm{eft}}^{\mrm{en}})$. 

\medskip \noindent
(2) Let $Q \in \cat{D}(A)$ be a quasi-inverse of $P$. Then 
\[ (Q \ot^{\mrm{L}}_{\K} Q) \ot^{\mrm{L}}_{A} (P \ot^{\mrm{L}}_{\K} P) \cong
(Q \ot^{\mrm{L}}_{A} P)  \ot^{\mrm{L}}_{\K} (Q \ot^{\mrm{L}}_{A} P)
\cong A  \ot^{\mrm{L}}_{\K} A \cong \til{A}^{\mrm{en}} \]
in $\cat{D}(\til{A}^{\mrm{en}})$. 
\end{proof}

\begin{thm}[Uniqueness of Rigid Dualizing DG Modules] \label{thm:502}
In the situation of Definition \tup{\ref{dfn:503}}, suppose that
$(R, \rho)$ and $(R', \rho')$ are rigid dualizing DG modules over $A$ relative 
to $\K$. Then there is a unique isomorphism 
$(R, \rho) \cong (R', \rho')$ in $\cat{D}(A)_{\mrm{rig} / \K}$.
\end{thm}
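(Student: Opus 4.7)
The plan is to show that any two rigid dualizing DG modules are isomorphic \emph{as DG modules} in $\cat{D}(A)$, then adjust the isomorphism to make it rigid, and finally deduce uniqueness from Theorem \ref{thm:501}. By Theorem \ref{thm:43}(2), there is a tilting DG $A$-module $P$ such that $R' \cong P \ot^{\mrm{L}}_A R$ in $\cat{D}(A)$. The essential task is therefore to prove that $P \cong A$.

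To prove $P \cong A$, I would compute $\opn{Sq}_{A / \K}(R')$ in two ways. Fix a K-flat resolution $\til{A}$ of $\K \to A$ and set $B := \til{A} \ot_\K \til{A}$. By Lemma \ref{lem:526}, the DG $B$-module $T := P \ot^{\mrm{L}}_\K P$ is tilting, hence perfect. Choosing K-flat resolutions of $P$ and $R$ over $\til{A}$ and rearranging tensor factors gives a canonical isomorphism
\[ (P \ot^{\mrm{L}}_A R) \ot^{\mrm{L}}_\K (P \ot^{\mrm{L}}_A R) \cong T \ot^{\mrm{L}}_B (R \ot^{\mrm{L}}_\K R) \]
in $\cat{D}(B)$. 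Applying $\opn{RHom}_B(A, -)$ and invoking the projection formula for the perfect object $T$ (valid since both sides are triangulated functors of $T$, coinciding when $T = B$, and closed under summands/cones/shifts) yields
\[ \opn{Sq}_{A / \K}(P \ot^{\mrm{L}}_A R) \cong T \ot^{\mrm{L}}_B \opn{Sq}_{A / \K}(R) \cong T \ot^{\mrm{L}}_B R \cong (P \ot^{\mrm{L}}_A P) \ot^{\mrm{L}}_A R , \]
using rigidity of $R$ and the identification $T \ot^{\mrm{L}}_B A \cong P \ot^{\mrm{L}}_A P$. Then rigidity of $R'$ gives $P \ot^{\mrm{L}}_A R \cong (P \ot^{\mrm{L}}_A P) \ot^{\mrm{L}}_A R$. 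Apply $\opn{RHom}_A(-, R)$, use adjunction together with $\opn{RHom}_A(R, R) \cong A$, and let $Q := \opn{RHom}_A(P, A)$ be the quasi-inverse of $P$ (Corollary \ref{cor:405}). Then
\[ Q \cong \opn{RHom}_A(P \ot^{\mrm{L}}_A P, A) \cong \opn{RHom}_A(P, Q) \cong Q \ot^{\mrm{L}}_A Q , \]
the last step by the projection formula of Lemma \ref{lem:400} applied to the perfect object $P$. Tensoring with $P$ gives $A \cong Q$, hence $P \cong A$ and $R \cong R'$ in $\cat{D}(A)$.

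For existence of a rigid isomorphism, pick any isomorphism $\psi : R \iso R'$ in $\cat{D}(A)$ and consider the rigidifying isomorphism $\rho'' := \opn{Sq}_{A / \K}(\psi)^{-1} \circ \rho' \circ \psi$ for $R$. Since $R$ is dualizing, $\opn{End}_{\cat{D}(A)}(R) = \bar{A}$, so $\rho'' = b \cd \rho$ for a unique invertible $b \in \bar{A}$. A direct calculation, using the quadratic property of squaring (Theorem \ref{thm:500}(2)), shows that $\phi := b \cd \psi$ satisfies $\rho' \circ \phi = \opn{Sq}_{A / \K}(\phi) \circ \rho$, so $\phi$ is a rigid isomorphism $(R, \rho) \to (R', \rho')$. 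Uniqueness is immediate: if $\phi_1, \phi_2$ are both rigid isomorphisms, then $\phi_2^{-1} \circ \phi_1$ is a rigid automorphism of $(R, \rho)$, and by Theorem \ref{thm:501} (whose hypothesis is built into the definition of a dualizing DG module) this must be $\opn{id}_R$.

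The main obstacle is the projection-formula step inside Paragraph 2, namely moving the tilting $B$-module $T$ past $\opn{RHom}_B(A, -)$ and re-interpreting $T \ot^{\mrm{L}}_B R$ as a DG $A$-module via $T \ot^{\mrm{L}}_B A \cong P \ot^{\mrm{L}}_A P$. This is essentially a DG incarnation of the ``cup-product/reduction'' formula from \cite{Ye5} alluded to in Remark \ref{rem:120}, and its delicacy lies not in the algebraic manipulation but in checking the naturality of all identifications across the base change from $B$ to $A$ and their independence of the chosen K-flat resolution $\til{A}$.
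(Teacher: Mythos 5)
Your proposal is correct and follows essentially the same route as the paper's proof: write $R' \cong P \ot^{\mrm{L}}_A R$ for a tilting $P$ via Theorem \ref{thm:43}(2), compute $\opn{Sq}_{A/\K}(R')$ using the rigidifying isomorphisms of both modules together with the projection formula for the perfect DG module $P \ot^{\mrm{L}}_{\K} P$ over $\til{A} \ot_{\K} \til{A}$ (Lemma \ref{lem:526}), deduce $P \cong A$, and then rescale an arbitrary isomorphism $R \iso R'$ by an invertible element of $\bar{A}$ using the quadratic property of $\opn{Sq}_{A/\K}$, with uniqueness coming from Theorem \ref{thm:501}. The only cosmetic deviations are that you re-derive the content of Theorem \ref{thm:43}(3) inline by dualizing into $R$, and you justify the projection-formula step by d\'evissage over the thick subcategory generated by the base DG ring rather than by citing Lemma \ref{lem:45}; both are valid given Corollary \ref{cor:550}.
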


\begin{proof}
The idea of first half of the proof goes back to the original work of Van den 
Bergh \cite{VdB}. According to Theorem \ref{thm:43}(2), there is a tilting DG 
module $P$ such that $R' \cong P \ot^{\mrm{L}}_{A} R$
in $\cat{D}(A)$. 
Choose a K-flat DG ring resolution $\K \to \til{A}$ of $\K \to A$,
and let $\til{A}^{\mrm{en}} := \til{A} \ot_{\K} \til{A}$. 
Then we have isomorphisms 
\[ \begin{aligned}
& P \ot^{\mrm{L}}_{A} R \cong^1 R' \cong^{2}
\opn{RHom}_{\til{A}^{\mrm{en}}}(A, R' \ot^{\mrm{L}}_{\K} R')
\\ & \quad 
\cong^1 \opn{RHom}_{\til{A}^{\mrm{en}}} \bigl( A, (P \ot^{\mrm{L}}_{A} R) 
\ot^{\mrm{L}}_{\K} (P \ot^{\mrm{L}}_{A} R) \bigr)
\\ & \quad 
\cong^3 \opn{RHom}_{\til{A}^{\mrm{en}}} \bigl( A, (R \ot^{\mrm{L}}_{\K} R) 
\ot^{\mrm{L}}_{\til{A}^{\mrm{en}}} (P \ot^{\mrm{L}}_{\K} P) \bigr)
\\ & \quad 
\cong^4 \opn{RHom}_{\til{A}^{\mrm{en}}} ( A, R \ot^{\mrm{L}}_{\K} R) 
\ot^{\mrm{L}}_{\til{A}^{\mrm{en}}} (P \ot^{\mrm{L}}_{\K} P) 
\\ & \quad 
\cong^2 R \ot^{\mrm{L}}_{\til{A}^{\mrm{en}}} (P \ot^{\mrm{L}}_{\K} P) 
\cong^3 R \ot^{\mrm{L}}_{A} P \ot^{\mrm{L}}_{A} P 
\end{aligned} \]
in $\cat{D}(A)$. The isomorphisms $\cong^1$ come from the given isomorphism 
$R' \cong P \ot^{\mrm{L}}_{A} R$
in $\cat{D}(A)$; the isomorphisms $\cong^2$ come from the rigidifying 
isomorphisms $\rho'$ and $\rho$, together with the isomorphisms from 
Theorem \ref{thm:500}(1); the isomorphism $\cong^3$
is by a standard tensor product identity (that is valid also in this derived 
setting, because $\til{A}$ is K-flat over $\K$); and the 
the isomorphism $\cong^4$ is by Lemma \ref{lem:45}, that 
applies because $\til{A}^{\mrm{en}}$ is cohomologically pseudo-noetherian,
$R \ot^{\mrm{L}}_{\K} R$ is in $\cat{D}^+(\til{A}^{\mrm{en}})$, and 
$P \ot^{\mrm{L}}_{\K} P$ is perfect over $\til{A}^{\mrm{en}}$ -- thanks to 
Lemma \ref{lem:526}. 
Now Theorem \ref{thm:43}(3) tells us that 
$P \ot^{\mrm{L}}_{A} P \cong P$ in $\cat{D}(A)$. This implies that $P \cong A$.
We conclude that $R' \cong R$ in $\cat{D}(A)$. 

The remainder of the proof is the same as in the proof of 
\cite[Theorem 5.2]{Ye2}.
Let $\phi : R \iso R'$ be any isomorphism in $\cat{D}(A)$. The calculation in 
the proof of Theorem \ref{thm:501} shows that 
$\opn{Sq}_{A / \K}(\phi) = 
a \cd (\rho' \circ \phi \circ \rho^{-1})$
for a unique invertible element $a \in \bar{A}$. Then 
$a^{-1} \cd \phi : R \to R$ is a rigid isomorphism; and it is unique by 
Theorem \ref{thm:501}.
\end{proof}

Existence of rigid dualizing DG modules in this generality is not known, but we 
believe it it true -- this is Conjecture \ref{conj:505} in the Introduction. 
Conjecture \ref{conj:501} below implies Conjecture \ref{conj:505}.

Suppose $u : A \to B$ is a homomorphism between cohomologically 
pseudo-noetherian DG rings. We say that $u$ is {\em 
cohomologically essentially smooth of relative dimension $n$} if 
$\bar{u} : \bar{A} \to \bar{B}$ is essentially smooth of relative dimension 
$n$, 
and the induced homomorphism 
$\opn{H}(A) \ot_{\bar{A}} \bar{B} \to \opn{H}(B)$
is bijective. In this case, the module of differentials 
$\Omega^1_{\bar{B} / \bar{A}}$ is projective of rank $n$, and hence 
$\Omega^n_{\bar{B} / \bar{A}}$ is projective of rank $1$. 
We define 
$\Omega^n_{B / A}[n] \in \cat{D}(B)$ to be the lift of 
$\Omega^n_{\bar{B} / \bar{A}}[n] \in \cat{D}(\bar{B})$,
under the group isomorphism 
$\opn{DPic}(B) \cong \opn{DPic}(\bar{B})$
of Theorem \ref{thm:40}. There is an alternative description of 
$\Omega^n_{B / A}[n]$ in terms of the {\em cotangent complex} of $B / A$. 
 
\begin{conj} \label{conj:501}
In the situation of Definition \tup{\ref{dfn:503}}, suppose 
$(R_A, \rho_A)$ is a rigid dualizing DG module over $A$ relative to $\K$.
Let $B$ be another DG ring, and let $A \to B$ be a cohomologically 
essentially finite type homomorphism. 
\begin{enumerate}
\item If $A \to B$ is cohomologically pseudo-finite, then the DG module 
\[ R_B := \opn{RHom}_{A}(B, R_A ) \in  \cat{D}(B) \]
has an induced rigidifying isomorphism $\rho_B$ relative to $\K$. 

\item If $A \to B$ is cohomologically essentially smooth of relative dimension 
$n$, then the DG module 
\[ R_B := \Omega^n_{B / A}[n] \ot^{\mrm{L}}_{A} R_A \in  \cat{D}(B) \]
has an induced rigidifying isomorphism $\rho_B$ relative to $\K$. 
\end{enumerate}
\end{conj}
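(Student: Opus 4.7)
\textbf{Proof plan for Conjecture \ref{conj:501}.} The strategy is the standard one from \cite{YZ1}, adapted to the DG setting: construct $\rho_B$ by combining the given $\rho_A$ with a ``reduction'' isomorphism that expresses $\opn{Sq}_{B/\K}$ in terms of $\opn{Sq}_{A/\K}$. In both cases the reduction isomorphism should follow from the cup product isomorphism recalled in Remark \ref{rem:120}, which under the hypotheses at hand will hold by a variant of the argument outlined there. I will handle the two cases in parallel, since the common mechanism is: realize $R_B$ as $T \ot^{\mrm{L}}_A R_A$ for an appropriate ``transfer object'' $T \in \cat{D}(B)$ (namely $T = \opn{RHom}_A(B, A)$ in case (1), and $T = \Omega^n_{B/A}[n]$ in case (2)), and then exhibit a canonical rigidifying isomorphism $\sigma_T : T \iso \opn{Sq}_{B/A}(T)$ of $T$ over $B$ relative to $A$. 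The rigid structure on $R_B$ will then come from the composition
\[ R_B = T \ot^{\mrm{L}}_A R_A \xar{\sigma_T \ot \rho_A} \opn{Sq}_{B/A}(T) \ot^{\mrm{L}}_A \opn{Sq}_{A/\K}(R_A) \xar{\smallsmile} \opn{Sq}_{B/\K}(T \ot^{\mrm{L}}_A R_A) = \opn{Sq}_{B/\K}(R_B). \]

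In case (2), the construction of $\sigma_T$ for $T = \Omega^n_{B/A}[n]$ should be essentially formal. Indeed, for a cohomologically essentially smooth homomorphism $u:A\to B$, one has a canonical ``diagonal'' identification, coming from the Koszul-type resolution of $B$ over $\til{B}\ot_{\til A}\til{B}$, showing that $\opn{Sq}_{B/A}(\Omega^n_{B/A}[n])$ is again a shift of $\bwedge^n\Omega^1_{B/A}$; this is the DG analogue of the HKR-type computation used in \cite{YZ1} for smooth maps of rings. Since by construction $\Omega^n_{B/A}[n]$ belongs to $\opn{DPic}^0(B)\cdot \{B[n]\}$, Theorem \ref{thm:40} together with Corollary \ref{cor:80} reduces the verification of $\sigma_T$ to the corresponding fact for the classical ring $\bar B$ over $\bar A$, which is known. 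In case (1), for a pseudo-finite $A\to B$ one takes $T = \opn{RHom}_A(B,A)$; here the rigidifying $\sigma_T$ comes from the trace morphism $\opn{Tr}_{B/A}:\opn{RHom}_A(B,A)\to A$ in the usual way, using that $B$ admits a pseudo-finite semi-free resolution over $A$ (Proposition \ref{prop:101}(2)).

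The two remaining tasks are: (a) prove that the cup product $\smallsmile$ exists and is an isomorphism in the DG generality used here; and (b) verify that the resulting $\rho_B$ is independent of the auxiliary choices (K-flat resolution $\til A\to \til B$, semi-free resolution of $B$ over $A$). For (a) one follows the blueprint of Remark \ref{rem:120}: after the standard adjunction moves, one is reduced to producing a canonical lift, along the restriction functor for $\til B\ot_\K\til B\twoheadrightarrow \til B\ot_{\til A}\til B$, of a certain isomorphism $\chi$. Under the hypotheses of Conjecture \ref{conj:501}, the source DG module will be a dualizing DG module over $\til B\ot_{\til A}\til B$ (by a variant of Proposition \ref{prop:252}(1) applied to the Gorenstein-like situation) and hence CM with respect to itself; Theorem \ref{thm:251} then gives the required unique lift $\bar\chi$. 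For (b), one combines the uniqueness statement of Theorem \ref{thm:500}(1), which makes the squaring itself well-defined up to unique isomorphism, with Theorem \ref{thm:501} applied to $(R_B,\rho_B)$ --- note $R_B$ is dualizing and hence satisfies the adjunction hypothesis there.

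The hard part will be item (a), the DG cup product: it is exactly the point where the earlier paper \cite{YZ1} contained an error, and its correction in \cite{Ye7} depends essentially on having the Cohen-Macaulay restriction-and-glue machinery (Theorems \ref{thm:250} and \ref{thm:251}) available in the DG setting. For case (2) one must also check that the canonical $\sigma_T$ constructed on $\Omega^n_{B/A}[n]$ is compatible with the cup product; this commutative-diagram verification, while conceptually straightforward, is where the bulk of the work will lie, and mirrors the difficulties already encountered in the purely ring-theoretic case of \cite{YZ1}.
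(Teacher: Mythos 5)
The statement you are addressing is not proved in the paper at all: it is stated as Conjecture \ref{conj:501}, left open, with only the remark that the ring case was settled in \cite{YZ1} (to be redone in \cite{Ye7}) and that the smooth case would ``presumably'' need the cup product of Remark \ref{rem:120}. So there is no proof to compare against, and your text does not close the gap either --- it is a plan in which every essential step is asserted rather than established. Concretely: (a) the existence of the cup product isomorphism $\smallsmile$ in the DG generality you need is exactly the open content; Remark \ref{rem:120} only sketches its construction in \cite{Ye5} for \emph{rings} $\K \to A \to B$ with $A \to B$ essentially Gorenstein, $M$ a rigid dualizing complex and $N$ tilting, and saying it ``will hold by a variant of the argument'' is not an argument. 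Moreover, in your case (1) the map $A \to B$ is cohomologically pseudo-finite, not flat/Gorenstein, so the cup product of Remark \ref{rem:120} is not even the right tool there; what one needs instead is a compatibility of the squaring operation with coinduction, i.e.\ an isomorphism $\opn{Sq}_{B/\K}(\opn{RHom}_A(B,M)) \cong \opn{RHom}_A(B,\opn{Sq}_{A/\K}(M))$, whose DG construction is again precisely what is missing. (b) Your rigidifying isomorphism $\sigma_T$ on $T=\Omega^n_{B/A}[n]$ rests on a ``Koszul-type/HKR'' identification of $\opn{Sq}_{B/A}(\Omega^n_{B/A}[n])$; but the paper defines $\Omega^n_{B/A}[n]$ only indirectly, as the lift of $\Omega^n_{\bar B/\bar A}[n]$ under $\opn{DPic}(B)\cong\opn{DPic}(\bar B)$ (Theorem \ref{thm:40}), so no diagonal resolution computation is available without substantial extra work, and reducing to $\bar B/\bar A$ via Corollary \ref{cor:80} does not by itself produce a canonical $\sigma_T$, only an isomorphism class.

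In addition, your appeal to Theorem \ref{thm:251} to produce the unique lift $\bar\chi$ presupposes that the relevant DG module is dualizing over $\til B\ot_{\til A}\til B$ ``by a variant of Proposition \ref{prop:252}(1) applied to the Gorenstein-like situation''; verifying this (base change of dualizing DG modules along a non-finite, K-flat homomorphism, plus the surjectivity hypothesis on $\opn{H}^0$ needed for Theorem \ref{thm:251}) is itself a nontrivial piece of the conjecture, not a citation. Finally, the independence-of-choices step needs more than Theorems \ref{thm:500}(1) and \ref{thm:501}: one must show the comparison isomorphisms arising from different resolutions are themselves \emph{rigid} morphisms before the uniqueness of rigid automorphisms can be invoked. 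In short, your outline follows the expected route (the one the author himself anticipates), but as it stands it proves nothing beyond what the paper already conjectures.
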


In the case when  $A$ and $B$ are rings, Conjectures \ref{conj:505} and 
\ref{conj:501} were already proved in \cite{YZ1}.
The proofs will be repeated, with improvements, in \cite{Ye7}. 
In the case when the cohomology of $A$ is bounded, Conjectures \ref{conj:505} 
and \ref{conj:501} were very recently proved by Shaul \cite{Sh2}. 
What remains to prove is the case when $\opn{H}(A)$ is unbounded. 

\begin{rem} \label{rem:520}
Finally, a few words on Conjecture \ref{conj:506} from the Introduction. 
We can see three possible routes to try to prove it.
The first route is by generalizing the original proof of 
\cite[Theorem 1.2]{YZ2}, that talked about a regular tractable ring $A$, to the
DG ring case. The second route is by generalizing the proof 
of \cite[Theorem 4]{AIL}, which applied to any tractable ring $A$, to the DG 
ring case. 

There is also a more conceptual route. 
In \cite{Ye5} we introduced the {\em rectangle operation} 
\[ \opn{Rect}_{A / \K}(M, N) := 
\opn{RHom}_{\til{A} \ot_{\K} \til{A}}(A, M \ot^{\mrm{L}}_{\K} N) , \]
for $M, N \in \cat{D}(A)$. Here $\K \to \til{A}$ is a K-flat resolution of 
$\K \to A$, but the rectangle operation is independent of this resolution. 
Notice that the square is a special instance of the rectangle:
$\opn{Sq}_{A / \K}(M) =  \opn{Rect}_{A / \K}(M, M)$.

Recently Shaul \cite{Sh1} proved that when $A$ is a tractable ring, and under 
certain finiteness conditions on $M$ and $N$, there is a canonical isomorphism 
\[ \opn{Rect}_{A / \K}(M, N) \cong 
D \bigl( D(M) \ot^{\mrm{L}}_{A} D(N) \bigr) , \]
where $D := \opn{RHom}_{A}(-, R_A)$, and $R_A$ is the rigid dualizing complex 
of $A$ relative to $\K$. Thus, writing 
$M \ot_{A / \K}^! N := \opn{Rect}_{A / \K}(M, N)$,
this operation becomes a symmetric monoidal structure on (a suitable 
subcategory of) $\cat{D}^{+}_{\mrm{f}}(A)$.

Suppose now that $M \in \cat{D}^{\mrm{b}}_{\mrm{f}}(A)$ is rigid relative to 
$\K$. Define 
$L := D(M) \in \cat{D}^{\mrm{b}}_{\mrm{f}}(A)$. 
Then $L$ satisfies 
$L \ot^{\mrm{L}}_{A} L \cong L$. If $M$ is nonzero on each connected component 
of $\opn{Spec} \bar{A}$, then the same holds for $L$. 
It is not hard to show that in this case $L \cong A$ in $\cat{D}(A)$. Therefore
$M \cong R_A$, so it is a rigid dualizing complex. 

If this work of Shaul could be extended to cover tractable DG rings, we would 
have a proof of Conjecture \ref{conj:506}, at least when $A$ is cohomologically 
bounded. 
\end{rem}


\end{document}